\newtheorem{theo}{Theorem}[section]
\newtheorem{prop}[theo]{Proposition}
\newtheorem{lemma}[theo]{Lemma}
\newtheorem{coro}[theo]{Corollary}
\newtheorem{claim}{Claim}
\theoremstyle{definition}\newtheorem{de}{Definition}[section]
\theoremstyle{definition}\newtheorem{rem}{Remark}[section]
\numberwithin{equation}{section}
\def\det{\mathrm{det}\/}
\def\R{\mathbb{R}}            
\def\eps{\varepsilon}
\def\aa{a}
\def\xb{b}
\def\ap{\mathrm{AppSteer}\/}
\def\ex{\mathrm{Exact}\/}
\def\gf{\mathrm{GlobalFree}\/}
\def\gfmod{\mathrm{GlobalFree\_Modified}\/}
\def\hX{\widehat X}
\def\hd{\hat d}
\def\hu{\hat u}
\def\bx{\overline{x}}
\def\bet{\overline{\eta}}
\def\cg{\mathcal{G}}
\def\cj{\mathcal{J}}
\def\ck{\mathcal{K}}
\def\ci{\mathcal{I}}
\def\cl{\mathcal{L}}
\def\ch{\mathcal{H}}
\def\cv{\mathcal{V}}
\def\tk{\widetilde{k}}
\def\tn{\widetilde{n}}
\def\tx{\widetilde{x}}
\begin{document}


\title{A Global Steering Method for Nonholonomic Systems \thanks{This work was supported by grants from Digiteo and R\'egion Ile-de-France, by the ANR project {GCM}, program ``Blanche'',
project number NT09\_504490, and by the Commission
of the European Communities under the 7th Framework Program Marie Curie Initial Training Network (FP7-PEOPLE-2010-ITN), project SADCO, contract number 264735. }
}
\author{Yacine Chitour
\thanks{Y. Chitour is with L2S, Universit\'e Paris-Sud XI, CNRS and
Sup\'elec, Gif-sur-Yvette, and Team  GECO, INRIA Saclay -- Ile-de-France, France.
       {\tt\small yacine.chitour@lss.supelec.fr}}
, Fr\'ed\'eric Jean
\thanks{F. Jean is with UMA, ENSTA ParisTech, and Team  GECO, INRIA Saclay -- Ile-de-France, France.
        {\tt\small frederic.jean@ensta-paristech.fr}}
~and Ruixing Long
\thanks{R. Long is with Department of Chemistry, Princeton University, Princeton, NJ 08544, USA.
        {\tt\small rlong@princeton.edu}}
 }

\date{}




\maketitle

\begin{abstract}
  In this paper, we present an iterative steering algorithm for
  nonholonomic systems (also called driftless control-affine systems)
  and we prove its global convergence under the sole assumption that
  the Lie Algebraic Rank Condition (LARC) holds true everywhere.  That
  algorithm is an extension of the one introduced in \cite{Jean1} for
  regular systems. The first novelty here consists in the explicit
  algebraic construction, starting from the original control system,
  of a lifted control system which is regular. The second contribution
  of the paper is an exact motion planning method for nilpotent
  systems, which makes use of sinusoidal control laws and which is a
  generalization of the algorithm described in \cite{Murray} for
  chained-form systems.
\end{abstract}


\tableofcontents


\section{Introduction}
Nonholonomic systems have been attracting the attention of the
scientific community for several years, due to the theoretical
challenges they offer and the numerous important applications they cover.
From the point of view of control theory, a nonholonomic system
is a driftless
control-affine system and is written as
\begin{equation}\label{CS0}
(\Sigma)\quad \dot{x}=\sum_{i=1}^{m}u_iX_i(x),\  x\in\Omega,\ \ u=(u_1,\dots,u_m)\in\mathbb{R}^m,
\end{equation}
where $\Omega$ is an open connected subset of $\mathbb{R}^n$, and $X_1,\dots,X_m$
are $C^\infty$ vector fields on $\Omega$. Admissible inputs are $\R^m$-valued measurable functions $u(\cdot)$ defined on some interval $[0,T]$ and a trajectory of $(\Sigma)$, corresponding to some $x_0\in \Omega$ and to an admissible input $u(\cdot)$,
is the (maximal) solution  $x(\cdot)$ in $\Omega$ of the Cauchy problem defined by
$\dot{x}(t)=\sum_{i=1}^{m}u_i(t)X_i(x(t))$, $t\in [0,T]$, and $x(0)=x_0$.

In this paper, we address the \emph{motion planning problem} (MPP for short) for $(\Sigma)$,
namely determine a procedure which associates with every pair of points $(p,q)\in\Omega\times\Omega$ an admissible input $u(\cdot)$ defined on some interval $[0,T]$, such that the corresponding trajectory of $(\Sigma)$ starting from $p$ at $t=0$ reaches $q$ at $t=T$.
As for the existence of a solution to MPP,
this is equivalent to the
complete controllability of $(\Sigma)$. After the works of Chow and
Rashevsky in the thirties~\cite{cho40,ras38}, and that of Sussmann
and Stefan in the seventies~\cite{Sussmann73,stefan}, the issue of
complete controllability for nonholomic systems is well-understood and
it is usually guaranteed by assuming that the Lie Algebraic Rank
Condition (also known as the H\"ormander condition) holds for
$(\Sigma)$. This easily checkable condition is not only sufficient for
complete controllability but also necessary when the vector fields are
analytic.  From a practical viewpoint, assuming the LARC is, in a
sense, the minimal requirement to ensure complete controllability for
$(\Sigma)$ and this is what we will do for all the control systems
considered hereafter.


As for the construction of the solutions of the MPP, we present, in this paper, a complete procedure solving the
MPP for a nonholonomic system subject to the sole
LARC. By ``complete procedure'', we mean that
the following properties must be guaranteed by the proposed procedure.
\begin{enumerate}
\item Global character of the algorithm: for every pair of points $(p,q)$ in $\Omega$, the algorithm must produce a steering control. (Note that the core of many algorithms consists in
a local procedure and turning the latter into a global one is not always a trivial issue.)
\item Proof of convergence of the algorithm.
\item Regarding numerical implementations, no prohibitive limitation on the state
dimension $n$.
\item Usefulness for practical applications, e.g., robustness with
  respect to the dynamics, ``nice'' trajectories produced by the
  algorithm, (no cusps neither large oscillations), and possibility of
  localizing the algorithm in order to handle obstacles (i.e.,
  reducing the working space $\Omega$ to any smaller open and connected
  subset of $\R^n$).
\end{enumerate}

There exist of course several algorithms addressing the MPP
in different contexts but most of them fail to verify all the aforementioned properties.

At first,  in the case of specific classes of driftless
nonholonomic systems (i.e. where more is known than the sole LARC), effective techniques have been proposed, among which a Lie bracket
method for steering nilpotentizable systems (see \cite{Lafferriere}
and \cite{Lafferriere2}), sinusoidal controls for chained-form
systems (see \cite{Murray}), averaging techniques for left-invariant systems defined on a Lie group (see \cite{Leonard, Bullo-Lewis-Leonard}), and a trajectory generation method for
flat systems (see \cite{Fliess}). Depending on the applications, these methods
turn out to be extremely efficient, especially when the system to be steered is shown to be flat with an explicit flat output.


However, the class of systems considered previously is rather restrictive: for $2$-input nonholonomic systems (i.e. $m=2$), under suitable regularity assumptions, a flat system admits a feedback chained-form transformation (cf. \cite{Murray2, Martin}) and thus is exactly nilpotentizable; on the other hand,  when the dimension of the state space is large enough,
exact nilpotentizability is clearly a non generic property among $2$-input nonholonomic systems. Moreover, there exist standard nonholonomic
systems whose kinematic model does not fall into any of the
aforementioned categories. For instance,  mobile robots with more than
one trailer cannot be transformed in chained-form unless each trailer
is hinged to the midpoint of the previous wheel axle, an unusual
situation in real vehicles. Another similar example is the
rolling-body problem: even the simplest model in this category, the
so-called plate-ball system, does not allow any chained-form
transformation and is not flat.

Regarding general nonholonomic systems, various steering techniques have been proposed in the literature, and we only mention three of them: the iterative method, the generic loop method, and the continuation method.
The first one, introduced in~\cite{Lafferriere} and improved in~\cite{Lafferriere2}, is an approximation procedure which is exact for nilpotent systems.
This method is proved to be convergent with the sole assumption of the LARC and actually meets most of the requirements to be a complete  procedure in the sense defined previously.
However, either the resulting trajectories in~\cite{Lafferriere2} contain a large number
of cusps (exponential with respect to the degree of nonholonomy), or the computation
of the steering control in~\cite{Lafferriere} requires the inversion of a system of algebraic
equations. The latter turns out to be numerically intractable as soon as the dimension of the state
is larger than six. Let us also mention a less important limitation for practical use.
The iterative method described in~\cite{Lafferriere,Lafferriere2} makes use of several nonlinear changes of coordinates, which must be performed by numerical integration of ODEs at each step of the iterative method, thus leading to spurious on-line computations.

The generic loop method, presented in~\cite{Sontag}, is based on a local deformation procedure and requires an a priori
estimate of some ``critical distance" which is, in general, an
unknown parameter in practice. That fact translates into a severe drawback for
constructing a globally valid algorithm. The continuation method of~\cite{Sussmann3} and~\cite{Chitour} belongs to the class of Newton-type methods.  Proving its convergence amounts to
show the global existence for the solution of a non linear differential equation,
 which relies on handling the
abnormal extremals associated to the control system. That latter issue
turns out to be a hard one, see~\cite{YJT1,YJT2} for instance. This is
why, in the current state of knowledge, the continuation method can be
proved to converge only under restrictive assumptions (see
\cite{Chitour1,Chitour2,Chitour3}).

The algorithm considered in the present paper takes as starting point the
globally convergent algorithm for steering {\it regular} nonholonomic systems
discussed in
\cite{Jean1}.  As the iterative method of~\cite{Lafferriere,Lafferriere2}, that algorithm
can be casted in the realm of nonlinear
geometric control and roughly works as follows:
one first solves the motion
planning problem for a control system which is nilpotent and ``approximates" system
$(\ref{CS0})$ in a suitable sense; then, one applies the resulting
input $\hat{u}$ to $(\ref{CS0})$ and iterates the procedure from the
current point. If we use $\hat{x}(t, \aa,\hat{u})$, $t\in[0,T]$ to
denote a trajectory of the ``approximate" control system starting from
$\aa$, a local version of this algorithm is summarized below,
where $d$ is an appropriate distance (to be defined in the next section)
and $e$ is a fixed positive real number.
\begin{algorithm}
\caption{Local Steering Algorithm}
\label{algo_rrt}
\begin{algorithmic}
\REQUIRE $x_0$, $x_1$, $e$
\STATE $k=0$;
\STATE $x^k=x_0$;
\WHILE {$d(x^k,x_1)>e$}
  \STATE Compute $\hat{u}^k$ such that $x_1=\hat{x}(T,x^k,\hat{u}^k)$;
   \STATE  $x^{k+1}=$ AppSteer $(x^k, x_1):=x(T,x^k,\hat{u}^k)$;
   \STATE  $k=k+1$;
\ENDWHILE
\end{algorithmic}
\end{algorithm}

We note that Algorithm \ref{algo_rrt} converges {\em locally} provided
that the function AppSteer is {\em locally contractive} with respect to the
distance $d$, i.e., for $x_1\in\Omega$, there exists
$\varepsilon_{x_1}>0$ and $c_{x_1} <1$ such that
\begin{equation}
d(\textrm{AppSteer}(x,x_1),x_1)\leq c_{x_1} d(x_1,x),
\end{equation} for $x\in\Omega$ and $d(x_1,x)<\varepsilon_{x_1}$.

Assume now that we have a {\em uniformly locally contractive} function $\ap$
on a connected compact set $K \subset \Omega$, i.e. there exists
$\varepsilon_K>0$ and $c_K\in (0,1)<1$ such that
\begin{equation}
d(\textrm{AppSteer}(x,x_1),x_1)\leq c_Kd(x_1,x),
\end{equation} for $x, ~x_1\in K$ and $d(x_1,x)<\varepsilon_K$.

Based on the local algorithm, a global
approximate steering algorithm on $K$ can be built along the line of the
following idea (a similar procedure is proposed
in~\cite{Lafferriere2}): consider a parameterized
path $\gamma \subset K$ connecting $x_0$ to $x_1$. Then pick a
finite sequence of intermediate goals $\{x^d_0=x_0,x^d_1, \dots,
x^d_j=x_1\}$ on $\gamma$ such that $d(x^d_{i-1},x^d_{i}) <
\varepsilon_K/2$,
$i=0,\dots, j$. One can prove that the iterated application
of a uniformly locally
contractive $\ap(x^{i-1}, x^d_{i})$ from the current
state to the next subgoal (having set $x^d_{i}=x_1$, for $ i \geq
j$) yields a sequence $x^i$  converging to $x_1$.

To turn the above idea into a practically efficient algorithm, three
issues must be successfully addressed:
\begin{itemize}
\item[(P-1)]\label{contraction}Construct a {\em uniformly} locally contractive
  local approximate steering method.
\item[(P-2)]\label{u-hat}The ``approximate" control $\hat{u}^k$ must
  be {\em exact} for steering the ``approximate system" from the
  current point $x^k$ to the final point $x_1$. As this computation
  occurs at each iteration, it must be performed in a reasonable time.
\item[(P-3)]\label{critical}Since the knowledge of the ``critical
  distance" $\varepsilon_K$ is not available in practice, the
  algorithm should achieve global convergence without explicit
  knowledge of $\varepsilon_K$.
\end{itemize}

Issues (P-1) and (P-3) are solved in~\cite{Jean1} under the assumption that the control
system is {\it regular} (cf. Definition~\ref{de:regular-point} below). The solution proposed therein relies on the understanding of the
geometry defined by the nonholonomic system (cf.~\cite{bjr}). That geometry is a sub-Riemannian one and it
endows the working space $\Omega$ with a sub-Riemannian metric $d$ for which the aforementioned
function AppSteer is contractive. Moreover, the approximation of the original system adapted to the motion
planning turns out to be the approximation at the first order with respect to $d$ (cf.\cite{Jean1}).
However, the regularity assumption for the control system is rather
restrictive: in general, nonholonomic systems do exhibit
singularities (cf. \cite{Vershik}). A solution also exists in the case of a non regular control
system~\cite{Vendittelli}, but only when the state dimension $n$ is less than or
equal to 5. In the present paper, we completely remove the
regularity assumption and solve Issues (P-1) and (P-3) for every nonholonomic control
system. The solution is based on an explicit desingularization procedure: adding
new variables (thus augmenting the dimension of the state space), we
construct  a ``lifted" control system which is regular and whose
projection is the original control system. The solution
of Issue (P-1) described in~\cite{Jean1} can thus be applied to the ``lifted''
control system, as well as the globally convergent motion planning
algorithm solving Issue (P-3) proposed therein. Note that other
desingularization procedures already
exist~\cite{Bellaiche1,goo76,Jea:01b,rot76}, but we insist on the fact
that the one we propose here involves only  {\it explicit } polynomial transformations.
It numerically translates to the fact that these changes of variables can be performed off-line
once each local procedure is identified.

As regards Issue (P-2), several algorithms were proposed for computing
$\hat{u}$, i.e. for controlling nilpotent
systems. In~\cite{Lafferriere}, the authors make use of piecewise
constant controls and obtain smooth controls by imposing some special
parameterization (namely by requiring the control system to stop
during the control process). In that case, the smoothness of the
inputs is recovered by using a reparameterization of the time, which
cannot prevent in general the occurrence of cusps or corners for the
corresponding trajectories. However, smoothness of the {\em
trajectories} is generally mandatory for robotic applications.
Therefore, the method proposed in \cite{Lafferriere} is not adapted to
such applications. In~\cite{Lafferriere2}, the proposed controls are
polynomial (in time), but an algebraic system must be inverted in
order to access to these inputs.  The size and the degree of this
algebraic system increase exponentially with respect to the dimension
of state space, and there does not exist a general efficient exact
method to solve it. Even the existence of solutions is a non trivial
issue. Furthermore, the methods~\cite{Lafferriere}
and~\cite{Lafferriere2} both make use of exponential coordinates which
are not explicit and thus require in general numerical integrations
of nonlinear differential equations. That prevents the use of these
methods in an iterative scheme such as Algorithm~\ref{algo_rrt}. Let
us also mention the path approximation method by Liu and
Sussmann~\cite{Liu}, which uses unbounded sequences of sinusoids. Even
though this method bears similar theoretical aspects with our method
(see especially the argument strategy in order to prove
Lemma~\ref{Free_Family}, which is borrowed from~\cite{Liu}), it is not
adapted from a numerical point of view to the motion planning issue
since it relies on a limit process of highly oscillating inputs.  In
the present paper, we present an {\it exact} steering algorithm 
 for general nilpotent systems is provided, which uses
sinusoidal inputs and which can be applied for controlling the
approximate (nilpotent) system used in~\cite{Jean1}. Our method
generalizes the one proposed in~\cite{Murray} for controlling
chained-form systems, which is briefly recalled next: after having brought
the system under a ``canonical'' form, the authors of~\cite{Murray}
proceed by controlling component after component by using, for each
component, two sinusoids with suitably chosen frequencies. In the
present paper, we show for general nilpotent systems that, with more
frequencies for each component, one can steer an arbitrary component
independently on the other components. We are also able
to construct inputs which give rise to $C^1-$trajectories.

We now describe in a condensed manner the global motion
planning strategy developed in this paper. The latter is presented as an algorithmic procedure associated with a given nonholonomic system $(\Sigma)$ defined on $\Omega\subset \R^n$. The required inputs are initial and final points $x^{\textrm{initial}}$ and  $x^{\textrm{final}}$ belonging to $\Omega$, a tolerance $e>0$, and a compact convex set $K\subset\Omega$ (of appropriate size) equal to the closure of its interior which is a neighborhood of both $x^{\textrm{initial}}$ and $x^{\textrm{final}}$. For instance, $K$ can be chosen to be a large enough compact tubular neighborhood constructed around a curve joining $x^{\textrm{initial}}$ and  $x^{\textrm{final}}$. The global steering method is summarized in Algorithm~\ref{algo}.

\begin{algorithm}[h]
\caption{Global Approximate Steering Algorithm: Global$(x^{\textrm{initial}}, x^{\textrm{final}},e,K)$}
\label{algo}
\begin{algorithmic}[1]
\STATE Build a decomposition of $K$ into a finite number of compacts sets $\cv_{\cj_i}^c$, with
$i=1,\dots,M$ (Section~\ref{Sec-lifting}).\\
\STATE Construct the connectedness graph $\textsf{G}:=(\textsf{N},~\textsf{E})$ associated with this decomposition
and choose a simple path $\textsf{p}:=\{j_0,j_1\dots,j_{\bar{M}}\}$ in $\textsf{G}$ such that
$x^{\textrm{initial}}\in\cv_{\cj_{j_0}}^c$ and  $x^{\textrm{final}}\in\cv_{\cj_{j_{\bar{M}}}}^c$
 (Section~\ref{Sec-lifting}).
\STATE Choose a sequence $(x^{i})_{i=1,\dots,\bar{M}-1}$ such that
$x^{i}\in\cv_{\cj_{j_i}}^c\cap\cv_{\cj_{j_{i+1}}}^c$.
\STATE Set $x:=x^{\textrm{initial}}$.
\FOR{$i = 1,\dots,\bar{M}-1$}
  \STATE Apply the Desingularization Algorithm at $\aa:=x^{i}$ with $\cj:=\cj_{i}$ (Section \ref{Sec-lifting}).\\
   \COMMENT{\textsf{the output is an $m$-tuple of vector fields $\xi$ on $\cv_{\cj_i}\times \R^{\tn}$ which is free up to step $r$.}}
\STATE  Let $\ap$ be the LAS method associated to the approximation $\mathcal{A}^{\xi}$ of $\xi$ on $\cv_{\cj_i}\times \R^{\tn}$ defined in Section \ref{bellaiche-construction} and
 to its steering law $\ex_{m,r}$ constructed in Section \ref{sec:sub-opt}).
  \STATE Set $\tx_0:=(x,0)$, $\tx_1:=(x^i,0)$, and $\cv^c:=\cv_{\cj_i}^c \times \overline{B}_R(0)$ with $R>0$ large enough.
  \STATE Apply GlobalFree$(\widetilde{x}_0,\tx_1,e,\cv^c,\ap)$ to $\xi$ (Section~\ref{GASA}).\\
\COMMENT{\textsf{the algorithm stops at a point $\tx$ which is $e$-close to $\tx_1$;}}
\RETURN  $x:=\pi (\tx)$.  \\
\COMMENT{\textsf{$\pi:\cv_{\cj_i}\times \R^{\tn} \to \cv_{\cj_i}$ is the canonical projection.}}
\ENDFOR
\end{algorithmic}
\end{algorithm}

The paper is devoted to the construction of the various steps of this algorithm.  We will also show that each of these steps is conceived so that the overall construction is a complete procedure in the sense defined previously. In particular, the convergence issue is addressed in the following theorem.

\begin{theo}\label{th1}
Let $(\Sigma)$ be a nonholonomic system on $\Omega\subset \R^n$ satisfying the LARC. For every $e>0$, every connected compact set $K$ which is equal to the closure of its interior, and every pair of points $(x^{\textrm{\emph{initial}}}, x^{\textrm{\emph{final}}})$ in the interior of $K$, Algorithm~\ref{algo} steers, in a finite number of steps, the control system $(\Sigma)$ from $x^{\textrm{\emph{initial}}}$ to a point $x \in K$ such that $d(x,x^{\textrm{\emph{final}}}) <e$.
\end{theo}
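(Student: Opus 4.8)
The plan is to prove Theorem~\ref{th1} by reducing the global motion planning problem for an arbitrary nonholonomic system satisfying the LARC to finitely many instances of the global steering problem for \emph{regular} lifted systems, and then invoking the convergence of $\gf$ applied to such systems. The argument naturally decomposes into three pieces, matching the three issues (P-1), (P-2), (P-3) identified in the introduction, each handled in the respective sections referenced in Algorithm~\ref{algo}.

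First I would establish the \textbf{finiteness of the decomposition and the existence of the path}. Since $K$ is compact and $(\Sigma)$ satisfies the LARC, the Desingularization Algorithm of Section~\ref{Sec-lifting} produces, for each point, a neighborhood $\cv_\cj$ together with a multi-index $\cj$ and a lifted $m$-tuple $\xi$ on $\cv_\cj\times\R^{\tn}$ that is free up to step $r$; by compactness one extracts a finite subcover, giving the compacts $\cv_{\cj_i}^c$, $i=1,\dots,M$. One then needs that the connectedness graph $\textsf{G}$ is connected \emph{along} $K$ (this uses that $K$ is connected and equals the closure of its interior, so that consecutive pieces overlap on sets with nonempty interior), which guarantees a simple path $\textsf p$ from the piece containing $x^{\textrm{initial}}$ to the one containing $x^{\textrm{final}}$, and the existence of the intermediate waypoints $x^i$ in the pairwise intersections. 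This step is essentially topological bookkeeping once the local desingularization is in place.

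Next comes the \textbf{heart of the proof}: on each piece $\cv_{\cj_i}\times\R^{\tn}$ the lifted system $\xi$ is regular and free up to step $r$, so the theory of~\cite{Jean1} applies. Concretely, one must check that the approximation $\mathcal A^\xi$ built in Section~\ref{bellaiche-construction} together with the exact nilpotent steering law $\ex_{m,r}$ of Section~\ref{sec:sub-opt} yields a function $\ap$ that is \emph{uniformly locally contractive} on the compact $\cv^c=\cv_{\cj_i}^c\times\overline B_R(0)$ with respect to the sub-Riemannian distance $d$ — this is where (P-1) and (P-2) are discharged, and where one invokes that $\ex_{m,r}$ is exact for nilpotent systems (so $\hat u^k$ really steers the approximate system exactly) and that the first-order approximation property makes $\ap$ contractive. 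Granting this, Theorem applied to $\gf$ (the global algorithm of Section~\ref{GASA}, solving (P-3) without knowledge of $\varepsilon_K$) shows that $\gf(\tx_0,\tx_1,e,\cv^c,\ap)$ steers $\xi$ from $(x,0)$ to a point $e$-close to $(x^i,0)$ in finitely many steps. Projecting by $\pi$ and using that $\pi$ is $1$-Lipschitz for the relevant distances (the lifted distance dominates the downstairs distance, since the original vector fields are the projections of the lifted ones), the original system $(\Sigma)$ is steered to within $e$ of $x^i$.

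Finally I would \textbf{concatenate}: iterating the \textbf{for} loop over $i=1,\dots,\bar M-1$ and then one last call targeting $x^{\textrm{final}}$, one obtains a finite concatenation of admissible inputs for $(\Sigma)$ whose trajectory ends within $e$ of $x^{\textrm{final}}$; finiteness of the total number of steps follows since each $\gf$ call terminates in finitely many steps and there are $\bar M$ of them. I expect the \textbf{main obstacle} to be the uniform contractivity claim for $\ap$ on the lifted system: one must verify that the desingularization produces a system that is not merely regular pointwise but \emph{uniformly} regular on the compact piece (uniform bounds on the privileged-coordinate estimates and on the approximation error), so that the constants $\varepsilon_K$ and $c_K<1$ can be chosen uniformly — together with checking that the exact nilpotent steering $\ex_{m,r}$ used here, which differs from the one in~\cite{Jean1}, preserves the contractivity estimate. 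Everything else — the compactness argument, the graph connectedness, the Lipschitz projection, and the concatenation — is routine given the constructions of the earlier sections.
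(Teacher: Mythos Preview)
Your proposal is correct and follows essentially the same route as the paper's own proof: reduce to finitely many lifted regular pieces via the Desingularization Algorithm (Theorem~\ref{theo-desingularization}), verify that $\mathcal{A}^\xi$ together with the sub-optimal steering law $\ex_{m,r}$ (Theorem~\ref{th:sub-optimal}) yields a uniformly locally contractive $\ap$ (Theorem~\ref{regular-uniform}), invoke the convergence of $\gf$ (Theorem~\ref{th:GlobConv}) on each piece, and concatenate. The ``main obstacle'' you flag---uniform contractivity on the compact lifted piece---is exactly what the chain Proposition~\ref{prop:continuity} $\Rightarrow$ Corollary~\ref{coro:unif_contr} $\Rightarrow$ Theorem~\ref{regular-uniform} handles, so no additional work is needed beyond citing those results.
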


Before providing the structure of the paper, we mention possible extensions of our
algorithm. The first one concerns the working space $\Omega$. Since it is an arbitrary open connected set of $\R^n$, one can extend the algorithm to the case where the working space is a smooth connected manifold of finite dimension. From a numerical point of view, there would be the additional burden of computing the charts. A second extension deals with the stabilization issue. Indeed, at the heart of the algorithm lies an iterative procedure such as Algorithm~\ref{algo_rrt}, which can be easily adapted for stabilization tasks (cf.~\cite{OriVen:05}). Another possible generalization takes advantage of devising from our algorithm a globally regular input, one can then address the motion planning of dynamical extensions of the nonholonomic control systems considered in the present paper. Finally, let us point out the modular nature of Algorithm \ref{algo}: one can propose other approaches to obtain uniformly contractive local methods (other desingularization methods or different ways of dealing with singular points), or replace $\ex_{m,r}(\cdot)$ by more efficient control strategies for general nilpotent systems.

The paper is organized as follows. In Section \ref{Sec-Back}, we
define properly the notion of first order approximation. We then
propose, in Section \ref{Desing}, a purely polynomial desingularization
procedure based on a {\em lifting} method. In Section \ref{regular-case}, we describe in detail the globally convergent
steering algorithm given in \cite{Jean1} for regular systems together
with a proof of convergence. In Section \ref{control-nilpotent}, we
present an exact steering procedure for general nilpotent systems using sinusoids, and we gather, in the appendix, the proof of Theorem \ref{th1} and some additional comments.





\section{Notations and Definitions}\label{Sec-Back}

Let $n$ and $m$ be two positive integers. Let $\Omega$ and $VF(\Omega)$ be respectively an open connected subset of $\mathbb{R}^n$ and the set of $C^\infty$ vector fields on $\Omega$. Consider $m$ vector fields $X_1,\dots,X_m$ of $VF(\Omega)$, and the associated driftless control-affine nonholonomic system given by
\begin{equation}\label{CS}
\dot{x}=\sum_{i=1}^{m}u_iX_i(x),\  x\in\Omega,
\end{equation}
where $u=(u_1,\cdots,u_m)\in\mathbb{R}^m$ and the input $u(\cdot)=(u_1(\cdot),\dots,u_m(\cdot))$ is an integrable vector-valued function defined on $[0,T]$, with $T$
a fixed positive real number.

We also assume that $(\ref{CS})$ is {\it complete}, i.e., for every $a\in \Omega$ and input
$u(\cdot)$, the Cauchy problem defined by $(\ref{CS})$ starting from $\aa$ at $t=0$ and corresponding to $u(\cdot)$ admits a unique (absolutely continuous) solution $x(\cdot,\aa,u)$ defined on $[0,T]$ and called the trajectory of $(\ref{CS})$ starting from $\aa$ at $t=0$ and corresponding to the
input $u(\cdot)$. A point $x\in\Omega$ is said to be {\em accessible} from $\aa$ if there exists an input $u:[0,T]\rightarrow\mathbb{R}^m$ and a time $t\in[0,T]$ such that $x=x(t,\aa,u)$. Then, System~(\ref{CS}) is said to be completely controllable if any two points
in $\Omega$ are accessible one from each other (see \cite{Sussmann1}).

We next provide a classical condition ensuring that System~(\ref{CS}) is controllable. We first need the following definition.

\begin{de}[\emph{Lie Algebraic Rank Condition (LARC)}]\label{larc}
Let $L(X)$ be the Lie algebra generated by the vectors fields $X_1,\dots,X_m$ and
$L(x)$ be the linear subspace of  $\mathbb{R}^n$ equal to the evaluation of $L(X)$ at every point $x\in\Omega$ (see \cite{Bellaiche1}). If $L(x)=\mathbb{R}^n$, we say that the {\it Lie Algebraic Rank Condition} (LARC for short) is verified at $x\in\Omega$. If this is the case at every point $x\in\Omega$, we say that System~(\ref{CS}) satisfies the LARC.
\end{de}
Chow's Theorem essentially asserts that, if System~(\ref{CS}) satisfies the
LARC then it is completely controllable (cf. \cite{cho40}).

\begin{rem}
For the sake of clarity, we assume through this paper that the control set is equal to $\R^m$. However, it is well-known that Chow's theorem only requires that the convex hull of the control set contains a neighborhood of the origin in $\R^m$ (see for instance \cite[Chapter 4, Theorem 2]{Jurd}). We will explain later how we can adapt our method to the case with constraints on the control set (see Appendix \ref{rem:control-set}). Moreover, it is worth recalling that complete controllability for $(\Sigma)$ does not imply that LARC holds true for $(\Sigma)$ if the vector fields $X_1,\dots, X_m$ are only smooth, but this is the case if $X_1,\dots, X_m$ are analytic (cf. \cite[Chapter 5]{Agrachev-CTGV}).
\end{rem}

Throughout this paper, we will only consider driftless control-affine nonholonomic systems of the type (\ref{CS}) verifying the LARC, and thus completely controllable. In
that context, the {\it motion planning problem}
will be defined as follows: find a procedure which furnishes, for every two points
$x_0$, $x_1\in \Omega$,  an input $u$ steering
(\ref{CS}) from
$x_0$ to $x_1$, i.e., $x(T,x_0,u)=x_1$. 

Our solution to this problem relies heavily on the underlying
geometry, which is a \emph{sub-Riemannian geometry}. We provide in Section \ref{recall-SRG}
the useful definitions and refer the reader to \cite{Bellaiche1} for more
details. We then introduce in Section \ref{def-ASM} a notion of approximate steering method related to this geometry.

\subsection{Basic facts on sub-Riemannian geometry}\label{recall-SRG}

\subsubsection{Sub-Riemannian distance and Nonholonomic order}

\begin{de}[\emph{Length of an input}]\label{length}
The {\it length of an input } $u$ is defined by
$$\ell(u)=\int_0^T\sqrt{u_1^2(t)+\dots+u_m^2(t)}dt,$$
and the {\it length of a trajectory} $x(\cdot,\aa,u)$ is defined
by $\ell(x(\cdot,\aa,u)):=\ell(u)$.
\end{de}

The appropriate notion of distance associated with the control system
$(\ref{CS})$ and closely related to the notion of accessibility is
that of {\it sub-Riemannian distance}, also called {\it control
  distance}.

\begin{de}[\emph{Sub-Riemannian distance}]\label{dis}
The vector fields $X_1,\dots,X_m$ induce a function $d$ on $\Omega$, defined by $d(x_1,x_2):=\inf_u\ell(x(\cdot,x_1,u)), \hbox{ for every points } x_1,x_2 \hbox{ in }\Omega$, where the infimum is taken over all the inputs $u$ such that $x(\cdot, x_1,u)$ is defined on $[0,T]$ and $x(T,x_1,u)=x_2$. We say that the function $d$ is the {\it sub-Riemannian distance} associated with $X_1,\dots,X_m$.
\end{de}%
\begin{rem}
The function $d$ defined above is a {\em distance} in the usual sense, i.e., it verifies (i) $d(x_1,x_2)\geq 0$ and $d(x_1,x_2)=0$ if and only if $x_1=x_2$; (ii) symmetry: $d(x_1,x_2)=d(x_2,x_1)$; (iii) triangular inequality: $d(x_1,x_3)\leq d(x_1,x_2)+d(x_2,x_3)$.
Notice that one always has  $d(x_1,x_2)<\infty$ since the control
system is assumed to be completely controllable.
\end{rem}

\begin{de}[\emph{Nonholonomic derivatives of a function}]\label{nd}
If $f:\Omega\rightarrow \mathbb{R}^n$ is a smooth function, the {\em first-order
nonholonomic derivatives} of $f$ are the Lie derivatives $X_i f$ of $f$
along $X_i$, $i=1,\ldots,m$. Similarly, $X_i(X_j f)$, $i,
j=1,\ldots,m$, are called the
{\em second-order nonholonomic derivatives} of $f$, and more generally, $X_{i_1}\cdots X_{i_k}f$, $i_1,\dots,i_k\in\{1,\dots,m\}$ are the {\em$k^{\textrm{th}}$-order nonholonomic derivatives} of $f$, where $k$ is any positive integer.
\end{de}

\begin{prop}[{\cite[Proposition 4.10, page 34]{Bellaiche1}}]\label{ndf-order}
Let $s$ be a non-negative integer. For a smooth function $f$ defined near $\aa\in\Omega$, the following conditions are equivalent:
\begin{itemize}
\item[{\em(i)}] $f(x)=O(d^s(x,\aa))$ for $x$ in a neighborhood of $\aa$;
\item[{\em(ii)}] all the nonholonomic derivatives of order $\leq s-1$ of $f$ vanish at $\aa$.
\end{itemize}
\end{prop}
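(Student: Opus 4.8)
The plan is to establish the two implications separately, using in both directions the classical \emph{Chen--Fliess expansion} of $f$ along trajectories of $(\ref{CS})$. Iterating $s$ times the elementary identity $\tfrac{d}{dt}f(x(t,\aa,u))=\sum_{i=1}^{m}u_i(t)\,(X_if)(x(t,\aa,u))$ (repeated use of the fundamental theorem of calculus) yields, for every control $u$ whose trajectory $x(\cdot,\aa,u)$ stays on $[0,T]$ in a fixed compact neighbourhood $V$ of $\aa$,
\[
  f\bigl(x(T,\aa,u)\bigr)=\sum_{k=0}^{s-1}\ \sum_{I=(i_1,\dots,i_k)}(X_{i_1}\cdots X_{i_k}f)(\aa)\,\Xi_I(u)\ +\ R_s(u),
\]
where $\Xi_I(u):=\int_{0\le\tau_1\le\cdots\le\tau_k\le T}u_{i_1}(\tau_1)\cdots u_{i_k}(\tau_k)\,d\tau$, the inner sum runs over all words $I$ of length $k$ in $\{1,\dots,m\}$ (the empty word giving the term $f(\aa)$), $X_{i_1}\cdots X_{i_k}f$ is the nonholonomic derivative of Definition~\ref{nd}, and the remainder $R_s(u)$ is a finite sum of $s$-fold iterated integrals of $u$ against $s$-th order nonholonomic derivatives of $f$ evaluated along the trajectory. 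Bounding each $|u_{i_j}(\tau)|$ by $\|u(\tau)\|$ and symmetrising over the $k!$ orderings of the simplex gives $|\Xi_I(u)|\le\frac1{k!}\bigl(\int_0^T\|u(\tau)\|\,d\tau\bigr)^k=\ell(u)^k/k!$ (with $\ell$ as in Definition~\ref{length}), whence, $V$ being compact and $f$ smooth, $|R_s(u)|\le C_V\,\ell(u)^s$ for a constant $C_V$ depending only on $V$, $f$, $m$, $s$. The case $s=0$ is trivial — (ii) is vacuous and (i) only says $f$ is bounded near $\aa$ — so we assume $s\ge1$.

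For (ii)$\Rightarrow$(i) I would argue as follows. Given $x$ close to $\aa$, put $\rho:=d(x,\aa)$ and pick, by Definition~\ref{dis}, a control $u$ on $[0,T]$ with $x(T,\aa,u)=x$ and $\ell(u)\le2\rho$. Along this trajectory $\|x(t,\aa,u)-\aa\|\le C_0\,\ell(u)\le2C_0\rho$ as long as it stays in $V$, so a continuation argument shows it remains in $V$ once $\rho$ is small enough. Under (ii) every coefficient $(X_{i_1}\cdots X_{i_k}f)(\aa)$ with $k\le s-1$ vanishes, so the explicit sum in the Chen--Fliess expansion disappears and $f(x)=R_s(u)$; the remainder bound then gives $|f(x)|\le C_V\,\ell(u)^s\le2^sC_V\,\rho^s=2^sC_V\,d(x,\aa)^s$, which is (i).

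For (i)$\Rightarrow$(ii) I would fix an arbitrary control $u$ and apply the expansion to the rescaled control $u_\lambda:=\lambda u$, with $\lambda>0$ small enough that the corresponding trajectory stays in $V$ (possible since $\ell(u_\lambda)=\lambda\ell(u)\to0$). Since $\Xi_I(u_\lambda)=\lambda^{|I|}\Xi_I(u)$ and, by (i) together with $d(x(T,\aa,u_\lambda),\aa)\le\ell(u_\lambda)$, one has $|f(x(T,\aa,u_\lambda))|\le C\lambda^s\ell(u)^s$, the expansion gives, after moving the remainder to the left,
\[
  \sum_{k=0}^{s-1}\lambda^{k}A_k(u)=O(\lambda^{s})\quad(\lambda\to0^+),\qquad A_k(u):=\sum_{|I|=k}(X_{i_1}\cdots X_{i_k}f)(\aa)\,\Xi_I(u).
\]
A polynomial in $\lambda$ that is $O(\lambda^{s})$ near $0$ has vanishing coefficients up to degree $s-1$, so $A_k(u)=0$ for every $k\le s-1$ and every control $u$. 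The last step is to deduce $(X_{i_1}\cdots X_{i_k}f)(\aa)=0$ for each word $I$ of length $k\le s-1$; for this one uses the linear independence, over the space of piecewise constant controls, of the functionals $\{\Xi_I:\ |I|=k\}$ — testing $A_k\equiv0$ against ``staircase'' controls that are successively constant and pure ($u\equiv e_{c_j}$ of duration $\delta_j$ on the $j$-th piece) turns $A_k$ into a homogeneous polynomial of degree $k$ in the $\delta_j$'s whose coefficients are (signed) sums of the quantities $(X_{i_1}\cdots X_{i_k}f)(\aa)$, and letting the colouring $(c_1,\dots,c_k)$ vary forces all of them to vanish. I expect this final linear-algebra point — cleanly isolating every word, including words with repeated letters — to be the only genuinely delicate step; the rest is routine manipulation of the Chen--Fliess formula together with the elementary estimate on iterated integrals.
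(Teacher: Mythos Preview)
Your argument is sound, and the final ``delicate step'' you flag is in fact fine: for a $k$-piece staircase control with colours $(c_1,\dots,c_k)$ and durations $(\delta_1,\dots,\delta_k)$, the coefficient of the multilinear monomial $\delta_1\cdots\delta_k$ in $\Xi_I(u)$ is exactly the Kronecker symbol $[I=(c_1,\dots,c_k)]$, because the only weakly increasing assignment of the $\tau_\ell$'s to pieces that produces $\delta_1\cdots\delta_k$ is $\tau_\ell\in$ piece $\ell$. Hence $A_k\equiv0$ forces $c_{(c_1,\dots,c_k)}=0$ for every colouring, and repeated letters cause no difficulty.

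As for the comparison: the paper does \emph{not} give its own proof of this proposition --- it is quoted verbatim from Bella\"{\i}che \cite[Proposition~4.10]{Bellaiche1} and used as a black box. Your Chen--Fliess argument is essentially the standard one and is very close to what Bella\"{\i}che does in the cited reference (Taylor-type expansion along admissible curves, remainder of order $\ell(u)^s$, then identification of the polynomial part). So there is nothing to contrast on the paper's side; your write-up would serve as a self-contained replacement for the citation.
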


\begin{de}[\emph{Nonholonomic order of a function}]\label{order-f}
Let $s$ and $f$ be respectively a non-negative integer and a smooth real-valued function defined on $\Omega$. If Condition (i) or (ii) of Proposition \ref{ndf-order} holds,
we say that $f$ is of {\em order} $\geq s$ at $\aa\in\Omega$. If $f$ is of order $\geq s$ but not of order $\geq s+1$ at $\aa$, we say that $f$ is of {\em order} $s$ at $\aa$. The order of $f$ at $\aa$ will be denoted by ord$_{\aa}(f)$.
\end{de}

\begin{de}[\emph{Nonholonomic order of a vector field}]\label{order-X}
Let $q$ be an integer. A vector field $Y\in VF(\Omega)$ is of {\em
  order} $\geq q$ at $\aa\in\Omega$ if, for every non-negative integer $s$ and
every smooth function $f$ of order $s$ at $\aa$, the Lie derivative $Yf$ is of order $\geq q+s$ at $\aa$. If $Y$ is of order $\geq q$ but not $\geq q+1$, it is of {\em order} $q$ at $\aa$. The order of $Y$ at $\aa$ will be denoted by ord$_{\aa}(Y)$.
\end{de}


\begin{de}[\emph{Nonholonomic first order approximation at $\aa$}]\label{approx-point}
An $m$-tuple $\widehat{X}^{\aa}:=\{\widehat{X}_1^{\aa},\dots,\widehat{X}_m^{\aa}\},$
defined on $B(\aa,\rho_{\aa}):=\{x\in\Omega,
\ d(x,\aa)\leq\rho_{\aa}\}$ with $\rho_{\aa}>0$, is said to be a {\it nonholonomic first order approximation of $X:=\{X_1,\dots,X_m\}$ at $\aa\in\Omega$}, if the vector fields $X_i-\widehat{X}_i^{\aa}$, for $i=1,\dots,m$, are of order $\geq 0$ at $\aa$. The positive number $\rho_{\aa}$ is called {\em the approximate radius at $\aa$}.
\end{de}

\begin{rem}\label{rem-nh-order}
As a consequence of Definition \ref{approx-point}, one gets that the nonholonomic order at $\aa$ defined by the vector fields $\hX_1^{\aa},\dots,\hX_m^{\aa}$ coincides with the one defined by $X_1,\dots,X_m$.
\end{rem}

\subsubsection{Privileged coordinates}

The changes of coordinates take an important place in this paper, whether it is to estimate the sub-Riemannian distance, or to compute the
order of functions and vector fields, or to transform a control system
into a normal form. To avoid heavy notations, we will need some
conventions and simplifications that we fix now for the rest of the
paper.

A point in $\Omega \subset \R^n$ is denoted by $x=(x_1,\dots,x_n)$ and
the canonical
basis of $\R^n$ by $(\partial_{x_1}, \dots,\partial_{x_n})$. Even
though $x$ is a point, we will sometimes refer to $(x_1,\dots,x_n)$ as
the original coordinates.
A \emph{system of local coordinates}
$y=(y_1,\dots,y_n)$ at a
point $\aa \in \Omega$ is defined as a diffeomorphism $\varphi$
between an open neighborhood $N_\aa \subset \Omega$ of $\aa$ and an
open neighborhood
$N_{\varphi(\aa)} \subset \R^n$ of $\varphi(\aa)$, $
\varphi :  x \mapsto y=(y_1,\dots, y_n)$. If the diffeomorphism $\varphi$ is defined on $\Omega$, then $y=(y_1,\dots,y_n)$ is said
to be a \emph{system of global coordinates} on $\Omega$. A system of global coordinates is said to be
\emph{affine} (resp. \emph{linear}) if the corresponding diffeomorphism $\varphi$ is affine (resp. linear).
If $f$ is a function  defined on $N_\aa$, the function $f \circ
\varphi^{-1}$ defined on $N_{\varphi(\aa)}$ will be called \emph{$f$
  (expressed) in coordinates} $(y_1,\dots,y_n)$. If $X \in VF
(\Omega)$ is a vector field, the push-forward $\varphi_*X=d\varphi \circ X \circ
\varphi^{-1} \in VF(N_{\varphi(\aa)})$ will be called \emph{$X$
  (expressed) in coordinates} $(y_1,\dots,y_n)$.

For the sake of simplicity, we will in general not introduce the
notation $\varphi$ and,
with a slight abuse of the notation, replace it by $y$. Thus we write
$y(x)$ or $(y_1(x),\dots, y_n(x))$ instead of $\varphi (x)$. The
function $f \circ \varphi^{-1}$ will be denoted  by
$f(y)$, and the  vector field $\varphi_*X$ by $X(y)$
or $y_*X$. The values at a point $\bar y \in N_{\varphi(\aa)}$
will be denoted respectively by $f(y)_{|y=\bar y}$ and $X(y)_{|y=\bar
  y}$.

A special class of coordinates, called {\em privileged coordinates}
and defined below, turns out to be a useful tool to compute the order
of functions and vector fields, and to estimate the sub-Riemannian
distance $d$.

We will use $L^s(X)$ to denote the Lie sub-algebra of elements of
length (cf. Definitions \ref{de:length-bracket} and \ref{de:length-bracket2}) not greater than $s\in\mathbb{N}$. Take $x \in {\Omega}$ and
let $L^s(x)$ be the vector space generated by the values at $x$ of
elements belonging to $L^s(X)$. Since System~(\ref{CS}) verifies the
LARC at every point $x\in\Omega$, there exists a smallest integer
$r:=r(x)$ such that $\dim L^{r}(x)=n$. This integer is called the {\em
  degree of nonholonomy} at $x$.

\begin{de}[\emph{Growth vector}]
\label{gro-vec}
For $\aa\in\Omega$, let
  $n_s(\aa):=\dim L^s(\aa)$, $s=1,\dots,r$. The
  sequence $(n_1(\aa),\dots,n_r(\aa))$ is the {\em growth vector} of
  $X$ at $\aa$.
\end{de}%

\begin{de}[\emph{Regular and singular points}]\label{de:regular-point}
A point $\aa\in\Omega$ is said to be {\em regular} if the growth
vector remains constant in a neighborhood of $\aa$ and, otherwise,
$\aa$ is said to be {\em singular}. The nonholonomic System~(\ref{CS}) (or the $m$-tuple $X$) is said to be \emph{regular} if every point in $\Omega$ is regular.
\end{de} 

\noindent Note that regular points form an open and dense set in ${\Omega}$.

\begin{de}[\emph{Weight}]\label{de-weight}
For $\aa\in\Omega$ and $j=1,\ldots,n$, let $w_j:=w_j(\aa)$ be the integer
defined by setting $w_j:=s$ if $n_{s-1} < j \leq n_s$, with
$n_s:=n_s(\aa)$ and $n_0:=0$. The integers $w_j$, for $j=1,\dots,n$
are called the {\em weight at }$\aa$.
\end{de}%
\begin{rem}\label{adapt_frame}
The meaning of Definition~\ref{de-weight} can be understood in another
way. Choose first some vector fields $W_1,\dots,W_{n_1}$
in $L^1(X)$ such that $W_1(\aa),\dots,W_{n_1}(\aa)$ form a basis of
$L^1(\aa)$. Choose then other vectors fields $W_{n_1+1},\dots,
W_{n_2}$ in $L^2(X)$ such that $W_{1}(\aa),\dots, W_{n_2}(\aa)$ form a
basis of $L^2(\aa)$ and, for every positive integer $s$, choose
$W_{n_{s-1}+1},\dots,W_{n_s}$ in $L^s(X)$ such that
$W_1(\aa),\dots,W_{n_s}(\aa)$ form a basis of $L^s(\aa)$. We obtain in
this way a sequence of vector fields $W_1,\dots,W_n$ such that
\begin{equation}\label{adapted-frame}
\left\{\begin{array}{l}
W_1(\aa),\dots,W_n(\aa) \textrm{ is a basis of } \R^n,\\
W_i\in L^{w_i}(X), i=1,\dots,n.
\end{array}\right.
\end{equation}
A sequence of vector fields verifying Eq. (\ref{adapted-frame}) is
called an {\em adapted frame at} $\aa$. The word ``adapted" means
``adapted to the flag $L^1(\aa)\subset L^2(\aa)\subset\cdots\subset
L^r(\aa)=\R^n$'', since the values at $\aa$ of an adapted
frame contain a basis $W_1(\aa),\dots,W_{n_s}(\aa)$ of every subspace
$L^s(\aa)$ of the flag. The values of $W_1,\dots,W_n$ at a point $b$
close to $\aa$ also form a basis of $\R^n$. However, if $\aa$
is singular, this basis may be not adapted to the flag
$L^1(\xb)\subset L^2(\xb)\subset\cdots\subset
L^{r(\xb)}(\xb)=\R^n$.
\end{rem} 

\begin{de}[\emph{Privileged coordinates at $\aa$}]A {\em system of privileged coordinates} at $\aa\in\Omega$ is a system of local coordinates $(z_1,\ldots,z_n)$
centered at $\aa$ (the image of $\aa$ is $0$) such that ord$_{\aa}(z_j(x))=w_j$, for
$j=1,\dots,n$.
\end{de}%
\begin{rem}\label{rem-weight-order}
For every system of local coordinates $(y_1,\dots,y_n)$ centered at $\aa$, we have, up to a re-ordering, ord$_{\aa}(y_j)\leq w_j$ or, without re-ordering, $\sum_{j=1}^n\textrm{ord}_{\aa}(y_j)\leq\sum_{j=1}^n w_j$.
\end{rem}

The order at $\aa\in\Omega$ of functions and vector fields expressed in a system of privileged
coordinates $(z_1,\dots,z_n)$ centered at $\aa$ can be evaluated algebraically as follows:

\begin{itemize}

\item the order of the monomial $z_1^{\alpha_1} \ldots z_n^{\alpha_n}$
is equal to its {\em weighted degree} $w(\alpha):= w_1\alpha_1+\dots+w_n\alpha_n;$

\item the order of a function $f(z)$ at $z=0$ is
the least weighted degree of the monomials occurring in the
Taylor expansion of $f(z)$ at 0;

\item the order of the monomial vector field $z_1^{\alpha_1}\dots
z_n^{\alpha_n} \partial_{z_j}$ is equal to its {\em weighted degree}
 $w(\alpha)-w_j,$ where one assigns the weight $-w_j$ to $\partial_{z_j}$ at $0$;

\item the order of a vector field $W(z) = \sum_{j=1}^n W_j(z)
\partial_{z_j}$ at $z=0$ is the least weighted degree of the monomials
occurring in the Taylor expansion of $W$ at 0.
\end{itemize}

\begin{de}[\emph{Continuously varying system of privileged coordinates}] A {\em continuously varying system of
privileged coordinates on $\Omega$} is a mapping $\Phi$ taking values
in $\R^n$, defined and continuous on a neighborhood of the set
$\{(x,x), x \in\Omega \} \subset \Omega \times \Omega$, and so that
the partial mapping $z:=\Phi
(\aa, \cdot)$ is a system of privileged coordinates at $\aa$. In
this case, there exists a continuous function $\bar{\rho} : \Omega
\rightarrow (0,+\infty)$ such that the coordinates $\Phi (\aa,
\cdot)$ are defined on $B(\aa,\bar{\rho}(\aa))$. We call $\bar{\rho}$ an
{\em injectivity radius function} of $\Phi$.
\end{de}%

\begin{de}[\emph{Pseudo-norm}]\label{de:pseudonorm} Let $\aa \in \Omega$ and $w_1,\dots,w_n$ the weights at $\aa$. The application from $\R^n$ to $\R$ defined by $\|z\|_{\aa}:=|z_1|^{1/w_1} + \dots + |z_n|^{1/w_n}, \ z=(z_1,\dots,z_n) \in \R^n,$
is called  {\em the pseudo-norm} at $\aa$.
\end{de}%

\subsubsection{Distance and error estimates}

Privileged coordinates provide estimates of the sub-Riemannian
distance $d$, according to the following result.

\begin{theo}[{Ball-Box Theorem \cite{Bellaiche1}}]
\label{le:bbox}
Consider $(X_1,\dots,X_m )\in V\!F (\Omega)^m$, a point $\aa \in \Omega$, and a
system of privileged coordinates $z$ at $\aa$. There exist positive
constants $C_d(\aa)$ and $\eps_d (\aa)$ such that, for every $x\in\Omega$ with
$d(\aa,x)<\eps_d (\aa)$, one has
\begin{equation}
\frac{1}{C_d(\aa)} \, \|z (x) \|_{\aa} \leq d(\aa,x) \leq C_d(\aa) \,
\|z (x) \|_{\aa}.
\label{eq:ballbox}
\end{equation}

If $\Omega$ contains only regular points and if $\Phi$ is a
continuously varying system of privileged coordinates on $\Omega$,
then there exist continuous positive functions $C_d(\cdot)$ and
$\eps_d (\cdot)$ on $\Omega$ such that Eq. (\ref{eq:ballbox})
holds true with $z=\Phi(\aa,\cdot)$ at all $(x,\aa)$ satisfying
$d(x,\aa)<\eps_d (\aa)$.
\end{theo}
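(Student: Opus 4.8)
The plan is to establish the two inequalities in \eqref{eq:ballbox} separately: the lower bound on $d$ is the easy half, coming from a differential estimate that privileged coordinates satisfy along trajectories, while the upper bound is the substantial half, obtained by building an explicit admissible motion out of commutator flows of an adapted frame. \textbf{Lower bound} ($\|z(x)\|_{\aa}\le C_d(\aa)\,d(\aa,x)$). Each $X_i$ has nonholonomic order $\ge -1$ at $\aa$ — indeed a nonholonomic derivative of order $\le s-2$ of $X_if$ is a nonholonomic derivative of order $\le s-1$ of $f$ — so, since $z_j$ has order $w_j$, the function $X_iz_j$ has order $\ge w_j-1$; by the algebraic description of the order in privileged coordinates recalled in Section~\ref{recall-SRG} there are constants $C>0$ and $\rho>0$ with $|X_iz_j(z)|\le C\|z\|_{\aa}^{w_j-1}$ whenever $\|z\|_{\aa}\le\rho$. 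Take a trajectory $x(\cdot)$ with $x(0)=\aa$ parametrized so that $\|u(t)\|\equiv 1$, and set $g(t)=\|z(x(t))\|_{\aa}$. Integrating $\dot z_j=\sum_i u_iX_iz_j(z)$ gives $|z_j(x(t))|\le C\sqrt{m}\int_0^t g(s)^{w_j-1}\,ds$ as long as $g\le\rho$, and a standard bootstrap argument in this weighted setting yields $g(t)\le Kt$ for all $t$ below an explicit threshold $\eps>0$. Applying this to a trajectory joining $\aa$ to a point $x$ with $d(\aa,x)<\eps/2$, of length $\le 2d(\aa,x)$ and reparametrized by arclength, gives $\|z(x)\|_{\aa}\le 2K\,d(\aa,x)$.

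\textbf{Upper bound} ($d(\aa,x)\le C_d(\aa)\,\|z(x)\|_{\aa}$). Choose an adapted frame $Y_1,\dots,Y_n$ at $\aa$ as in Remark~\ref{adapt_frame}, so that $Y_j$ is an iterated bracket of the $X_i$ of length $w_j$ and $Y_1(\aa),\dots,Y_n(\aa)$ is a basis of $\R^n$. The classical identity expressing a Lie bracket as a commutator of flows provides, for each $j$, a concatenation $\theta_j(s)$ of $O(2^{w_j})$ pieces of $X_i$-flow of duration $|s|$ (hence of length $\le\lambda_j|s|$) with $\theta_j(s)=\exp\!\big(\mathrm{sgn}(s)|s|^{w_j}Y_j\big)$ up to a higher-order error, both signs of $s$ being reached by reversing the commutator pattern. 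Put $F(s_1,\dots,s_n):=\theta_n(s_n)\circ\cdots\circ\theta_1(s_1)(\aa)$ and reparametrize by $u_j=\mathrm{sgn}(s_j)|s_j|^{w_j}$, i.e. $\widehat F(u):=F\big(\dots,\mathrm{sgn}(u_j)|u_j|^{1/w_j},\dots\big)$. Using the order estimates on the $X_i$ together with the bracket-error bounds, one checks that in the coordinates $z$ the map $\widehat F$ is tangent to the identity for the weighted structure attached to the dilations $\delta_\lambda(z)=(\lambda^{w_1}z_1,\dots,\lambda^{w_n}z_n)$: $z_j(\widehat F(u))=u_j+o(\|u\|_{\aa})$ uniformly near $0$. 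A weighted fixed-point / topological-degree argument — the classical inverse function theorem being unavailable because $u\mapsto|u|^{1/w_j}$ is not differentiable at $0$ — then shows that $\widehat F$ covers a neighborhood of $\aa$ with $|u_j|\le 2|z_j(x)|$ whenever $\widehat F(u)=x$. The motion $F(s)$ joins $\aa$ to such an $x$ with length $\le\sum_j\lambda_j|s_j|=\sum_j\lambda_j|u_j|^{1/w_j}\le 2\big(\sum_j\lambda_j\big)\|z(x)\|_{\aa}$, which is the claim (normalizing the time to the fixed value $T$ changes nothing, arclength being reparametrization-invariant).

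\textbf{The regular case and the main difficulty.} When $\Omega$ has only regular points, the growth vector and the weights $w_j$ are locally constant, so an adapted frame $Y_1,\dots,Y_n$, the commutator motions $\theta_j$, the maps $\widehat F$ and all the error bounds can be chosen to depend continuously on $\aa$, while a continuously varying system of privileged coordinates $\Phi$ is available by hypothesis; running the two arguments above with these choices and extracting uniform constants on each compact subset from the continuity of the data produces continuous functions $C_d(\cdot)$ and $\eps_d(\cdot)$ for which \eqref{eq:ballbox} holds with $z=\Phi(\aa,\cdot)$. The main obstacle is exactly the surjectivity-with-estimate of $\widehat F$ in the upper bound: one must absorb into the weighted-homogeneous leading term the higher-order errors coming both from the Baker--Campbell--Hausdorff expansions of the commutator flows and from the non-nilpotent remainders of the $X_i$, and do so uniformly in $\aa$ in the regular case; this is where the homogeneity of the nilpotent approximation, a topological-degree (or iterative) argument, and the compactness of $\{\|z\|_{\aa}\le\rho\}$ are all used.
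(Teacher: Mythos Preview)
The paper does not actually prove this theorem: it is quoted from \cite{Bellaiche1} and used as a black box, so there is no ``paper's own proof'' to compare against. That said, your sketch is essentially the standard argument one finds in Bella\"\i che's original paper and in subsequent expositions (the differential inequality for the lower bound, the commutator-flow construction together with a weighted degree/inverse-function argument for the upper bound, and continuity of the data at regular points for the uniform version), so in spirit it matches the source the paper cites.

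One small inaccuracy: from $z_j(\widehat F(u))=u_j+o(\|u\|_{\aa}^{w_j})$ you do \emph{not} get the componentwise bound $|u_j|\le 2|z_j(x)|$ in general; what the weighted inverse argument actually yields is the pseudo-norm bound $\|u\|_{\aa}\le C\,\|z(x)\|_{\aa}$, and that is already enough for your length estimate $\sum_j\lambda_j|u_j|^{1/w_j}\le C'\|z(x)\|_{\aa}$. This does not affect the overall validity of the sketch.
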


\begin{coro}\label{theo:unif_contr}
Let $K$ be a compact subset of $\Omega$. Assume that $K$ only contains regular points and there exists a
continuously varying system of privileged coordinates $\Phi$ on $K$. Then, there exist positive constants $C_K$ and $\eps_K$ such
that, for every pair $(\aa,x)\in K\times K$ verifying $d(\aa,x) <
\eps_K$, one has
\begin{equation}\label{uniform-estimation}
\frac{1}{C_K} \Vert \Phi(\aa,x)\Vert_{\aa} \leq d(\aa,x) \leq C_K
\Vert \Phi(\aa,x)\Vert_{\aa}.
\end{equation}
\end{coro}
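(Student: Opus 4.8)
The plan is to deduce the statement from the second (``continuously varying'') assertion of the Ball-Box Theorem (Theorem~\ref{le:bbox}) by a routine compactness argument; the only real preparation needed is to replace $K$ by a suitable open set of regular points on which $\Phi$ is defined. First I would note that the set of regular points of $\Omega$ is open: if the growth vector is constant on an open neighborhood of a point, then every point of that neighborhood is regular. Since $K$ is compact and consists only of regular points, and since $\Phi$ (being a continuously varying system of privileged coordinates on $K$) is defined and continuous on a neighborhood of $\{(x,x):x\in K\}$, one can choose an open set $\Omega'$ with $K\subset\Omega'\subset\Omega$ such that $\Omega'$ contains only regular points and such that $\Phi$ restricts to a continuously varying system of privileged coordinates on $\Omega'$.

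Next I would apply the second part of Theorem~\ref{le:bbox} with $\Omega'$ in place of $\Omega$: this yields continuous positive functions $C_d(\cdot)$ and $\eps_d(\cdot)$ on $\Omega'$ such that, with $z=\Phi(\aa,\cdot)$, the inequality~(\ref{eq:ballbox}) holds at every pair $(x,\aa)$ with $d(x,\aa)<\eps_d(\aa)$. I would then set
\[
C_K:=\max_{\aa\in K}C_d(\aa),\qquad \eps_K:=\min_{\aa\in K}\eps_d(\aa).
\]
Both are well defined, finite, and strictly positive, because $C_d$ and $\eps_d$ are continuous and positive on $\Omega'\supset K$ and $K$ is compact.

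Finally, for any pair $(\aa,x)\in K\times K$ with $d(\aa,x)<\eps_K$, the symmetry of $d$ gives $d(x,\aa)=d(\aa,x)<\eps_K\le\eps_d(\aa)$, so~(\ref{eq:ballbox}) applies at $(x,\aa)$, and since $C_d(\aa)\le C_K$ one obtains
\[
\frac{1}{C_K}\,\|\Phi(\aa,x)\|_{\aa}\;\le\;\frac{1}{C_d(\aa)}\,\|\Phi(\aa,x)\|_{\aa}\;\le\; d(\aa,x)\;\le\; C_d(\aa)\,\|\Phi(\aa,x)\|_{\aa}\;\le\; C_K\,\|\Phi(\aa,x)\|_{\aa},
\]
which is exactly~(\ref{uniform-estimation}).

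The only point requiring genuine care — hence what I would regard as the \emph{main obstacle}, although it is a minor one — is the first step: verifying that one may pass to an open neighborhood of $K$ made up of regular points and contained in the domain of $\Phi$, so that the \emph{continuously varying} version of the Ball-Box Theorem (and not merely its pointwise version) is available. Once this reduction is secured, the corollary is an immediate consequence of the continuity of $C_d$ and $\eps_d$ and the compactness of $K$.
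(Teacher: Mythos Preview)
Your proposal is correct and is precisely the intended argument: the paper states the result as a direct corollary of the Ball-Box Theorem without giving a separate proof, and the standard compactness argument you supply (pass to an open regular neighborhood $\Omega'$ of $K$ where $\Phi$ is defined, apply the continuously varying version of Theorem~\ref{le:bbox}, then take $C_K=\max_K C_d$ and $\eps_K=\min_K \eps_d$) is exactly what is implicit. Your identification of the only subtle point---ensuring an open set of regular points containing $K$ on which $\Phi$ is still a continuously varying system of privileged coordinates---is apt and correctly handled.
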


Privileged coordinates also allow one to measure the error obtained
when $X$ is replaced by an approximation $\widehat{X}$.

\begin{prop}[{\cite[Prop.~7.29]{Bellaiche1}}]
\label{le:error}
Consider a point $\aa \in \Omega$, a system of privileged
coordinates $z$ at $\aa$, and an approximation $\hX$ of
$X$ at $\aa$. Then, there exist positive constants $C_e(\aa)$ and
$\eps_e (\aa)$ such that, for every $x \in \Omega$ with
$d(\aa,x)<\eps_e (\aa)$ and every integrable input function
$u(\cdot)$ with $\ell (u) <\eps_e (\aa)$, one has
\begin{equation}
\| z (x(T,x,u)) - z (\hat x (T,x,u)) \|_{\aa} \leq C_e(\aa) \max \big(
\|z(x)\|_{\aa}, \ell (u) \big) \ \ell (u)^{1/r},
\label{eq:error}
\end{equation}
where $r$ is the degree of nonholonomy at $\aa$, $x(\cdot,x,u)$ and $\hat
x (\cdot,x,u)$ are respectively the trajectories of $\dot{x} = \sum_{i=1}^m u_iX_i(x),$
and $\dot{x} = \sum_{i=1}^m u_i\hX_i(x)$.
\end{prop}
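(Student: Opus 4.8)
The plan is to work in the privileged coordinates $z$ at $\aa$ and use the anisotropic dilations $\delta_\eps:z\mapsto(\eps^{w_1}z_1,\dots,\eps^{w_n}z_n)$ — with $w_1\le\dots\le w_n=r$ the weights at $\aa$ — to reduce the statement to a Gronwall comparison between a perturbed and an unperturbed homogeneous system. First I would write, in these coordinates, $X_i=Y_i+\rho_i$ and $\hX_i=Y_i+\hat\rho_i$, where $Y_i$ is the weighted-homogeneous component of weighted degree $-1$; it is the same for $X_i$ and $\hX_i$ precisely because $X_i-\hX_i$ has order $\ge0$ at $\aa$, and $\rho_i,\hat\rho_i$ are then vector fields of order $\ge0$ at $\aa$. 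I would next record the classical facts that $Y_i$ has the weighted-triangular form $\sum_j p_{ij}(z)\partial_{z_j}$, with $p_{ij}$ homogeneous of weighted degree $w_j-1$ depending only on the coordinates of weight $<w_j$, and that consequently (integrating coordinate by coordinate, from the lowest weight up) the trajectories of $\dot z=\sum_iv_iY_i(z)$ issued from $\{\|z\|_\aa\le1\}$ with an input $v$ satisfying $\ell(v)\le1$ all stay in a fixed compact set $Q_\aa$.

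Then, given $x$ with $d(\aa,x)<\eps_e(\aa)$ and an input $u$ with $\ell(u)<\eps_e(\aa)$, I would set $\eta:=\max(\|z(x)\|_\aa,\ell(u))$ and $v:=u/\eta$ (so $\ell(v)=\ell(u)/\eta\le1$), and introduce the rescaled curves $\zeta(t):=\delta_{1/\eta}(z(x(t,x,u)))$ and $\hat\zeta(t):=\delta_{1/\eta}(z(\hat x(t,x,u)))$, which both start at $\zeta^0:=\delta_{1/\eta}(z(x))$, a point of pseudo-norm $\le1$. Using $(\delta_{1/\eta})_*Y_i=\eta^{-1}Y_i$ and the fact that $(\delta_{1/\eta})_*$ sends a field of order $\ge0$ to one bounded on $Q_\aa$ uniformly for all small $\eta>0$ (its monomials pick up factors $\eta^k$ with $k\ge0$; the Taylor remainder is controlled by Proposition~\ref{ndf-order} together with Theorem~\ref{le:bbox}), these curves satisfy $\dot\zeta=\sum_iv_i(Y_i(\zeta)+\eta P_i^\eta(\zeta))$ and $\dot{\hat\zeta}=\sum_iv_i(Y_i(\hat\zeta)+\eta\widehat P_i^\eta(\hat\zeta))$ with $\sup_{Q_\aa}|P_i^\eta|,\sup_{Q_\aa}|\widehat P_i^\eta|\le C$. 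Choosing $\eps_e(\aa)$ small enough that $d(\aa,x)<\eps_e(\aa)$ forces $\|z(x)\|_\aa\le1$ (via Theorem~\ref{le:bbox}) and $\eta$ small, a continuation argument based on the Gronwall estimate below confines $\zeta$ and $\hat\zeta$ to $Q_\aa$ throughout $[0,T]$.

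Next I would subtract the two equations: with $L$ a Lipschitz constant for the $Y_i$ on $Q_\aa$, from $|\zeta(t)-\hat\zeta(t)|\le2C\eta\int_0^t|v(s)|\,ds+L\int_0^t|v(s)|\,|\zeta(s)-\hat\zeta(s)|\,ds$ Gronwall's inequality gives $|\zeta(T)-\hat\zeta(T)|\le2C\eta\,\ell(v)\,e^{L\ell(v)}\le C_1\ell(u)$, the last step using $\ell(v)\le1$ and the key identity $\eta\,\ell(v)=\ell(u)$. After shrinking $\eps_e(\aa)$ once more so that $C_1\ell(u)\le1$, the elementary bound $\|c\|_\aa\le n\max_j|c_j|^{1/w_j}\le n|c|^{1/r}$ for $|c|\le1$ (valid since $w_j\le r$) yields $\|\zeta(T)-\hat\zeta(T)\|_\aa\le n(C_1\ell(u))^{1/r}$. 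Finally, since $\delta_\eta$ commutes with coordinatewise subtraction and scales the pseudo-norm by $\eta$,
\begin{equation*}
\|z(x(T,x,u))-z(\hat x(T,x,u))\|_\aa=\eta\,\|\zeta(T)-\hat\zeta(T)\|_\aa\le nC_1^{1/r}\,\eta\,\ell(u)^{1/r},
\end{equation*}
which is the claimed estimate with $C_e(\aa):=nC_1^{1/r}$ and $\eta=\max(\|z(x)\|_\aa,\ell(u))$.

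I expect the main obstacle to be the a priori confinement of the two rescaled trajectories to a fixed compact set: a crude Gronwall estimate on either trajectory alone is useless, since the rescaled drift $\eta^{-1}Y_i$ blows up as $\eta\to0$ and $Y_i$ is a polynomial of possibly high degree, so one genuinely has to exploit the weighted-triangular structure of privileged coordinates and integrate weight-by-weight. The remaining subtlety is purely bookkeeping — tracking how every power of $\eta$ produced by $\delta_{1/\eta}$ is matched by the rescaling $u=\eta v$ of the input — since it is exactly the combination $\eta\,\ell(v)=\ell(u)$ together with $\|\delta_\eta c\|_\aa=\eta\|c\|_\aa$ that sharpens the naive $O(\eta^{1+1/r})$ estimate into the claimed $O(\eta\,\ell(u)^{1/r})$.
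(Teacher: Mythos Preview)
The paper does not actually prove this proposition: it is quoted verbatim from Bella\"iche~\cite[Prop.~7.29]{Bellaiche1} and used as a black box (the only place it reappears is in the one-line derivation of Corollary~\ref{coro:unif_contr}). So there is no ``paper's own proof'' to compare against.

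That said, your argument is the right one and is essentially Bella\"iche's. The scheme --- split $X_i$ and $\hX_i$ into a common weighted-homogeneous part $Y_i$ of degree $-1$ plus remainders of order $\ge0$, rescale by the anisotropic dilation $\delta_{1/\eta}$ with $\eta=\max(\|z(x)\|_\aa,\ell(u))$ and simultaneously rescale the input, then apply Gronwall on a fixed compact --- is exactly how the estimate is obtained in~\cite{Bellaiche1}. Your bookkeeping is correct: the crucial identities are $(\delta_{1/\eta})_*Y_i=\eta^{-1}Y_i$, the linearity of $\delta_\eta$ (so it commutes with subtraction), $\|\delta_\eta c\|_\aa=\eta\|c\|_\aa$, and $\eta\,\ell(v)=\ell(u)$; together they turn the naive $O(\eta)$ Euclidean bound into the anisotropic $\eta\,\ell(u)^{1/r}$ bound. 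The one point worth writing out more carefully in a full proof is the confinement step you flag at the end: the clean way is to first bound the \emph{unperturbed} homogeneous trajectory weight-by-weight (this uses only $\ell(v)\le1$ and the triangular dependence of the $j$-th component of $Y_i$ on coordinates of weight $<w_j$), obtaining a compact $Q_0$, and then run the continuation/bootstrap argument with $Q_\aa$ a fixed enlargement of $Q_0$, shrinking $\eps_e(\aa)$ so that the $O(\eta)$ deviation never reaches $\partial Q_\aa$.
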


\subsection{Approximate steering method} \label{def-ASM}

\begin{de}[\emph{Nonholonomic first order approximation on $\Omega$}]\label{approx}
A {\em nonholonomic first order approximation of $X$ on $\Omega$} is a mapping $\mathcal{A}$ which associates, with every $\aa\in\Omega$, a nonholonomic first order approximation of $X$ at $\aa$ defined on $B(\aa,\rho_{\aa})$, i.e., $\mathcal{A}(\aa):=\widehat{X}^{\aa}$ on $B(\aa,\rho_{\aa})$. The {\it approximation radius function} of $\mathcal{A}$ is the function $\rho:\Omega\rightarrow (0,\infty)$ which associates, with every $\aa$, its approximate radius $\rho_{\aa}$, i.e., $\rho(\aa):=\rho_{\aa}$.
\end{de}

\noindent In the sequel, {\em nonholonomic first-order approximations} will simply be called {\em approximations}. Useful properties of
approximations are \emph{continuity} and \emph{nilpotency}.

\begin{de}[\emph{Continuity and nilpotency of an approximation}]\label{conti-nilpo}
Let $\mathcal{A} : \aa \mapsto \widehat{X}^{\aa}$ be an approximation on $\Omega$.

\begin{itemize}

\item We say that $\mathcal{A}$ is {\em continuous} if
\begin{itemize}
\item[(i)] the mapping
$
(\aa,x) \mapsto \mathcal{A}(\aa)(x)
$
is well-defined and, for every $\aa\in\Omega$, is continuous on a neighborhood of $(\aa,\aa)\in\Omega \times \Omega$;
\item[(ii)]the approximation radius function $\rho$ of $A$
is continuous.
\end{itemize}

\item We say that $\mathcal{A}$ is {\em nilpotent
of step $s \in \mathbb{N}$} if, for every $\aa \in \Omega$, the Lie
  algebra generated
by $\widehat{X}^{\aa}$ is nilpotent of step $s$, i.e. every Lie bracket of length larger than $s$ is equal to zero. (For a definition of the length of a Lie bracket, see Definitions \ref{de:length-bracket} and \ref{de:length-bracket2}.)

\end{itemize}
\end{de}%

Consider a $m$-tuple of vector fields $X=\{X_1,\dots,X_m\}$ in $VF^m
(\Omega)$.

\begin{de}[\emph{Steering law of an approximation}]\label{de:steering-law}
 Let $\mathcal{A} :\aa \mapsto \hX^{\aa}$ be an approximation of $X$ on $\Omega$ and $\rho$
its approximation radius function. A {\em steering law}\/ of
$\mathcal{A}$ is a mapping
which, to every pair $(x,\aa) \in \Omega\times\Omega$ verifying $d(x,\aa) <
\rho(\aa)$, associates an integrable input function $\hat{u}:[0,t]\mapsto\mathbb{R}^m$,
henceforth called a {\em steering control}, such that
the trajectory $\hat x (\cdot, x, \hat u)$ of the {\em approximate control system}
\begin{equation}\label{ACS-de}
  \dot{x}=\sum_{i=1}^m u_i\hX_i^{\aa}(x),
\end{equation}
 is defined on $[0,T]$
and satisfies $\hat x (T, x, \hat u)=\aa$. In other words, $\hat
u(\cdot)$ steers (\ref{ACS-de}) from $x$ to $\aa$.
\end{de}%

A steering law of an approximation is intended to be used as an
approximate steering law for the original system. For that purpose, it
is important to have a continuity property of the steering control:
the closest are $x$ and $\aa$, the smaller is the length of
$\hat{u}$. We introduce the stronger notion of \emph{sub-optimality}
(which is a sort of Lipschitz continuity of the steering law).

\begin{de}[\emph{Sub-optimal steering law}]\label{de:sub-opt}
Let $\mathcal{A}$ be an approximation of $X$ on
$\Omega$ and, for every $\aa \in \Omega$,  let $\hd_{\aa}$ be the
sub-Riemannian distance associated to $\mathcal{A}( \aa)$. We say that a
steering law of $\mathcal{A}$ is {\em sub-optimal} if there exists a
constant $C_{\ell} >0$ and a continuous positive function
$\eps_{\ell}(\cdot)$ such that, for any $\aa,x \in \Omega$ with
$d(\aa,x) < \eps_{\ell}(\aa)$, the control $\hat u(\cdot)$ steering
(\ref{ACS-de}) from $x$ to $\aa$ satisfies: $
\ell (\hat u) \leq C_{\ell} \, \hd_{\aa} (x,\aa) = C_{\ell} \, \hd_{\aa}(
\hat x(0, x, \hat u), \hat
x(T, x, \hat u)).
$
\end{de}%
Due to the definition of the sub-Riemannian distance
$\hd_{\aa}$, sub-optimal steering laws always exist. Given an approximation $\mathcal{A}$ of $X$ and a steering law for
$\mathcal{A}$, we define a {\em local approximate steering} method for
$X$ as follows.

\begin{de}[\emph{Local approximate steering}]\label{de:ap}
The {\em local approximate steering}  (LAS for short)
method associated to $\mathcal{A}$ and its steering law is the mapping $\ap(\cdot,\cdot)$
which associates, with every pair $(x,\aa) \in \Omega\times\Omega$ verifying $d(x,\aa) <
\rho(\aa)$, the point $x(T,x,\hat{u})$, i.e.,
\[
\ap (x,\aa):= x(T, x,{\hu}),
\]
where $\hu (\cdot)$ is the steering control of
$\mathcal{A}(\aa)$ associated to $(x,\aa)$ and $\rho$ is
the approximation radius function of $\mathcal{A}$.
\end{de}%

\begin{de}[\emph{Local contractions and uniform local contractions}]
A LAS method is {\em locally contractive} if, for every $\aa \in \Omega$,
there exist $\varepsilon_{\aa}>0$ and $c_\aa <1$ such that one has:
$$
d(\aa,x) < \varepsilon_{\aa}\ \Longrightarrow \ d(\aa,\ap(x,\aa))
\leq c_\aa d(\aa,x).
$$
A LAS method is {\em uniformly locally contractive} on a compact set $K
\subset \Omega$ if it is locally contractive, and if $\varepsilon_{\aa}$ and
$c_\aa$ are independent of $\aa$, i.e., there exists
$\varepsilon_K>0$ and $c_K <1$ such that, for every pair
$(\aa,x)\in K\times K$, the following implication holds true:
$$
d(\aa,x)<\varepsilon_K\ \Longrightarrow\ d(\aa,\ap(x,\aa)) \leq
c_K d(\aa,x).
$$
\label{Def:LinComLAS}
\end{de}

\begin{rem}
We will show that if $\widehat{X}$ is an approximation of $X$ at
$\aa$, the corresponding $\ap$ function is locally contractive in a
neighborhood of $\aa$. By the Fixed Point Theorem, one gets local
convergence of Algorithm 1 (LAS).  However, in order to obtain a
globally convergent algorithm from LAS, one needs $\ap$ to be {\em
  uniformly} locally contractive. In other words, the mapping
$\mathcal{A}$ needs to be {\it continuous} in the sense of Definition
\ref{conti-nilpo}.
\end{rem}

As a direct consequence of Proposition \ref{le:error}, we obtain sufficient
conditions for a LAS method to be uniformly locally contractive.

\begin{coro}\label{coro:unif_contr}
Let $K$ be a compact subset of $\Omega$. Assume that:
\begin{itemize}
\item[$(i)$] all points in $K$ are regular;
\item[$(ii)$] there exists a
continuously varying system of privileged coordinates $\Phi$ on $K$;
\item[$(iii)$] there exists a
continuous approximation $\mathcal{A}$ of $X$ on $K$;
\item[$(iv)$]
$\mathcal{A}$ is provided with  a sub-optimal steering law.
\end{itemize}
Then, the LAS
method $\ap$ associated to  $\mathcal{A}$ and its steering law is
uniformly locally contractive. Moreover, up to
reducing the positive constant $\eps_K$ occurring in Corollary~\ref{theo:unif_contr}, one
has, for every pair $(\aa,x)\in K\times K$ verifying $d(\aa,x)<\eps_K$,
\begin{eqnarray}
d(\ap(x,\aa),\aa) &\leq& \frac{1}{2} d(x,\aa),
\label{eq:distcontr}\\[0.1cm]
\label{eq:pseudcontr}
\Vert z(\ap(x,\aa))\Vert_{\aa}&\leq& \frac{1}{2} \, \Vert z(x)\Vert_{\aa}.
\end{eqnarray}
\end{coro}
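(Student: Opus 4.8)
The plan is to combine the Ball-Box Theorem (Corollary~\ref{theo:unif_contr}), the error estimate (Proposition~\ref{le:error}), and the sub-optimality of the steering law into a single contraction inequality, uniformly in $\aa \in K$. The starting observation is that $\ap(x,\aa)$ differs from $\aa$ only because we applied the steering control $\hu$ of the \emph{approximate} system $\mathcal A(\aa)$ to the \emph{true} system: by construction $\hat x(T,x,\hu)=\aa$, so $\ap(x,\aa)=x(T,x,\hu)$ is exactly the error term in Proposition~\ref{le:error}. Writing $z:=\Phi(\aa,\cdot)$ for the continuously varying privileged coordinates at $\aa$, this gives
\[
\|z(\ap(x,\aa))\|_{\aa} = \|z(x(T,x,\hu)) - z(\hat x(T,x,\hu))\|_{\aa} \leq C_e(\aa)\max\big(\|z(x)\|_{\aa},\,\ell(\hu)\big)\,\ell(\hu)^{1/r},
\]
valid whenever $d(\aa,x)<\eps_e(\aa)$ and $\ell(\hu)<\eps_e(\aa)$.

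Next I would bound $\ell(\hu)$. Sub-optimality (Definition~\ref{de:sub-opt}) gives $\ell(\hu)\leq C_\ell\,\hd_{\aa}(x,\aa)$ for $d(\aa,x)<\eps_\ell(\aa)$. The point $x$ is close to $\aa$ in the true distance $d$, but the contraction estimate is stated for the approximate distance $\hd_{\aa}$; here one uses Remark~\ref{rem-nh-order} (the nonholonomic order at $\aa$ is the same for $X$ and for $\hX^{\aa}$) together with a Ball-Box estimate applied to $\mathcal A(\aa)$, so that $\hd_{\aa}(x,\aa)$ is comparable to $\|z(x)\|_{\aa}$, which in turn is comparable to $d(\aa,x)$ by Corollary~\ref{theo:unif_contr}. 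Thus $\ell(\hu)\leq C\,\|z(x)\|_{\aa}\leq C'\,d(\aa,x)$ for some constants. Plugging this into the error estimate, and using that $d(\aa,x)$ (hence $\|z(x)\|_{\aa}$ and $\ell(\hu)$) is small, the $\max$ is achieved by $\|z(x)\|_{\aa}$, so
\[
\|z(\ap(x,\aa))\|_{\aa} \leq C_e(\aa)\,\|z(x)\|_{\aa}\,\big(C'\,d(\aa,x)\big)^{1/r} \leq C''(\aa)\,d(\aa,x)^{1/r}\,\|z(x)\|_{\aa}.
\]

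The crucial step — and the main obstacle — is making all these constants \emph{uniform} over the compact set $K$. Each of $C_d,\eps_d,C_e,\eps_e,\eps_\ell$ is a priori only a pointwise (continuous) function of $\aa$; I would invoke precisely hypotheses $(i)$–$(iv)$: regularity of $K$ plus a continuously varying $\Phi$ makes the Ball-Box constants continuous on $K$ (second part of Theorem~\ref{le:bbox} and Corollary~\ref{theo:unif_contr}), continuity of the approximation $\mathcal A$ makes $C_e(\aa),\eps_e(\aa)$ continuous (one needs to check that the proof of Proposition~\ref{le:error} yields continuous constants when $\hX^{\aa}$ depends continuously on $\aa$), and the continuous function $\eps_\ell(\cdot)$ is part of the definition of sub-optimality. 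A continuous positive function on a compact set attains a positive minimum and a finite maximum, so we get uniform constants $C_K, \eps_K$. Then, since $d(\aa,x)^{1/r}\to 0$ as $d(\aa,x)\to 0$, I would shrink $\eps_K$ so that $C_K\,\eps_K^{1/r}\leq \tfrac12\,C_K^{-1}$ or directly so that the combined constant in front of $\|z(x)\|_{\aa}$ is $\leq \tfrac12$; this yields \eqref{eq:pseudcontr}. Finally, \eqref{eq:distcontr} follows from \eqref{eq:pseudcontr} by applying the two-sided Ball-Box inequality \eqref{uniform-estimation} at $\aa$ twice (once to pass from $d$ to $\|\cdot\|_{\aa}$, once to come back), absorbing the factor $C_K^2$ by a further shrinking of $\eps_K$ if a clean constant $1/2$ is wanted — or simply noting $C_K^2\cdot\tfrac12$ times a small power of $\eps_K$ can be made $\leq \tfrac12$. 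Local contractivity in the sense of Definition~\ref{Def:LinComLAS} is then immediate, with $\eps_K$ and $c_K=1/2$ independent of $\aa$.
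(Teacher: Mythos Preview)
Your proposal is correct and follows exactly the approach of the paper: extend the error estimate of Proposition~\ref{le:error} to continuous constants $C_e(\cdot),\eps_e(\cdot)$ under hypotheses $(i)$--$(iv)$, then combine with the Ball-Box estimate and sub-optimality, and finally use compactness to make all constants uniform. The paper's own proof in fact stops after the first step and declares the rest ``standard''; you have written out precisely that standard argument, including the one point worth being careful about --- that \eqref{eq:distcontr} does not follow from \eqref{eq:pseudcontr} alone via two applications of Ball-Box (which would only give a factor $C_K^2/2$), but rather both inequalities follow from the intermediate estimate $\|z(\ap(x,\aa))\|_\aa \leq C\,d(\aa,x)^{1/r}\,\|z(x)\|_\aa$ by separate shrinkings of $\eps_K$.
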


\begin{proof}[Proof of Corollary \ref{coro:unif_contr}]
Under the hypotheses $(i)-(iv)$, one immediately extends Proposition \ref{le:error} and obtains that there exist continuous
positive functions $C_e(\cdot)$ and $\eps_e (\cdot)$ such that
inequality~(\ref{eq:error}) holds true, with $z=\Phi(\aa,\cdot)$ and
$\hX=\mathcal{A}(\aa)$, for every pair $(x,\aa)\in\Omega\times\Omega$
with $d(x,\aa) < \eps_e (\aa)$ and
every integrable input function $u(\cdot)$ with $\ell (u) <
\eps_e (\aa)$. The remaining argument is standard and one conclude easily.

\end{proof}

\begin{rem}\label{rem-uniform1}
Since the growth vector and the weights do not remain constant in any
open neighborhood of a singular point, privileged coordinates $z$
cannot vary continuously in any open neighborhood of that singular
point. Therefore, around a singular point, the distance estimations provided in
Eqs. (\ref{uniform-estimation}) and (\ref{eq:pseudcontr}) and based on
privileged coordinates do not hold true uniformly. In particular, if
$({\aa}_n)$ is a sequence of regular points converging to a singular
point $\aa$ (this is possible since regular points are dense in
$\Omega$), the sequences $\varepsilon_d(\aa_n)$ and
$\varepsilon_e(\aa_n)$ tend to zero whereas $\varepsilon_d(\aa)$ and
$\varepsilon_e(\aa)$ are not equal to zero.
\end{rem}

\begin{rem}\label{rem-uniform2}
A similar discontinuity issue occurs of course for the approximate
system. Indeed, if $\aa$ is a singular point, the growth vector and
the weights of the associated privileged coordinates  at $\aa$ change
around $\aa$, implying a change of the truncation order in the Taylor
expansion of the vector fields. Therefore, the approximate vector
fields cannot vary continuously in any neighborhood of a singular
point.
\end{rem}



\section{Desingularization by Lifting}\label{Desing}

As it appears in Corollary~\ref{coro:unif_contr}, the absence of
singular points
is one of the key features in order to construct uniformly locally
contractive LAS method.  As a matter of fact, we will show in
Section~\ref{GASA} how to construct a globally convergent motion planning
algorithm  for a regular nonholonomic system (i.e., when all points in
$\Omega$ are regular).

However, in general, nonholonomic systems do have singular
points. For such systems, attempts have been made to construct
specific LAS methods (see~\cite{Vendittelli,Jean2}), but additional
conditions on the structure of the singularities are required. Our
approach here is different: we present in this section a desingularization procedure
of the system, in such a way to replace a MPP for a
non regular system by a MPP for a regular one.

The strategy consists in ``{\em {lifting}}" the vector fields $\{X_1,\dots,X_m\} \in VF^m(\Omega)$ defining the control system to some extended domain $\widetilde{\Omega}:=\Omega\times\R^{\tilde{n}},$ with
$\tn\in\mathbb{N}$ to be defined later. The lifted vector fields  $\{\xi_1,\dots,\xi_m\}\in VF^m(\widetilde{\Omega})$ are constructed so that:
\begin{itemize}
 \item[(i)] for $i=1,\dots,m$, $\xi_i$ has the following form,
$$
\xi_i(x,y)=X_i(x)+\sum_{j=1}^{\tn}b_{ij}(x,y)\partial_{y_j}, \qquad (x,y) \in \Omega\times\R^{\tilde{n}},
$$
where the functions $b_{ij}$, $j=1,\dots,\tilde{n}$, are smooth;
\item[(ii)] the Lie algebra generated by $\{\xi_1,\dots,\xi_m\}$ is free up to step $r$ (see Def.~\ref{free-s} below).
\end{itemize}

Point (ii) guarantees that the nonholonomic system defined by $\{\xi_1,\dots,\xi_m\}$ is regular, since its growth vector is constant
on $\widetilde{\Omega}$. Point (i) guarantees that one obtains
$X_1,\dots, X_m$ by {\em projecting} $\xi_1,\dots,\xi_m$ on
$\R^n$. Indeed, let $\pi$ be the canonical projector from
$\widetilde{\Omega}$ to $\Omega$ defined by $\pi(\widetilde{x})=x$, where
$\widetilde{x}=(x,y)\in\widetilde{\Omega}$. Then, denoting
$d\pi_{\widetilde{x}}$ the differential of $\pi$ at $\widetilde{x}$,
one has
$$
d\pi_{\widetilde{x}} (\xi_i(\widetilde{x}))=X_i(\pi(\widetilde{x})).
$$
As a consequence, the
projection by $\pi$ of a trajectory
$\widetilde{x}(\cdot,\widetilde{x}_0,u)$ of the control system
\begin{equation}
\label{ECSS}
\dot{\tx}=\sum_{i=1}^mu_i\xi_i(\tx), \tx\in\widetilde{\Omega},
\end{equation}
is a trajectory  of (\ref{CS}) associated to the \emph{same} input, i.e.,
$
\pi \big(\widetilde{x}(\cdot,\widetilde{x}_0,u)\big) =
x(\cdot,\pi(\widetilde{x}_0),u).
$

Therefore, any control $u$ steering System~(\ref{ECSS}) from a point
$\widetilde{x}_0:=(x_0,0)$ to a point $\widetilde{x}_1:=(x_1,0)$ also
steers System~(\ref{CS}) from $x_0$ to $x_1$. It then suffices to
solve the MPP for the regular System~(\ref{ECSS}).

Note that distinguished desingularization procedures already exist, cf. \cite{rot76,Bellaiche1,Jea:01b}.
However, an important property of the desingularization procedure presented here is that all
the changes of
coordinates and intermediate constructions involved in it
are explicit and purely algebraic. Note also that, during the lifting
process, we obtain, as a byproduct, a nonholonomic first
order approximation of $\{\xi_1,\dots,\xi_m\}$  in a ``canonical" form, which
can be exactly controlled by sinusoids (see Section
\ref{control-nilpotent}). 

We start this section by presenting some general facts on free Lie
algebras, namely the {\em P. Hall basis} in Section \ref{Sec-Hall},
and the {\em canonical form} of a nilpotent Lie algebra of step $r$ in
Section \ref{Sec-cano}. We then give one {\em desingularization
  procedure} in Section \ref{Sec-lifting}. The proofs of the
results stated in Section \ref{Sec-lifting} will be gathered in
Section \ref{Sec-proof}.

\subsection{P. Hall basis on a free Lie algebra and evaluation map}
\label{Sec-Hall}
In this section, we present some general facts on free Lie
algebras. The reader is referred to \cite{Bourbaki} for more
details. Consider $\ci:=\{1,\dots,m\}$, and the free Lie algebra
$\cl(\ci)$ generated by the elements of $\ci$. Recall that $\cl(\ci)$
is the $\mathbb{R}$-vector space generated by the elements of $\ci$
and their formal brackets $[~,~]$, together with the relations of
skew-symmetry and the Jacobi identity enforced (see \cite{Bourbaki}
for more details). We note that, by construction, for every $I\in \cl(\ci)$, there exists $(I_1,I_2)\in \cl(\ci)\times \cl(\ci)$ such that $I=[I_1,I_2]$.

\begin{de}[Length of the elements of $\cl(\ci)$] \label{de:length-bracket}
The {\em length} of an element $I$ of a free Lie
algebra $\cl(\ci)$, denoted by $\vert I\vert$, is defined inductively by
\begin{eqnarray}
\vert I\vert&:=&1, \textrm{ for }
I=1,\dots,m;\label{length1}\\ \vert I\vert&:=&\vert I_1\vert+\vert I_2\vert,
\textrm{ for } I=[I_1,I_2], \textrm{ with } I_1, I_2\in L(X).\label{length2}
\end{eqnarray}
\end{de}
We use $\cl^s(\ci)$
to denote the subspace generated by elements of $\cl(\ci)$ of length
not greater than $s$. Let $\tn_s$ be the dimension of
$\cl^s(\ci)$.

The {\em P. Hall basis} of $\cl(\ci)$ is a totally ordered subset of $\cl(\ci)$ defined as follows.
\begin{de}[\emph{P. Hall basis}]A subset $\ch=\{I_j\}_{j\in\mathbb{N}}$ of $\cl(\ci)$ is the {\em P. Hall basis} of $\cl(\ci)$ if (H1), (H2), (H3), and (H4) are verified.
\begin{itemize}
\item[(H$1$)]If $\vert I_i\vert<\vert I_j\vert$, then $I_i\prec I_j$;
\item[(H$2$)]$\{1,\dots,m\}\subset\ch$, and we impose that $1\prec 2\prec\dots\prec m$;
\item[(H$3$)]every element of length $2$ in $\ch$ is in the form $[I_i,I_j]$ with $(I_i, I_j)\in\ci\times\ci$ and $I_i\prec I_j$;
\item[(H$4$)]an element $I_k\in\cl(\ci)$ of length greater than $3$ belongs to $\ch$ if $I_k=[I_{k_1},[I_{k_2},I_{k_3}]]$ with $I_{k_1},I_{k_2},I_{k_3}, \textrm{ and }[I_{k_2},I_{k_3}]$ belonging to $\ch$, $I_{k_2}\prec I_{k_3}$, $I_{k_2}\prec I_{k_1}$ or $I_{k_2}=I_{k_1}$, and $I_{k_1}\prec[I_{k_2},I_{k_3}]$.
\end{itemize}
\end{de}

\noindent The elements of $\ch$ form a basis of $\cl(\ci)$, and
$``\prec"$ defines a strict and total order over the set $\ch$. In the
sequel, we use $I_k$ to denote the $k^{\textrm{th}}$ element of $\ch$
with respect to the order $``\prec"$. Let $\ch^s$ be the subset of
$\ch$ of all the elements of length not greater than $s$. The elements
of $\ch^s$ form a basis of $\cl^s(\ci)$ and
$\textrm{Card}(\ch^s)=\tn_s$. The set $\cg^s:=\ch^s\setminus\ch^{s-1}$ contains
the elements in $\ch$ of length equal to $s$. Its cardinal will be
denoted by $\tk_s=\textrm{Card}(\cg^s)$.  

By (H1)$-$(H4), every element $I_j\in\mathcal{H}$ can be expanded in a unique way as
\begin{equation}\label{Hall_expansion}
I_j=[I_{k_1},[I_{k_2},\cdots,[I_{k_{i}},I_{k}]\cdots]],
\end{equation}
with $k_1\geq \cdots\geq k_{i}$, $k_{i}<k$, and $k\in\{1,\dots,\tilde{n}_1\}$. In that case, the element $I_j$ is said to be a {\em direct descendent} of $I_k$, and we write $\phi(j):=k$. For $I_j\in\ch^r$, the expansion $(\ref{Hall_expansion})$ also associates with $I_j\in\ch$ a sequence $\alpha_j=(\alpha_j^1,\dots,\alpha_j^{\tn_r})$ in $\mathbb{Z}^{\tn_r}$ defined by $$\alpha_j^{\ell}:=\textrm{ Card }\{s\in\{1,\dots,i\}, k_s=\ell\}.$$ By construction, one has $\alpha_j^{\ell}=0$ for $\ell\geq j$, and $\alpha_j=(0,\dots,0)$  for $1\leq j\leq\tilde{n}_1$.  

Consider now a family of $m$ vector fields $X=\{X_1,\dots, X_m\}$ and
the Lie algebra $L(X)$ they generate.
The P. Hall basis $\ch$ induces, via the {\em evaluation map}, a
family of vector fields spanning  $L(X)$ as a linear space.

\begin{de}[\emph{Evaluation map}]\label{de:eval-map}
The {\em evaluation map} $E_X$ defined on $\cl(\ci)$, with values in $L(X)$, assigns to every $I\in\cl(\ci)$ the vector field $X_I=E_X(I)$ obtained by plugging in $X_i$, $i=1,\dots,m$, for the corresponding letter $i$.
\end{de}

\begin{de}[\emph{Length of the elements of $L(X)$}]\label{de:length-bracket2}
With the notations of Definition \ref{de:eval-map}, if $X_I=E_X(I)$, the length of $X_I$, denoted by $\Delta(X_I)$, is set to be equal to $\vert I\vert$, .
\end{de}

\begin{de}[\emph{P. Hall family}]
The {\em P. Hall family} $H_X$ associated with the vector fields $X=\{X_1,\dots, X_m\}$ is defined by
$
H_X:=\{E_X(I), I\in\mathcal{H}\},
$
where $E_X$ is the evaluation map and $\mathcal{H}$ is the P. Hall
basis of the free Lie algebra $\cl(\ci)$. The family $H_X$ inherits
the ordering and the numbering of the elements in $\ch$ induced by
{(H1)--(H4)}.
\end{de}

\noindent Note that $H_X$ is a spanning set of $L(X)$, but not always
a basis.

\begin{de}[\emph{Free up to step $s$}]\label{free-s}
Let $s$ be a positive integer such that $1\leq s\leq r$. A family of vector fields $\xi=\{\xi_1,\dots,\xi_m\}$ defined on a subset $\widetilde{\Omega}$ of $\R^{\tn_r}$ is said to be {\em free up to step $s$} if, for every $\tx\in\widetilde{\Omega}$, the growth vector $(n_1(\tx),\dots,n_s(\tx))$ is equal to $(\tn_1,\dots,\tn_s)$.
\end{de}
\begin{rem}
If $\xi$ defined on $\widetilde{\Omega}\subset\R^{\tn_r}$ is free up to step $r$, then every point of $\widetilde{\Omega}$ is {\em regular}.
\end{rem}

\begin{de}[\emph{Free weights}]\label{de:free-weight}
Let $\xi=\{\xi_1,\dots,\xi_m\}$ be free up to step $r$ on $\widetilde{\Omega}\subset\mathbb{R}^{\tn_r}$. The integers $\widetilde{w}_1,\dots,\widetilde{w}_{\tn_r}$, where $\widetilde{w}_j=s$ if $n_{s-1}(\tx)<j\leq n_s(\tx)$ for every $\tx\in\widetilde{\Omega}$, are called the {\em free weights} of step $r$.
\end{de}

\subsection{Canonical form}\label{Sec-cano}
We present in this section the construction of a canonical form
for nilpotent systems proposed by Grayson and Grossman
in~\cite{Grayson1} and~\cite{Grayson2}. Similar results were also
obtained by Sussmann in~\cite{Sussmann1}.

The construction takes place in $\R^{\tn_r}$, where $r$ is a positive
integer and $\tn_r$ the dimension of $\cl^r(\ci)$. We denote by
$v=(v_1,\dots,v_{\tn_r})$  the points in $\R^{\tn_r}$, and by
$(\partial_{v_1}, \dots,\partial_{v_{\tn_r}})$ the
canonical basis of $\R^{\tn_r}$.  For $j=1,\dots,\tn_r$, we assign to
the coordinate function $v_j$ the weight $\widetilde{w}_j$ and to the vector
$\partial_{v_j}$ the weight $-\widetilde{w}_j$.  The {\em weighted degree}
of a monomial  $v_1^{\alpha_1} \cdots v_{\tn_r}^{\alpha_{\tn_r}}$
is then defined as $
\widetilde{w}(\alpha):=
\widetilde{w}_1\alpha_1+\cdots+\widetilde{w}_{\tn_r}\alpha_{\tn_r},
\qquad \hbox{where }  \alpha=(\alpha_1,\dots,\alpha_{\tn_r}),
$
and the weighted degree of a monomial vector field
$v_1^{\alpha_1}\dots v_{\tn_r}^{\alpha_{\tn_r}} \partial_{v_j}$ is
defined as  $\widetilde{w}(\alpha)-\widetilde{w}_j$.

For every $I_j\in\ch^r$, let $\alpha_j$ be the sequence associated
with $I_j$ (see Section \ref{Sec-Hall}). Define the monomial $P_{j}(v)$ associated with $I_j$ by
\begin{equation}
P_{j}(v):=\frac{v^{\alpha_j}}{\alpha_j!},
\end{equation}
where $\displaystyle
v^{\alpha_j}:=\prod_{\ell}v_{\ell}^{\alpha_j^{\ell}}$, and
$\displaystyle\alpha_j ! :=\prod_{\ell}\alpha_j^{\ell}!$.
Note that  $P_{j}$  satisfies the following inductive formulas.
\begin{equation}
\label{induction-P}
\begin{array}{llll}
P_{j}(v)&=&1 &\textrm{ if }I_j\in\ch^1;\\
P_{j}(v)&=&\displaystyle\frac{v_{j_1}}{\alpha_{j_2}^{j_1}+1}P_{j_2}(v)&\textrm{ if }I_j=[I_{j_1},I_{j_2}].
\end{array}
\end{equation}

\begin{theo}[\cite{Grayson1,Grayson2}]
\label{th-grossman}
We define the vector fields $D_1,\dots, D_m$
on $\R^{\tn_r}$ as follows:
\begin{eqnarray*}
D_1&=&{\partial_{v_1}},\\
D_2&=&{\partial_{v_2}}+\sum_{\substack{2\leq\vert I_j\vert\leq
    r\\ \phi(j)=2}}P_{j}(v){\partial_{v_j}},\\
&\vdots&\\
D_m&=&{\partial_{v_m}}+\sum_{\substack{2\leq\vert I_j\vert\leq
    r\\ \phi(j)=m}}P_{j}(v){\partial_{v_j}}.\\
\end{eqnarray*}
Then, the Lie algebra $L(D)$ generated by $(D_1,\dots,D_m)$ is free up
to
step $r$, and one has
$$
D_{I_{j}}(0)={\partial_{v_j}},\ \textrm{ for }I_j\in\ch^r,
$$
where $D_{I_j}:=E_D(I_j)$ is defined through the
evaluation map $E_D$  with values in $L(D)$.
\end{theo}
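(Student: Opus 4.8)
The plan is to establish first the pointwise identity $D_{I_j}(0)=\partial_{v_j}$, and then to deduce from it, together with a weighted-homogeneity property of the $D_i$, that $D=\{D_1,\dots,D_m\}$ is free up to step $r$. The starting observation is that each $D_i$ is homogeneous of weighted degree $-1$ with respect to the free weights $\widetilde w_1,\dots,\widetilde w_{\tn_r}$ (Definition~\ref{de:free-weight}): the term $\partial_{v_i}$ has weighted degree $-\widetilde w_i=-1$ since $|I_i|=1$, and for each term $P_j(v)\partial_{v_j}$ occurring in $D_i$ one reads off from the P. Hall expansion $(\ref{Hall_expansion})$ of $I_j$ that $\widetilde w(\alpha_j)=|I_j|-1$ (write $I_j=[I_{k_1},[\dots,[I_{k_i},I_{\phi(j)}]\dots]]$ and use $|I_{\phi(j)}|=1$ together with $\widetilde w_\ell=|I_\ell|$), so that $P_j(v)\partial_{v_j}$ has weighted degree $(|I_j|-1)-\widetilde w_j=-1$. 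Since the Lie bracket adds weighted degrees, every iterated bracket $D_{I_j}$ is homogeneous of weighted degree $-|I_j|=-\widetilde w_j$; in particular $D_{I_j}(0)$ is automatically a linear combination of the $\partial_{v_\ell}$ with $\widetilde w_\ell=\widetilde w_j$, so proving $D_{I_j}(0)=\partial_{v_j}$ amounts to computing finitely many constants for each $j$.

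These constants I would pin down by induction on $|I_j|$. The case $|I_j|=1$ is immediate from the definition of $D_i$. For the inductive step, write $I_j=[I_{j_1},I_{j_2}]$ as in $(\ref{Hall_expansion})$, with $I_{j_1},I_{j_2}\in\ch$ of length $<|I_j|$, and set $R_a:=D_{I_{j_a}}-\partial_{v_{j_a}}$, so that $R_a(0)=0$ by the induction hypothesis; then
\[
D_{I_j}=[D_{I_{j_1}},D_{I_{j_2}}]=\partial_{v_{j_1}}(R_2)-\partial_{v_{j_2}}(R_1)+[R_1,R_2],
\]
where $\partial_{v_{j_1}}(R_2)$ denotes the field obtained by differentiating the coefficients of $R_2$. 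By the induction hypothesis and homogeneity, every nonzero coefficient of $R_a$ has strictly positive weighted degree, so $[R_1,R_2]$ contributes nothing to the coefficients of the $\partial_{v_\ell}$ with $\widetilde w_\ell\leq\widetilde w_j$ (a further differentiation would force negative weighted degree). One is thus reduced to proving that the constant vector field $\sum_{\widetilde w_\ell=\widetilde w_j}\bigl(\partial_{v_{j_1}}R_{j_2,\ell}-\partial_{v_{j_2}}R_{j_1,\ell}\bigr)\partial_{v_\ell}$ — writing $R_{a,\ell}$ for the $\partial_{v_\ell}$-coefficient of $D_{I_{j_a}}$ — equals $\partial_{v_j}$. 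The hard part will be exactly this: it requires strengthening the induction hypothesis so as to control the shape of the $R_{a,\ell}$ (for instance that $R_{a,\ell}$ involves only the variables $v_p$ with $p<\ell$, and that its component of weighted degree $\widetilde w_{j_1}$ is normalized according to the recursion $P_j=\frac{v_{j_1}}{\alpha_{j_2}^{j_1}+1}P_{j_2}$ of $(\ref{induction-P})$), and it uses the combinatorics of the P. Hall basis in an essential way. This is the bookkeeping carried out in \cite{Grayson1,Grayson2} (see also \cite{Sussmann1}); conceptually the identity expresses that the $D_i$ are right-invariant vector fields on the free nilpotent Lie group of step $r$ on $m$ generators, written in coordinates of the second kind attached to the P. Hall basis, the value at the identity of the right-invariant field of $I_j$ being tautologically $I_j$, i.e.\ $\partial_{v_j}$.

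Granting $D_{I_j}(0)=\partial_{v_j}$ for all $I_j\in\ch^r$, I would conclude the freeness cleanly. Let $M(v)$ be the $\tn_r\times\tn_r$ matrix whose $k$-th column is $D_{I_k}(v)$, $k=1,\dots,\tn_r$; its entries are polynomials in $v$, and its $(\ell,k)$ entry is homogeneous of weighted degree $\widetilde w_\ell-\widetilde w_k$. Writing $\delta_t$ for the dilation $v\mapsto(t^{\widetilde w_1}v_1,\dots,t^{\widetilde w_{\tn_r}}v_{\tn_r})$ and expanding $\det M(\delta_t v)$ over permutations, the scalar attached to each permutation $\sigma$ is $t^{\sum_k(\widetilde w_{\sigma(k)}-\widetilde w_k)}=t^0=1$, so $\det M(\delta_t v)=\det M(v)$ for every $t>0$; letting $t\to 0^+$ and using continuity gives $\det M\equiv\det M(0)$, and $\det M(0)$ is the determinant of the matrix with columns $\partial_{v_1},\dots,\partial_{v_{\tn_r}}$, hence equal to $1$. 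Therefore $D_{I_1}(v),\dots,D_{I_{\tn_r}}(v)$ form a basis of $\R^{\tn_r}$ at every point $v$. Since for each $s\leq r$ the fields $\{D_{I_k}\}_{k\leq\tn_s}$ are exactly the P. Hall family $H_D^s$, which spans $L^s(D)$, we get $\dim L^s(v)=\tn_s$ for every $v$ and every $s=1,\dots,r$; that is, the growth vector (Definition~\ref{gro-vec}) of $D$ is constantly $(\tn_1,\dots,\tn_r)$, so $D$ is free up to step $r$ in the sense of Definition~\ref{free-s}.

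Thus the only substantial obstacle is the combinatorial core of the second paragraph: weighted homogeneity and a routine degree count dispose of everything except the normalization of the $\partial_{v_j}$-coefficient, and verifying that this coefficient is precisely $1$ forces one to propagate through the bracket a sufficiently rigid description of the lower-order terms (triangularity in the $v_p$ and an explicit link between their linear parts and the monomials $P_j$) — the place where the skew-symmetry and Jacobi relations encoded in the P. Hall basis are genuinely used, and the content of the Grayson--Grossman construction.
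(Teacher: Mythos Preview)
Your proposal is sound and, in fact, more detailed than what the paper itself offers: the paper does not prove Theorem~\ref{th-grossman} but merely states that ``the proof \dots\ goes by induction on the length of elements in $L(D)$'' and refers the reader to \cite{Grayson2}. Your inductive scheme is exactly this strategy, and you are right to flag the combinatorial normalization (showing that the constant in front of $\partial_{v_j}$ is precisely $1$) as the place where the P.~Hall structure and the recursion~(\ref{induction-P}) are genuinely used --- that is the content of \cite{Grayson1,Grayson2}.

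Two remarks on what you add beyond the paper. First, your weighted-homogeneity reduction is correct and sharpens the picture: since each nonzero coefficient $c_{a,\ell}$ of $R_a$ is homogeneous of degree $\widetilde w_\ell-|I_{j_a}|>0$, any nonzero term $c_{1,p}\,\partial_{v_p}c_{2,\ell}$ forces $\widetilde w_p>|I_{j_1}|$ and $\widetilde w_\ell-|I_{j_2}|\geq\widetilde w_p$, hence $\widetilde w_\ell>|I_j|$; so $[R_1,R_2]$ indeed contributes nothing at level $\widetilde w_\ell\leq\widetilde w_j$. Second, your dilation argument for freeness is a clean device not spelled out in the paper (the paper deduces freeness directly from $D_{I_j}(0)=\partial_{v_j}$ via Corollary~\ref{coro1-grossman}, which records the triangular polynomial form of $D_{I_k}$ at every point). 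Your determinant computation is equivalent: homogeneity makes $M(v)$ block lower-triangular with constant diagonal blocks, and $D_{I_j}(0)=\partial_{v_j}$ pins those blocks to the identity, so $\det M\equiv 1$. Either route yields $\dim L^s(v)=\tn_s$ for all $v$ and all $s\leq r$.
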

\noindent The proof of Theorem \ref{th-grossman} goes by induction on
the length of elements in $L(D)$. The reader is referred to
\cite{Grayson2} for a complete development.

\begin{coro}\label{coro1-grossman}
For all $I_k\in\ch^r$, $D_{I_k}$ has the following form
\begin{equation}\label{eq:coro-grossman}
D_{I_k}=\partial_{v_k}+\sum_{{I_j\in\ch^r,\  \vert I_j\vert>\vert
    I_k\vert}}P_j^{k}(v)\partial_{v_j},
\end{equation}
where every non zero polynomial $P_j^{k}$ is homogeneous
of weighted degree equal to $\vert I_j\vert-\vert I_k\vert$.
\end{coro}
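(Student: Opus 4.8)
The plan is to exploit the fact that the vector fields $D_1,\dots,D_m$ are homogeneous of weighted degree $-1$ with respect to the dilations attached to the free weights $\widetilde{w}_1,\dots,\widetilde{w}_{\tn_r}$, and that Lie brackets add weighted degrees. Throughout, let $\mathcal{E}:=\sum_{j=1}^{\tn_r}\widetilde{w}_jv_j\partial_{v_j}$ be the associated Euler vector field, and recall that a vector field $V$ on $\R^{\tn_r}$ is homogeneous of weighted degree $p\in\mathbb{Z}$ when $[\mathcal{E},V]=pV$; equivalently, writing $V=\sum_jV_j(v)\partial_{v_j}$, each nonzero component $V_j$ is a weighted-homogeneous polynomial whose monomial vector fields $V_j(v)\partial_{v_j}$ all have weighted degree $p$ in the sense recalled before Theorem~\ref{th-grossman}.

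First I would check that every $D_i$ is homogeneous of weighted degree $-1$. Since the elements of $\ch$ are ordered by nondecreasing length by (H1) and $\mathrm{Card}(\ch^s)=\tn_s$, one has $\widetilde{w}_\ell=|I_\ell|$ for all $\ell$. Now if $\phi(j)=i$, with Hall expansion $I_j=[I_{k_1},[I_{k_2},\dots,[I_{k_i},I_i]\cdots]]$ (so $|I_i|=1$ since $i\in\{1,\dots,\tn_1\}$), then by the very definition of $\alpha_j$ one gets $\widetilde{w}(\alpha_j)=\sum_\ell\widetilde{w}_\ell\alpha_j^\ell=\sum_{s}|I_{k_s}|=|I_j|-1$. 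Hence the monomial vector field $P_j(v)\partial_{v_j}$ has weighted degree $(|I_j|-1)-\widetilde{w}_j=-1$, and the leading term $\partial_{v_i}$ has weighted degree $-\widetilde{w}_i=-1$ as well; so each $D_i$ is homogeneous of weighted degree $-1$.

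Next, using the Jacobi identity in the form $[\mathcal{E},[X,Y]]=[[\mathcal{E},X],Y]+[X,[\mathcal{E},Y]]$, an immediate induction on the bracket length shows that any iterated bracket of $D_1,\dots,D_m$ of length $\ell$ is homogeneous of weighted degree $-\ell$. Applying this to $D_{I_k}=E_D(I_k)$ gives that $D_{I_k}$ is homogeneous of weighted degree $-|I_k|$; it is also a polynomial vector field, being an iterated bracket of polynomial vector fields. Writing $D_{I_k}=\sum_{j=1}^{\tn_r}P_j^{k}(v)\partial_{v_j}$, homogeneity forces every nonzero $P_j^{k}$ to be a weighted-homogeneous polynomial of weighted degree $\widetilde{w}_j-|I_k|=|I_j|-|I_k|$. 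Since a nonzero weighted-homogeneous polynomial has nonnegative weighted degree (all weights being positive), $P_j^{k}=0$ whenever $|I_j|<|I_k|$, and when $|I_j|=|I_k|$ the polynomial $P_j^{k}$ must be constant.

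Finally it remains to pin down these constants, i.e. the indices $j\neq k$ with $|I_j|=|I_k|$, and to normalize the $j=k$ term. By Theorem~\ref{th-grossman}, $D_{I_k}(0)=\partial_{v_k}$; evaluating $D_{I_k}=\sum_jP_j^{k}(v)\partial_{v_j}$ at $v=0$ therefore yields $P_k^{k}(0)=1$ and $P_j^{k}(0)=0$ for $j\neq k$. Combined with the previous step this gives $P_k^{k}\equiv1$ and $P_j^{k}\equiv0$ for every $j\neq k$ with $|I_j|\leq|I_k|$, which is precisely (\ref{eq:coro-grossman}). The only genuinely delicate point of the argument is the combinatorial identity $\widetilde{w}(\alpha_j)=|I_j|-1$ used to establish the homogeneity of the $D_i$'s; once that is in hand, everything else is a routine application of the behaviour of weighted degrees under Lie bracket together with the normalization $D_{I_k}(0)=\partial_{v_k}$.
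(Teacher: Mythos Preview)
Your proof is correct. The paper states this corollary without an explicit argument (the subsequent remark only alludes to an induction formula for the explicit polynomials $P_j^k$), so you are supplying precisely the details the paper leaves implicit: weighted homogeneity of each $D_i$, the additivity of weighted degree under Lie bracket, and then the normalization $D_{I_k}(0)=\partial_{v_k}$ from Theorem~\ref{th-grossman} to pin down the constant components. The combinatorial identity $\widetilde{w}(\alpha_j)=|I_j|-1$ that you flag is indeed the only nontrivial point, and your derivation of it from the Hall expansion is fine (the minor notational clash between the index $i=\phi(j)$ of the generator and the number of left factors in the expansion is harmless). Compared with a bare induction on bracket length, your dilation/Euler-vector-field formulation is cleaner and makes the degree statement immediate rather than something to be tracked through the recursion.
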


\begin{rem}
The explicit expression of the polynomials $P_j^k(v)$ as functions of
the monomials $P_{j}(v)$ is obtained through an induction formula.
\end{rem}

\begin{coro}\label{coro2-grossman}
For $i=1,\dots, m$, we define $m$ vector fields $\check{D}_i$ as follows:
\begin{equation*}
\check{D}_i:=\partial_{v_i}+\sum_{\substack{2\leq\vert
    I_k\vert\leq\ch^{r-1}\\\phi(k)=i}}P_{k}(v)\partial_{v_k}+\sum_{\substack{I_j\in
    S\\\phi(j)=i}}P_{j}(v)\partial_{v_j},
\end{equation*}
where $S$ is an arbitrary non-empty subset of $\cg^{r}$. Then,
\begin{itemize}
\item if $I_k\in\ch^{r-1}\cup S$, we have
$
\check{D}_{I_k}=\partial_{v_k}+\sum_{{I_j\in\ch^{r-1}\cup S,\  \vert
    I_j\vert>\vert I_k\vert}}P_j^{k}(v)\partial_{v_j};
$
\item if $I_k\in\cg^{r}\setminus S$, we have $\check{D}_{I_k}=0$.
\end{itemize}
\end{coro}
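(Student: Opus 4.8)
The plan is to realise $(\check D_1,\dots,\check D_m)$ as the horizontal lift, with respect to a suitable coordinate projection, of the canonical frame obtained after forgetting the variables $v_j$ with $I_j\in\cg^r\setminus S$, and then to transport the formula of Corollary \ref{coro1-grossman} through this projection. Set $T:=\{\,j\in\{1,\dots,\tn_r\} : I_j\in\cg^r\setminus S\,\}$ and let $\pi\colon\R^{\tn_r}\to\R^{\tn_r-\#T}$ be the linear projection that forgets the coordinates $v_j$, $j\in T$; write $\bar v:=\pi(v)$. Note that $\ch^r\setminus T=\ch^{r-1}\cup S$.

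The first step is to record that the coefficients of the canonical frame never involve the top-weight variables. From the inductive formula \eqref{induction-P}, an induction on $\vert I_j\vert$ gives that $P_j$ is weighted homogeneous of degree $\vert I_j\vert-1\le r-1$; likewise, each nonzero $P_j^k$ of Corollary \ref{coro1-grossman} is weighted homogeneous of degree $\vert I_j\vert-\vert I_k\vert<r$. Since $\widetilde w_\ell=r$ for $\ell\in T$, none of these polynomials can contain a variable $v_\ell$ with $\ell\in T$; in particular they are functions of $\bar v$ alone.

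Consequently both $D_i$ and $\check D_i$ are $\pi$-projectable, and I claim $\check D_i=\pi^{*}\bar D_i$ with $\bar D_i:=\pi_{*}D_i$. Indeed, comparing the two definitions and using $\cg^r=(\cg^r\setminus S)\sqcup S$, one sees that $\check D_i$ and $D_i$ carry the same $\partial_{v_\ell}$-coefficient for every $\ell\notin T$, while the $\partial_{v_j}$-coefficient of $\check D_i$ vanishes for every $j\in T$; that is exactly the statement that $\check D_i$ is the horizontal lift of $\bar D_i$ (and $\pi_{*}\check D_i=\bar D_i$ as well). Now horizontal lifts are stable under the Lie bracket, with $[\pi^{*}\bar W_1,\pi^{*}\bar W_2]=\pi^{*}[\bar W_1,\bar W_2]$, and $\pi$-relatedness of vector fields also passes to brackets. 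Iterating these two facts along the Hall brackets yields $\check D_{I_k}=\pi^{*}\bar D_{I_k}$ and that $D_{I_k}$ is $\pi$-related to $\bar D_{I_k}$, where $\bar D_{I_k}:=E_{\bar D}(I_k)$. Hence $\check D_{I_k}$ and $D_{I_k}$ have identical $\partial_{v_\ell}$-coefficients for every $\ell\notin T$, whereas $\check D_{I_k}$ has no component along $\partial_{v_j}$ for $j\in T$: in a word, $\check D_{I_k}$ is $D_{I_k}$ with all of its terms $P_j^k\partial_{v_j}$, $j\in T$, deleted.

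Finally I would substitute the formula $D_{I_k}=\partial_{v_k}+\sum_{I_j\in\ch^r,\ \vert I_j\vert>\vert I_k\vert}P_j^k(v)\partial_{v_j}$ of Corollary \ref{coro1-grossman}. If $I_k\in\ch^{r-1}\cup S$ then $k\notin T$, the term $\partial_{v_k}$ survives, and since $\ch^r\setminus T=\ch^{r-1}\cup S$ the deletion leaves exactly $\check D_{I_k}=\partial_{v_k}+\sum_{I_j\in\ch^{r-1}\cup S,\ \vert I_j\vert>\vert I_k\vert}P_j^k(v)\partial_{v_j}$, which is the first claim. If $I_k\in\cg^r\setminus S$ then $\vert I_k\vert=r$, so there is no $I_j$ of length $>r$ and $D_{I_k}=\partial_{v_k}$; since now $k\in T$, the deletion kills this single term and $\check D_{I_k}=0$. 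The point that needs care is the bracket bookkeeping — that iterating brackets of the $\check D_i$ never produces a component along $v_j$, $j\in T$ — but this is precisely what the horizontal-lift formalism encapsulates, and it ultimately reflects the fact that the length-$r$ Hall elements span a central ideal, so discarding some of them is compatible with the Lie structure.
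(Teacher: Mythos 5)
Your proof is correct, and it is worth noting that the paper states Corollary~\ref{coro2-grossman} without proof (like Corollary~\ref{coro1-grossman}, it is left as a consequence of Theorem~\ref{th-grossman}), so there is no paper argument to compare against. Your reduction works because the splitting $\R^{\tn_r}=\R^{\tn_r-\#T}\times\R^{\#T}$ is a genuine product and the coefficients of $D_i$ never involve the $v_j$ with $j\in T$; in that situation the ``append zeros'' map $\pi^{*}$ is a Lie algebra embedding of $VF(\R^{\tn_r-\#T})$ into $VF(\R^{\tn_r})$ that is also $\pi$-related to the identity, so your two bracket-compatibility facts are exactly what is needed and the deletion interpretation of $\check D_{I_k}$ follows. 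The weighted-homogeneity estimate $\deg P_j^k=\vert I_j\vert-\vert I_k\vert<r$ (hence no $v_\ell$ with $\widetilde w_\ell=r$ appears) is the crucial observation that makes $D_i$ projectable and the argument go through. The closing remark about $\cg^r$ spanning a central ideal of the free nilpotent Lie algebra of step $r$ is precisely the structural reason this works, and it could be used as an alternative, more algebraic phrasing of the same proof: quotienting by the central subspace $\mathrm{span}\{v_j\partial_{v_j}\text{-directions}, j\in T\}$ and then lifting back horizontally. Both formulations are equivalent; the one you chose is the most concrete and requires only the explicit form of the canonical frame.
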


\begin{de}[\emph{Canonical form}]
Let $X_1,\dots,X_m$ be $m$ vector fields on an open subset $\Omega$ of
$\R^{\tn_r}$ and $v$ a local system of coordinates on $\Omega$.
The control system associated to
$\{X_1,\dots,X_m\}$ is said to be in {\em canonical form in the
  coordinates $v$} if one has
$
v_*X_i=D_i, \quad \hbox{for } i=1,\dots,m,
$
where we use $v_*X_i$ to denote the {push-forward} of $X_i$ by $v$.
\end{de}

\noindent Consider now the control system given by
\begin{equation}\label{cano-f1}
\dot{v}=\sum_{i=1}^{m}u_iD_i(v), \quad v\in\R^{\tn_r}.
\end{equation}Writing $(\ref{cano-f1})$ component by component, one has, for $j=1,\dots,\tn_r$,
\begin{equation}\label{cano-f2}
\dot{v}_j=P_{j}(v_1,\dots,v_{j-1})u_i,\ \textrm{ where }i=\phi(j),
\end{equation}
or inductively,
\begin{equation}\label{cano-f3}
\dot{v}_j=\frac{v_{j_1}}{\alpha_{j_2}^{j_1}+1}\dot{v}_{j_2},\ \ \textrm{ where }I_j=[I_{j_1},I_{j_2}].
\end{equation}
More explicitly, one has
\begin{equation}\label{cano-f4}
\dot{v}_j=\frac{1}{k!}v_{j_1}^k\dot{v}_{j_2},\ \textrm{ if }X_{I_j}=\textrm{ad}^k_{X_{I_{j_1}}}X_{I_{j_2}},
\end{equation}where ad$^k_{X_{I_{j_1}}}X_{I_{j_2}}:=[\underbrace{ X_{I_{j_1}},[ X_{I_{j_1}},\cdots, [X_{I_{j_1}}}_{k\textrm{ times }},X_{I_{j_2}}]$, with $X_{I_{j_2}}=[X_{I_{j_3}},X_{I_{j_4}}]$ and ${I_{j_3}}\prec{I_{j_1}}$. The inductive formula (\ref{cano-f4}) will be used in Chapter \ref{control-nilpotent}. 

A particular system of coordinates, called {\em canonical coordinates} (a terminology arising from Lie group theory), allows one to obtain canonical forms. Consider $m$ vector fields $X_1,\dots,X_m $ on $\Omega \subset \R^{\tn_r}$, let $v \in \Omega$, and $W=\{W_1,\dots,W_n\}$ be a set of vector fields in $L(X)$ such that $W_1(v),\dots,W_n(v)$ is a basis of $\R^{\tn_r}$. The {\em canonical coordinates of the second kind} at $v$ associated with $W$ are the system of local coordinates at $v$ defined as the inverse of the local diffeomorphism
\begin{equation}\label{exp-coordinates}
(z_1,\dots, z_{\tn_r})\longmapsto e^{z_{\tn_r}W_{\tn_r}}\circ\cdots\circ e^{z_1W_1}(v),
\end{equation}
where we use $e^{zW_i}$ to denote the flow of $W_i$. When the system $(X_1,\dots, X_m)$ is nilpotent, the above diffeomorphism defines global coordinates on $\Omega$ for every $v\in \Omega$.

\begin{theo}[\cite{Sussmann1}]\label{theo-sussmann}
Assume that the vector fields $(X_1,\dots, X_m)$ generate a Lie algebra which is both nilpotent of step $r$ and free up to step $r$. Then, in the canonical coordinates of the second kind  associated with the P. Hall basis $H^r_X$, the control system defined by $(X_1,\dots, X_m)$ is in canonical form.
\end{theo}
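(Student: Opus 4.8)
Here is how I would approach the proof.

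The plan is to establish the equivalent statement that the vector fields $D_1,\dots,D_m$ are $\Phi$-related to $X_1,\dots,X_m$, where $\Phi$ is the (global, by nilpotency) diffeomorphism $(z_1,\dots,z_{\tn_r})\mapsto e^{z_{\tn_r}W_{\tn_r}}\circ\cdots\circ e^{z_1W_1}(v)$ defining the canonical coordinates of the second kind at an arbitrary point $v$, with $W_j:=X_{I_j}$ the $j$-th element of the P. Hall family $H_X^r$. Since $X$ is free up to step $r$ and nilpotent of step $r$, both $L(X)$ and $\R^{\tn_r}$ have dimension $\tn_r$, $H_X^r$ is a basis of $L(X)$, and the evaluation map $Z\mapsto Z(q)$ is a linear isomorphism $L(X)\to\R^{\tn_r}$ at every point $q$; moreover $L(X)$ is isomorphic to the free nilpotent Lie algebra of step $r$ on $m$ generators. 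Consequently the vector field $\zeta_i$ representing $X_i$ in these coordinates is characterized, for each $z$, by the linear relation $\sum_{k=1}^{\tn_r}\zeta_i^k(z)\,d\Phi_z(\partial_{z_k})=X_i(\Phi(z))$, and once transported into the abstract free nilpotent Lie algebra this relation is independent of the particular system $X$. It therefore suffices to show $\zeta_i=D_i$ as a universal identity.

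First I would compute $d\Phi_z(\partial_{z_k})$ by differentiating $\Phi$ through the chain of flows and using the classical identity $(e^{tW})_*Y=e^{-t\,\mathrm{ad}_W}Y$, which is a finite sum here by nilpotency of step $r$. This yields $d\Phi_z(\partial_{z_k})=\big(\Theta_k(W_k)\big)(\Phi(z))$ with $\Theta_k:=e^{-z_{\tn_r}\mathrm{ad}_{W_{\tn_r}}}\circ\cdots\circ e^{-z_{k+1}\mathrm{ad}_{W_{k+1}}}$, so that $\zeta_i$ is characterized by $\sum_{k}\zeta_i^k(z)\,\Theta_k(W_k)=W_i$ in $L(X)$. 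Expanding $\Theta_k(W_k)=\sum_j c_{kj}(z)W_j$ in the P. Hall basis, and using that each factor $e^{-z_\ell\mathrm{ad}_{W_\ell}}$ strictly increases bracket length and raises the weighted degree of coefficients by the same amount (with $z_\ell$ carrying weighted degree $|I_\ell|$), one gets $c_{kk}\equiv1$, $c_{kj}=0$ whenever $|I_j|\le|I_k|$ and $j\ne k$, and $c_{kj}$ weighted-homogeneous of degree $|I_j|-|I_k|$ otherwise. Hence the system $\sum_k\zeta_i^k c_{kj}=\delta_{ij}$ is triangular with unit diagonal with respect to the P. Hall ordering (refined by length), and determines the polynomials $\zeta_i^j$ recursively by induction on $|I_j|$.

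It remains to identify the solution with the Grayson--Grossman vector fields: one checks that $\zeta_i^j$ has the same support as $(D_i)_j$ — essentially, nonzero only when $I_j$ is a direct descendent of $I_i$ — and that the nonzero entries obey the same inductive recursion $(\ref{induction-P})$ as the monomials $P_j$, whence $\zeta_i^j=(D_i)_j$ for all $i,j$ by Theorem~\ref{th-grossman} and Corollary~\ref{coro1-grossman}. The routine part is the adjoint / variation-of-parameters computation of $d\Phi$; the main obstacle is this last combinatorial bookkeeping — tracking how the iterated brackets produced by the operators $\mathrm{ad}_{W_\ell}$ rewrite in the P. Hall basis via (H1)--(H4), together with the identities for the sequences $\alpha_j$ and for the descendent map $\phi$ — and, as in the proof of Theorem~\ref{th-grossman}, it is carried out by induction on bracket length. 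Conceptually, the same conclusion follows by viewing a system that is free and nilpotent of step $r$ as the orbit, based at $v$, of the left-invariant frame on the free nilpotent Lie group $G$ of step $r$ on $m$ generators: canonical coordinates of the second kind on $\Omega$ correspond to those on $G$, and $D_1,\dots,D_m$ are by construction that frame read in these coordinates; the detailed verification is the one carried out in~\cite{Sussmann1}.
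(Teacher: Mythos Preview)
The paper does not give its own proof of this theorem: it is stated as a result of Sussmann, cited from~\cite{Sussmann1}, and used as an ingredient in the desingularization procedure (see Remark after the statement and the proof of Claim~\ref{claim-change-coordinates}). There is therefore nothing to compare your argument against in the paper itself.

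That said, your outline is a sound route to an independent proof. The reduction to a universal identity in the free nilpotent Lie algebra is the right observation (freeness and nilpotency of step $r$ make $E_X$ an isomorphism from $\cl^r(\ci)/[\cl^r,\cl^r]_{>r}$ onto $L(X)$), and your variation-of-parameters computation of $d\Phi_z(\partial_{z_k})$ via $(e^{tW})_*Y=e^{-t\,\mathrm{ad}_W}Y$ is correct in this nilpotent setting, yielding the triangular system $\sum_k \zeta_i^k c_{kj}=\delta_{ij}$ with $c_{kk}=1$ and $c_{kj}=0$ for $|I_j|\le|I_k|$, $j\ne k$. The genuine work, which you correctly flag as the obstacle, is the combinatorial identification of the unique solution $\zeta_i^j$ with $\delta_{i,\phi(j)}P_j$: this requires showing that the P.~Hall rewriting of each $\mathrm{ad}_{W_{\ell_1}}\cdots\mathrm{ad}_{W_{\ell_s}}W_k$ in the basis $H_X^r$, combined with the triangular inversion, produces nonzero contributions only along the descendent chain of $I_i$ and with exactly the monomial coefficients prescribed by the recursion~(\ref{induction-P}). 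Your alternative Lie-group viewpoint (identifying the system with the left-invariant frame on the free nilpotent group in canonical coordinates of the second kind) is in fact closer in spirit to Sussmann's original argument and bypasses some of this bookkeeping, since the canonical form of the frame in those coordinates is essentially the content of Theorem~\ref{th-grossman}.
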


\begin{rem}
The canonical coordinates of the second kind require to determine the flow of the control vector fields i.e., to integrate some differential equations. In general, there does not exist algebraic change of coordinates between an arbitrary system of coordinates and the canonical coordinates of the second kind.
\end{rem}

\subsection{Desingularization algorithm}\label{Sec-lifting}
Let $X=\{X_1,\dots,X_m\}\subset VF(\Omega)$ be a family of $m$ vector
fields on $\Omega\subset\mathbb{R}^n$, and $K$ be a compact subset of
$\Omega$. We assume that the LARC is satisfied at every point of $K$. Therefore, the degree of nonholonomy of $X$ is \emph{bounded} on $K$ and we
denote by $r$ its maximal value.

Recall that $\mathcal{H}^r$ denote the elements of the P. Hall basis
of length smaller or equal to $r$. For every $n$-tuple
$\cj=(I_1,\dots,I_n)$ of elements of $\ch^r$, we define the domain
$\cv_{\cj}\subset\Omega$ by
\begin{equation}\label{VJ}
\cv_{\cj}:=\{\ p\in\Omega\textrm{ such that }\det
(X_{I_1}(p),\dots,X_{I_n}(p))\neq 0\  \},
\end{equation}
where $X_{I_j}=E_X(I_j)$. Such a set $\cv_{\cj}$ is open in $\Omega$ (possibly empty) and for every $p\in\cv_{\cj}$, the vectors
$X_{I_1}(p),\dots, X_{I_n}(p)$ form a basis of $\R^n$.

Since $K$ is compact, there exist a finite family of $n$-tuples
$\cj_1,\dots, \cj_M$ of elements of $\ch^r$ such that
\begin{equation}\label{K-covering}
K\subset\bigcup_{i=1}^M\cv_{\cj_i}.
\end{equation}One deduces from (\ref{K-covering}) a compact covering of $K$ in the form
\begin{equation}\label{K-covering-c}
K\subset\bigcup_{i=1}^M\cv_{\cj_i}^c,
\end{equation}where, for $i=1,\dots,M$, the set $\cv_{\cj_i}^c\subset \cv_{\cj_i}$ is compact.

\begin{de}\label{de:graph}
Let $(S_i)_{i\in I}$ be a finite set of subsets of $\Omega$. The \emph{connectedness graph} $\textsf{G}:=(\textsf{N},\textsf{E})$ associated with $(S_i)_{i\in I}$ is defined as
as follows:
\begin{itemize}
\item[-] the set of nodes $\textsf{N}:=I$;  
\item[-] a pair $(i,j)$ with $i$ and $j$ in $\textsf{N}$ belongs to the set of edges $\textsf{E}$ if $S_i\cap S_j\neq\emptyset$.
\end{itemize}
A \emph{simple path} on $\textsf{G}$ is a subset $\textsf{p}:=\{i_1,\dots,i_L\}$ of two by two distinct elements of $\textsf{N}$ such that, for $j=1,\dots,L-1$, the pair $(i_j,i_{j+1})$ belongs to $\textsf{E}$.
\end{de}

\begin{rem}
With the notations of Definition \ref{de:graph}, if we assume that all the sets $S_i$ are open or, all of them are closed, and the set $\displaystyle S:=\cup_{i\in I} S_i$ is connected, then, for every $(x_0,x_1)\in S\times S$, there exists a simple path on $\textsf{G}$ denoted by $p:=\{i_1,\dots, i_L\}$ such that $x_0\in S_{i_1}$ and $x_1\in S_{i_L}$.
\end{rem}

Take $\cj=(I_1,\dots,I_n)$ among $\cj_1,\dots,\cj_M$, and pick a point $a$ in $\cv_{\cj}$. In the sequel, we construct, by induction on the length of elements in a free Lie algebra, a family of $m$ vector fields $\xi=\{\xi_1,\dots, \xi_m\}$ defined on $\cv_{\cj}\times\R^{\tilde{n}_r-n}$, which is free up to step $r$ and has its projection on $\cv_{\cj}$ equal to $X$. At the same time, we give an approximation of $\xi$ at $\tilde{a}:=(a,0)\in\cv_{\cj}\times\R^{\tn_r-n}$  in canonical form.  

We define $\cj^s:=\{I_j\in\cj, \textrm{ with }\vert I_j\vert=s\}, \textrm{ for }s\geq 1$, and $\cg^s:=\ch^s\setminus\ch^{s-1}, \textrm{ for }s\geq 2$. We denote by $k_s$ the cardinal of $\cj^s$, and by $\tilde{k}_s$ the cardinal of $\cg^s$. We are now ready to describe in details our desingularization algorithm. \bigskip

\noindent \fbox{\bf Desingularization Algorithm (DA)}

\begin{itemize}
\item {\bf Step 1:}
\begin{enumerate}
\item[(1-1)] Set
$
\cv^1:=\cv_{\cj}\times\R^{\tilde{k}_1-k_1}, \ a^1:=(a,0)\in\cv^1,\ \ck^1:=\ch^1\cup(\cj\setminus\cj^1).
$
\item[(1-2)] Define $\{\xi^1_1,\dots, \xi^1_m\}$ on $\cv^1$ as follows:
\begin{equation*}
\forall (x,v^1) \in \cv^1, \qquad \xi_i^1(x,v^1) :=X_i(x) +\left\{\displaystyle\begin{array}{ll}0,&\textrm{ for } i\in \cj^1,\\ {\partial_{v^1_i}},&\textrm{ for }i\in\cg^1\setminus\cj^1. \end{array}\right.
\end{equation*}
\item[(1-3)] Compute the coordinates $y^1$ on $\cv^1$ defined as the unique affine system of coordinates on $\cv^1$ such that
$
{\partial_{y_j^1}}=\xi^1_{I_j}(a^1), \textrm{ for }I_j\in\ck^1, \textrm{ and } y^1(a^1)=0.
$
\item[(1-4)] Construct the system of global coordinates $z^1$ on $\cv^1$ by
\begin{eqnarray*}
z^1_{j}&:=&y^1_{j}, \ \ \textrm{ for }j\in\ch^1,\\
z^1_{j}&:=& y^1_{j}-\sum_{k=1}^{\tilde{n}_1}(\xi^1_k\cdot y^1_k)(y^1)_{|y^1=0} \ \ y^1_k,\ \ \textrm{ for }I_j\in\ck^1\setminus\ch^1,
\end{eqnarray*}where $I_j$ denotes the $j^{\textrm{th}}$ element in $\ck^1$.
\end{enumerate}

\item {\bf Step s, 2 $\leq$ s $\leq$ r:}
\begin{enumerate}
\item[(s-1)] Set
$
\cv^s:=\cv^{s-1}\times\R^{\tilde{k}_s-k_s}, \ a^s:=(a,0)\in\cv^s,
$
and
$
\ck^s:=\ck^{s-1}\cup(\cg^s\setminus\cj^s).
$
Denote by $v^s$ the points in $\R^{\tilde{k}_s-k_s}$.
\item[(s-2)] Define $\{\xi^s_1,\dots, \xi^s_m\}$ as the vector fields on $\cv^s$ which write in coordinates $(z^{s-1},v^s)$ as:
\begin{equation*}
\xi_i^s(z^{s-1},v^s)=\xi_i^{s-1}(z^{s-1}) +\sum_{\substack{I_k \in \cg^s \setminus\cj^s \\ \phi(k)=i}} P_{k}(z^{s-1}){\partial_{v_{k}^s}}\ .
\end{equation*}
\item[(s-3)] Compute the system of global coordinates $y^s$ on $\cv^s$ as the unique isomorphism $(z^{s-1},v^s) \mapsto y^s$ such that
$
{\partial _{y_{\phi(I)}^s}}=\xi^s_I(a^s) \hbox{ for every } I\in\ck^s\ .
$
\item[(s-4)] Construct the system of global coordinates $\tilde{z}^s$ on $\cv^s$ by the following recursive formulas:
\begin{itemize}
\item[(s-4)-(a)] for $I_j\in\ch^s$,
\begin{equation}
\tilde{z}^s_{j}:=y^s_{j}+\sum_{k=2}^{\vert I_j\vert-1}r_k(y^s_1,\dots,y^s_{j-1}),
\end{equation}
where, for $k=2,\dots,\vert I_{j}\vert-1$,
\begin{eqnarray*}
&&r_k(y^s_1,\dots,y^s_{j-1})\\
&=&-\sum_{\substack{\vert\beta\vert=k\\ \omega(\beta)<\vert I_j\vert}} \big[(\xi_{I_1}^s)^{\beta_1}\cdots(\xi_{I_{j-1}}^s)^{\beta_{j-1}}\cdot(y^s_{j}+ \sum_{q=2}^{k-1}r_q)\big](y^s)_{|y^s = 0}\ \frac{(y_1^s)^{\beta_1}}{\beta_1!}\\
&&\cdots\frac{(y_{j-1}^s)^{\beta_{j-1}}}{\beta_{j-1}!}\ ;
\end{eqnarray*} 
\item[(s-4)-(b)] for $I_j\in\ck^s\setminus\ch^s$,
\begin{equation}
\tilde{z}^s_{j}:=y^s_{j}+\sum_{k=2}^{s}r_k(y^s_1,\dots,y^s_{\tn_s}),
\end{equation}where, for $k=2,\dots,s$,
\begin{eqnarray*}
r_k(y^s_1,\dots,y^s_{\tn_s})&=&-\sum_{\substack{\vert\beta\vert=k\\ \omega(\beta)\leq s}}\big[(\xi_{I_1}^s)^{\beta_1}\cdots(\xi_{I_{\tn_s}}^s)^{\beta_{\tn_s}}\cdot(y^s_{j}+ \sum_{q=2}^{s}r_q)\big](y^s)_{|y^s = 0} \\
&& \frac{(y_1^s)^{\beta_1}}{\beta_1!}\cdots\frac{(y_{j-1}^s)^{\beta_{\tn_s}}}{\beta_{\tn_s}!}.
\end{eqnarray*} 
\end{itemize}
\item[(s-5)] Construct the system of global coordinates $z^s$ as follows: 
\begin{itemize}
\item[(s-5)-(a)] for $j>\tn_s$, set $z_j^s:=\tilde{z}_j^s$;
\item[(s-5)-(b)] for $j=1,\dots,\tn_s$, set
$
z_j^s:=\Psi_j^s(\tilde{z}_1^s,\dots,\tilde{z}_{j-1}^s),
$
where all $\Psi_j^s$ are polynomials such that the two following conditions are satisfied:
\begin{itemize}
  \item[-] if we impose the weight of $z_j^s$ to be $\widetilde{w}_j$ for $j=1,\dots,\tn_s$, then every $\Psi_j^s$ is homogeneous of weighted degree equal to $\widetilde{w}_j$;
  \item[-] denote by ord$_{a^s}^s(\cdot)$ the nonholonomic order defined by $(\xi_1^s,\dots,\xi_m^s)$ at $a^s$, and by $\xi_{i,j}^{s}(z^s)$ the $j^{\textrm{th}}$ component of $\xi_{i}^{s}(z^s)$; then one has
 \begin{equation}\label{change-coordinates}
 \xi_{i,j}^{s}(z^s)= \delta_{i,\phi(j)} P_{j} (z^s_1,\dots,z^s_{j-1})+R_{i,j}(z^s), \quad j=1,\dots,\tn_s,
 \end{equation}
where $\textrm{ord}_{a^s}^s(R_{i,j})\geq \widetilde{w}_j$ ($\delta_{i,k}$ denotes the Kronecker symbol). Note that $\textrm{ord}_{a^s}^s(P_{j})=\widetilde{w}_j -1$.
\end{itemize}
 \end{itemize}
\end{enumerate}
\end{itemize}

\begin{theo}\label{theo-desingularization}
Let $\xi_i:=\xi_i^r$, for $i=1,\dots,m$, and $z:=z^r$,  where $\xi_i^r$ and $z^r$ are given by the {desingularization algorithm}. Then,
the family of vector fields $\xi=\{\xi_1,\dots,\xi_m\}$ defined on $\Omega \times\R^{\tn_r-n}$ is free up to step $r$. Moreover, the system of coordinates $z=(z_1,\dots,z_{\tn_r})$ is a system of privileged coordinates at $\tilde{a}$ for $\xi$, and the family of vector fields $\widehat{\xi}=\{\widehat{\xi}_1,\dots,\widehat{\xi}_m\}$ defined in the coordinates $z$ by the canonical form:
\begin{equation}\label{cano-desingularized}
\widehat{\xi}_i=\partial_{z_i}+\sum_{\substack{2\leq\vert I_j\vert\leq \tn_{r}\\ \phi(j)=i}}P_{j}(z_1,\dots,z_{j-1})\partial_{z_j}, \ \textrm{ for }i=1,\dots,m,
\end{equation}
is a nonholonomic first order approximation of $\xi$ at $\tilde{a}$.
\end{theo}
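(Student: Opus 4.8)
The plan is to prove the three assertions at once, by induction on the step $s=1,\dots,r$ of the Desingularization Algorithm, showing that after Step $s$ the $m$-tuple $\xi^s=\{\xi^s_1,\dots,\xi^s_m\}$ on $\cv^s$ satisfies: \textbf{(a)} $\xi^s$ is free up to step $s$, $\{\xi^s_I(a^s):I\in\ck^s\}$ is a basis of $\R^{\dim\cv^s}$, and $\xi^s_{I_j}(a^s)=\partial_{z^s_j}$ for $I_j\in\ch^s$; \textbf{(b)} $\mathrm{ord}^s_{a^s}(z^s_j)=\widetilde{w}_j$ for $j=1,\dots,\tn_s$; \textbf{(c)} in the coordinates $z^s$ one has $\xi^s_{i,j}(z^s)=\delta_{i,\phi(j)}P_j(z^s_1,\dots,z^s_{j-1})+R_{i,j}(z^s)$ with $\mathrm{ord}^s_{a^s}(R_{i,j})\ge\widetilde{w}_j$, for $j=1,\dots,\tn_s$. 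Granting this for $s=r$, the theorem follows at once: a short count of the index sets gives $\ck^r=\ch^r$, so $\dim\cv^r=\tn_r$ and $\cv^r=\cv_\cj\times\R^{\tn_r-n}$; then \textbf{(a)} is the freeness up to step $r$ of $\xi=\xi^r$ (hence every point is regular), \textbf{(b)} says $z=z^r$ is a system of privileged coordinates at $\widetilde{a}=a^r$, and \textbf{(c)}, now valid for all $j$, yields $\xi_i-\widehat{\xi}_i=\sum_j R_{i,j}(z)\,\partial_{z_j}$, each monomial term of which has nonholonomic order at $\widetilde{a}$ at least $\widetilde{w}_j-\widetilde{w}_j=0$ (as $\partial_{z_j}$ has weight $-\widetilde{w}_j$); by Definition~\ref{approx-point} this is exactly the statement that $\widehat{\xi}$ is a nonholonomic first order approximation of $\xi$ at $\widetilde{a}$.

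The base case $s=1$ is direct. By (1-2) the fields $\xi^1_i$ are pointwise linearly independent on $\cv^1$, since $\{X_i:i\in\cj^1\}$ is part of the basis $\{X_I:I\in\cj\}$ of $\R^n$ (valid on $\cv_\cj$) while the remaining $\xi^1_i$ carry the pairwise independent constant directions $\partial_{v^1_i}$; moreover, as these directions are constant and the $X_i$ do not depend on $v^1$, one has $\xi^1_I=X_I$ for $|I|\ge 2$, so $\{\xi^1_{I_j}(a^1):I_j\in\ck^1\}$ is a basis of $\R^{\dim\cv^1}$, which makes (1-3) legitimate, (1-4) being the linear correction removing the order-$1$ part of the remaining coordinates. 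For the inductive step, assume \textbf{(a)}--\textbf{(c)} after Step $s-1$. The field $\xi^s_i=\xi^{s-1}_i+\sum_{I_k\in\cg^s\setminus\cj^s,\,\phi(k)=i}P_k(z^{s-1})\,\partial_{v^s_k}$ pushes forward to $\xi^{s-1}_i$ under the map $\cv^s\to\cv^{s-1}$ forgetting $v^s$, so every Lie bracket of $\xi^s$ pushes forward to the corresponding bracket of $\xi^{s-1}$; in particular $\dim L^{s-1}(\xi^s)$ is identically $\tn_{s-1}$ on $\cv^s$ (being at most $\mathrm{Card}\,\ch^{s-1}=\tn_{s-1}$ and at least $\tn_{s-1}$ by this projection and the induction hypothesis), and new directions $\partial_{v^s_k}$ can only be produced by length-$s$ brackets, since $P_k$ has weighted degree $s-1$ and $\partial_{v^s_k}$ weight $-s$.

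The heart of Step $s$ is that the added monomials are \emph{precisely} the $P_k$'s of the Grayson--Grossman construction, so bracketing propagates through their recursion \eqref{induction-P}: exactly as in Theorem~\ref{th-grossman} and Corollary~\ref{coro1-grossman}, for $I_k\in\cg^s\setminus\cj^s$ the $\partial_{v^s_k}$-component of $\xi^s_{I_k}$ equals $1$, and the $v^s$-block of the length-$s$ brackets not in $\cj$ is, in the order $\prec$, unit upper triangular; combined with the fact that the $\cj$-brackets push forward onto the linearly independent family $\{X_I(p):I\in\cj\cap\ch^s\}$ for $p\in\cv_\cj$, this gives the freeness up to step $s$ at every point of $\cv^s$ as well as the fact that $\{\xi^s_I(a^s):I\in\ck^s\}$ is a basis of $\R^{\dim\cv^s}$, so that (s-3) is legitimate; the normalization $\xi^s_{I_j}(a^s)=\partial_{z^s_j}$ is then built into the construction of $z^s$ in (s-3)--(s-5). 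Step (s-4) is the standard polynomial-subtraction step of the privileged-coordinate construction: the terms $r_k$ are designed so that every nonholonomic derivative of $z^s_j$ of order $<|I_j|=\widetilde{w}_j$ vanishes at $a^s$, giving $\mathrm{ord}^s_{a^s}(\widetilde{z}^s_j)\ge\widetilde{w}_j$; since the reverse inequality holds up to re-ordering (Remark~\ref{rem-weight-order}), equality follows, which is \textbf{(b)}.

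The remaining task, and the \textbf{main obstacle}, is Step (s-5): showing that there exist polynomials $\Psi^s_j$, homogeneous of weighted degree $\widetilde{w}_j$ in $(\widetilde{z}^s_1,\dots,\widetilde{z}^s_{j-1})$, such that in the coordinates $z^s$ every component $\xi^s_{i,j}$ reduces to $\delta_{i,\phi(j)}P_j(z^s_1,\dots,z^s_{j-1})$ modulo a remainder of order $\ge\widetilde{w}_j$, i.e.\ \eqref{change-coordinates}. I would carry this out by a finite induction on $j$ (equivalently on the weighted degree): using \textbf{(b)} and the bracket relations defining the Hall family, the weighted-homogeneous part of $\xi^s_{i,j}$ of the critical degree $\widetilde{w}_j-1$ is shown to differ from $\delta_{i,\phi(j)}P_j$ by a term that a suitable admissible $\Psi^s_j$ annihilates, the degree bookkeeping being exactly the two conditions listed in (s-5)-(b); since weight-homogeneous changes of coordinates preserve order estimates, \textbf{(b)} and \textbf{(c)} persist and the induction closes. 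The difficulty is that $\xi$ is not nilpotent and the Hall brackets are split between the $n$ ``old'' directions supplied by $X$ on $\cv_\cj$ and the created ones, so that at each weighted degree one must pin down precisely which monomials an admissible $\Psi^s_j$ can kill without spoiling the lower-degree normalizations already achieved or the projection onto $\R^n$ — a genuine extension of the normal-form computations behind Theorems~\ref{th-grossman} and~\ref{theo-sussmann}. A second delicate point is making rigorous the \emph{global} (not merely pointwise-at-$a^s$) part of the freeness in \textbf{(a)}, which rests on the same Grayson--Grossman triangular structure of the $v$-directions together with the basis property of $\cj$ on $\cv_\cj$.
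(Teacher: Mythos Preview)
Your overall strategy matches the paper's: induction on the step $s$ of the algorithm, carrying invariants that at $s=r$ yield all three conclusions. The paper packages these as Proposition~\ref{prop-desingularization}, whose properties (A1)--(A5) are close to your \textbf{(a)}--\textbf{(c)}; the paper additionally tracks (A3), the order of the coordinates $z^s_j$ with $|I_j|>s$ (those coming from $\cj\setminus\ch^s$), and states (A5) for \emph{every} bracket $\xi^s_{I_k}$, $I_k\in\ck^s$, not only for the generators --- this stronger hypothesis is what makes the inductive bracket computation (Claim~\ref{general-form-I}) run cleanly, whereas your \textbf{(c)} would force you to re-derive the bracket forms at each stage.

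Where you diverge is precisely at your ``main obstacle'', the existence of the weighted-homogeneous change $\Psi^s_j$ in Step~(s-5). The paper does \emph{not} proceed by induction on $j$. Instead (Claim~\ref{claim-change-coordinates}) it isolates the weighted-homogeneous part $\check{\xi}^s_i$ of $\xi^s_i$ in the coordinates $\widetilde{z}^s$, observes that these generate a Lie algebra that is both nilpotent of step $s$ and free up to step $s$, and invokes Theorem~\ref{theo-sussmann}: in the canonical coordinates of the second kind associated with $\{\check{\xi}^s_{I_k}\}_{I_k\in\ch^s}$, this nilpotent system is in canonical form. The map $\widetilde{z}^s\mapsto z^s$ is exactly that passage to canonical coordinates, and an order count (each $\Psi^s_j$ has nonholonomic order $\widetilde{w}_j$, hence its Taylor expansion is a polynomial of weighted degree $\widetilde{w}_j$) shows it is algebraic with the required homogeneity. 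This structural input is what replaces your proposed componentwise normal-form computation; without it your argument for (s-5) remains a plan rather than a proof.
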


\begin{rem}
We note that the desingularization procedure does not a priori require that
\begin{itemize}
\item[(a)]the coordinates $z$ are privileged coordinates;
\item[(b)]the system $\widehat{\xi}$ is a first order approximation of $\xi$ at $a$.
\end{itemize}However, (a) and (b) can be used directly at the first step of the motion planning algorithm presented in Section \ref{regular-case}.
\end{rem}

\begin{rem}\label{rem:nilpotent}
If we assume that the original system $X=\{X_1,\dots, X_m\}$ is nilpotent, then, by adapting the proof of Theorem \ref{theo-desingularization} presented in Section \ref{Sec-proof}, one can show that the corresponding ``lifted" system $\xi=\{\xi_1,\dots,\xi_m\}$ given by the Desingularization Algorithm proposed in this section remains \emph{nilpotent} with the same order of nilpotency. Moreover, when expressed in the privileged coordinates $z$, the system $\xi$ is equal to its own first order approximation in the canonical form. In other words, for any nilpotent systems of step $k$, the Desingularization Algorithm constructs a nilpotent system of step $k$  and free up to step $k$ which is in the canonical form in coordinates $z$.
\end{rem}

\subsection{Proof of Theorem 3.5}\label{Sec-proof}

The proof of Theorem \ref{theo-desingularization} is based on the following proposition.

\begin{prop}\label{prop-desingularization}
The desingularization algorithm is feasible from $s=1$ to $s=r$. At each step $s$ of the construction $(s=1,\dots,r)$, the following properties hold true:
\begin{enumerate}
\item[{\em (A1)}]\label{linear} the vectors $\{\xi_I^s(a^s) \ : \ I\in\ck^s\}$ are linearly independent;
\item[{\em (A2)}]\label{order-equal} if $\vert I_j\vert\leq s$, then {\em ord}$_{a^s}^s(\tilde{z}^s_j)=\vert I_j\vert$, and {\em ord}$_{a^s}^s(z^s_j)=\vert I_j\vert$;
\item[{\em (A3)}]\label{order-greater} if $\vert I_j\vert> s$, then {\em ord}$_{a^s}^s(z^s_j)>s$;
\item[{\em (A4)}]\label{change-exist} the change of coordinates $(\Psi_j^s)_{j=1,\dots,\tn_s}$ is well defined;
\item[{\em (A5)}]\label{bracket-form} for $I_k\in\ck^{s}$, the vector fields $\xi_{I_k}^s$ has the following form in coordinates $z^s$,
\begin{equation}\label{eq:bracket-form}
\xi_{I_k}^s (z^s)=\sum_{I_j\in\ch^s}(P_j^k(z^s)+R_j^k(z^s))\partial_{z_j^s} +\sum_{I_{\ell}\in\ck^s\setminus\ch^s}Q_{\ell}^k(z^s) \partial_{z_{\ell}^s},
\end{equation}
with {\em ord}$_{a^s}^s(R_j^k)>\vert I_j\vert-\vert I_k\vert$,  {\em ord}$_{a^s}^s(Q_{\ell}^k)>s-\vert I_k\vert$, and $P_j^k$ given by Eq. (\ref{eq:coro-grossman}).

More precisely, if one defines $
\check{\xi}_i^s:=\sum_{\substack{I_j\in\ch^s\\\phi(j)=i}}P_{j}(z^s)\partial_{z_j^s},
$
then, one has
$
\check{\xi}^s_{I_k}=\sum_{I_j\in\ch^s}P_j^k(z^s)\partial_{z_j^s},
$
where the polynomials $P_j^k$ verify the following properties:
\begin{itemize}
\item if $I_k\in\ch^s$, then
\begin{itemize}
\item for $\vert I_j\vert<\vert I_k\vert$, $P_j^k=0$;
\item for $\vert I_j\vert=\vert I_k\vert$, $P_j^j=1$, and $P_j^k=0$ if $k\neq j$;
\item for $\vert I_j\vert>\vert I_k\vert$, {\em ord}$_{a^s}^s(P_j^k)=\vert I_j\vert-\vert I_k\vert$;
\end{itemize}
\item if $I_k\in\ck^s\setminus\ch^s$, $P_j^k=0$ for all $j=1,\dots, \tn_s$.
\end{itemize}
\end{enumerate}
\end{prop}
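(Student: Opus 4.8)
**The plan is to prove Proposition \ref{prop-desingularization} by induction on the step index $s$, carrying along the five properties (A1)--(A5) simultaneously.** The base case $s=1$ is essentially a linear-algebra verification: the vector fields $\xi_i^1$ agree with $X_i$ on the $\Omega$-component and either vanish or equal a coordinate vector field on the new $v^1$-variables, so the evaluations $\xi_I^1(a^1)$ for $I\in\ck^1$ are visibly independent (this uses that $a\in\cv_{\cj}$, so the $X_{I_j}(a)$ for $I_j\in\cj^1$ already span a complement), giving (A1); the affine coordinates $y^1$ then exist, and the corrections in (1-4) are designed precisely so that (A2), (A3) hold at order one; (A4) and (A5) are trivial at length one since there are no brackets of length $\ge 2$ to organize yet. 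The real content is the inductive step.

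For the inductive step, assuming (A1)--(A5) at step $s-1$, I would proceed in the following order. \emph{First}, check feasibility and (A1): the new vector fields $\xi_i^s$ differ from $\xi_i^{s-1}$ only by terms $P_k(z^{s-1})\partial_{v_k^s}$ with $I_k\in\cg^s\setminus\cj^s$, and since at $a^s$ one has $P_k(z^{s-1})|_{a^s}$ compatible with the Grayson--Grossman normalization $D_{I_k}(0)=\partial_{v_k}$ (Theorem~\ref{th-grossman}), the bracket $\xi_{I_k}^s(a^s)$ picks up exactly $\partial_{v_k^s}$ modulo lower data, so the enlarged family $\{\xi_I^s(a^s): I\in\ck^s\}$ stays independent; this also makes the isomorphism $y^s$ in (s-3) well defined. \emph{Second}, establish (A5) in coordinates $y^s$: using the inductive form of $\xi_{I_k}^{s-1}$ and the explicit formula \eqref{induction-P} for the $P_k$, together with Corollary~\ref{coro1-grossman}, show that $\xi_{I_k}^s$ has the claimed shape $\sum P_j^k\partial + \sum R_j^k\partial + \sum Q_\ell^k\partial$ with the stated order inequalities --- the polynomial part $\check\xi_{I_k}^s=\sum P_j^k\partial_{z_j^s}$ matching Grossman's $D_{I_k}$ exactly because the $P_j$ were chosen to satisfy the same induction \eqref{induction-P} that defines the canonical form. \emph{Third}, derive (A2) and (A3): the corrections $r_k$ in (s-4) are built by the standard Bella\"iche-type recursion that kills, order by order, every monomial of weighted degree $<|I_j|$ appearing in $\tilde z_j^s$, so that $\tilde z_j^s$ becomes of order exactly $|I_j|$ (for $|I_j|\le s$) or $>s$ (otherwise); then (s-5), which is the Grossman homogenization $\Psi_j^s$, preserves those orders while forcing the precise canonical-form identity \eqref{change-coordinates}. \emph{Fourth}, (A4) --- existence of the $\Psi_j^s$ --- follows because after step (s-4) the vector fields already have the right leading terms up to homogeneous corrections, and the homogeneity constraint together with \eqref{change-coordinates} determines the $\Psi_j^s$ recursively in $j$ with no obstruction (this is exactly the argument that privileged coordinates can be upgraded to ones in which the nilpotent approximation is in canonical form).

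**Once the proposition is proved, Theorem~\ref{theo-desingularization} follows immediately:** taking $s=r$, property (A1) with $\ck^r=\ch^r$ says the P.~Hall family $\{\xi_{I_j}(\ta): I_j\in\ch^r\}$ is a basis of $\R^{\tn_r}$, hence the growth vector of $\xi$ at $\ta$ is $(\tn_1,\dots,\tn_r)$; since the construction is uniform in the base point $a$ (the only place $a$ enters is through the affine/polynomial normalizations, which can be done at any $p\in\cv_{\cj}$), this holds everywhere on $\cv_{\cj}\times\R^{\tn_r-n}$, so $\xi$ is free up to step $r$ --- and extending $\cv_{\cj}$ to all of $\Omega$ (by the finite cover and gluing, or simply because the formulas make sense globally) gives the statement on $\Omega\times\R^{\tn_r-n}$. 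Properties (A2)--(A3) say precisely that $\mathrm{ord}_{\ta}(z_j)=\widetilde w_j$, i.e. $z$ is privileged at $\ta$, and (A5) (or directly \eqref{change-coordinates} at $s=r$) says $\xi_{i,j}(z)=\delta_{i,\phi(j)}P_j(z_1,\dots,z_{j-1})+R_{i,j}$ with $\mathrm{ord}_{\ta}(R_{i,j})\ge\widetilde w_j$, which is exactly the statement that $\widehat\xi$ as in \eqref{cano-desingularized} is a first-order approximation of $\xi$ at $\ta$, since $\xi_i-\widehat\xi_i$ then has order $\ge 0$.

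**The main obstacle I anticipate is the bookkeeping in the inductive proof of (A5) together with the compatibility of the two recursions:** one must show that the polynomials $P_j$ defined combinatorially from the P.~Hall expansion (via \eqref{induction-P}) are the \emph{same} as the polynomials $P_j^k$ arising from iterated Lie brackets of the $\check\xi_i^s$, and that the ``error'' terms $R_j^k, Q_\ell^k$ genuinely have strictly larger order after each bracket operation --- this requires carefully tracking how Lie brackets interact with the weighted grading (a bracket of a vector field of order $\ge a$ with one of order $\ge b$ has order $\ge a+b$, but one needs the \emph{strict} inequalities to survive), and is where the argument is most delicate. The coordinate changes (s-4) and (s-5) are by contrast fairly mechanical once one knows the classical fact (Bella\"iche, \cite{Bellaiche1}) that any system of coordinates can be rectified into privileged ones and then into canonical-form-adapted ones by triangular polynomial substitutions; the novelty here is only that everything must be done by \emph{explicit} polynomial formulas rather than by integrating flows, which is already guaranteed by the way (s-4)--(s-5) are written.
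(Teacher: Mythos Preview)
Your proposal is correct and follows essentially the same approach as the paper: induction on $s$ carrying (A1)--(A5) simultaneously, with the base case $s=1$ handled by direct linear-algebra verifications and the inductive step organized around (i) checking independence via the Grayson--Grossman normalization, (ii) establishing the bracket structure, (iii) building the privileged coordinates by the Bella\"iche recursion, and (iv) upgrading to canonical form. One ordering nuance worth flagging: the paper does not establish ``(A5) in coordinates $y^s$'' as you phrase it, but rather first proves a preliminary structural form of $\xi_{I_k}^{s+1}$ in the \emph{old} coordinates $(z^s,v^{s+1})$ (its Claim~\ref{general-form-I}), uses that to get (A1), then constructs $\tilde z^{s+1}$ and $z^{s+1}$ via (A2)--(A4), and only \emph{then} re-runs the bracket computation (its Claim~\ref{prop5}) in the new coordinates $z^{s+1}$ to obtain (A5) proper --- so the bracket-structure verification is done twice, once in old coordinates and once in new, both by induction on the length $|I_k|$. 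Your identification of the main obstacle (tracking that the strict order inequalities on $R_j^k$, $Q_\ell^k$ survive each Lie bracket) is exactly right and is where the bulk of the paper's computation lies.
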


\begin{rem}\label{rem-desing}
Property (A1) implies that Step (s-3) is feasible, which, in turn, guarantees that Steps (s-4)-(a) and (s-4)-(b) are well defined, and $\tilde{z}^s$ is a system of coordinates because the differential of the application $y^s\mapsto\tilde{z}^s$ at $0$ is equal to the identity map. Property (A4) guarantees that Step (s-5)-(b) is feasible. Property (A2) ensures that, at the end of the algorithm, the system of coordinates $z^r$ is a system of privileged coordinates. Property (A5) finally ensures that for $s=r$, the approximation $\widehat{\xi}$ of $\xi$ is in canonical form.
\end{rem}

\noindent By Remark \ref{rem-desing}, Theorem \ref{theo-desingularization} is a consequence of Proposition \ref{prop-desingularization} by induction on $s$.

\begin{proof}[{Proof of Proposition \ref{prop-desingularization}}]
We begin by showing that Properties (A1)-(A5) hold true for $s=1$.

\begin{claim}\label{linear1}
The family of vectors $\{\xi_I^1(a^1)\}_{I\in\ck^1}$ is linearly independent, i.e., Property {\em (A1)} holds true for $s=1$.
\end{claim}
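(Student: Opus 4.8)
The claim asserts that the vectors $\{\xi_I^1(a^1)\}_{I\in\ck^1}$ are linearly independent, where $\ck^1=\ch^1\cup(\cj\setminus\cj^1)$ and $\xi^1_i(x,v^1)=X_i(x)+\partial_{v^1_i}$ for $i\in\cg^1\setminus\cj^1$, while $\xi^1_i(x,v^1)=X_i(x)$ for $i\in\cj^1$. The natural approach is to split $\ck^1$ into the ``length-one'' part $\ch^1=\{1,\dots,m\}$ and the remaining part $\cj\setminus\cj^1$, which consists of Hall brackets $I_j$ of length $\geq 2$ belonging to the fixed $n$-tuple $\cj$, and to exploit the product structure of $\cv^1=\cv_{\cj}\times\R^{\tilde k_1-k_1}$: the $\R^n$-component of $\xi^1_I(a^1)$ is exactly $X_I(a)$ (evaluation of the corresponding Lie element at $a$), since the extra terms $\partial_{v^1_i}$ are constant vector fields on the fiber and hence do not contribute to brackets of length $\geq 2$; for the length-one generators the $\R^n$-component is $X_i(a)$.

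First I would observe that the vector fields $\xi^1_i$ differ from $X_i$ only by the constant vector fields $\partial_{v^1_i}$ ($i\in\cg^1\setminus\cj^1$) living purely in the fiber $\R^{\tilde k_1-k_1}$. Consequently, for any Hall element $I$ of length $\geq 2$, $\xi^1_I = X_I$ (as a vector field on $\cv^1$, with no fiber component), because any bracket involving at least one factor picks up a derivative and the $\partial_{v^1_i}$ are killed; more carefully, $\xi^1_I(x,v^1)=X_I(x)$ depends only on $x$. For $I=i$ of length one, $\xi^1_i(a^1)=X_i(a)+\partial_{v^1_i}$ if $i\in\cg^1\setminus\cj^1$ and $\xi^1_i(a^1)=X_i(a)$ if $i\in\cj^1$ — in either case its projection onto $\R^n$ is $X_i(a)=X_{I_i}(a)$. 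Now suppose a linear combination $\sum_{I\in\ck^1}\lambda_I\,\xi^1_I(a^1)=0$. Projecting onto the $\R^n$-factor gives $\sum_{I\in\ck^1}\lambda_I\,X_I(a)=0$. But $\ck^1\subset\cj$ (indeed $\ch^1\subset\cj$ since $\cj$ is an $n$-tuple of Hall elements that must contain a spanning set, actually more directly: the elements of $\ck^1$ correspond to the indices $\phi(I)$ forming a subset of the $n$-tuple $\cj$, all of whose evaluations $X_{I_j}(a)$ are linearly independent by definition of $\cv_{\cj}$ in Eq.~(\ref{VJ}) and $a\in\cv_{\cj}$). Hence all $\lambda_I=0$ for $I\in\ck^1$ — wait, one must be careful that $\ck^1$ really injects into $\cj$.

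The point requiring care, and the main (minor) obstacle, is the bookkeeping: one must check that the map sending $I\in\ck^1$ to the corresponding coordinate index is injective and lands among the components of $\cj$, so that linear independence of $\{X_{I_j}(a):I_j\in\cj\}$ (which holds because $a\in\cv_{\cj}$, by Eq.~(\ref{VJ})) forces the coefficients to vanish. Concretely: $\ck^1=\ch^1\cup(\cj\setminus\cj^1)$; the generators in $\ch^1=\cj^1\sqcup(\cg^1\setminus\cj^1)$ split into those in $\cj$ (namely $\cj^1$) and the new ones in $\cg^1\setminus\cj^1$, whose $\xi^1_I(a^1)$ have independent fiber components $\partial_{v^1_i}$. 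So I would argue in two stages: project the vanishing combination to $\R^n$ to conclude $\lambda_I=0$ for $I\in\cj^1\cup(\cj\setminus\cj^1)=\cj$ (using independence of $\{X_{I_j}(a)\}_{I_j\in\cj}$, noting the brackets of length $\geq 2$ in $\ck^1$ are exactly $\cj\setminus\cj^1$ and have zero fiber component), and then read off the remaining fiber equations $\sum_{i\in\cg^1\setminus\cj^1}\lambda_i\,\partial_{v^1_i}=0$ to get $\lambda_i=0$ for $i\in\cg^1\setminus\cj^1$. Together these exhaust $\ck^1$, proving linear independence. I expect the whole argument to be short; the only thing to be vigilant about is the index-set juggling between $\ch^1$, $\cj$, $\cj^1$, $\cg^1$, and $\ck^1$.
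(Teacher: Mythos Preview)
Your approach is the same as the paper's, but the two-stage argument you give at the end has the projections in the wrong order, and this creates a genuine (though easily fixable) gap. When you project first onto the $\R^n$-factor you obtain
\[
\sum_{I\in\cj}\lambda_I\,X_I(a)\;+\;\sum_{i\in\cg^1\setminus\cj^1}\lambda_i\,X_i(a)\;=\;0,
\]
because for $i\in\cg^1\setminus\cj^1$ the vector $\xi^1_i(a^1)=X_i(a)+\partial_{v^1_i}$ also has a nonzero $\R^n$-component $X_i(a)$. From this equation alone you cannot conclude $\lambda_I=0$ for $I\in\cj$: the extra vectors $X_i(a)$ with $i\notin\cj$ lie in $\R^n$ too, and since $\{X_I(a)\}_{I\in\cj}$ is a basis of $\R^n$ they are certainly expressible in terms of the latter.

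The remedy is simply to reverse the order of the two projections. Project first onto the fiber $\R^{\tilde k_1-k_1}$: only the length-one generators $i\in\cg^1\setminus\cj^1$ have nonzero fiber component (equal to $\partial_{v^1_i}$), while every $\xi^1_I(a^1)$ with $I\in\cj$ lies in $\R^n\times\{0\}$. Hence the fiber equation reads $\sum_{i\in\cg^1\setminus\cj^1}\lambda_i\,\partial_{v^1_i}=0$, giving $\lambda_i=0$ for all $i\in\cg^1\setminus\cj^1$. Substituting back, the $\R^n$-projection becomes $\sum_{I\in\cj}\lambda_I\,X_I(a)=0$, and now independence of $\{X_I(a)\}_{I\in\cj}$ (valid since $a\in\cv_{\cj}$, Eq.~(\ref{VJ})) yields $\lambda_I=0$ for $I\in\cj$. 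Since $\ck^1=\cj\cup(\cg^1\setminus\cj^1)$, this finishes the proof, and is exactly the argument given in the paper.
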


\begin{proof}[Proof of Claim \ref{linear1}]

By construction, for every $I\in\cj$, one has $\xi_I^1(a^1)=X_I(a)$, which belongs to $\R^{n}\times\{0\}$. For $i\in\cg^1\setminus\cj^1$, the vector $\xi_i^1(a^1)$ belongs to $\R^{n}\times\R^{\tk_1-k_1}$, and the family of vectors $\{\xi_i^1(a^1)\}_{i\in\cg^1\setminus\cj^1}$ is linearly independent. Therefore, the family of vectors $\{\xi_I^1(a^1)\}_{I\in\ck^1}$ is linearly independent and Claim \ref{linear1} holds true.
\end{proof}

\begin{claim}\label{order-1}
For $j=1,\dots,\tn_1$, one has {\em ord}$_{a^1}^1(z^1_j)=1$, i.e., Property {\em (A2)} holds true for $s=1$.
\end{claim}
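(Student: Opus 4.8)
The plan is to read the order of $z^1_j$ off its value and its first nonholonomic derivatives at $a^1$. Recall from Definition~\ref{order-f} and Proposition~\ref{ndf-order} that, for the nonholonomic order $\mathrm{ord}_{a^1}^1$ associated with $(\xi_1^1,\dots,\xi_m^1)$, a smooth function $f$ has order exactly $1$ at $a^1$ if and only if $f(a^1)=0$ while at least one of the first-order nonholonomic derivatives $\xi_i^1 f$ ($i=1,\dots,m$) does not vanish at $a^1$. So the argument splits into proving ``order $\geq 1$'' and ``order $<2$''.

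For the first part, note that by Step~(1-4) of the algorithm one has $z^1_j=y^1_j$ for every $j=1,\dots,\tn_1$ (that is, for $I_j\in\ch^1$), and by Step~(1-3) the affine coordinates $y^1$ are normalized so that $y^1(a^1)=0$; hence $z^1_j(a^1)=0$ and $\mathrm{ord}_{a^1}^1(z^1_j)\geq 1$. Here Step~(1-3) is legitimate because the vectors $\{\xi_I^1(a^1):I\in\ck^1\}$ are linearly independent, which is exactly Claim~\ref{linear1}, so $y^1$ is a bona fide affine system of coordinates on $\cv^1$. For the second part, I would compute $(\xi_i^1 z^1_j)(a^1)=dy^1_j\big(\xi_i^1(a^1)\big)$. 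Since $\ck^1=\ch^1\cup(\cj\setminus\cj^1)$ with the length-one elements $\ch^1=\{1,\dots,m\}$ listed first, the $j$-th element of $\ck^1$ for $j\leq\tn_1$ is the generator $j$ itself, so Step~(1-3) gives $\partial_{y^1_j}=\xi_j^1(a^1)$ for $j=1,\dots,\tn_1$; therefore $dy^1_j\big(\xi_i^1(a^1)\big)=dy^1_j\big(\partial_{y^1_i}\big)=\delta_{ij}$ for $i,j\in\{1,\dots,m\}$. In particular $(\xi_j^1 z^1_j)(a^1)=1\neq 0$, so $z^1_j$ is not of order $\geq 2$ at $a^1$. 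Combining the two parts yields $\mathrm{ord}_{a^1}^1(z^1_j)=1$ for every $j=1,\dots,\tn_1$, which is Claim~\ref{order-1}, i.e. Property~(A2) at $s=1$.

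I do not expect a genuine obstacle in this step: it is a direct unwinding of the definitions made in Step~1 combined with the characterization of the nonholonomic order. The only points deserving care are the bookkeeping of the ordering of $\ck^1$ — so that the first $\tn_1$ coordinate vectors of $y^1$ are precisely $\xi_1^1(a^1),\dots,\xi_m^1(a^1)$ — and the fact that $y^1$ is well defined, which is handed to us by Claim~\ref{linear1}.
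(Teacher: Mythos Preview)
Your proof is correct and follows essentially the same approach as the paper's own argument: both establish $\mathrm{ord}_{a^1}^1(z^1_j)\geq 1$ from $z^1_j(a^1)=0$ and $\mathrm{ord}_{a^1}^1(z^1_j)\leq 1$ from $(\xi_j^1\cdot z^1_j)(a^1)=1$. You simply unpack in more detail what the paper summarizes as ``by construction,'' tracing the identity $(\xi_j^1\cdot z^1_j)(a^1)=\delta_{ij}$ back through Steps~(1-3) and~(1-4) and noting the role of Claim~\ref{linear1}.
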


\begin{proof}[Proof of Claim \ref{order-1}]

For $j=1,\dots,\tn_1$, one has by construction $\xi_j^1\cdot z_j^1(a^1)=1$. Thus, one has $\textrm{ord}_{a^1}^1(z_j^1)\leq 1.$ Since $z^1$ is a system of coordinates centered at $a^1$, one has $z_j^1(a^1)=0$, which implies that $\textrm{ord}_{a^1}^1(z_j^1)>0.$ Therefore, one gets  ord$_{a^1}^1(z_j^1)=1$ and Claim \ref{order-1} holds true.
\end{proof}

\begin{claim}\label{order-2}
For $I_j\in\ck^1$ with $\vert I_j\vert> 1$, one has {\em ord}$_{a^1}^1(z^1_j)>1$, i.e., Property {\em (A3)} holds true for $s=1$.
\end{claim}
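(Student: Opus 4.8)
The plan is to reduce the order estimate to the vanishing of finitely many nonholonomic derivatives, and then to check that vanishing by an elementary computation in the affine coordinates $y^1$.

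First I would invoke Proposition~\ref{ndf-order}, applied with $s=2$ to the lifted system $\xi^1=(\xi^1_1,\dots,\xi^1_m)$ on $\cv^1$: a smooth function $f$ with $f(a^1)=0$ satisfies $\mathrm{ord}^1_{a^1}(f)\ge 2$ if and only if all its first-order nonholonomic derivatives vanish at $a^1$, i.e. $\xi^1_i f(a^1)=0$ for $i=1,\dots,m$. Since $\mathrm{ord}^1_{a^1}(z^1_j)>1$ means precisely $\mathrm{ord}^1_{a^1}(z^1_j)\ge 2$, it therefore suffices to prove, for $I_j\in\ck^1\setminus\ch^1$, that $z^1_j(a^1)=0$ and $\xi^1_i z^1_j(a^1)=0$ for every $i=1,\dots,m$.

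The first condition is immediate: by (1-3) the coordinates $y^1$ are centered at $a^1$, so every $y^1_k$ vanishes there, and by (1-4) the function $z^1_j$ is $y^1_j$ plus a linear combination of the $y^1_k$; hence $z^1_j(a^1)=0$. For the second condition, I would use that $y^1$ is an \emph{affine} system of coordinates, so $z^1_j$ is an affine function of $x$ with constant differential, and that by (1-3) one has $\partial_{y^1_i}|_{a^1}=\xi^1_{I_i}(a^1)=\xi^1_i(a^1)$ for $i=1,\dots,m$ — the length-one elements of the P.~Hall basis being precisely the generators $1,\dots,m$, with matching index. Consequently $\xi^1_i y^1_k(a^1)=\langle dy^1_k(a^1),\partial_{y^1_i}|_{a^1}\rangle=\delta_{ik}$ for all $i\in\{1,\dots,m\}$ and every coordinate index $k$. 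Substituting the expression of (1-4) for $z^1_j$ into $\xi^1_i z^1_j=\langle dz^1_j,\xi^1_i\rangle$ and evaluating at $a^1$: the contribution of $y^1_j$ vanishes because the index $j$ of the length-$>1$ element $I_j$ is not among $1,\dots,m$, while the correction term, which is assembled precisely from the first-order nonholonomic derivatives of $y^1_j$ along $\xi^1_1,\dots,\xi^1_m$, cancels whatever remains. Structurally, this is the single statement that the covector $dz^1_j(a^1)$ annihilates the first-order distribution $L^1(a^1)=\mathrm{span}\{\xi^1_1(a^1),\dots,\xi^1_m(a^1)\}$, which holds by construction of $z^1$. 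This yields $\xi^1_i z^1_j(a^1)=0$ for all $i$, hence $\mathrm{ord}^1_{a^1}(z^1_j)\ge 2$, establishing Property (A3) at $s=1$.

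The argument involves no analytic subtlety; the only point requiring care is the bookkeeping between the two indexing conventions at play (positions in $\ck^1$ versus indices in the P.~Hall basis) and making sure the correction term in (1-4) behaves as described after differentiation at $a^1$. I expect this to be the main — and essentially the only — obstacle.
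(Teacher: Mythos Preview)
Your proof is correct and follows essentially the same route as the paper: both arguments reduce the claim, via Proposition~\ref{ndf-order}, to checking that $\xi^1_i z^1_j(a^1)=0$ for $i=1,\dots,m$, and both compute this using the identity $(\xi^1_i\cdot y^1_k)(a^1)=\delta_{ik}$ that comes from the definition of the affine chart $y^1$ in (1-3). The paper's computation is written as a cancellation, $\xi^1_k y^1_j(a^1)-\xi^1_k y^1_j(a^1)=0$, without observing separately that each term vanishes; your additional remark that $\xi^1_i y^1_j(a^1)=\delta_{ij}=0$ (because $j>m$) is correct but makes the phrase ``cancels whatever remains'' slightly misleading, since in fact both the main term and the correction term are individually zero.
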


\begin{proof}[Proof of Claim \ref{order-2}]

For $\vert I_j\vert\geq 2$, i.e. $I_j\in\ck^1\setminus\cj^1$, one computes $\xi_k^1\cdot z_j^1$ at $a^1$ for every $k\in\{1,\dots,\tn_1\}$.
 \begin{eqnarray*}
 \xi_k^1\cdot z_j^1(a^1)&=&\xi_k^1\cdot y_j^1(a^1)-\sum_{i=1}^{\tn_1}(\xi_i^1\cdot y_j^1)(a^1)(\xi_k^1\cdot y_i^1)(a^1)\\
 &=&\xi_k^1\cdot y_j^1(a^1)-\xi_k^1\cdot y_j^1(a^1)=0.
 \end{eqnarray*}Then, by definition, one has ord$_{a^1}^1(z_j^1)>1$ for $\vert I_j\vert>1$ and Claim \ref{order-2} holds true.
 \end{proof}

\begin{claim}\label{linear-change}
For $i=1,\dots,m$, and $j=1,\dots, \tn_1$, the $j^{\textrm{th}}-$component of $\xi_i^1$ in coordinates $z^1$ is equal to $1$ if $i=j$, and equal to $0$ otherwise. In other words, for $i=1,\dots,m$, the $\tn_1$ first components of $\xi_i^1$ verify Eq. (\ref{change-coordinates}). Properties {\em (A4)} and {\em (A5)} hold true for $s=1$.
\end{claim}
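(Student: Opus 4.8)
The plan is to exploit the fact that at step~$1$ every coordinate change is affine and the lifted control vector fields have the very rigid form $\xi_i^1 = X_i + c_i$, with $c_i = 0$ for $i\in\cj^1$ and $c_i = \partial_{v_i^1}$ for $i\in\cg^1\setminus\cj^1$. First I would record the elementary structural observation used repeatedly below: each $c_i$ is a constant vector field independent of the $v^1$-variables, and $X_i$ is independent of $v^1$ as well, so $[\xi_i^1,\xi_j^1] = [X_i,X_j]$ for all $i,j\in\{1,\dots,m\}$ and, by induction on the length, $\xi_I^1 = X_I$ (as a vector field on $\cv^1$ tangent to the $\cv_{\cj}$-factor) for every $I\in\ch^r$ with $|I|\geq 2$. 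In particular $\xi^1_{I_k}(a^1) = X_{I_k}(a)\in\R^n\times\{0\}$ whenever $|I_k|\geq 2$, whereas $\xi^1_{I_k}(a^1) = \xi^1_k(a^1)$ for the letters $I_k = k\leq\tn_1$; since $\{\xi^1_I(a^1): I\in\ck^1\}$ is already a basis of $\R^{n+\tk_1-k_1}$ by Claim~\ref{linear1}, the affine coordinates $y^1$ of Step~(1-3) are well defined.

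Next I would prove the statement on the first $\tn_1$ components. Because the letters $1\prec 2\prec\dots\prec m$ are the first $\tn_1$ elements of the ordered set $\ck^1 = \ch^1\sqcup(\cj\setminus\cj^1)$, the defining property $\partial_{y^1_\ell}|_{a^1} = \xi^1_{I_\ell}(a^1)$ of Step~(1-3) reads $\partial_{y^1_i}|_{a^1} = \xi^1_i(a^1)$ for $i=1,\dots,\tn_1$. By Step~(1-4) one has $z^1_j = y^1_j$ for these indices, so the $j$-th component of $\xi^1_i$ in the coordinates $z^1$ is the function $\xi^1_i\cdot z^1_j = \xi^1_i\cdot y^1_j = dy^1_j(\xi^1_i)$, whose value at $a^1$ is $dy^1_j(\partial_{y^1_i}|_{a^1}) = \delta_{ij}$. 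Since $\phi(j) = j$ and $P_j\equiv 1$ for $I_j\in\ch^1$, setting $R_{i,j} := \xi^1_{i,j} - \delta_{ij}$ puts the first $\tn_1$ components in the form $\xi^1_{i,j}(z^1) = \delta_{i,\phi(j)}P_j(z^1) + R_{i,j}(z^1)$ with $R_{i,j}$ vanishing at $a^1$; by Proposition~\ref{ndf-order} with $s=1$ --- a smooth function has nonholonomic order $\geq 1$ at a point iff it vanishes there --- one gets $\mathrm{ord}_{a^1}^1(R_{i,j})\geq 1 = \widetilde{w}_j$, which is exactly Eq.~(\ref{change-coordinates}) for $s=1$. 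I would point out here that this is the precise meaning of the informal phrasing ``the $j$-th component equals $1$ if $i=j$ and $0$ otherwise'': the equality holds only modulo a remainder of order $\geq\widetilde{w}_j$, and that remainder need not vanish identically when position $j$ corresponds to a length-one frame element.

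Then I would dispatch (A4) and (A5) for $s=1$. Property (A4) is immediate: there is no polynomial change $\Psi^1$ to carry out at step~$1$, the passage $y^1\mapsto z^1$ of Step~(1-4) being an invertible affine map (the identity on the first $\tn_1$ coordinates, lower unitriangular on the others), so $z^1$ is a genuine global coordinate system on $\cv^1$. For (A5): by the structural observation, $\check\xi^1_i = \sum_{I_j\in\ch^1,\ \phi(j)=i}P_j(z^1)\partial_{z^1_j} = \partial_{z^1_i}$, hence the $\check\xi^1_i$ are constant commuting fields and $\check\xi^1_{I_k} = \partial_{z^1_k}$ when $I_k\in\ch^1$ while $\check\xi^1_{I_k} = 0$ when $|I_k|\geq 2$; equivalently the polynomials in Eq.~(\ref{eq:bracket-form}) are $P^k_j = \delta_{jk}$ for $I_k\in\ch^1$ and $P^k_j = 0$ for $I_k\in\ck^1\setminus\ch^1$, which is exactly the list of properties asserted and agrees with Eq.~(\ref{eq:coro-grossman}) truncated at length~$1$. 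For the remainders: if $I_k\in\ch^1$, then $\xi^1_{I_k} - \check\xi^1_{I_k} = \xi^1_k - \partial_{z^1_k}$ has all its components vanishing at $a^1$ (those of index $\leq\tn_1$ by the previous paragraph, those of index $>\tn_1$ by the computation in the proof of Claim~\ref{order-2}), hence of order $\geq 1$, which exceeds both $|I_j| - |I_k| = 0$ and $s - |I_k| = 0$; if $|I_k|\geq 2$, then $\xi^1_{I_k} = X_{I_k}$ has smooth components, automatically of order $\geq 0 > 1 - |I_k| = s - |I_k|\geq |I_j| - |I_k|$ since $|I_j|\leq s = 1$. This establishes (A5) for $s=1$ and, together with Claims~\ref{linear1}, \ref{order-1} and \ref{order-2}, the base case $s=1$ of Proposition~\ref{prop-desingularization}.

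No genuine analytic obstacle arises: at step~$1$ everything collapses to linear algebra plus a single application of the first-order order criterion. The one point that requires care, and which I would emphasize, is the bookkeeping --- aligning the P.~Hall ordering of $\ck^1$ with the coordinate indices, tracking which quantities are exactly constant, which merely vanish at $a^1$, and which are only of controlled order, and reading ``the component equals $\delta_{ij}$'' as equality modulo a remainder of order $\geq\widetilde{w}_j$ rather than as an identity.
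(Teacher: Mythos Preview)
Your proof is correct and follows the same core idea as the paper: the affine coordinates $y^1$ are defined so that $\partial_{y^1_j}=\xi^1_j(a^1)$ for $j\leq\tn_1$, and since $z^1_j=y^1_j$ on those indices, the $j$-th component of $\xi^1_i$ equals $\delta_{ij}$ at $a^1$. The paper's own proof consists of exactly that one sentence and stops there; you go further and explicitly verify (A4) and (A5), which the paper leaves implicit. Your additional structural observation that $\xi^1_I=X_I$ for $|I|\geq 2$, your explicit computation of $\check\xi^1_{I_k}$, and your careful reading of ``equals $\delta_{ij}$'' as ``equals $\delta_{ij}$ modulo a remainder of order $\geq\widetilde w_j$'' are all correct refinements that make the argument self-contained rather than telegraphic.
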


\begin{proof}[Proof of Claim \ref{linear-change}]
By Claim \ref{linear1}, $\xi_1^1(a^1),\dots, \xi_{\tn_1}^1(a^1)$ is a basis of $\R^{\tn_1}$, and thus the linear change of coordinates $y^1$ exists. As $\partial_{y_j^1}=\xi_j^1(a^1)$, and $z_j^1=y_j^1$ for $j=1,\dots, \tn_1$, Claim \ref{linear-change} holds true.
\end{proof}

\noindent Therefore, Properties (A1)-(A5) hold true for $s=1$. Let $1\leq s\leq r$. Let us now assume that Properties (A1)-(A5) hold true for $1\leq s'\leq s$. We will show that they still hold true for $s+1$.

\begin{claim}\label{def-P}
The vector fields $\{\xi_i^{s+1}\}_{i=1,\dots,m}$ are well defined. Moreover, one has {\em ord}$_{a^{s+1}}^s(P_{k})=s$.
\end{claim}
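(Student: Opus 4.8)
The plan is to establish the two assertions in turn, under the standing induction hypothesis that Properties (A1)--(A5) hold at every step $s'\le s$.

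\medskip\noindent\emph{Well-definedness of $\xi_i^{s+1}$.} Since the algorithm is feasible through step $s$, the coordinate system $z^s$ on $\cv^s$ is already available (here (A1) makes Steps (s-3)--(s-4) go through and (A4) makes Step (s-5)-(b) go through, cf.\ Remark~\ref{rem-desing}). The only thing to check is that the coefficients $P_k(z^s)$ occurring in Step (s-2) (applied with $s$ replaced by $s+1$) make sense, i.e.\ that for $I_k\in\cg^{s+1}$ the monomial $P_k$ involves only coordinates $z^s_\ell$ with $|I_\ell|\le s$. Writing $I_k=[I_{k_1},I_{k_2}]$ and using the recursion (\ref{induction-P}), one has $P_k=\frac{z_{k_1}}{\alpha^{k_1}_{k_2}+1}\,P_{k_2}$ with $|I_{k_1}|,|I_{k_2}|<|I_k|=s+1$; by induction on the length this forces every letter appearing in $P_k$ to have length $\le s$, hence to lie in $\ch^s\subset\ck^s$ and to be among $z^s_1,\dots,z^s_{\tn_s}$. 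Consequently $P_k(z^s)$ is a well-defined polynomial on $\cv^{s+1}=\cv^s\times\R^{\tk_{s+1}-k_{s+1}}$, independent of the new fibre coordinates $v^{s+1}$, and so $\xi_i^{s+1}=\xi_i^s+\sum_{I_k\in\cg^{s+1}\setminus\cj^{s+1},\ \phi(k)=i}P_k(z^s)\,\partial_{v_k^{s+1}}$ is a genuine smooth vector field on $\cv^{s+1}$.

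\medskip\noindent\emph{Order of $P_k$.} Since $P_k$ does not depend on $v^{s+1}$ while $\xi_i^{s+1}-\xi_i^s$ is a linear combination of the $\partial_{v_k^{s+1}}$'s, every nonholonomic derivative of $P_k$ along $(\xi^{s+1}_1,\dots,\xi^{s+1}_m)$ equals the corresponding one along $(\xi^{s}_1,\dots,\xi^{s}_m)$; hence $\mathrm{ord}^s_{a^{s+1}}(P_k)=\mathrm{ord}^s_{a^s}(P_k)$ and it suffices to compute the latter. By (A2) at step $s$, the first $\tn_s$ components of $z^s$ are privileged coordinates for $(\xi^s)$ at $a^s$ with $\mathrm{ord}^s_{a^s}(z^s_\ell)=|I_\ell|=\widetilde w_\ell$ for $|I_\ell|\le s$, so by the algebraic rule that the order of a monomial in privileged coordinates equals its weighted degree we get $\mathrm{ord}^s_{a^s}(P_k)=\sum_\ell\alpha_k^\ell\,|I_\ell|$. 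Finally $I_k$ is obtained from the single letter $I_{\phi(k)}$ (of length $1$) by bracketing $\alpha_k^\ell$ times with each $I_\ell$ — this is exactly the content of the Hall expansion (\ref{Hall_expansion}) and of the definition of $\alpha_k$ — so $|I_k|=1+\sum_\ell\alpha_k^\ell|I_\ell|$, whence $\mathrm{ord}^s_{a^{s+1}}(P_k)=|I_k|-1=s$.

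\medskip The only delicate point is the assertion that the first $\tn_s$ components of $z^s$ are genuinely privileged for $(\xi^s)$ at $a^s$, i.e.\ that the $\xi^s$-weight of $z^s_\ell$ is really $|I_\ell|$: this is not (A2) by itself, but follows from $n_{s'}(\xi^s,a^s)=\tn_{s'}$ for $s'\le s$, which is furnished by (A1) together with the structural form (A5) of the iterated brackets $\xi^s_{I_k}$. If one wishes to avoid this, the order of $P_k$ can be obtained directly by induction on $|I_k|$ from the factorization $P_k=\frac{z_{k_1}}{\alpha^{k_1}_{k_2}+1}P_{k_2}$: $\mathrm{ord}(z_{k_1})=|I_{k_1}|$ by (A2), $\mathrm{ord}(P_{k_2})=|I_{k_2}|-1$ by the inductive hypothesis, the order of the product of these two monomials is the sum of their orders, and the base case $I_k\in\ch^1$, $P_k=1$, has order $0$; this gives $\mathrm{ord}(P_k)=|I_{k_1}|+|I_{k_2}|-1=|I_k|-1=s$. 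The remaining verifications are routine bookkeeping and I would not spell them out.
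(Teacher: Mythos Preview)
Your proof is correct and follows the same route as the paper. Your second argument (the inductive one via $P_k=\frac{z_{k_1}}{\alpha^{k_1}_{k_2}+1}P_{k_2}$, using (A2) for $\mathrm{ord}(z_{k_1})=|I_{k_1}|$ and the inductive hypothesis for $\mathrm{ord}(P_{k_2})=|I_{k_2}|-1$) is precisely the paper's proof; your first argument via the weighted degree and the Hall expansion $|I_k|=1+\sum_\ell\alpha_k^\ell|I_\ell|$ is a minor repackaging of the same computation. You are in fact slightly more careful than the paper in two places: you justify why $\mathrm{ord}^s_{a^{s+1}}$ and $\mathrm{ord}^s_{a^s}$ agree on functions of $z^s$ alone, and you flag (and resolve) the point that the additivity of order on these particular monomials relies on the first $\tn_s$ coordinates behaving like privileged ones, which is implicit in the paper.
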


\begin{proof}[Proof of Claim \ref{def-P}]
Consider $I_k\in\cg^{s+1}\setminus\cj^{s+1}$, then one has $I_k=[I_{k_1},I_{k_2}]$. By Eq. (\ref{induction-P}), one has
$$
P_{k}(z^{s})=\frac{z_{k_1}^{s}}{\alpha_{k_2}^{k_1}+1}P_{k_2}(z^{s}).
$$
Since $\vert I_{k_1}\vert \leq s$ and $\vert I_{k_2}\vert \leq s$, we have $k_1\leq \tn_{s}$ and $k_2\leq \tn_{s}$, thus the right-hand side of the above equation is well defined in coordinates $z^{s}=(z_1^{s},\dots, z_{\tn_{s}}^{s})$.   Therefore, the new vector fields $\{\xi_i^{s+1}\}_{i=1,\dots,m}$ are also well defined. Since ord$_{a^{s+1}}^s({z_{k_1}^{s}}P_{k_2})=$ ord$_{a^{s+1}}^s(z_{k_1}^s)+$ord$_{a^{s+1}}^s(P_{k_2})$, and by inductive hypothesis (namely (A2) holds true at step s), one has $\textrm{ord}_{a^{s+1}}^s(z_{k_1}^s)=\vert I_{k_1}\vert, \textrm{ and }\ \textrm{ord}_{a^{s+1}}^s(P_{k_2})=\vert I_{k_2}\vert-1.$ Therefore, one has $
\textrm{ord}_{a^{s+1}}^s(P_{k})=\vert I_{k_1}\vert+\vert I_{k_2}\vert-1=s.
$\end{proof}


\begin{claim}\label{general-form-I}
For $I_k\in\ck^{s+1}$ with $\vert I_k\vert\leq s+1$, one has
\begin{equation}\label{eq:general-form-I}
\xi_{I_k}^{s+1} (z^s,v^{s+1})=\xi_{I_k}^s(z^s)+ \sum_{I_j\in\cg^{s+1}\setminus\cj^{s+1}}\tilde{P}_j^k(z^s)\partial_{v_j^{s+1}},
\end{equation}where $
\tilde{P}_j^k(z^s)=P_j^k(z_1^s,\dots,z_{\tn_s}^s)+\tilde{R}_j^k(z^s),$ with  {\em ord}$_{a^{s+1}}^s(P_j^k)=\vert I_j\vert-\vert I_k\vert$ and
{\em ord}$_{a^{s+1}}^s(\tilde{R}_j^k)>\vert I_j\vert-\vert I_k\vert$.
\end{claim}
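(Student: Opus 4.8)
The proof will run inside the induction on the step index $s$ already in progress, so Properties (A1)--(A5) are available for every $s'\le s$ and Claim~\ref{def-P} is established; on top of that I would carry out an inner induction on the length $\vert I_k\vert$ of the P.\ Hall element, from $1$ to $s+1$. The first goal is the \emph{structural form}: $\xi_{I_k}^{s+1}$ differs from $\xi_{I_k}^{s}$ only by a ``vertical'' field $\zeta_k:=\sum_{I_j\in\cg^{s+1}\setminus\cj^{s+1}}\tilde P_j^k(z^s)\,\partial_{v_j^{s+1}}$ whose coefficients depend on $z^s$ alone. For $\vert I_k\vert=1$ this is literally Step~(s-2), with $\tilde P_j^i=P_j(z^s)$ when $\phi(j)=i$ and $0$ otherwise. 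For the inductive step write $I_k=[I_{k_1},I_{k_2}]$; then $\vert I_{k_1}\vert,\vert I_{k_2}\vert\le s$, so $I_{k_1},I_{k_2}\in\ck^{s+1}$ and the inner hypothesis applies. Since the evaluation map is a Lie-algebra morphism, $\xi_{I_k}^{s+1}=[\xi_{I_{k_1}}^{s+1},\xi_{I_{k_2}}^{s+1}]=[\xi_{I_{k_1}}^{s},\xi_{I_{k_2}}^{s}]+[\xi_{I_{k_1}}^{s},\zeta_{k_2}]+[\zeta_{k_1},\xi_{I_{k_2}}^{s}]+[\zeta_{k_1},\zeta_{k_2}]$. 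The first term is $\xi_{I_k}^{s}$; the last vanishes because two fields pointing only in the $v^{s+1}$ directions with coefficients independent of $v^{s+1}$ commute; and each middle term is again vertical with $z^s$-dependent coefficients, since a field with components and coefficients in $z^s$ only neither creates horizontal components nor differentiates the $v^{s+1}$-coefficients of a vertical field. Reading off coefficients gives the recursion $\tilde P_j^k=\xi_{I_{k_1}}^{s}\!\cdot\tilde P_j^{k_2}-\xi_{I_{k_2}}^{s}\!\cdot\tilde P_j^{k_1}$.

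Next I would isolate the leading term and prove the order estimates, still by the inner induction, splitting $\tilde P_j^k=P_j^k(z_1^s,\dots,z_{\tn_s}^s)+\tilde R_j^k$. The base case is the remark above together with $\mathrm{ord}_{a^{s+1}}^s(P_j)=\widetilde w(\alpha_j)=\vert I_j\vert-1=\vert I_j\vert-\vert I_k\vert$ (Claim~\ref{def-P} and (A2) at step~$s$), $P_j$ being exactly the $\partial_{v_j}$-coefficient of the canonical field $D_i$. For the inductive step, insert $\tilde P_j^{k_i}=P_j^{k_i}+\tilde R_j^{k_i}$ into the recursion and use (A5) at step~$s$: $\xi_{I_{k_i}}^{s}=\sum_{I_l\in\ch^s}\big(P_l^{k_i}+R_l^{k_i}\big)\partial_{z_l^s}+\sum_{I_m\in\ck^s\setminus\ch^s}Q_m^{k_i}\partial_{z_m^s}$, where, acting on a polynomial in $z_1^s,\dots,z_{\tn_s}^s$, only the $\ch^s$-part contributes (the coordinates $z_m^s$, $I_m\in\ck^s\setminus\ch^s$, simply do not occur). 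As $\xi_{I_{k_i}}^{s}$ is a length-$\vert I_{k_i}\vert$ bracket of the control fields, it has nonholonomic order $\ge-\vert I_{k_i}\vert$ and the $R_l^{k_i}\partial_{z_l^s}$ terms have order $>-\vert I_{k_i}\vert$ by (A5); a bookkeeping of orders then shows that the contributions of the $R_l^{k_i}$ and of the $\tilde R_j^{k_i}$ all have order $>\vert I_j\vert-\vert I_k\vert$, so that, setting $P_j^k:=\check\xi^{s}_{I_{k_1}}\!\cdot P_j^{k_2}-\check\xi^{s}_{I_{k_2}}\!\cdot P_j^{k_1}$ with $\check\xi^{s}_{I_{k_i}}=\sum_{I_l\in\ch^s}P_l^{k_i}\partial_{z_l^s}$ as in (A5), and collecting the remainder into $\tilde R_j^k$, one obtains $\mathrm{ord}_{a^{s+1}}^s(\tilde R_j^k)>\vert I_j\vert-\vert I_k\vert$.

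Finally I would identify this $P_j^k$ with the polynomial of \eqref{eq:coro-grossman} and read off its order. Because $I_j\in\cg^{s+1}\setminus\cj^{s+1}$, the polynomial $P_j^{k_2}$ is weighted-homogeneous of degree $\vert I_j\vert-\vert I_{k_2}\vert\le s$, hence involves only variables $z_l^s$ with $\vert I_l\vert\le s$, i.e.\ $I_l\in\ch^s$; therefore $\check\xi^{s}_{I_{k_1}}\!\cdot P_j^{k_2}$ coincides with $D_{I_{k_1}}\!\cdot P_j^{k_2}$ as a polynomial identity, and symmetrically. Thus $P_j^k=D_{I_{k_1}}\!\cdot P_j^{k_2}-D_{I_{k_2}}\!\cdot P_j^{k_1}$, which is precisely the $\partial_{v_j}$-component of $[D_{I_{k_1}},D_{I_{k_2}}]=D_{I_k}$, i.e.\ the coefficient $P_j^k$ of \eqref{eq:coro-grossman}; being homogeneous of weighted degree $\vert I_j\vert-\vert I_k\vert$, (A2) at step~$s$ yields $\mathrm{ord}_{a^{s+1}}^s(P_j^k)=\vert I_j\vert-\vert I_k\vert$. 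The main obstacle is exactly this order bookkeeping: making sure that every error term coming from $R_l^{k_i}$, from $\tilde R_j^{k_i}$ (and the harmless $Q_m^{k_i}$, which never act on the polynomials in play) is genuinely of strictly higher order, and that passing to the leading parts $\check\xi^s$ reproduces the Grayson--Grossman canonical recursion with no loss of weighted degree.
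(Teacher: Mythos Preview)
Your proposal is correct and follows essentially the same approach as the paper: an inner induction on $\vert I_k\vert$, the base case from Step~(s-2) and Claim~\ref{def-P}, the inductive step by expanding $[\xi_{I_{k_1}}^{s+1},\xi_{I_{k_2}}^{s+1}]$, then using (A5) at step~$s$ to separate the leading part and control the orders of the remainders, with the identification of $P_j^k$ via the Grayson--Grossman recursion (the paper phrases this last point as an appeal to Corollary~\ref{coro2-grossman}). Your presentation is slightly more conceptual (isolating the ``vertical'' field $\zeta_k$ first, noting $[\zeta_{k_1},\zeta_{k_2}]=0$), whereas the paper writes out each order estimate explicitly; one small caution is that the $Q_m^{k_i}\partial_{z_m^s}$ terms, while indeed not acting on the leading polynomials $P_j^{k_2}$, can act on $\tilde R_j^{k_2}$ at later induction steps and must be included in the order bookkeeping (the paper handles them via $\mathrm{ord}(Q_\ell^{k_1})>s-\vert I_{k_1}\vert\ge s+1-\vert I_k\vert$).
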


\begin{proof}[Proof of Claim \ref{general-form-I}]
The proof goes by induction on the length $\vert I_k\vert$. For $\vert I_k\vert=1$, one has
$$
\xi_k^{s+1}(z^s,v^{s+1})=\xi_k^s(z^s)+\sum_{\substack{I_j\in\cg^{s+1}\setminus\cj^{s+1} \\ \phi(j)=k}}P_{j}(z^s)\partial_{v_j^{s+1}}.
$$
By Claim \ref{def-P}, if $\phi(j)=k$, then ord$_{a^{s+1}}^s(P_{j})=s=\vert I_j\vert-\vert I_k\vert$. Claim \ref{general-form-I} holds true for $\vert I_k\vert=1$.

Assume that Claim \ref{general-form-I} holds true for every $I\in\ck^{s+1}$ of length less than or equal to $s_1$. Consider $I_k\in\ck^{s+1}$ with $\vert I_k\vert=s_1+1$. In coordinates $(z^s,v^{s+1})$, one has
\begin{eqnarray*}
\xi_{I_k}^{s+1}&=&[\xi_{I_{k_1}}^{s+1}, \xi_{I_{k_2}}^{s+1}]=[\xi_{I_{k_1}}^s+ \sum_{I_i\in\cg^{s+1}\setminus\cj^{s+1}}(P_i^{k_1}+\tilde{R}_i^{k_1}) \partial_{v_i^{s+1}},
\xi_{I_{k_2}}^s+\sum_{I_i\in\cg^{s+1} \setminus\cj^{s+1}}(P_i^{k_2}+\tilde{R}_i^{k_2})\partial_{v_i^{s+1}}]\\
&=&[\xi_{I_{k_1}}^{s}, \xi_{I_{k_2}}^{s}]+\sum_{I_i\in\cg^{s+1}\setminus\cj^{s+1}} \{\xi_{I_{k_2}}^s\cdot(P_i^{k_1}+\tilde{R}_i^{k_1})-\xi_{I_{k_1}}^s \cdot(P_i^{k_2}+\tilde{R}_i^{k_2})\} \partial_{v_i^{s+1}}.
\end{eqnarray*}
Since (A5) holds true up to step $s$, one has
\begin{eqnarray*}
&& \xi_{I_{k_1}}^s\cdot(P_i^{k_2}+\tilde{R}_i^{k_2}) = \left[\sum_{I_j\in\ch^s}(P_j^{k_1}+R_j^{k_1})\partial_{z_j^s}+\sum_{I_{\ell}\in\ck^s\setminus\ch^s} Q_{\ell}^{k_1}\partial_{z_{\ell}^s}\right]\cdot(P_i^{k_2}+\tilde{R}_i^{k_2})\\
&=&\sum_{I_j\in\ch^s}(P_j^{k_1}+R_j^{k_1})\partial_{z_j^s}P_i^{k_2}+ \sum_{I_{\ell}\in\ck^s\setminus\ch^s}Q_{\ell}^{k_1}\partial_{z_{\ell}^s}P_i^{k_2}+\sum_{I_j\in\ch^s}(P_j^{k_1}+R_j^{k_1})\partial_{z_j^s}\tilde{R}_i^{k_2}+ \sum_{I_{\ell}\in\ck^s\setminus\ch^s}Q_{\ell}^{k_1}\partial_{z_{\ell}^s}\tilde{R}_i^{k_2}\\
&=&\sum_{I_j\in\ch^s}P_j^{k_1}\partial_{z_j^s}P_i^{k_2}+ \left[\sum_{I_j\in\ch^s}R_j^{k_1}\partial_{z_j^s}P_i^{k_2}+ \sum_{I_j\in\ch^s}(P_j^{k_1}+R_j^{k_1})\partial_{z_j^s}\tilde{R}_i^{k_2} + \sum_{I_{\ell}\in\ck^s\setminus\ch^s}Q_{\ell}^{k_1}\partial_{z_{\ell}^s}\tilde{R}_i^{k_2}\right]\\
&:=&\sum_{I_j\in\ch^s}P_j^{k_1}\partial_{z_j^s}P_i^{k_2}+\mathcal{R}_{i,1}.
\end{eqnarray*}

We first show that every term in $\mathcal{R}_{i,1}$ has, at $a^{s+1}$,  an order strictly greater than $s+1-\vert I_k\vert$.\smallskip

Indeed, for $I_j\in\ch^s$, since $\textrm{ord}_{a^{s+1}}^s(z_j)=\vert I_j\vert,\ \textrm{ord}_{a^{s+1}}^s(P_i^{k_2})=\vert I_i\vert-\vert I_{k_2}\vert,$ $\textrm{ and }\textrm{ord}_{a^{s+1}}^s(R_j^{k_1})>\vert I_j\vert-\vert I_{k_1}\vert,$ one has $\textrm{ord}_{a^{s+1}}^s(R_j^{k_1}\partial_{z_j^s}P_i^{k_2})>\vert I_j\vert-\vert I_{k_1}\vert+(\vert I_i\vert-\vert I_{k_2}\vert)-\vert I_j\vert=\vert I_i\vert-\vert I_k\vert,$ with $\vert I_i\vert=s+1.$ Note that ord$_{a^{s+1}}^s((P_j^{k_1}+R_j^{k_1})\partial_{z_j^s}\tilde{R}_i^{k_2})=$ ord$_{a^{s+1}}^s(P_j^{k_1}\partial_{z_j^s}\tilde{R}_i^{k_2})$. Since $\textrm{ord}_{a^{s+1}}^s(P_j^{k_1})=\vert I_j\vert-\vert I_{k_1}\vert,\ \textrm{ and } \textrm{ord}_{a^{s+1}}^s(\tilde{R}_i^{k_2})>\vert I_i\vert-\vert I_{k_1}\vert,$ then, one has
$\textrm{ord}_{a^{s+1}}^s(P_j^{k_1}\partial_{z_j^s}\tilde{R}_i^{k_2})>\vert I_j\vert-\vert I_{k_2}\vert+\vert I_i\vert-\vert I_{k_1}\vert-\vert I_j\vert=\vert I_i\vert-\vert I_k\vert.$ Recall that, by definition, all the functions have positive order. Therefore,  one gets
$\textrm{ord}_{a^{s+1}}^s(Q_{\ell}^{k_1}\partial_{z_{\ell}^s}\tilde{R}_i^{k_2})\geq \textrm{ord}_{a^{s+1}}^s(Q_{\ell}^{k_1})>s-\vert I_{k_1}\vert\geq s+1-\vert I_k\vert.$ In conclusion, one gets $\textrm{ord}_{a^{s+1}}^s(\mathcal{R}_{i,1})>s+1-\vert I_k\vert.$

A similar computation shows that
\begin{eqnarray*}
&&\xi_{I_{k_2}}^s\cdot(P_i^{k_1}+\tilde{R}_i^{k_1})\\
&=&\sum_{I_j\in\ch^s}P_j^{k_2} \partial_{z_j^s}P_i^{k_1}+\left[\sum_{I_j\in\ch^s}R_j^{k_2} \partial_{z_j^s}P_i^{k_1}+\sum_{I_j\in\ch^s}(P_j^{k_2}+R_j^{k_2}) \partial_{z_j^s}\tilde{R}_i^{k_1}+\sum_{I_{\ell}\in\ck^s\setminus\ch^s} Q_{\ell}^{k_2}\partial_{z_{\ell}^s}\tilde{R}_i^{k_1}\right]\\
&:=&\sum_{I_j\in\ch^s}P_j^{k_2}\partial_{z_j^s}P_i^{k_1}+\mathcal{R}_{i,2}, \textrm{ with ord}_{a^{s+1}}^s(\mathcal{R}_{i,2})>s+1-\vert I_k\vert.
\end{eqnarray*}

\noindent Therefore, one gets
\begin{eqnarray*}
\xi_{I_k}^{s+1}&=&\xi_{I_k}^s+\sum_{I_i\in\cg^{s+1}\setminus \cj^{s+1}}\{\sum_{I_j\in\ch^s}(P_j^{k_1}\partial_{z_j^s}P_i^{k_2}-P_j^{k_2} \partial_{z_j^s}P_i^{k_1})\}\partial_{v_i}+\sum_{I_i\in\cg^{s+1}\setminus\cj^{s+1}}(\mathcal{R}_{i,1}+\mathcal{R}_{i,2})\partial_{v_i},
\end{eqnarray*}with $\textrm{ord}_{a^{s+1}}^s(\mathcal{R}_{i,1}+\mathcal{R}_{i,2})\geq\min\left(\textrm{ord}_{a^{s+1}}^s(\mathcal{R}_{i,1}),\ \textrm{ord}_{a^{s+1}}^s(\mathcal{R}_{i,2})\right)>s+1-\vert I_k\vert.$ 

Since Corollary \ref{coro2-grossman} implies that $\displaystyle\sum_{I_j\in\ch^s}(P_j^{k_1}\partial_{z_j^s}P_i^{k_2}-P_j^{k_2}\partial_{z_j^s}P_i^{k_1})=P_i^k,$ and ord$_{a^{s+1}}^s(P_i^k)=\vert I_i\vert-\vert I_k\vert$, one gets
$$
\xi_{I_k}^{s+1}(z^s,v^{s+1})=\xi_{I_k}^s (z^s)+\sum_{I_i\in\cg^{s+1}\setminus \cj^{s+1}}(P_i^k(z^s)+\tilde{R}_i^k(z^s))\partial_{v_i},
$$
with
$
\textrm{ord}_{a^{s+1}}^s(P_i^k)=s+1-\vert I_k\vert \quad \hbox{and} \quad \textrm{ord}_{a^{s+1}}^s(\tilde{R}_i^k)>s+1-\vert I_k\vert.$ Therefore, Claim \ref{general-form-I} still holds true for $I_k\in\ck^{s+1}$ with $\vert I_k\vert=s_1+1$. This terminates the induction, and Claim \ref{general-form-I} is now proved.
\end{proof}

\begin{claim}\label{linearS}
The family of vectors $\{\xi_{I_k}^{s+1}(a^{s+1})\}_{I_k\in\ck^{s+1}}$ is linearly independent, i.e., Property {\em (A1)} holds true at Step $s+1$.
\end{claim}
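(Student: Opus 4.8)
The plan is to prove that the square matrix whose columns are the vectors $\xi_{I_k}^{s+1}(a^{s+1})$, $I_k\in\ck^{s+1}$, expressed in the coordinates $(z^s,v^{s+1})$ on $\cv^{s+1}$ produced by the algorithm at step $s+1$, is block upper-triangular with invertible diagonal blocks, hence invertible. Write the tangent space at $a^{s+1}$ as $\R^{N}\oplus\R^{N'}$, where $N:=\dim\cv^s$ and $N':=\tk_{s+1}-k_{s+1}$, the first summand carrying $\{\partial_{z_j^s}\}$ and the second $\{\partial_{v_j^{s+1}}\}_{I_j\in\cg^{s+1}\setminus\cj^{s+1}}$, and order $\ck^{s+1}=\ck^s\sqcup(\cg^{s+1}\setminus\cj^{s+1})$ accordingly. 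Since $\xi_i^{s+1}=\xi_i^s+\sum_{\phi(k)=i}P_k(z^s)\partial_{v_k^{s+1}}$, each $\xi_i^{s+1}$ is $\pi$-related to $\xi_i^s$ for the projection $\pi\colon(z^s,v^{s+1})\mapsto z^s$; hence so is every bracket $\xi_{I_k}^{s+1}$, and therefore $\xi_{I_k}^{s+1}(a^{s+1})=\bigl(\xi_{I_k}^s(a^s),\,w_k\bigr)$ for some $w_k\in\R^{N'}$.

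Two of the blocks are then read off directly. The top-left block (rows $\R^N$, columns $I_k\in\ck^s$) is the matrix of $\{\xi_{I_k}^s(a^s)\}_{I_k\in\ck^s}$, invertible by the inductive hypothesis (A1) at step $s$. For the bottom-right block (rows $\R^{N'}$, columns $I_k\in\cg^{s+1}\setminus\cj^{s+1}$): such $I_k$ have $|I_k|=s+1$, so Claim~\ref{general-form-I} gives $w_k=\sum_{I_j}\tilde P_j^k(0)\partial_{v_j^{s+1}}$ with $\tilde P_j^k=P_j^k+\tilde R_j^k$; since $|I_j|=|I_k|$, Corollary~\ref{coro1-grossman} yields $P_j^k=\delta_{jk}$, while $\tilde R_j^k(0)=0$ because $\mathrm{ord}_{a^{s+1}}^s(\tilde R_j^k)>|I_j|-|I_k|=0$; hence $w_k=\partial_{v_k^{s+1}}$ and this block is the identity.

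The remaining task — the heart of the proof — is to show that the bottom-left block vanishes, i.e. $w_k=0$ for every $I_k\in\ck^s$. For $|I_k|\le s$ this follows from Claim~\ref{general-form-I}, since then $\mathrm{ord}_{a^{s+1}}^s(\tilde P_j^k)\ge|I_j|-|I_k|=s+1-|I_k|\ge1$; for $|I_k|=s+1$ (so $I_k\in\cj^{s+1}$) the index $k$ is absent from the sum over $\cg^{s+1}\setminus\cj^{s+1}$, so $\tilde P_j^k(0)=P_j^k(0)=0$ by Corollary~\ref{coro1-grossman}. The case $|I_k|>s+1$ (so $I_k\in\cj$) is the main obstacle: here $P_j^k\equiv0$ (Corollary~\ref{coro1-grossman}, as $|I_j|=s+1<|I_k|$), so one must show that the remainder $\tilde R_j^k$ still vanishes at $a^{s+1}$. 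I would obtain this by extending the nested-bracket induction used to prove Claim~\ref{general-form-I}: writing $\xi_{I_k}^{s+1}=\xi_{I_k}^s+\sum_{I_j\in\cg^{s+1}\setminus\cj^{s+1}}F_j^k(z^s)\partial_{v_j^{s+1}}$, one has $F_j^k=\xi_{I_{k_1}}^s\cdot F_j^{k_2}-\xi_{I_{k_2}}^s\cdot F_j^{k_1}$ for $I_k=[I_{k_1},I_{k_2}]$, started from $F_j^i=P_j$ when $\phi(j)=i$; using that each $P_j$ depends only on the Hall coordinates $z_\ell^s$ ($I_\ell\in\ch^s$) and has $\mathrm{ord}_{a^{s+1}}^s(P_j)=s$, and that Lie differentiation along the $\xi_i^s$ strictly improves the order of such coefficients (the very order bookkeeping of (A5) at step $s$ already invoked inside Claim~\ref{general-form-I}), one should reach $\mathrm{ord}_{a^{s+1}}^s(F_j^k)\ge1$, hence $F_j^k(a^{s+1})=0$. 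Controlling this strict order gain through brackets of unbounded length is the delicate point.

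With the bottom-left block zero, the matrix is block upper-triangular with invertible diagonal blocks, hence invertible; thus $\{\xi_{I_k}^{s+1}(a^{s+1})\}_{I_k\in\ck^{s+1}}$ is a basis of the tangent space at $a^{s+1}$, in particular linearly independent, which is (A1) at step $s+1$ and proves the Claim.
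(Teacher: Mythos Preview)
Your block upper-triangular strategy is exactly the paper's: show that the $v^{s+1}$-components of $\xi_{I_k}^{s+1}(a^{s+1})$ vanish for $I_k\in\ck^s$ and equal $\partial_{v_k^{s+1}}$ for $I_k\in\cg^{s+1}\setminus\cj^{s+1}$, then combine with (A1) at step~$s$. The paper dispatches the first claim in one sentence by citing Claim~\ref{general-form-I} for all $I_k\in\ck^s$, without separating out your three subcases; your treatment of $|I_k|\le s$ and of $|I_k|=s+1$ with $I_k\in\cj^{s+1}$ is correct and simply unpacks what that citation means. You are also right to notice that Claim~\ref{general-form-I} is only \emph{stated} for $|I_k|\le s+1$, so elements $I_k\in\cj^{s+2}\cup\cdots\cup\cj^r\subset\ck^s$ are not formally covered --- the paper passes over this in silence.

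Where your proposal fails is in the fix you propose for that last case. Lie differentiation along $\xi_i^s$ \emph{lowers} the nonholonomic order by one (since $\mathrm{ord}_{a^s}^s(\xi_i^s)=-1$); it does not ``strictly improve'' it. Running the bracket recursion $F_j^{[I,J]}=\xi_I^s\cdot F_j^J-\xi_J^s\cdot F_j^I$ from $F_j^i=P_j$ with $\mathrm{ord}_{a^{s+1}}^s(P_j)=s$ therefore yields only $\mathrm{ord}_{a^{s+1}}^s(F_j^k)\ge s+1-|I_k|$, which is exactly the estimate in Claim~\ref{general-form-I} and is nonpositive precisely when $|I_k|\ge s+1$; it says nothing about $F_j^k(a^{s+1})$. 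Noting that the homogeneous part $P_j^k$ vanishes (nilpotency of the canonical model at step $s+1$) does not rescue the argument either: the bound on the remainder $\tilde R_j^k$ coming from Claim~\ref{general-form-I} is still $>s+1-|I_k|<0$, hence vacuous. So the vanishing of the bottom-left block for $|I_k|>s+1$ is not established by the order bookkeeping you invoke, and both your argument and the paper's short proof share this lacuna.
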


\begin{proof}[Proof of Claim \ref{linearS}]
Claim \ref{general-form-I} implies that for every $I_k\in\ck^{s}$, one has $\xi_{I_k}^{s+1}(a^{s+1})=\xi_{I_k}^{s}(a^{s})\in\mathbb{R}^{\tn_s}\times\{0\}.$ Corollary \ref{coro2-grossman} implies that for every $I_k\in\cg^{s+1}\setminus\cj^{s+1}$, one has $$\xi_{I_k}^{s+1}(a^{s+1})=\xi_{I_k}^{s}(a^{s})+\partial_{v_k}\in\mathbb{R}^{\tn_s}\times\mathbb{R}^{\tk_{s+1}-k_{s+1}}.$$ Therefore, by (A1) at step s, the vectors $\{\xi_{I_k}^{s+1}(a^{s+1})\}_{I_k\in\ck^{s+1}}$ are linearly independent.
\end{proof}

\begin{claim}\label{claim-order}
After performing {\em (s+1)-4-(a)} and {\em (s+1)-4-(b)} in the Desingularization Algorithm, one has, for every $I_j\in\ch^{s+1}$, $\textrm{\em ord}_{a^{s+1}}^{s+1}(\tilde{z}^{s+1}_j)=\vert I_j\vert,$ and for every $I_j\in\ck^{s+1}\setminus\ch^{s+1}$, $\textrm{\em ord}_{a^{s+1}}^{s+1}(z_j^{s+1})>s+1$.
\end{claim}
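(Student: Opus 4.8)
The plan is to follow the classical scheme for upgrading adapted coordinates into privileged ones (cf.~\cite{Bellaiche1}), applied here to the lifted frame $\{\xi_{I}^{s+1}:I\in\ck^{s+1}\}$. By Step~(s+1)-3 together with Claim~\ref{linearS}, the coordinates $y^{s+1}$ are well defined and satisfy $\xi_{I_k}^{s+1}\cdot y_j^{s+1}(a^{s+1})=\delta_{kj}$ for all $I_k,I_j\in\ck^{s+1}$; in particular $\xi_{I_j}^{s+1}(a^{s+1})=\partial_{y_j^{s+1}}$. I would first record two elementary facts. Each $\xi_i^{s+1}$ lowers the nonholonomic order of a function by at most one (a direct consequence of Proposition~\ref{ndf-order}), so $\xi_i^{s+1}$ has order $\geq -1$ at $a^{s+1}$, and since the order of a Lie bracket is at least the sum of the orders of its arguments (immediate from Definition~\ref{order-X}), $\xi_{I_j}^{s+1}$ has order $\geq -|I_j|$. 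Moreover every $r_k$ is a sum of monomials of degree $\geq 2$ in $y^{s+1}$, so $\tilde z_j^{s+1}$ and $y_j^{s+1}$ have the same linear part in these coordinates; hence $\xi_{I_j}^{s+1}\cdot\tilde z_j^{s+1}(a^{s+1})=1$, which forces $\mathrm{ord}_{a^{s+1}}^{s+1}(\tilde z_j^{s+1})\leq|I_j|$. Thus for $I_j\in\ch^{s+1}$ only the reverse inequality remains, while for $I_j\in\ck^{s+1}\setminus\ch^{s+1}$ one has $z_j^{s+1}=\tilde z_j^{s+1}$ by Step~(s+1)-5-(a); in both cases it suffices to bound $\mathrm{ord}_{a^{s+1}}^{s+1}(\tilde z_j^{s+1})$ from below.

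The second ingredient is a manageable criterion for the order. By Proposition~\ref{ndf-order} and the fact that each $\xi_I^{s+1}$ is an iterated bracket of $\xi_1^{s+1},\dots,\xi_m^{s+1}$, a function $f$ has order $\geq p$ at $a^{s+1}$ if and only if $\big(\xi_{I_{i_1}}^{s+1}\cdots\xi_{I_{i_q}}^{s+1}f\big)(a^{s+1})=0$ whenever $|I_{i_1}|+\dots+|I_{i_q}|\leq p-1$. Using the triangular form of the $\xi_{I_k}^{s+1}$ supplied by Claim~\ref{general-form-I} and Corollary~\ref{coro1-grossman} --- each equals $\partial_{y_k^{s+1}}$ plus a combination of the $\partial_{y_m^{s+1}}$ with $|I_m|>|I_k|$ and coefficients of positive order --- together with the particular shape of the functions at hand ($\tilde z_j^{s+1}$ is $y_j^{s+1}$ plus a polynomial in $y_1^{s+1},\dots,y_{j-1}^{s+1}$, resp.\ in $y_1^{s+1},\dots,y_{\tn_{s+1}}^{s+1}$), this criterion reduces, after reordering the frame factors modulo operators of strictly larger weight, to the vanishing at $a^{s+1}$ of the ordered derivatives $(\xi_{I_1}^{s+1})^{\beta_1}\cdots(\xi_{I_{j-1}}^{s+1})^{\beta_{j-1}}f$ (resp.\ with the index running up to $\tn_{s+1}$) for $\omega(\beta)\leq p-1$ --- exactly the operators appearing in the formulas for the $r_k$.

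With these preliminaries the proof is an inner induction on the truncation level $k$. Fix $I_j$, set $g_1:=y_j^{s+1}$ and $g_k:=y_j^{s+1}+\sum_{q=2}^{k}r_q$, so that $g_{|I_j|-1}=\tilde z_j^{s+1}$ if $I_j\in\ch^{s+1}$ and $g_{s+1}=\tilde z_j^{s+1}=z_j^{s+1}$ if $I_j\in\ck^{s+1}\setminus\ch^{s+1}$. Writing $\mu:=|I_j|$ in the first case and $\mu:=s+2$ in the second, I would show $\mathrm{ord}_{a^{s+1}}^{s+1}(g_k)\geq\min(k+1,\mu)$ by induction on $k\geq 1$. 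For $k=1$: in the $y^{s+1}$-coordinates $y_j^{s+1}$ is the single monomial of weighted degree $|I_j|$, and the triangular form shows at once that its first-order nonholonomic derivatives vanish at $a^{s+1}$, whence order $\geq\min(2,\mu)$. For the step: if $g_{k-1}$ already has order $\geq\mu$, then every ordered operator with $\omega(\beta)<\mu$ annihilates it at $a^{s+1}$, so $r_k=0$ and $g_k=g_{k-1}$; otherwise $g_{k-1}$ has order $\geq k$, the numbers $\big[(\xi_{I_1}^{s+1})^{\beta_1}\cdots g_{k-1}\big](a^{s+1})$ with $|\beta|=k$ record its order-$k$ data, and by construction $r_k$ is minus the associated Taylor-type polynomial $\sum\frac{1}{\beta!}[\cdots](a^{s+1})\,y^{\beta}$. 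Since $r_k$ consists of monomials of degree exactly $k$, the ordered operators with $\omega(\beta')<k$ still annihilate $g_k=g_{k-1}+r_k$ at $a^{s+1}$, while for $\omega(\beta')=k$ the contribution of $r_k$ cancels that of $g_{k-1}$; hence $\mathrm{ord}_{a^{s+1}}^{s+1}(g_k)\geq k+1$. Taking $k=|I_j|-1$ and combining with the upper bound above yields $\mathrm{ord}_{a^{s+1}}^{s+1}(\tilde z_j^{s+1})=|I_j|$ for $I_j\in\ch^{s+1}$; taking $k=s+1$ yields $\mathrm{ord}_{a^{s+1}}^{s+1}(z_j^{s+1})\geq s+2>s+1$ for $I_j\in\ck^{s+1}\setminus\ch^{s+1}$.

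The main obstacle is the weighted-degree bookkeeping concealed in the inductive step: one must check carefully that the lower-order tail ($\partial_{y_m^{s+1}}$ with $|I_m|>|I_k|$) of each triangular expansion $\xi_{I_k}^{s+1}$ does not spoil the matching between the monomials of $g_{k-1}$ and the ordered derivative operators when several of these vector fields are composed, and that the cut-off $\omega(\beta)<|I_j|$ (resp.\ $\omega(\beta)\leq s+1$) is precisely the range needed so that every term of order $<|I_j|$ (resp.\ $\leq s+1$) is accounted for while nothing of higher order is disturbed. These verifications rest on the homogeneity statements of Claim~\ref{general-form-I} and on the inductive hypotheses (A2)--(A5) at step~$s$. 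A minor additional point: in the case $I_j\in\ck^{s+1}\setminus\ch^{s+1}$ only the corrections $r_2,\dots,r_{s+1}$ are available, which suffices because there one needs merely order $>s+1$, not the sharp value $|I_j|$.
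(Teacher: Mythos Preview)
Your proof is correct and follows essentially the same route as the paper: both reduce the order estimate to the vanishing at $a^{s+1}$ of the ordered frame-derivatives $(\xi_{I_1}^{s+1})^{\beta_1}\cdots(\xi_{I_\bullet}^{s+1})^{\beta_\bullet}$ applied to $\tilde z_j^{s+1}$, and then argue that the recursive corrections $r_k$ in Steps~(s+1)-4-(a),(b) are designed precisely to enforce this vanishing. The paper's version is terser --- it invokes Lemma~\ref{lemma-bellaiche} as a black box and simply asserts that the formulas achieve the required cancellations, whereas you unpack the mechanism through the inner induction on the truncation level $k$; but the underlying argument, and in particular the use of Claim~\ref{linearS} to produce the adapted frame and the upper bound from the identity $\xi_{I_j}^{s+1}\cdot\tilde z_j^{s+1}(a^{s+1})=1$ (the paper phrases this as ``by construction''), is the same.
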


The proof of Claim \ref{claim-order} is based on the following result due to Bella\"iche \cite[Lemma 4.12]{Bellaiche1}.

\begin{lemma}\label{lemma-bellaiche}
Let $\{X_1,\dots,X_m\}$ be a family vector fields defined on
$\Omega$. Consider $\{W_1,\dots,W_n\}$ a frame adapted to the flag
$L^1(\aa)\subset\cdots\subset L^r(\aa)=\R^n$ at $\aa\in\Omega$
(\emph{Remark \ref{adapt_frame}}). A
function $f$ is of order strictly greater than $s$ at $\aa$ is and
only if $(W_1^{\alpha_1}\cdots W_n^{\alpha_n}f)(\aa)=0,$ for all $\alpha=(\alpha_1,\dots,\alpha_n)$ such that $w(\alpha)\leq s$.
\end{lemma}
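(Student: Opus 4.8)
The plan is to recast the equivalence as a linear-algebra statement about the homogeneous nilpotent approximation of $(X_1,\dots,X_m)$ at $\aa$. The implication ``$\Rightarrow$'' is elementary: since $W_i\in L^{w_i}(X)$ is a linear combination of iterated brackets of the $X_j$'s of length $\le w_i$, and each $X_j$ has nonholonomic order $\ge-1$ at $\aa$ while $\mathrm{ord}_\aa([Y,Z])\ge\mathrm{ord}_\aa(Y)+\mathrm{ord}_\aa(Z)$, one gets $\mathrm{ord}_\aa(W_i)\ge-w_i$, so the operator $W_1^{\alpha_1}\cdots W_n^{\alpha_n}$ lowers nonholonomic order by at most $w(\alpha)$. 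Hence, if $\mathrm{ord}_\aa(f)>s$ and $w(\alpha)\le s$, then $W_1^{\alpha_1}\cdots W_n^{\alpha_n}f$ has order $\ge s+1-w(\alpha)\ge1$, so it vanishes at $\aa$. A by-product is that, for every $\alpha$ with $w(\alpha)\le s$, the linear functional $f\mapsto(W_1^{\alpha_1}\cdots W_n^{\alpha_n}f)(\aa)$ descends to the space of weighted $s$-jets at $\aa$ (smooth germs modulo germs of nonholonomic order $>s$), which has dimension $N:=\#\{\beta\in\mathbb{N}^n:w(\beta)\le s\}$ when read in a system of privileged coordinates $z$ at $\aa$.

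For the converse it then suffices to prove that these $N$ functionals are linearly independent: being $N$ in number, they span the dual of the $N$-dimensional jet space, so a function killed by all of them has vanishing weighted $s$-jet, i.e.\ nonholonomic order $>s$. I would fix privileged coordinates $z$ at $\aa$ and decompose each $W_i=\widehat W_i+R_i$ into its homogeneous part $\widehat W_i$ of weighted degree $-w_i$ and a remainder $R_i$ of order $\ge-w_i+1$. The adaptedness of $\{W_1,\dots,W_n\}$ forces the component of $W_i(\aa)$ along $\mathrm{span}\{\partial_{z_j}:w_j=w_i\}$ to be nonzero (otherwise $W_i(\aa)\in L^{w_i-1}(\aa)=\mathrm{span}\{W_j(\aa):j\le n_{w_i-1}\}$, contradicting the linear independence of $W_1(\aa),\dots,W_n(\aa)$), whence $\widehat W_i\ne0$ and $\{\widehat W_1,\dots,\widehat W_n\}$ is a frame near $\aa$. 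Expanding the product and pairing against monomials, one gets $(W_1^{\alpha_1}\cdots W_n^{\alpha_n}z^\beta)(\aa)=(\widehat W_1^{\alpha_1}\cdots\widehat W_n^{\alpha_n}z^\beta)(\aa)$ when $w(\beta)=w(\alpha)$, and $0$ when $w(\beta)>w(\alpha)$; thus the $N\times N$ pairing matrix is block-triangular by weighted degree, and its invertibility reduces, degree by degree, to the following assertion: for each $d\le s$, a polynomial $g$ homogeneous of weighted degree $d$ with $(\widehat W_1^{\alpha_1}\cdots\widehat W_n^{\alpha_n}g)(\aa)=0$ for every $\alpha$ with $w(\alpha)=d$ must vanish.

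This last, purely graded, assertion I would settle as follows. Since $\widehat W_1^{\alpha_1}\cdots\widehat W_n^{\alpha_n}g$ is homogeneous of weighted degree $d-w(\alpha)$, it is identically $0$ for $w(\alpha)>d$ and vanishes at $\aa=0$ for $w(\alpha)<d$; together with the hypothesis for $w(\alpha)=d$ this shows $(\widehat W_1^{\alpha_1}\cdots\widehat W_n^{\alpha_n}g)(\aa)=0$ for \emph{all} $\alpha$. A Poincar\'e--Birkhoff--Witt reordering, using the relations $[\widehat W_i,\widehat W_j]=\sum_k c_{ij}^k(z)\widehat W_k$ with polynomial coefficients (legitimate because $\{\widehat W_i\}$ is a frame), expresses any iterated derivative $\widehat W_{i_1}\cdots\widehat W_{i_k}g$ as a polynomial combination of ordered ones, so every iterated derivative of $g$ vanishes at $\aa$; and since $\widehat W_1(\aa),\dots,\widehat W_n(\aa)$ is a basis of $\R^n$, these operators span, over the smooth functions near $\aa$, all differential operators, so all partial derivatives of $g$ vanish at $\aa$ and the polynomial $g$ is $0$.

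The step I expect to be the main obstacle is this reduction to the nilpotent approximation, and inside it the point $\mathrm{ord}_\aa(W_i)=-w_i$ \emph{exactly}: this is where the defining property of an adapted frame, that $W_1(\aa),\dots,W_{n_s}(\aa)$ span $L^s(\aa)$ for every $s$, is genuinely needed (a general frame would not suffice). Everything else is weighted-degree bookkeeping in privileged coordinates, of the kind already used extensively in this paper; an essentially equivalent route first replaces $z$ by coordinates in which $W_i=\partial_{z_i}+(\text{order}\ge-w_i+1)$, but the same bookkeeping reappears.
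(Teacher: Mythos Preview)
The paper does not supply its own proof of this lemma: it is stated as ``the following result due to Bella\"\i che \cite[Lemma 4.12]{Bellaiche1}'' and used as a black box in the proof of Claim~9. So there is no in-paper argument to compare against; what one can compare with is Bella\"\i che's original proof, which also works in privileged coordinates and exploits the same weighted-degree filtration, though organized somewhat differently (he builds privileged coordinates from adapted linear coordinates and reads off the order directly, rather than passing through a jet-space duality count as you do).

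Your argument is correct. The forward implication is exactly the standard order-bookkeeping, and for the converse your linear-algebra reduction is sound: the $N$ functionals $f\mapsto(W^{\alpha}f)(\aa)$, $w(\alpha)\le s$, descend to the $N$-dimensional space of weighted $s$-jets, and the pairing matrix against the monomials $z^\beta$, $w(\beta)\le s$, is block lower-triangular in the weighted degree, with diagonal blocks $\big((\widehat W^{\alpha}z^\beta)(\aa)\big)_{w(\alpha)=w(\beta)=d}$. The invertibility of each diagonal block is exactly your graded statement, and your proof of that statement via ``all ordered $\widehat W$-derivatives of $g$ vanish at $\aa$ $\Rightarrow$ all $\widehat W$-derivatives vanish at $\aa$ $\Rightarrow$ all partials vanish at $\aa$'' is valid because $\{\widehat W_i\}$ is a frame and ordered products of a frame span, over $C^\infty$, the full algebra of differential operators (so every $\partial^\beta$ is a $C^\infty$-combination of ordered $\widehat W$-products, and evaluation at $\aa$ kills each term).

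Two small comments. First, the coefficients $c_{ij}^k$ in $[\widehat W_i,\widehat W_j]=\sum_k c_{ij}^k\,\widehat W_k$ need not be \emph{polynomial} (inverting the frame matrix gives, a priori, only smooth functions near $\aa$); but smoothness is all your PBW/straightening step requires, so this is cosmetic. Second, your remark that $\mathrm{ord}_\aa(W_i)=-w_i$ exactly---and hence $\widehat W_i\neq0$ and $\{\widehat W_i\}$ is a frame---is precisely where adaptedness is used, and your justification via $L^{s}(\aa)=\mathrm{span}\{\partial_{z_j}|_{0}:w_j\le s\}$ in privileged coordinates is the right one; this is indeed the crux, and the rest is, as you say, bookkeeping.
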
 

\begin{proof}[Proof of Claim \ref{claim-order}]
Claim \ref{linearS} guarantees that $\{\xi_{I_k}^{s+1}\}_{I_k\in\ch^{s+1}}$ is a basis adapted to the flag $L^1(a^{s+1})\subset\cdots\subset L^{s+1}(a^{s+1}).$ Complete $\{\xi_{I_k}^{s+1}\}_{I_k\in\ch^{s+1}}$ by other elements of the Lie algebra generated by $\{\xi_i^{s+1}\}_{i=1,\dots,m}$ in order to get a basis adapted to the flag $L^1(a^{s+1})\subset\cdots \subset L^{s+1}(a^{s+1})\subset\dots\subset L^r(a^{s+1}).$

For $I_j\in\ck^{s+1}\setminus\ch^{s+1}$, Formula (s+1)-4-(b) ensures that $((\xi_{I_1}^{s+1})^{\beta_1}\cdots(\xi_{I_{\tn_{s+1}}}^{s+1})^{\beta_{\tn_{s+1}}}\cdot \tilde{z}_j^{s+1})(a^{s+1})=0,$ for all $\beta=(\beta_1,\dots,\beta_{\tn_{s+1}})$ such that $w(\beta)\leq s+1$. By Lemma \ref{lemma-bellaiche}, one has $\textrm{ord}_{a^{s+1}}^{s+1}(\tilde{z}_j^{s+1})>s+1,\ \textrm{ for }I_j\in\ck^{s+1}\setminus\ch^{s+1}.$

For $I_j\in\ch^{s+1}$, Formula (s+1)-4-(a) implies that $((\xi_{I_1}^{s+1})^{\beta_1}\cdots(\xi_{I_{j-1}}^{s+1})^{\beta_{j-1}}\cdot \tilde{z}_j^{s+1})({a^{s+1}})=0,$ for all $\beta=(\beta_1,\dots,\beta_{j-1})$ such that $w(\beta)\leq \vert I_j\vert-1$. Using again Lemma \ref{lemma-bellaiche}, one has $\textrm{ord}_{a^{s+1}}^{s+1}(\tilde{z}_j^{s+1})>\vert I_j\vert-1,\ \textrm{ for }I_j\in\ch^{s+1}.$ By construction, one already has that ord$_{a^{s+1}}^{s+1}(\tilde{z}_j^{s+1})\leq \widetilde{w}_j=\vert I_j\vert$. Therefore, one finally gets $\textrm{ord}_{a^{s+1}}^{s+1}(\tilde{z}_j^{s+1})=\vert I_j\vert, \ \textrm{ for } I_j\in\ch^{s+1}.$ Claim \ref{claim-order} is now proved.\end{proof}

\begin{claim}\label{claim-change-coordinates}
The change of coordinates $(\Psi_j^{s+1})_{j=1,\dots,\tn_{s+1}}$ is well defined, i.e., Property {\em (A4)} holds true.
\end{claim}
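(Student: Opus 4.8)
The plan is to identify the normalized coordinate system $z^{s+1}$ with a system of \emph{canonical coordinates of the second kind} for the weighted-homogeneous nilpotent approximation of $\xi^{s+1}$ at $a^{s+1}$, and to check that this identification has the triangular, weight-preserving form required in Step~(s+1)-5-(b). Throughout, the weights are the free weights $\widetilde{w}_1,\dots,\widetilde{w}_{\tn_{s+1}}$ of Definition~\ref{de:free-weight}, so that $\widetilde{w}_j=|I_j|$ for $I_j\in\ch^{s+1}$, and ``principal part'' of a function of order $\geq\sigma$ at $a^{s+1}$ means its weighted-homogeneous component of weighted degree exactly $\sigma$.

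First I would fix the setting in the coordinates $\tilde{z}^{s+1}$. By Claim~\ref{claim-order}, $\textrm{ord}_{a^{s+1}}^{s+1}(\tilde{z}_j^{s+1})=|I_j|$ for $I_j\in\ch^{s+1}$ and $\textrm{ord}_{a^{s+1}}^{s+1}(\tilde{z}_j^{s+1})>s+1$ for $I_j\in\ck^{s+1}\setminus\ch^{s+1}$, so $\tilde{z}^{s+1}$ is a privileged system at $a^{s+1}$. Writing $\xi_i^{s+1}=\sum_\ell \xi_{i,\ell}^{s+1}(\tilde{z}^{s+1})\,\partial_{\tilde{z}_\ell^{s+1}}$, each component $\xi_{i,j}^{s+1}$ has order $\geq\widetilde{w}_j-1$; keeping only the principal parts defines the homogeneous nilpotent approximation $\widehat{\xi}^{s+1}=\{\widehat{\xi}_1^{s+1},\dots,\widehat{\xi}_m^{s+1}\}$ of $\xi^{s+1}$ at $a^{s+1}$. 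Claim~\ref{linearS} gives that $\{\xi_{I_k}^{s+1}(a^{s+1})\}_{I_k\in\ch^{s+1}}$ are linearly independent, hence $\dim L^{s+1}(a^{s+1})=\tn_{s+1}$ and $\xi^{s+1}$ (therefore also $\widehat{\xi}^{s+1}$) is free up to step $s+1$; being an approximation, $\widehat{\xi}^{s+1}$ is moreover nilpotent. Claim~\ref{claim-order} also yields $\widehat{\xi}_{I_k}^{s+1}(0)=\partial_{\tilde{z}_k^{s+1}}$ for $I_k\in\ch^{s+1}$, so $\{\widehat{\xi}_{I_k}^{s+1}\}_{I_k\in\ch^{s+1}}$ is an adapted frame at $0$.

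Since $\widehat{\xi}^{s+1}$ is free up to step $s+1$ and nilpotent, Theorem~\ref{theo-sussmann} (equivalently, Theorem~\ref{th-grossman} together with Corollary~\ref{coro2-grossman}) asserts that, in the canonical coordinates of the second kind associated with $\{\widehat{\xi}_{I_k}^{s+1}\}_{I_k\in\ch^{s+1}}$, the family $\widehat{\xi}^{s+1}$ is in canonical form, i.e. equals $(D_1,\dots,D_m)$. I would take $z^{s+1}$ to be exactly these coordinates, the auxiliary coordinates of index $>\tn_{s+1}$ being left unchanged (compatible with Claim~\ref{general-form-I}, which shows that $\widehat{\xi}_{I_k}^{s+1}$ has no principal part along those directions when $I_k\in\ch^{s+1}$). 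Two points make this legitimate: (i) nilpotency of $\widehat{\xi}^{s+1}$ makes each flow $e^{t\widehat{\xi}_{I_k}^{s+1}}$ polynomial in $t$ and in the base point, so the diffeomorphism~(\ref{exp-coordinates}) and its inverse are polynomial; (ii) weighted-homogeneity of $\widehat{\xi}_{I_k}^{s+1}$ (of degree $-\widetilde{w}_k$) together with $\widehat{\xi}_{I_k}^{s+1}(0)=\partial_{\tilde{z}_k^{s+1}}$ forces the inverse diffeomorphism to be, for each $j\leq\tn_{s+1}$, of the form $z_j^{s+1}=\tilde{z}_j^{s+1}+(\textrm{a weighted-homogeneous polynomial of degree }\widetilde{w}_j\textrm{ in }\tilde{z}_1^{s+1},\dots,\tilde{z}_{j-1}^{s+1})$. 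Setting $\Psi_j^{s+1}$ equal to this expression provides a well-defined change of coordinates satisfying both conditions of Step~(s+1)-5-(b).

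It remains to check~(\ref{change-coordinates}) and to single out the hard part. Writing $\xi_i^{s+1}=\widehat{\xi}_i^{s+1}+E_i$ with $\textrm{ord}_{a^{s+1}}^{s+1}(E_i)\geq 0$ and pushing forward by the privileged (hence order-preserving) change $z^{s+1}$, one gets $\xi_{i,j}^{s+1}(z^{s+1})=\delta_{i,\phi(j)}P_j(z_1^{s+1},\dots,z_{j-1}^{s+1})+R_{i,j}(z^{s+1})$, the first term being the $j$-th component of $D_i$ and $\textrm{ord}_{a^{s+1}}^{s+1}(R_{i,j})\geq\widetilde{w}_j$ since the $j$-th component of $E_i$ in a privileged system has order $\geq\textrm{ord}_{a^{s+1}}^{s+1}(z_j^{s+1})=\widetilde{w}_j$. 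The step I expect to be the main obstacle is point (ii): proving that passing to the canonical coordinates of the second kind of the homogeneous approximation truly yields a \emph{triangular}, weight-preserving change touching only the first $\tn_{s+1}$ coordinates — this is exactly where the adapted structure of Claim~\ref{claim-order}, the precise bracket form in Corollary~\ref{coro2-grossman}, and the inductive formula~(\ref{induction-P}) for the $P_j$ have to be combined with care. An equivalent, more computational route is to build $\Psi_j^{s+1}$ by a finite recursion on $j$, at each stage solving for the coefficients of $\Psi_j^{s+1}$ the linear system expressing that the principal part of the $j$-th component of each $\xi_i^{s+1}$ becomes $\delta_{i,\phi(j)}P_j$; its solvability again rests on the freeness of $\widehat{\xi}^{s+1}$. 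The remaining verifications (invertibility of the change, the order bound on $R_{i,j}$) are routine given Lemma~\ref{lemma-bellaiche} and the earlier claims.
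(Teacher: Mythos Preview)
Your proposal is correct and follows essentially the same route as the paper: both arguments pass to the weighted-homogeneous nilpotent approximation of $\xi^{s+1}$ at $a^{s+1}$ (the paper calls it $\check{\xi}^{s+1}$), observe that it is free up to step $s+1$ and nilpotent, invoke Theorem~\ref{theo-sussmann} to obtain the canonical form in canonical coordinates of the second kind, and then argue that the resulting change $\tilde{z}^{s+1}\mapsto z^{s+1}$ is a polynomial map with each $\Psi_j^{s+1}$ weighted-homogeneous of degree $\widetilde{w}_j$. You are somewhat more explicit than the paper about why the change is triangular and touches only the first $\tn_{s+1}$ coordinates; the paper relegates this to Remark~\ref{rem-change-coordinates}, deducing triangularity from the weight constraint alone. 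Your alternative ``computational route'' by recursive identification of coefficients is exactly the content of that remark.
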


\begin{proof}[Proof of Claim \ref{claim-change-coordinates}]
After performing Steps (s+1)-4-(a) and (s+1)-4-(b), one obtains a new system of coordinates $\tilde{z}^{s+1}$. In this system of coordinates, one can write $\xi_{i}^{s+1}$ as
\begin{eqnarray*}
\xi_i^{s+1}(\tilde{z}^{s+1})&=&\partial_{\tilde{z}_i^{s+1}}+\sum_{\substack{I_j\in\ch^{s+1}\\\vert I_j\vert \geq 2}} (\tilde{P}_{i,j}(\tilde{z}^{s+1}) +\tilde{R}_{i,j}(\tilde{z}^{s+1})) \partial_{\tilde{z}_j^{s+1}}+ \sum_{I_{\ell}\in\ck^{s+1}\setminus\ch^{s+1}} \tilde{Q}_{i,\ell}(\tilde{z}^{s+1}) \partial_{\tilde{z}_{\ell}^{s+1}},
\end{eqnarray*}
where $\tilde{P}_{i,j}$, $\tilde{R}_{i,j}$, and $\tilde{Q}_{i,\ell}$ are polynomials with
$\textrm{ord}_{a^{s+1}}^{s+1}(\tilde{P}_{i,j})=\widetilde{w}_j-1$, $\textrm{ord}_{a^{s+1}}^{s+1}(\tilde{R}_{i,j})\geq \widetilde{w}_j$, and
$\textrm{ord}_{a^{s+1}}^{s+1}(\tilde{Q}_{i,\ell})>s$. Since $\textrm{ord}_{a^{s+1}}^{s+1}(\tilde{z}^{s+1}_j)=\widetilde{w}_j,\ \hbox{  for }I_j\in\ch^{s+1}$, and $
\textrm{ord}_{a^{s+1}}^{s+1}(\tilde{z}^{s+1}_j)>s+1,\quad \hbox{ for }I_j\in\ck^{s+1}\setminus\ch^{s+1},$ the polynomials $\tilde{P}_{i,j}$ contain only variables $\tilde{z}^{s+1}_k$ with $\widetilde{w}_k \leq \widetilde{w}_j-1$. 

Let us now show that there exists a change of coordinates $\Psi^{s+1}$ which transforms coordinates $\tilde{z}^{s+1}$ into new coordinates $z^{s+1}$ such that $$\textrm{ord}_{a^{s+1}}^{s+1}(z^{s+1}_j)=\widetilde{w}_j,\ \textrm{ for } I_j\in\ch^{s+1},$$ $$\textrm{ord}_{a^{s+1}}^{s+1}(z^{s+1}_j)>s+1, \ \textrm{ for } I_j\in\ck^{s+1}\setminus\ch^{s+1},$$ and in the new coordinates, the $\tn_{s+1}$ first components $\xi_{i,j}^{s+1}(z^{s+1})$ of $\xi_i^{s+1}(z^{s+1})$ are in the form
$$
\xi_{i,j}^{s+1}(z^{s+1})= \delta_{i,\phi(j)}
                             P_{j}(z_1^{s+1},\dots,z_{j-1}^{s+1})+R_{i,j}(z^{s+1}), \quad j=1,\dots,\tn_{s+1},
$$
with ord$_{a^{s+1}}^{s+1}(R_{i,j})\geq \widetilde{w}_j$.

We first note that, once one has $\textrm{ord}_{a^{s+1}}^{s+1}(z^{s+1}_j)=\widetilde{w}_j$ for $I_j\in\ch^{s+1},$ and $\textrm{ord}_{a^{s+1}}^{s+1}(z^{s+1}_j)>s+1$ for $I_j\in\ck^{s+1}\setminus\ch^{s+1},$ then, the order of $P_{i,j}$ will be equal to its weighted degree, and thus automatically equal to $\widetilde{w}_j-1$ by construction of these polynomials.

Consider now $\check{\xi_i}^{s+1}$ defined in coordinates $\tilde{z}^{s+1}$ by
$$
\check{\xi}_i^{s+1}(\tilde{z}^{s+1})=\partial_{\tilde{z}_i^{s+1}}+ \sum_{I_j\in\ch^{s+1}} \tilde{P}_{i,j}(\tilde{z}^{s+1}) \partial_{\tilde{z}_j^{s+1}}.
$$
Recall that, by construction, the vector fields $\displaystyle\{\check{\xi}_i\}_{i=1,\dots,m}$ generate a free nilpotent Lie algebra of step $s+1$. Moreover, in the canonical coordinates of the second kind $(z_1^{s+1},\dots,z_{\tn_{s+1}}^{s+1})$ associated with $\{\check{\xi}^{s+1}_{I_k}\}_{I_k\in\ch^{s+1}}$, the vector fields $\check{\xi}_i^{s+1}$ are in the canonical form, i.e.
$$
\check{\xi}_i^{s+1}=\partial_{\tilde{z}_i^{s+1}}+ \sum_{\substack{I_j\in\ch^{s+1}\\\phi(j)=i}}P_{j}(z^{s+1})\partial_{z_j^{s+1}}.
$$

\noindent By definition of a system of coordinates, there exist $\tn_{s+1}$ smooth functions $(\Psi_{1}^{s+1},\dots, \Psi_{\tn_{s+1}}^{s+1})$ such that, for $j=1,\dots,\tn_{s+1}$, one has
$$z_j^{s+1}=\Psi_j^{s+1}(\tilde{z}_1^{s+1},\dots,\tilde{z}_{\tn_{s+1}}^{s+1}).$$
Expand now $\Psi_j^{s+1}$ in Taylor series. Since $\textrm{ord}_{a^{s+1}}^{s+1}(z^{s+1})=\widetilde{w}_j$, the Taylor expansion of $\Psi_j^{s+1}$ is a \emph{polynomial} of weighted degree equal to $\widetilde{w}_j$. Claim \ref{claim-change-coordinates} is now proved.\end{proof}

\begin{rem}\label{rem-change-coordinates}
The change of coordinates $(\Psi_j^{s+1})_{j=1,\dots,\tn_{s+1}}$ is computed by identification. Indeed, since ord$_{a^{s+1}}^{s+1}(z_j^{s+1})=\widetilde{w}_j$, and the nonholonomic order does not depend on any system of coordinates, then $\Psi_j^{s+1}$ is a function of order $\widetilde{w}_j$ at $a^{s+1}$, i.e., the Taylor expansion of $\Psi_j^{s+1}$ at $a^{s+1}$ contains only monomials of weighted degree equal to $\widetilde{w}_j$, and there is a finite number of such monomials. Therefore, the function $\Psi_j^{s+1}$ is necessarily in the following form
\begin{equation}\label{eq:change-coordinates}
\Psi_j^{s+1}(\tilde{z}^{s+1})=\sum_{w(\alpha)=\widetilde{w}_j} \beta_j^{\alpha}(\tilde{z}^{s+1}_1)^{\alpha_1} \dots (\tilde{z}^{s+1}_{\tn_{s+1}})^{\alpha_{\tn_{s+1}}},
\end{equation}where $\beta_j^{\alpha}$ are real numbers. Eq. (\ref{eq:change-coordinates}) is a finite sum and therefore the scalar coefficients $(\varphi_j^{\alpha})$ can be obtained by identification. Claim \ref{claim-change-coordinates} guarantees that such a set of real numbers $(\varphi_j^{\alpha})$ exists. Note also that, due to the constraint on the weight, Eq. (\ref{eq:change-coordinates}) only involves variables $\tilde{z}_k^{s+1}$ of weight less than $\widetilde{w}_j$, implying that the change of coordinates $(\Psi_j^{s+1})_{j=1,\dots,\tn_{s+1}}$ is naturally triangular.
\end{rem}

\begin{rem}\label{rem-change-example}
Let us now illustrate Remark \ref{rem-change-coordinates} with a simple example. Consider here a nilpotent system of step $2$ generated by two vector fields $(\xi_1, \xi_2)$. We have $\xi_{I_1}=\xi_1$, $\xi_{I_2}=\xi_2$ and $\xi_{I_3}=[\xi_1,\xi_2]$.  In coordinates $\tilde{z}=(\tilde{z}_1,\tilde{z}_2,\tilde{z}_3)$, $\xi_1$ and $\xi_2$ are necessarily in the form $\xi_1=(1,0,\alpha_1 \tilde{z}_1+\alpha_2\tilde{z}_2)$, and $\xi_2=(0,1,\beta_1 \tilde{z}_1+\beta_2\tilde{z}_2)$, where $\alpha_1,~\alpha_2,~\beta_1\textrm{ and }\beta_2$ are real numbers verifying $\beta_1-\alpha_2=1$. As mentioned in Remark \ref{rem-change-coordinates}, in the change of coordinates $(\Psi_1,\Psi_2,\Psi_3)$, every $\Psi_j$ is a homogeneous polynomial of weighted degree equal to $\widetilde{w}_j$. Set $z=(\Psi_1(\tilde{z}),\Psi_2(\tilde{z}),\Psi_3(\tilde{z}))=:(\tilde{z}_1,\tilde{z}_2,\tilde{z}_3+a\tilde{z}_1\tilde{z}_2+b\tilde{z}_1^2+c\tilde{z}_2^2),$ with $a,~b, \textrm{ and }c$ to be determined. One imposes
 that $\xi_2(z)=(0,1,z_1)$. After computation, one gets
\begin{eqnarray*}
(\alpha_1+2b)\tilde{z}_1+(\alpha_2+a)\tilde{z}_2&=&0,\\
(\beta_1+a)\tilde{z}_1+(\beta_2+2c)\tilde{z}_2&=&z_1=\tilde{z}_1.
\end{eqnarray*}
By identification, one gets $a=-\alpha_2$, $b=-\frac{\alpha_1}{2}$, $c=-\frac{\beta_2}{2}$, and in that case, $\beta_1+a=\beta_1-\alpha_2=1$ is automatically verified.
Then, the triangular change of coordinates $$(z_1,z_2,z_3)=(\tilde{z}_1,\tilde{z}_2,\tilde{z}_3-\alpha_2\tilde{z}_1\tilde{z}_2-\frac{\alpha_1}{2}\tilde{z}_1^2-\frac{\beta_2}{2}\tilde{z}^2)$$ puts $\xi_1$ and $\xi_2$ into the canonical form.
\end{rem}

\begin{claim}\label{prop5}
Property {\em (A5)} holds true at step $s+1$.
\end{claim}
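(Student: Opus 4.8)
The plan is to first determine the form of the generators $\xi_i^{s+1}$ expressed in the final coordinates $z^{s+1}$, and then to derive the form of an arbitrary bracket $\xi_{I_k}^{s+1}$, $I_k\in\ck^{s+1}$, by induction on the length $\vert I_k\vert$, repeating the order bookkeeping of the proof of Claim \ref{general-form-I} but now performed for \emph{all} the components in the coordinates $z^{s+1}$ instead of only those along the newly added directions.

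For the base case $\vert I_k\vert=1$, I would invoke Claim \ref{claim-change-coordinates} and its proof (together with Remark \ref{rem-change-coordinates}): in the coordinates $z^{s+1}$, the first $\tn_{s+1}$ components of $\xi_i^{s+1}$ are
\[
\xi_{i,j}^{s+1}(z^{s+1})=\delta_{i,\phi(j)}\,P_j(z_1^{s+1},\dots,z_{j-1}^{s+1})+R_{i,j}(z^{s+1}),\qquad j=1,\dots,\tn_{s+1},
\]
with $\mathrm{ord}_{a^{s+1}}^{s+1}(R_{i,j})\geq\widetilde{w}_j=\vert I_j\vert$, while the components along $\partial_{z_\ell^{s+1}}$ with $I_\ell\in\ck^{s+1}\setminus\ch^{s+1}$ coincide with the functions $\tilde Q_{i,\ell}$ appearing in the proof of Claim \ref{claim-change-coordinates}, of order $>s$ (their order is preserved by the triangular change $\tilde z^{s+1}\mapsto z^{s+1}$; one may also argue that $\xi_{i,\ell}^{s+1}=\xi_i^{s+1}\cdot z_\ell^{s+1}$ has order $\geq-1+(s+2)>s$ by Claim \ref{claim-order}). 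Writing $\check{\xi}_i^{s+1}:=\sum_{I_j\in\ch^{s+1},\ \phi(j)=i}P_j(z^{s+1})\partial_{z_j^{s+1}}$, this is exactly (A5) for the brackets of length $1$, with $P_j^i$ the $j$-th component of $\check\xi_i^{s+1}$, $R_j^i:=R_{i,j}$, and $Q_\ell^i:=\xi_{i,\ell}^{s+1}$.

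For the inductive step I would assume (A5) at step $s+1$ for every $I\in\ck^{s+1}$ of length $\leq s_1$ and take $I_k=[I_{k_1},I_{k_2}]\in\ck^{s+1}$ with $\vert I_k\vert=s_1+1$, so that $\vert I_{k_1}\vert,\vert I_{k_2}\vert\leq s_1$. Substituting the inductive form of both factors into $\xi_{I_k}^{s+1}=[\xi_{I_{k_1}}^{s+1},\xi_{I_{k_2}}^{s+1}]$ and expanding by bilinearity, I would separate the ``principal'' contribution $[\check\xi_{I_{k_1}}^{s+1},\check\xi_{I_{k_2}}^{s+1}]=\check\xi_{I_k}^{s+1}$ from a finite sum of error terms, each obtained either by differentiating a remainder $R$ or $Q$ along a vector field or by applying a $Q$-coefficient. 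Corollary \ref{coro1-grossman} (equivalently Corollary \ref{coro2-grossman}), applied to the canonical vector fields generating the free nilpotent Lie algebra of step $s+1$ on $\R^{\tn_{s+1}}$, identifies $\check\xi_{I_k}^{s+1}$ with $\sum_{I_j\in\ch^{s+1}}P_j^k(z^{s+1})\partial_{z_j^{s+1}}$, where $P_j^k$ is homogeneous of weighted degree $\vert I_j\vert-\vert I_k\vert$, vanishes for $\vert I_j\vert<\vert I_k\vert$, equals $\delta_{j,k}$ for $\vert I_j\vert=\vert I_k\vert$, and is of order $\vert I_j\vert-\vert I_k\vert$ for $\vert I_j\vert>\vert I_k\vert$ (and $\check\xi_{I_k}^{s+1}=0$ if $I_k\in\ck^{s+1}\setminus\ch^{s+1}$, being a bracket of length $>s+1$ of step-$(s+1)$-nilpotent vector fields). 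The error terms I would estimate exactly as in the proof of Claim \ref{general-form-I}, using $\mathrm{ord}_{a^{s+1}}^{s+1}(z_j^{s+1})=\vert I_j\vert$ for $I_j\in\ch^{s+1}$ (Claim \ref{claim-order}), the inductive bounds $\mathrm{ord}_{a^{s+1}}^{s+1}(R_j^{k_t})>\vert I_j\vert-\vert I_{k_t}\vert$ and $\mathrm{ord}_{a^{s+1}}^{s+1}(Q_\ell^{k_t})>s+1-\vert I_{k_t}\vert$, and the positivity of the orders of all the functions involved, to conclude that every error term along $\partial_{z_j^{s+1}}$ with $I_j\in\ch^{s+1}$ has order $>\vert I_j\vert-\vert I_k\vert$ and every one along $\partial_{z_\ell^{s+1}}$ with $I_\ell\in\ck^{s+1}\setminus\ch^{s+1}$ has order $>s+1-\vert I_k\vert$. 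This yields (A5) for $I_k$ and closes the induction on the length.

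I expect the main obstacle to be, as already in Claim \ref{general-form-I}, the control of the terms involving the ``leftover'' coordinates $z_\ell^{s+1}$, $I_\ell\in\ck^{s+1}\setminus\ch^{s+1}$: one must check that differentiating a coefficient $Q_\ell^{k_t}$ or a remainder $R_j^{k_t}$ along such a high-weight direction does not spoil the order estimate, which rests on $\mathrm{ord}_{a^{s+1}}^{s+1}(Q_\ell^{k_t})>s+1-\vert I_{k_t}\vert$ and on the fact that all functions occurring have nonnegative order. Once Claim \ref{prop5} is established, combining it with Claims \ref{linearS}, \ref{claim-order} and \ref{claim-change-coordinates} gives Properties (A1)--(A5) at step $s+1$, which completes the induction on $s$, proving Proposition \ref{prop-desingularization}; Theorem \ref{theo-desingularization} then follows via Remark \ref{rem-desing}.
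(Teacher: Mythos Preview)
Your proposal is correct and follows essentially the same approach as the paper: an induction on $\vert I_k\vert$ in which the base case uses the form of $\xi_i^{s+1}$ in the coordinates $z^{s+1}$ established in Claim~\ref{claim-change-coordinates}, and the inductive step expands $[\xi_{I_{k_1}}^{s+1},\xi_{I_{k_2}}^{s+1}]$ bilinearly, identifies the principal part as $\check\xi_{I_k}^{s+1}$ via the canonical-form corollaries, and bounds the orders of all remaining error terms exactly as you describe. The paper carries out the same order bookkeeping term by term (including the ``leftover'' $Q$-terms you flag as the delicate point), so there is no substantive difference.
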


\begin{proof}[Proof of Claim \ref{prop5}]
The proof goes by induction on the length of $I_k\in\ck^{s+1}$. It is similar to the one of Claim \ref{general-form-I}. For $\vert I_k\vert=1$, one has
\begin{eqnarray*}
\xi_i^{s+1}(z^{s+1})&=&\sum_{\substack{I_j\in\ch^{s+1}\\\phi(j)=i}}(P_{j}(z^{s+1})+R_{i,j}(z^{s+1})) \partial_{z_j^{s+1}}+\sum_{I_{\ell}\in\ck^{s+1}\setminus\ch^{s+1}}Q_{i,\ell}(z^{s+1})\partial_{z_{\ell}^{s+1}},
\end{eqnarray*}
with
$
\textrm{ord}_{a^{s+1}}^{s+1}(P_{j})=\vert I_j\vert-1,\ \textrm{ord}_{a^{s+1}}^{s+1}(R_{i,j})>\vert I_j\vert-1,\ \textrm{ord}_{a^{s+1}}^{s+1}(Q_{i,\ell})>s.$
Claim \ref{prop5} holds true for $\vert I_k\vert=1$.

Assume that Claim \ref{prop5} holds for brackets of length less than $s_1$. We show that it still holds true for brackets of length $s_1+1$. Consider $I_k\in\ck^{s+1}$ with $\vert I_k\vert=s_1+1$. Then, one has
\begin{eqnarray*}
&&\xi_{I_k}^{s+1}=[\xi_{I_{k_1}}^{s+1}, \xi_{I_{k_2}}^{s+1}]\\
&=&[\sum_{I_j\in\ch^{s+1}}(P_j^{k_1}+R_j^{k_1})\partial_{z_j^{s+1}}+\sum_{I_{\ell}\in\ck^{s+1}\setminus\ch^{s+1}}Q_{\ell}^{k_1}\partial_{z_{\ell}^{s+1}} , \sum_{I_j\in\ch^{s+1}}(P_j^{k_2}+R_j^{k_2})\partial_{z_j^{s+1}}+\sum_{I_{\ell}\in\ck^{s+1}\setminus\ch^{s+1}}Q_{\ell}^{k_2}\partial_{z_{\ell}^{s+1}}]\\
&=&\sum_{I_j\in\ch^{s+1}} [\sum_{I_i\in\ch^{s+1}}P_i^{k_1}\partial_{z_i^{s+1}}P_j^{k_2}- P_i^{k_2}\partial_{z_i^{s+1}}P_j^{k_1}]\partial_{z_j^{s+1}}\\
&+&\sum_{I_j\in\ch^{s+1}}[\sum_{I_i\in\ch^{s+1}}\{R_i^{k_1} \partial_{z_i^{s+1}}(P_j^{k_2}+R_j^{k_2})-R_i^{k_2}\partial_{z_i^{s+1}} (P_j^{k_1}+R_j^{k_1})\}+\{P_i^{k_1}\partial_{z_i^{s+1}}R_j^{k_2}-P_i^{k_2}\partial_{z_i^{s+1}} R_j^{k_1}\}\\
&&+\sum_{I_{\ell}\in\ck^{s+1}\setminus\ch^{s+1}}Q_{\ell}^{k_1} \partial_{z_{\ell}^{s+1}}(P_j^{k_2}+R_j^{k_2})-Q_{\ell}^{k_2}\partial_{z_{\ell}^{s+1}} (P_j^{k_1}+R_j^{k_1})]\partial_{z_j^{s+1}}\\
&+&\sum_{I_j\in\ck^{s+1}\setminus\ch^{s+1}}[\sum_{I_i\in\ch^{s+1}}(P_i^{k_1}+R_i^{k_1}) \partial_{z_i^{s+1}}Q_j^{k_2}-(P_i^{k_2}+R_i^{k_2})\partial_{z_i^{s+1}}Q_j^{k_1}\\
&&+\sum_{I_{\ell}\in\ck^{s+1}\setminus\ch^{s+1}}Q_{\ell}^{k_1}\partial_{z_{\ell}^{s+1}} Q_j^{k_2}-Q_{\ell}^{k_2}\partial_{z_{\ell}^{s+1}}Q_j^{k_1}]\partial_{z_j^{s+1}}.
\end{eqnarray*}

\noindent By the inductive hypothesis, one can proceed as follows.
\begin{itemize}
\item Taking into account the relation
$$\textrm{ord}_{a^{s+1}}^{s+1}(R_i^{k_1})>\vert I_i\vert-\vert I_{k_1}\vert,\hbox{ and }
\textrm{ord}_{a^{s+1}}^{s+1}(\partial_{z_i^{s+1}}(P_j^{k_2}+R_j^{k_2}))\geq \vert I_j\vert-\vert I_{k_2}\vert-\vert I_i\vert,$$
one deduces that $$\textrm{ord}_{a^{s+1}}^{s+1}R_i^{k_1}\partial_{z_i^{s+1}}(P_j^{k_2}+R_j^{k_2})>\vert I_j\vert-\vert I_k\vert.$$ By a similar argument, ord$_{a^{s+1}}^{s+1}R_i^{k_2}\partial_{z_i^{s+1}}(P_j^{k_1}+R_j^{k_1})>\vert I_j\vert-\vert I_k\vert$. Therefore, one gets
$$\textrm{ord}_{a^{s+1}}^{s+1}(R_i^{k_1}\partial_{z_i^{s+1}}(P_j^{k_2}+R_j^{k_2})- R_i^{k_2}\partial_{z_i^{s+1}}(P_j^{k_1}+R_j^{k_1}))>\vert I_j\vert-\vert I_k\vert.$$

\item Since $\textrm{ord}_{a^{s+1}}^{s+1}(P_i^{k_1})=\vert I_i\vert-\vert I_{k_1}\vert\hbox{ and } \textrm{ord}_{a^{s+1}}^{s+1}(\partial_{z_i^{s+1}}R_j^{k_2})>\vert I_j\vert-\vert I_{k_2}\vert-\vert I_i\vert,$ then $\textrm{ord}_{a^{s+1}}^{s+1}(P_i^{k_1}\partial_{z_i^{s+1}}R_j^{k_2})>\vert I_j\vert-\vert I_k\vert.$ By a similar argument, ord$_{a^{s+1}}^{s+1}(P_i^{k_2}\partial_{z_i^{s+1}}R_j^{k_1})>\vert I_j\vert-\vert I_k\vert$. One thus obtains $$\textrm{ord}_{a^{s+1}}^{s+1}(P_i^{k_1}\partial_{z_i^{s+1}}R_j^{k_2}-P_i^{k_2}\partial_{z_i^{s+1}}R_j^{k_1})>\vert I_j\vert-\vert I_k\vert.$$

\item Using the fact that $\textrm{ord}_{a^{s+1}}^{s+1}(\partial_{z_{\ell}^{s+1}}(P_j^{k_2}+R_j^{k_2}))>\vert I_j\vert-\vert I_{k_2}\vert-(s+1)\hbox{ and }\textrm{ord}_{a^{s+1}}^{s+1}(Q_{\ell}^{k_1})>s+1-\vert I_{k_1}\vert,$ then $\textrm{ord}_{a^{s+1}}^{s+1}(Q_{\ell}^{k_1}\partial_{z_{\ell}^{s+1}}(P_j^{k_2}+R_j^{k_2}))>\vert I_j\vert-\vert I_{k}\vert.$ By a similar argument, ord$_{a^{s+1}}^{s+1}Q_{\ell}^{k_2}\partial_{z_{\ell}^{s+1}}(P_j^{k_1}+R_j^{k_1})>\vert I_j\vert-\vert I_k\vert$. One deduces $\textrm{ord}_{a^{s+1}}^{s+1}(Q_{\ell}^{k_1}\partial_{z_{\ell}^{s+1}}(P_j^{k_2}+R_j^{k_2})- Q_{\ell}^{k_2}\partial_{z_{\ell}^{s+1}}(P_j^{k_1}+R_j^{k_1}))>\vert I_j\vert-\vert I_k\vert.$

\item Recall that $\textrm{ord}_{a^{s+1}}^{s+1}(P_i^{k_1}+R_i^{k_1})=\vert I_i\vert-\vert I_{k_1}\vert,\hbox{ and }\textrm{ord}_{a^{s+1}}^{s+1}(\partial_{z_i^{s+1}}Q_j^{k_2})>s+1-\vert I_{k_2}\vert-\vert I_i\vert,$ then $\textrm{ord}_{a^{s+1}}^{s+1}((P_i^{k_1}+R_i^{k_1})\partial_{z_i^{s+1}}Q_j^{k_2})>s+1-\vert I_k\vert.$ By a similar argument, $\textrm{ord}_{a^{s+1}}^{s+1}((P_i^{k_2}+R_i^{k_2})\partial_{z_i^{s+1}}Q_j^{k_1})>s+1-\vert I_k\vert.$
Therefore, it yields
$\textrm{ord}_{a^{s+1}}^{s+1}((P_i^{k_1}+R_i^{k_1})\partial_{z_i^{s+1}}Q_j^{k_2}- (P_i^{k_2}+R_i^{k_2})\partial_{z_i^{s+1}}Q_j^{k_1})>s+1-\vert I_k\vert.$

\item Since $\partial_{z_{\ell}^{s+1}}Q_j^{k_2}$ is a function, one knows by definition that ord$_{a^{s+1}}^{s+1}(\partial_{z_{\ell}^{s+1}}Q_j^{k_2})\geq 0$. As ord$_{a^{s+1}}^{s+1}(Q_{\ell}^{k_1})>s+1-\vert I_{k_1}\vert$, one has $\textrm{ord}_{a^{s+1}}^{s+1}(Q_{\ell}^{k_1}\partial_{z_{\ell}^{s+1}}Q_j^{k_2})>s+1-\vert I_{k_1}\vert=s+1-(\vert I_k\vert-\vert I_{k_2}\vert)>s+1-\vert I_k\vert.$ By a similar argument, ord$_{a^{s+1}}^{s+1}(Q_{\ell}^{k_2}\partial_{z_{\ell}^{s+1}}Q_j^{k_1})>s+1-\vert I_k\vert$.
One hence derives $\textrm{ord}_{a^{s+1}}^{s+1}(Q_{\ell}^{k_1}\partial_{z_{\ell}^{s+1}}Q_j^{k_2}-Q_{\ell}^{k_2}\partial_{z_{\ell}^{s+1}}Q_j^{k_1})>s+1-\vert I_k\vert.$
\end{itemize}

Summing up the above terms, one gets, for $I_k\in\ck^{s+1}$ of length $s_1+1$, that the bracket $\xi_{I_k}^{s+1}$ can be written in the form
\begin{eqnarray*}
\xi_{I_k}^{s+1}(z^{s+1})&=&\sum_{I_j\in\ch^{s+1}}(P_j^k(z^{s+1})+R_j^k(z^{s+1}))\partial_{z_j^{s+1}}+\sum_{I_{j}\in\ck^{s+1}\setminus\ch^{s+1}}Q_{j}^k(z^{s+1})\partial_{z_{j}^{s+1}},
\end{eqnarray*}with $\textrm{ord}_{a^{s+1}}^{s+1}(P_j^k)=\vert I_j\vert-\vert I_k\vert,\qquad \textrm{ord}_{a^{s+1}}^{s+1}(R_j^k)>\vert I_j\vert-\vert I_k\vert,$ and $\textrm{ord}_{a^{s+1}}^{s+1}(Q_{\ell}^{k})>s+1-\vert I_k\vert.$ Claim \ref{prop5} is now proved.
\end{proof}

In conclusion, Properties (A1)-(A5) still hold true at step $s+1$ in the Desingularization Algorithm. The induction step is established, which terminates the proof of Proposition \ref{prop-desingularization}.

\end{proof}


\section{Global Steering Method for Regular Systems}\label{regular-case}
By taking into account the Desingularization Algorithm presented in Chapter \ref{Desing}, we assume in this chapter and without loss of generality that the family of vectors fields $X=\{X_1,\dots, X_m\}$ is free up to step $r$ (cf. Definition \ref{free-s}). Recall that, in that case, every point $x\in\Omega$ is regular and the growth vector is constant on $\Omega$. We present in Section \ref{bellaiche-construction} an algebraic construction of privileged coordinates and a nonholonomic first order approximation of $X$ under canonical form. For regular systems, this construction also provides a continuously varying system of privileged coordinates. We then propose in Section \ref{GASA} a global motion planning algorithm for regular systems.

\subsection{Construction of the approximate system $\mathcal{A}^X$}\label{bellaiche-construction}


For every point $\aa$ in $\Omega$, we construct the first order approximate system $\mathcal{A}^X(a)$ of the system $X$ at $a$ (cf. Definition \ref{approx}) as follows: 
\begin{enumerate}
\item[Step (1)] Take $\{X_{I_j}\}_{I_j\in\ch^r}$. Set $w_j=\widetilde{w}_j$ for $j=1,\dots,n$.
\item[Step (2)] Construct the linear system of coordinates $y=(y_1,\dots,y_n)$ such that $\partial_{y_j}=X_{I_j}(\aa)$.
\item[Step (3)] Build the system of privileged coordinates $\widetilde{z}=(\widetilde{z}_1,\dots,\widetilde{z}_n)$ by the following iterative formula: for $j=1,\dots,n$,
\begin{equation}\label{regular-priv}
\widetilde{z}_j:=y_j+\sum_{k=2}^{w_j-1}h_k(y_1,\dots,y_{j-1}),
\end{equation}where, for $k=2,\dots,w_j-1$,
\begin{equation*}
h_k(y_1,\dots,y_{j-1})=-\sum_{\substack{ \vert \alpha\vert=k\\ w(\alpha)<w_j}}X_{I_1}^{\alpha_1}\dots X_{I_{j-1}}^{\alpha_{j-1}}\cdot(y_j+\sum_{q=2}^{k-1}h_q)(y)\arrowvert_{y=0}\ \frac{y_1^{\alpha_1}}{\alpha_1!}\cdots\frac{y_{j-1}^{\alpha_{j-1}}}{\alpha_{j-1}!},
\end{equation*}with $\vert\alpha\vert:=\alpha_1+\cdots+\alpha_{n}$.

\item[Step (4)] For $i=1,\dots,m$, compute the Taylor expansion of $X_i(\widetilde{z})$ at $0$, and express every vector field as a sum of vector fields which are homogeneous with respect to the weighted degree defined by the sequence $(w_j)_{j=1,\dots,n}$:
$$X_i(\widetilde{z})=X_i^{(-1)}(\widetilde{z})+X_i^{(0)}(\widetilde{z})+\cdots,$$
  where we use $X_i^{(k)}(\widetilde{z})$ to denote the sum of all the
  terms of weighted degree equal to $k$. Set
  $\widehat{X}_i^a(\widetilde{z}):=X_i^{(-1)}(\widetilde{z})$.

\item[Step (5)]  For $j=1,\dots,n$, identify homogeneous
  polynomials $\Psi_j$ of weighted degree equal to $w_j$ such that, in
  the  system of privileged coordinates
  $z:=(z_1,\dots,z_n)$ defined by
$$
z_j:=\Psi_j(\widetilde{z}_1,\dots,\widetilde{z}_{j-1}),
  \ \textrm{ for } j=1,\dots,n,
$$
the approximate system
$$
\widehat{X}^a(z)=\{z_*\widehat{X}^a_1(\widetilde{z}),\dots,
  z_*\widehat{X}^a_m(\widetilde{z})\}
$$
is in the canonical form.

\item[Step (6)] Set $\mathcal{A}^X(a):=\widehat{X}^a$ and
  $\Phi^X(a,\cdot):=$ the mapping $x \mapsto z$.
\end{enumerate}

\begin{rem}\label{proof-priv}
Steps (1)-(3) construct a system of privileged coordinates
$\widetilde{z}$.  The proof that $\widetilde{z}$ is a system of {privileged}
coordinates is essentially based on Lemma
\ref{lemma-bellaiche}. Roughly speaking, the idea to obtain $\widetilde{z}_j$ from
$y_j$ goes as follows: for every $\alpha=(\alpha_1,\dots,\alpha_n)$
with $w(\alpha)<w_j$ (so $\alpha_j=\cdots=\alpha_n=0$), compute
$X_{I_1}^{\alpha_1}\cdots X_{I_{j-1}}^{\alpha_{j-1}}\cdot
y_j(y)\vert_{y=0}$. If it is not equal to zero, then replace $y_j$
by
$$
y_j-(X_{I_1}^{\alpha_1}\cdots X_{I_{j-1}}^{\alpha_{j-1}}\cdot
y_j)(y)\vert_{y=0}\ \frac{y_1^{\alpha_1}}{\alpha_1!} \cdots
\frac{y_{j-1}^{\alpha_{j-1}}}{\alpha_{j-1}!}.
$$
With that new value of $y_j$, one gets $X_{I_1}^{\alpha_1}\cdots
X_{I_{j-1}}^{\alpha_{j-1}}\cdot y_j(y)\vert_{y=0}=0$. Therefore, by
Lemma \ref{lemma-bellaiche}, one has ord$_{\aa}(\widetilde{z}_j)\geq w_j$ for
$j=1,\dots,n$.
On the other hand, since Step (3) of the construction does not modify
the linear part, the system of coordinates $\widetilde{z}$ remains
adapted. By Remark \ref{rem-weight-order}, one also has
ord$_{\aa}(\widetilde{z}_j)\leq w_j$, and therefore,
ord$_{\aa}(\widetilde{z}_j)=w_j$.
\end{rem}

\begin{rem}
The existence of $\Psi_j$ in Step (5) is guaranteed by a simple
modification of Claim \ref{claim-change-coordinates},
page~\pageref{claim-change-coordinates}, see also Remarks
\ref{rem-change-coordinates} and \ref{rem-change-example}. The key point is, in the current case, the exponential coordinates are \emph{algebraic}.
\end{rem}

\begin{rem}
We will propose in Section \ref{control-nilpotent} an effective and
exact method for steering general nilpotent systems given in the
canonical form.
\end{rem}

It results from \cite{Bellaiche1} that, for {\em regular systems}, the
mapping $\Phi^X:(\aa,x)\rightarrow z$ is a continuously varying
system of privileged coordinates on $\Omega$. Note also that the
coordinates $z$ are obtained from $y$ by expressions of the form
\begin{eqnarray*}
z_1&=&y_1,\\
z_2&=&y_2+\textrm{pol}_2(y_1),\\
&\vdots&\\
z_n&=&y_n+\textrm{pol}_n(y_1,\dots,y_{n-1}),
\end{eqnarray*} where, for $j=1,\dots,n$, the function pol$_j(\cdot)$
is a polynomial which does not contain constant nor linear terms. Due
to the triangular form of this change of coordinates, the inverse
change of coordinates from $z$ to $y$ bears exactly the same
form. Therefore, the mapping $z=\Phi^X(\aa,\cdot)$ is defined on the
whole $\Omega$, i.e., $\Phi^X$ has an infinite injectivity radius.  We
also note that, by construction, $\mathcal{A}^X$ is a nonholonomic
first order approximation (cf. Definition \ref{approx}) and its
continuity results from the continuity of the mapping
$\Phi^X:(\aa,x)\mapsto z$. In summary, we have the following
proposition.

\begin{prop}\label{prop:continuity}
The mapping $\Phi^X$ is a continuously varying system of privileged
coordinates on $\Omega$ and the mapping $\mathcal{A}^X$ is a
continuous approximation of $X$ on $\Omega$.
\end{prop}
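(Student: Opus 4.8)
The plan is to reduce the statement to its two halves and verify both by running through the six steps of the construction of $\mathcal A^X$ in Section~\ref{bellaiche-construction}, the recurring point being that the standing assumption that $X$ is free up to step $r$ makes every step \emph{global and uniform} over $\Omega$. Indeed, under that assumption the growth vector and the weights $w_1,\dots,w_n$ are constant on $\Omega$ and $\mathcal H^r$ has exactly $n$ elements, so that $(X_{I_1}(\aa),\dots,X_{I_n}(\aa))$ is a basis of $\R^n$ at \emph{every} $\aa\in\Omega$ and the matrix $M(\aa):=(X_{I_1}(\aa)\mid\dots\mid X_{I_n}(\aa))$ is invertible and depends smoothly (analytically, if the $X_i$ are) on $\aa$. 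In particular Step~(2) is defined everywhere and yields, for each $\aa$, an adapted frame together with its dual linear coordinates $y=y(\aa,\cdot)$.

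\emph{$\Phi^X(\aa,\cdot)$ is a system of privileged coordinates at $\aa$.} Since $\partial_{y_j}=X_{I_j}(\aa)$ with $X_{I_j}\in L^{w_j}(X)$, Remark~\ref{rem-weight-order} gives $\mathrm{ord}_\aa(y_j)\le w_j$ and $y$ is centred and adapted at $\aa$. Step~(3) performs exactly the elimination described in Remark~\ref{proof-priv}: invoking Lemma~\ref{lemma-bellaiche}, it kills, for each $j$, all derivatives $X_{I_1}^{\alpha_1}\cdots X_{I_{j-1}}^{\alpha_{j-1}}\cdot\widetilde z_j$ at $\aa$ with $w(\alpha)<w_j$, forcing $\mathrm{ord}_\aa(\widetilde z_j)\ge w_j$; as Step~(3) does not modify the linear part, $\widetilde z$ stays centred and adapted, so $\mathrm{ord}_\aa(\widetilde z_j)\le w_j$ as well, whence $\mathrm{ord}_\aa(\widetilde z_j)=w_j$, i.e. $\widetilde z$ is privileged at $\aa$. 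Step~(5) replaces each $\widetilde z_j$ by a weighted-homogeneous polynomial $\Psi_j$ of weighted degree $w_j$ whose linear part is $\widetilde z_j$ itself (see Remark~\ref{rem-change-example}); being triangular with identity linear part it is a genuine coordinate change, and adding to $\widetilde z_j$ only monomials of the same weighted degree does not alter nonholonomic orders, so $z=\Phi^X(\aa,\cdot)$ is again privileged at $\aa$. Because this substitution and its inverse are triangular of the same polynomial type, $\Phi^X(\aa,\cdot)$ is defined on all of $\Omega$, i.e. $\Phi^X$ has infinite injectivity radius.

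\emph{$\mathcal A^X(\aa)=\widehat X^\aa$ is a first order approximation of $X$ at $\aa$, with continuous radius.} In the privileged coordinates $z$, split each $X_i=\sum_{k\ge-1}X_i^{(k)}$ into its weighted-homogeneous components, the sum starting at $-1$ because $X_i\in L^1(X)$ has nonholonomic order $\ge-1$; then $X_i-\widehat X_i^\aa=\sum_{k\ge0}X_i^{(k)}$ has order $\ge0$ at $\aa$, which is exactly Definition~\ref{approx-point}. The push-forward performed inside Step~(5) merely re-expresses this already-approximating $m$-tuple in the coordinates $z$, where, by Theorem~\ref{theo-sussmann} (equivalently, by the analogue of Claim~\ref{claim-change-coordinates}), it assumes the canonical form~\eqref{cano-desingularized}. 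This is the classical construction of \cite{Bellaiche1} and uses no pointwise regularity. Moreover, $\Phi^X(\aa,\cdot)$ being global, the approximation radius $\rho_\aa$ may be taken arbitrarily large, so the radius function $\rho$ of $\mathcal A^X$ can be chosen continuous, which already gives part~(ii) of the continuity requirement in Definition~\ref{conti-nilpo}.

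\emph{Joint continuity}, which is the only genuinely delicate point. I would show that $(\aa,x)\mapsto\Phi^X(\aa,x)=z$ is smooth: in Step~(2), $y(\aa,x)=M(\aa)^{-1}(x-\aa)$ is smooth in $(\aa,x)$ by the global invertibility of $M(\aa)$; in Step~(3) the coefficients of the correcting polynomials $h_k$ are nonholonomic derivatives of the $y_j$, that is, iterated Lie derivatives of the smooth functions $y_j(\aa,\cdot)$ along the \emph{fixed} vector fields $X_{I_\ell}$ evaluated at $x=\aa$, hence smooth in $\aa$, so $\widetilde z(\aa,x)$ is smooth; in Steps~(4)--(5) the pertinent Taylor coefficients of $X_i$ in the coordinates $\widetilde z$ at $\aa$, and thus the coefficients of $X_i^{(-1)}$, depend smoothly on $\aa$, while the scalars defining the $\Psi_j$ solve, by the identification of Remark~\ref{rem-change-coordinates}, a \emph{triangular} linear system whose entries are smooth in $\aa$ and which is uniquely solvable at every $\aa$ by the argument of Claim~\ref{claim-change-coordinates}; by Cramer's rule these scalars, hence the $\Psi_j$, hence $z=\Phi^X(\aa,x)$, vary smoothly with $(\aa,x)$. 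Consequently $(\aa,x)\mapsto\mathcal A^X(\aa)(x)$, obtained from the fixed canonical vector fields by the smooth change of variables $z=\Phi^X(\aa,\cdot)$, is smooth, which gives part~(i) of Definition~\ref{conti-nilpo}. Together with the two preceding paragraphs this proves that $\Phi^X$ is a continuously varying system of privileged coordinates on $\Omega$ and that $\mathcal A^X$ is a continuous approximation of $X$ on $\Omega$. I expect this last paragraph — checking that the eliminations of Step~(3) and the identifications of Step~(5) are \emph{uniformly} well posed over $\Omega$ — to be the main obstacle; it is precisely here that freeness up to step $r$ (constant weights, a P.\,Hall frame in general position at every point) is indispensable, whereas for a system with singular points these eliminations degenerate exactly on the singular locus, as noted in Remarks~\ref{rem-uniform1} and~\ref{rem-uniform2}.
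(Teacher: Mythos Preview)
Your proof is correct and follows the same line as the paper, namely tracing through Steps~(1)--(6) of the construction and using freeness up to step~$r$ to guarantee that the adapted frame, the elimination of Step~(3), and the identification of Step~(5) are globally and uniformly well posed. The paper's own treatment is in fact much terser than yours: it does not give a formal proof but only the paragraph preceding the proposition, where it (i) cites \cite{Bellaiche1} for the fact that $\Phi^X$ varies continuously on a regular domain, (ii) observes the triangular polynomial form of $z$ in terms of $y$ to conclude infinite injectivity radius, and (iii) remarks that continuity of $\mathcal{A}^X$ follows from that of $\Phi^X$. Your argument fills in precisely the details the paper leaves to the reader or to Bella\"\i che's paper, in particular the joint smoothness of $(\aa,x)\mapsto z$ through the smooth dependence of $M(\aa)^{-1}$, of the coefficients of the $h_k$, and of the solution of the triangular linear system determining the $\Psi_j$.
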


The following theorem is a consequence of Proposition
\ref{prop:continuity} and Corollary \ref{coro:unif_contr}.

\begin{theo}\label{regular-uniform}
Let $\cv^c$ be a compact subset of $\Omega$. If $\mathcal{A}^X$ is
provided with a sub-optimal steering law \emph{(cf. Definitions
  \ref{de:steering-law} and \ref{de:sub-opt})}, then the LAS method
$\ap$ associated with $\mathcal{A}^X$ and its steering law
\emph{(cf. Definition \ref{de:ap})} is uniformly locally contractive on $\cv^c$.
\end{theo}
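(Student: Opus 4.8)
The plan is to verify that the four hypotheses $(i)$--$(iv)$ of Corollary~\ref{coro:unif_contr} hold with $K:=\cv^c$, and then invoke that corollary directly. No new estimate is required: the analytic content has already been packaged into Proposition~\ref{le:error} (and hence Corollary~\ref{coro:unif_contr}) and into Proposition~\ref{prop:continuity}, so the argument is purely a matter of bookkeeping.

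First I would recall the standing assumption of this section, namely that $X=\{X_1,\dots,X_m\}$ is free up to step $r$. By the remark following Definition~\ref{free-s}, this forces every point of $\Omega$ — in particular every point of the compact set $\cv^c$ — to be regular, which gives hypothesis $(i)$. Next, Proposition~\ref{prop:continuity} states precisely that $\Phi^X$ is a continuously varying system of privileged coordinates on $\Omega$ and that $\mathcal{A}^X$ is a continuous approximation of $X$ on $\Omega$; restricting these objects to a neighborhood of the diagonal $\{(x,x):x\in\cv^c\}\subset\cv^c\times\cv^c$ yields hypotheses $(ii)$ and $(iii)$ with $K=\cv^c$. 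Here one may additionally note that, in the regular case, $\Phi^X$ has infinite injectivity radius (the change of coordinates $y\mapsto z$ of Section~\ref{bellaiche-construction} being triangular and polynomial, with polynomial inverse of the same form), so that $\Phi^X(\aa,\cdot)$ is in fact defined on all of $\Omega$; but even without this, continuity on a neighborhood of the diagonal is all the corollary requires. Finally, hypothesis $(iv)$ is exactly the assumption of the theorem, namely that $\mathcal{A}^X$ is equipped with a sub-optimal steering law in the sense of Definitions~\ref{de:steering-law} and~\ref{de:sub-opt}.

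With $(i)$--$(iv)$ established, Corollary~\ref{coro:unif_contr} applies with $K=\cv^c$ and asserts that the LAS method $\ap$ associated with $\mathcal{A}^X$ and its steering law is uniformly locally contractive on $\cv^c$; moreover, up to shrinking the constant $\eps_{\cv^c}$, the quantitative contraction estimates~(\ref{eq:distcontr}) and~(\ref{eq:pseudcontr}) hold for every pair $(\aa,x)\in\cv^c\times\cv^c$ with $d(\aa,x)<\eps_{\cv^c}$. This is exactly the claimed conclusion.

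As for the main difficulty: at this stage there is essentially none — the statement is a corollary of work done upstream. The genuine effort lies in proving Proposition~\ref{prop:continuity} (which rests on the algebraic, triangular form of the privileged coordinates produced in Section~\ref{bellaiche-construction}, guaranteeing their continuous dependence on the base point for a regular system) and in the error estimate of Proposition~\ref{le:error} underlying Corollary~\ref{coro:unif_contr}. The only subtlety worth flagging is that Corollary~\ref{coro:unif_contr} is phrased for an \emph{arbitrary} compact subset of $\Omega$, so compactness of $\cv^c$ together with the global continuity already proved suffices, with no further structural hypothesis on $\cv^c$.
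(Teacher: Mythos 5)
Your proposal is correct and follows exactly the route the paper takes: the paper simply states that Theorem~\ref{regular-uniform} is a consequence of Proposition~\ref{prop:continuity} and Corollary~\ref{coro:unif_contr}, and you have spelled out the routine verification of hypotheses $(i)$--$(iv)$ of that corollary (regularity from the standing ``free up to step $r$'' assumption, $(ii)$ and $(iii)$ from Proposition~\ref{prop:continuity}, and $(iv)$ by hypothesis). Nothing to add.
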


\begin{rem}\label{rem:form-approx}
Due to Step (5) in the construction procedure, the approximate system
$\mathcal{A}^X(a)$ is under canonical form in a system of privileged
coordinates $z$. Therefore, $\mathcal{A}^X(a)$ has always the same
form, regardless of the control system $X$ or the approximate point
$a\in\Omega$. The specificity of each system or each approximate point
is hidden in the change of coordinates $\Phi^X$.
\end{rem}

\begin{rem}\label{rem:goal0}
It is important to notice that the approximate system used in the LAS
method is a nonholonomic first order approximation at the \emph{goal}
point $a$ (cf. Definition \ref{de:ap}). Therefore, the steering
control always displaces $\mathcal{A}^X(a)$ from some position (which
is the image by $\Phi^X(a,\cdot)$ of the current point of the original system) to
0 (which is $\Phi^X(a,a)$ by construction) in coordinates $z$. The latter
fact plays a crucial role in getting the \emph{sub-optimality} for the
steering law (see Section \ref{sec:sub-opt} for more details).
\end{rem} 

\subsection{Approximate steering algorithm}\label{GASA}

Let $\cv^c\subset\Omega$ be a connected compact set equal to the closure
of its interior and $(x^{\textrm{initial}},
x^{\textrm{final}})\in\cv^c\times\cv^c$. We devise, under the
assumptions of Theorem \ref{regular-uniform}, an algorithm (Algorithm
\ref{algo_gf} below) which steers
System~(\ref{CS}) from $x^{\textrm{initial}}$ to
$x^{\textrm{final}}$. That algorithm does not require
any a priori knowledge on the critical distance
$\varepsilon_{\cv^c}$. Note that this algorithm bears similarities
with {\em trust-region} methods in optimization (see \cite{Gilbert}
for more details). 

Recall first that the family of vectors fields $X=\{X_1,\dots, X_m\}$ is assumed to be free up to step $r$. As a consequence the weights $(w_1,\dots, w_n)$ are equal at every point $a \in\cv^c$  to  $(\widetilde{w}_1,\dots, \widetilde{w}_n)$, the free weights of step $r$. Hence the pseudo-norm $\|\cdot \|_a$ (see Definition~\ref{de:pseudonorm}) does not depend on $a \in\cv^c$ and will be denoted as $\| \cdot \|_r$.

The parameterized path $t \mapsto \delta_{t}(x)$ is defined by
$$
\delta_{t}(x) :=  (t^{w_1}
z_1(x), \dots, t^{w_n} z_n(x)), \ \textrm{ for }x\in\Omega,
$$
where $z:= \Phi^X(x^{\textrm{final}},\cdot)$. Note that
$\delta_{t}$ is the (weighted) {\em dilatation} in
privileged coordinates at $x^{\textrm{final}}$ with parameter $t$. In particular, $\| z(\delta_{t}(x))\|_r = |t| \, \| z(x)\|_r$. We
also define the function $\textrm{Subgoal}$ as follows.

\begin{center}
\begin{tabular}{ll}
\hline $\mathrm{Subgoal}(\bx,\eta,j)$ &\\ & \\
1. $t_j:=\max(0,1-\frac{j \eta}{\Vert z(\bx)\Vert_r})$; \\ [0.2cm]
2. $\mathrm{Subgoal}(\bx,\eta,j):=\delta_{t_j}(\bx)$ &  \\ \hline
\end{tabular}
\end{center}
We note that the formula for generating $t_j$ guarantees that
$$
\Vert z(\mathrm{Subgoal}(\bx,\eta,j))-z(\mathrm{Subgoal}(\bx,\eta,j-1))\Vert_r \leq \eta,
$$
 and that $x^d=x^{\textrm{final}}$ for $j$ large
enough. 

\begin{algorithm}
\caption{$\gf~(x^{\textrm{initial}}, x^{\textrm{final}}, e, {\cv^c}, \ap)$}
\label{algo_gf}
\begin{algorithmic}[1]
\STATE $i:=0$; $j:=1$; 
\STATE $x_{i}:=x^{\textrm{initial}}$; $\bx := x^{\textrm{initial}}$; 
\STATE $\eta:=\Vert z(x^{\textrm{initial}})\Vert_r$; \qquad\COMMENT{\textsf{initial choice of the maximum step size;}} 
\WHILE{$\Vert z(x_i)\Vert_r > e$} 
\STATE $x^d:=\mbox{Subgoal}~(\bx,\eta,j)$; 
\STATE $x:= \ap~(x_{i},x^d)$;  
\IF[\textsf{if the system is not approaching the subgoal,}]{$\Vert \Phi^X(x^d,x)\Vert_r > \frac{1}{2} \Vert \Phi^X(x^d,x_{i})\Vert_r$}  
\STATE $\eta:=\frac{\eta}{2}$;\qquad\COMMENT{\textsf{reduce the maximum step size,}}  
\STATE $\bx:=x_{i}$; $j:=1$;\qquad\COMMENT{\textsf{change the path $\delta_{0,t}(\bar{x})$.}} 
\ELSE 
\STATE $i:=i+1$; $j:=j+1$; 
\STATE $x_{i}:=x$;  $x^d_{i}:=x^d$; 
\ENDIF 
\ENDWHILE 
\RETURN $x_i$. 
\end{algorithmic}
\end{algorithm}

The global convergence of Algorithm \ref{algo_gf} is
established in the following theorem. For the sake of clarity, we
first assume that the sequences $(x_i)_{i\geq 0}$ and $(x_i^d)_{i \geq
0}$ constructed by Algorithm \ref{algo_gf} both stay within $\cv^c$. This assumption being of a purely numerical nature, we explain at the end of this section how we can remove it by adding suitable intermediate steps to Algorithm \ref{algo_gf}.

\begin{theo}\label{th:GlobConv}
Let $\cv^c\subset\Omega$ be a connected compact set equal to the closure of its interior. Assume that
\begin{itemize}
\item[(i)] the approximate system system $\mathcal{A}^X$ is provided with a sub-optimal steering law;
\item[(ii)] the LAS method $\ap$ is associated with $\mathcal{A}^X$ and its steering law;
\end{itemize}Then, $\forall~ (x^{\textrm{\emph{initial}}},x^{\textrm{\emph{final}}})\in \cv^c\times \cv^c$,
\emph{Algorithm \ref{algo_gf}} terminates in a finite number
of steps for any choice of the tolerance $e>0$ provided that the sequences $(x_i)_{i \geq 0}$ and $(x_i^d)_{i \geq
0}$ both belong to $\cv^c$.
\end{theo}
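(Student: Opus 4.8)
The plan is to combine the uniform contraction of $\ap$ on $\cv^c$ with a careful bookkeeping of the sub-goals produced by $\mathrm{Subgoal}$. By Theorem~\ref{regular-uniform} (equivalently Corollary~\ref{coro:unif_contr}, whose hypotheses hold on the compact $\cv^c$ by Proposition~\ref{prop:continuity} and assumption~(i)), there exist $\eps_K>0$ and $C_K\ge 1$ such that, for every pair $(\aa,x)\in\cv^c\times\cv^c$ with $d(\aa,x)<\eps_K$, both the distance contraction~(\ref{eq:distcontr}) and the pseudo-norm contraction~(\ref{eq:pseudcontr}) hold, together with the ball--box bounds~(\ref{uniform-estimation}). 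The key observation is that the test used in Algorithm~\ref{algo_gf} to reject a step is exactly the negation of~(\ref{eq:pseudcontr}) written at $\aa:=x^d$ (recall $x=\ap(x_i,x^d)$ and $\Phi^X(x^d,x^d)=0$); hence \emph{if $d(x^d,x_i)<\eps_K$ then the step is accepted and $\eta$ is not halved}, and, contrapositively, $\eta$ is halved only at iterations where $d(x^d,x_i)\ge\eps_K$.

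I organise the run of the algorithm into \emph{phases}: a phase is a maximal block of consecutive iterations during which $\bx$ and $\eta$ are constant; a new phase starts right after each halving, with $\bx$ reset to the current $x_i$, with $\eta$ halved, and with $j$ reset to $1$. Within a phase the index $j$ increases by one at each accepted iteration, so the sub-goals are $g_k:=\mathrm{Subgoal}(\bx,\eta,k)=\delta_{t_k}(\bx)$, $k\ge 1$, with $t_k=\max\!\big(0,\,1-k\eta/\Vert z(\bx)\Vert_r\big)$ and $z:=\Phi^X(x^{\textrm{final}},\cdot)$; in particular $\Vert z(g_k)\Vert_r=\max(0,\Vert z(\bx)\Vert_r-k\eta)$, so $g_k=x^{\textrm{final}}$ as soon as $k\ge\Vert z(\bx)\Vert_r/\eta$. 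The core of the proof is the claim that there is $\eta^\star>0$, depending only on $\cv^c$, $C_K$ and $\eps_K$, such that \emph{every iteration of a phase with step size $\eta\le\eta^\star$ is accepted}.

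To prove the claim, set $\rho_{\max}:=\max_{x\in\cv^c}\Vert z(x)\Vert_r$ and $\kappa:=\max(1,\rho_{\max}^{\,r-1})$. From $z(g_k)=\big(t_k^{w_1}z_1(\bx),\dots,t_k^{w_n}z_n(\bx)\big)$, the inequality $|t^{w}-1|\le r(1-t)$ for $t\in[0,1]$ and $w\le r$, the elementary bound $|z_j(\bx)|\le\Vert z(\bx)\Vert_r^{w_j}\le\kappa\,\Vert z(\bx)\Vert_r$, together with $(1-t_1)\Vert z(\bx)\Vert_r=\min(\Vert z(\bx)\Vert_r,\eta)\le\eta$ and $|t_{k+1}-t_k|\le\eta/\Vert z(\bx)\Vert_r$, one obtains $|z(g_1)-z(\bx)|\le C'\eta$ and $|z(g_{k+1})-z(g_k)|\le C'\eta$ for all $k\ge 1$ and all $\bx\in\cv^c$, where $C':=r\sqrt{n}\,\kappa$. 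Since $z^{-1}=\Phi^X(x^{\textrm{final}},\cdot)^{-1}$ is uniformly continuous on the compact $z(\cv^c)$ and $d$ metrizes the standard topology, there is a modulus $\omega$ with $\omega(0^+)=0$ such that $d(g_1,\bx)\le\omega(C'\eta)$ and $d(g_{k+1},g_k)\le\omega(C'\eta)$; choose $\eta^\star$ with $\omega(C'\eta^\star)<\eps_K/2$. Now, in a phase with $\eta\le\eta^\star$, setting $x^{(0)}:=\bx$ and $x^{(k)}:=\ap(x^{(k-1)},g_k)$, I prove by induction on $k$ that $d(g_k,x^{(k-1)})<\eps_K$ (which makes each iteration accepted): the base case $k=1$ is $d(g_1,\bx)<\eps_K/2$; and if $d(g_k,x^{(k-1)})<\eps_K$ then~(\ref{eq:pseudcontr}) holds at $\aa=g_k$, so the step is accepted, and~(\ref{eq:distcontr}) gives $d(g_k,x^{(k)})\le\tfrac12 d(g_k,x^{(k-1)})<\eps_K/2$, whence $d(g_{k+1},x^{(k)})\le d(g_{k+1},g_k)+d(g_k,x^{(k)})<\eps_K$. (Here we use the standing assumption that all $x_i$ and all sub-goals lie in $\cv^c$, so that Corollary~\ref{coro:unif_contr} applies to each pair $(g_k,x^{(k-1)})$.) Obtaining these smallness estimates \emph{uniformly} over $\cv^c$ — especially for base points $\bx$ close to $x^{\textrm{final}}$, where the dilation parameter may vary widely — is the only delicate point, and it is exactly what forces the use of the pointwise bound $|z_j|\le\Vert z\Vert_r^{w_j}$; everything else is routine.

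Finally, $\eta$ takes only the values $\Vert z(x^{\textrm{initial}})\Vert_r\,2^{-p}$, $p=0,1,2,\dots$, and decreases strictly at each halving; hence after at most $\big\lceil\log_2(\Vert z(x^{\textrm{initial}})\Vert_r/\eta^\star)\big\rceil$ halvings it reaches a value $\le\eta^\star$, and by the claim no further halving occurs: the algorithm then stays in one last phase, all of whose iterations are accepted. In that phase, after at most $\lceil\rho_{\max}/\eta\rceil$ iterations the sub-goal becomes (and remains) $x^{\textrm{final}}$, and the induction above, read at the index $N$ with $g_N=x^{\textrm{final}}$, gives $d(x^{\textrm{final}},x^{(N-1)})<\eps_K$; from that iteration on,~(\ref{eq:pseudcontr}) applies at $\aa=x^{\textrm{final}}$, so $\Vert z(x_i)\Vert_r=\Vert\Phi^X(x^{\textrm{final}},x_i)\Vert_r$ is at least halved at every step and tends to $0$, hence drops below $e$ after finitely many steps and the \textbf{while} loop exits. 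Since every phase is finite (an all-accepted phase terminates, because along accepted steps $\Vert\Phi^X(x^d,x_{i+1})\Vert_r\le\tfrac12\Vert\Phi^X(x^d,x_i)\Vert_r$ and eventually $x^d=x^{\textrm{final}}$; and a phase containing a rejected iteration ends at that iteration) and there are finitely many phases, Algorithm~\ref{algo_gf} terminates in a finite number of steps.
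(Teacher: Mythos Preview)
Your proof is correct and follows essentially the same strategy as the paper's: show that once $\eta$ falls below a threshold every step is accepted, via an induction on the sub-goals that combines the triangle inequality $d(g_{k+1},x^{(k)})\le d(g_{k+1},g_k)+d(g_k,x^{(k)})$ with the contraction~(\ref{eq:distcontr}), and then observe that the sub-goal eventually equals $x^{\textrm{final}}$ so that~(\ref{eq:pseudcontr}) drives $\Vert z(x_i)\Vert_r$ to zero. Your organisation into ``phases'' and your explicit Euclidean estimate $|z(g_{k+1})-z(g_k)|\le C'\eta$ (via $|t^{w}-s^{w}|\le r|t-s|$ and $|z_j|\le\Vert z\Vert_r^{w_j}$) are a slightly more careful route to the sub-goal closeness than the paper's one-line claim $\Vert z(g_{k+1})-z(g_k)\Vert_r\le\eta$ together with the compactness implication~(\ref{eq:distcont}), but the core argument is the same.
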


\begin{proof}[Proof of Theorem \ref{th:GlobConv}]
Note first that, if the conditional statement of Line 7 is not true
for every $i$ greater than some $i_0$, then $x^d_i=x^{\textrm{final}}$ after a finite
number of iterations. In this case, the error $\Vert z(x_i)\Vert_r$ is reduced
at each iteration and the algorithm stops when it becomes smaller
than the given tolerance $e$. This happens in particular if
$d(x_{i},x^d) < \eps_{\cv^c}$ for all $i$ greater than $i_0$ because
condition~(\ref{eq:pseudcontr}) is verified. Another preliminary remark is that, due to the continuity of the
control distance and of the function $\Vert z(\cdot)\Vert_r$,
there exists $\bet >0$ such that, for every pair $(x_1, x_2) \in {\cv^c}\times {\cv^c}$, one has
\begin{equation}
\Vert z(x_1)-z(x_2)\Vert_r < \bet \ \Longrightarrow \ d(x_1,x_2) < \frac{\eps_{\cv^c}}{2}.
\label{eq:distcont}
\end{equation}

In the following, we will prove by induction that if, at some step
$i_0$, one has $\eta < \bet$, then, for all  $i> i_0$, one has $d(x_{i-1},x_i^d) < (1/2+\cdots+(1/2)^{i-i_0})\eps_{\cv^c}< \eps_{\cv^c}$.

We assume without loss of generality that $i_0=0$ and $\bx =x_0$. For $i=1$, by construction,
$x^d = \mbox{Subgoal}(x_0,\eta,1)$ and $\Vert z(x_0)-z(x^d)\Vert_r \leq \eta < \bet$.

In view of~(\ref{eq:distcont}), one then has $d(x_0,x^d) <
\eps_{\cv^c}/2$. In view of~(\ref{eq:pseudcontr}), the conditional statement of Line
7 is not true, therefore  $x_1^d=x^d$ and one has
$
\displaystyle d(x_0,x_1^d) < \eps_{\cv^c}/2.
$

Assume now that for $i>1$ one has:
\begin{equation}
d(x_{i-2},x_{i-1}^d) < (1/2+\dots+(1/2)^{i-1}) \eps_{\cv^c}.
\label{eq:indhyp}
\end{equation}
The subgoal $x_{i-1}^d$ is of the form $\mathrm{Subgoal}(\bx,\eta,j)$. Let $x^d = \mathrm{Subgoal}(\bx,\eta,j+1)$. One can write:
\[
d(x_{i-1},x^d) \leq d(x_{i-1},x_{i-1}^d) +
d(x_{i-1}^d,x^d).
\]
By construction, it is $\Vert z(x_{i-1}^d)-z(x^d)\Vert_r \leq \eta < \bet$, which implies $d(x_{i-1}^d,x^d) < \eps_{\cv^c}/2$.
  The induction
hypothesis~(\ref{eq:indhyp}) implies that $d(x_{i-1},x_{i-1}^d) \leq \frac{1}{2} d(x_{i-2},x_{i-1}^d)$.

Finally, one gets $$d(x_{i-1},x^d) \leq \frac{1}{2} d(x_{i-2},x_{i-1}^d) +
d(x_{i-1}^d,x^d) \leq (1/2+\dots+(1/2)^{i})\eps_{\cv^c}.$$

In view of~(\ref{eq:pseudcontr}), the conditional statement of Line
7 is not true, and so $x_i^d = x^d$. This ends the induction. 

Notice that, at some step $i$, $\eta \geq \bet$, the conditional statement of
Line 7 could be false.  In this case, $\eta$ is decreased as in
Line 8.  The
updating law of $\eta$ guarantees that after a finite number of
iterations of Line 8, there holds $\eta < \bet$. This ends the proof.
\end{proof}

When the working space $\Omega$ is equal to the whole $\R^n$, the
assumption that the sequences $(x_i)_{i \geq 0}$ and $(x_i^d)_{i \geq
0}$ constructed by Algorithm \ref{algo_gf} both stay within a compact set ${\cv^c}$ can be removed.
This requires a simple modification of Lines $11$ and $12$ of Algorithm \ref{algo_gf}. 

We choose a real number $R$ close to one, precisely
$(\frac{1}{2})^{1/(r+1)^2} < R < 1,$ where $r$ is the maximum value of the degree of nonholonomy of System (\ref{CS}) on $\cv^c$. For every non-negative integer
$k$, we set $R_k = 1 + R + \cdots + R^k$. The algorithm is modified
as follows. Introduce first a new variable $k$, and add the
initialization $k:=0$. Replace then Lines $11$ and $12$ of Algorithm \ref{algo_gf} by the procedure below.

\begin{algorithm}[!h]
\begin{algorithmic}[1]
\IF{$\Vert z(x)\Vert_r \geq R_{k+1} \Vert z(x^{\textrm{initial}})\Vert_r$} 
\STATE $\eta:=\frac{\eta}{2}$; 
\ELSIF{$R_k \Vert z(x^{\textrm{initial}})\Vert_r \leq \Vert z(x)\Vert_r < R_{k+1} \Vert z(x^{\textrm{initial}})\Vert_r$} 
\STATE $i:=i+1$; $j:=j+1$; 
\STATE $x_{i}:=x$; $x^d_{i}:=x^d$;  
\STATE $\eta:=\frac{\eta}{2}$; 
\STATE $k:=k+1$; 
\ELSIF{$\Vert z(x)\Vert_r\leq R_k \Vert z(x^{\textrm{initial}})\Vert_r$} 
\STATE $i:=i+1$; $j:=j+1$; 
\STATE $x_{i}:=x$; $x^d_{i}:=x^d$; 
\ENDIF 
\end{algorithmic}
\end{algorithm}

This procedure guarantees that the sequences $(x_i)_{i \geq 0}$ and
$(x_i^d)_{i \geq 0}$ of the algorithm both belong to the compact set
$$
K= \{ x\in \R^n \ : \ \Vert z(x)\Vert_r \leq \frac{1}{1-R} \Vert z(x^{\textrm{initial}})\Vert_r \}.
$$
Moreover, at each iteration of the algorithm, the new variable $k$
is such that
$$
\Vert z(x_i)\Vert_r \geq R_k \Vert z(x^{\textrm{initial}})\Vert_r \quad \Rightarrow \quad \eta \leq
\frac{\Vert z(x^{\textrm{initial}})\Vert_r}{2^k}.
$$ 

For the sake of clarity, we state here the complete modified algorithm named as Algorithm \ref{algo_gfmod}.

\begin{algorithm}
\caption{$\gfmod~(x^{\textrm{initial}}, x^{\textrm{final}}, e, {\cv^c}, \ap)$}
\label{algo_gfmod}
\begin{algorithmic}[1]
\STATE $i:=0$; $j:=1$; 
\STATE $x_{i}:=x^{\textrm{initial}}$; $\bx := x^{\textrm{initial}}$; 
\STATE $\eta:=\Vert z(x^{\textrm{initial}})\Vert_r$; \qquad\COMMENT{\textsf{initial choice of the maximum step size;}} 
\WHILE{$\Vert z(x_i)\Vert_r > e$} 
\STATE $x^d:=\mbox{Subgoal}~(\bx,\eta,j)$; 
\STATE $x:= \ap~(x_{i},x^d)$;  
\IF[\textsf{if the system is not approaching the subgoal,}]{$\Vert \Phi^X(x^d,x)\Vert_r > \frac{1}{2} \Vert \Phi^X(x^d,x_{i})\Vert_r$}  
\STATE $\eta:=\frac{\eta}{2}$;\qquad\COMMENT{\textsf{reduce the maximum step size,}}  
\STATE $\bx:=x_{i}$; $j:=1$;\qquad\COMMENT{\textsf{change the path $\delta_{0,t}(\bar{x})$.}} 
\ELSIF{$\Vert z(x)\Vert_r \geq R_{k+1} \Vert z(x^{\textrm{initial}})\Vert_r$} 
\STATE $\eta:=\frac{\eta}{2}$; 
\ELSIF{$R_k \Vert z(x^{\textrm{initial}})\Vert_r \leq \Vert z(x)\Vert_r < R_{k+1} \Vert z(x^{\textrm{initial}})\Vert_r$} 
\STATE $i:=i+1$; $j:=j+1$; 
\STATE $x_{i}:=x$; $x^d_{i}:=x^d$;  
\STATE $\eta:=\frac{\eta}{2}$; 
\STATE $k:=k+1$; 
\ELSIF{$\Vert z(x)\Vert_r\leq R_k \Vert z(x^{\textrm{initial}})\Vert_r$} 
\STATE $i:=i+1$; $j:=j+1$; 
\STATE $x_{i}:=x$; $x^d_{i}:=x^d$; 
\ENDIF 
\ENDWHILE 
\RETURN $x_i$. 
\end{algorithmic}
\end{algorithm}

\begin{prop}\label{mod-global-algo}
Let ${\cv^c}\subset\Omega$ be a connected compact set equal to the closure of its interior. Under the assumptions \emph{(i)} and \emph{(ii)} of Theorem \ref{th:GlobConv}, $\forall~ (x^{\textrm{\emph{initial}}},x^{\textrm{\emph{final}}})\in {\cv^c}\times {\cv^c}$, \emph{Algorithm \ref{algo_gfmod}} terminates in a finite number of iterations for
any choice of the tolerance $e>0$.
\end{prop}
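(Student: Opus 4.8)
The plan is to reduce the statement to Theorem~\ref{th:GlobConv}: I will first show that the bookkeeping with the radii $R_k$ automatically confines both sequences $(x_i)$ and $(x^d_i)$ to the explicit compact set $K=\{x\in\R^n : \|z(x)\|_r\le\frac{1}{1-R}\|z(x^{\textrm{initial}})\|_r\}$, so that the uniform contractivity estimates become available, and then rerun the argument of Theorem~\ref{th:GlobConv} while checking that $\eta$ is halved only finitely many times. For the confinement, note that $z=\Phi^X(x^{\textrm{final}},\cdot)$ gives $z(x^{\textrm{final}})=0$, so $x^{\textrm{final}}\in K$, and $x^{\textrm{initial}}\in K$ trivially. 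By induction on $i$, an iterate $x_i$ is recorded only through one of the two branches in which $\|z(x)\|_r<R_{k+1}\|z(x^{\textrm{initial}})\|_r\le\tfrac1{1-R}\|z(x^{\textrm{initial}})\|_r$, hence $x_i\in K$; moreover $x^d=\mathrm{Subgoal}(\bx,\eta,j)=\delta_{t_j}(\bx)$ with $t_j\in[0,1]$, so $\|z(x^d)\|_r=t_j\|z(\bx)\|_r\le\|z(\bx)\|_r$, where $\bx$ is always one of $x^{\textrm{initial}},x_1,x_2,\dots\in K$, whence $x^d\in K$. Thus the two sequences stay in $K$, and, $K$ being compact, Theorem~\ref{regular-uniform} together with Corollaries~\ref{theo:unif_contr} and~\ref{coro:unif_contr} applies with $\cv^c:=K$: there are $C_K,\eps_K>0$ for which \eqref{uniform-estimation}, \eqref{eq:distcontr} and \eqref{eq:pseudcontr} hold on $K\times K$ whenever the control distance is $<\eps_K$, and, by continuity of $d$ and of $\|z(\cdot)\|_r$ on $K$, a number $\bet>0$ for which \eqref{eq:distcont} holds.

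Next I would prove that $\eta$ is halved only finitely many times. It decreases in exactly three places: the branch of Line~7, the branch $\|z(x)\|_r\ge R_{k+1}\|z(x^{\textrm{initial}})\|_r$, and the branch $R_k\|z(x^{\textrm{initial}})\|_r\le\|z(x)\|_r<R_{k+1}\|z(x^{\textrm{initial}})\|_r$, the last being the only one that increments $k$; if $\eta$ were halved infinitely often then $\eta\to0$. First, the branch of Line~7 fires only finitely often: otherwise there would be infinitely many resets $\bx:=x_i,\ j:=1$, hence one such reset while $\eta<\bet$, and from that step on the induction of the proof of Theorem~\ref{th:GlobConv} gives $d(x_{i-1},x^d)<\eps_K$ at every subsequent step, so the conditional of Line~7 is never true again --- a contradiction. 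Consequently $\bx$ is eventually constant; after the last reset, once $\eta$ has dropped below $\bet$ the same induction applies, so the iterates then satisfy $\|\Phi^X(x^d,x_i)\|_r\le\tfrac12\|\Phi^X(x^d,x_{i-1})\|_r$, and since the subgoals march to $x^{\textrm{final}}$, $\|z(x_i)\|_r$ eventually stays below $\|z(x^{\textrm{initial}})\|_r\le R_k\|z(x^{\textrm{initial}})\|_r$; hence neither the branch $\|z(x)\|_r\ge R_{k+1}\|z(x^{\textrm{initial}})\|_r$ nor the middle branch is entered again, so $\eta$ becomes constant --- again a contradiction. Therefore $\eta$ is halved only finitely many times and $k$ stays bounded.

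For the conclusion: from some step $i_0$ on, $\bx$ and $\eta<\bet$ are fixed and a reset has just occurred, so the induction of Theorem~\ref{th:GlobConv} applies verbatim, giving $d(x_{i-1},x^d)<\eps_K$ for all $i>i_0$, Line~7 never true, the auxiliary branches never entered; moreover $t_j=0$ for $j$ large, i.e. $x^d=x^{\textrm{final}}$, and then $\Phi^X(x^d,\cdot)=z$ and \eqref{eq:pseudcontr} give $\|z(x_i)\|_r\le\tfrac12\|z(x_{i-1})\|_r$ at each step, so the test $\|z(x_i)\|_r>e$ of the \textsf{while} loop fails after finitely many iterations, which is the assertion. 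The main obstacle is the middle step: one has to show simultaneously that shrinking $\eta$ defeats all three $\eta$-halving branches and, in particular, that $\eta$ cannot stabilize at a value $\ge\bet$ for which the conditional of Line~7 happens to stay false without the iterates converging. This is precisely where the specific choice $(\tfrac12)^{1/(r+1)^2}<R<1$ and the relation ``$\|z(x_i)\|_r\ge R_k\|z(x^{\textrm{initial}})\|_r\Rightarrow\eta\le\|z(x^{\textrm{initial}})\|_r/2^k$'' are used: the exponent $(r+1)^2$ is calibrated against the exponent $1/r$ in the error estimate of Proposition~\ref{le:error}, so that the at-most-$C_K$-fold transient expansion of $\|z(\cdot)\|_r$ produced by one application of $\ap$ when the steering control is still long is absorbed by the (slow but genuine) growth of the admissible radius $R_{k+1}\|z(x^{\textrm{initial}})\|_r$, which both keeps $x$ in $K$ and prevents the counter $k$ from growing without bound.
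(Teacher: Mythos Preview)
Your confinement argument (that $(x_i)$ and $(x_i^d)$ stay in $K=\{x:\|z(x)\|_r\le\frac{1}{1-R}\|z(x^{\textrm{initial}})\|_r\}$) is correct and matches the paper. The overall architecture --- reduce to Theorem~\ref{th:GlobConv} once the auxiliary $\eta$-halving branches are shown to fire only finitely often --- is also the paper's. But the crucial middle step is not actually carried out; you describe it rather than prove it.

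The gap is the sentence ``since the subgoals march to $x^{\textrm{final}}$, $\|z(x_i)\|_r$ eventually stays below $\|z(x^{\textrm{initial}})\|_r\le R_k\|z(x^{\textrm{initial}})\|_r$''. This is precisely the assertion that needs a quantitative argument. Contractivity is measured in $\|\Phi^X(x^d,\cdot)\|_r$, not in $\|z(\cdot)\|_r=\|\Phi^X(x^{\textrm{final}},\cdot)\|_r$; while $x^d\neq x^{\textrm{final}}$ the iterates can drift in the $z$-pseudonorm, and you give no bound on that drift. (There is also a mechanical issue: in the Line~10 branch neither $i$ nor $j$ is incremented, so ``the subgoals march to $x^{\textrm{final}}$'' is not automatic.) The paper closes this by the rough Ball--Box estimate~\eqref{eq:dfnofd0}, deriving
\[
\|z(x_i)\|_r \;\le\; \|z(x_{i_0})\|_r \;+\; (2C_0^2)^{1/(r+1)}\,\eta^{1/(r+1)^2},
\]
and then invoking the coupling $\|z(x_i)\|_r\ge R_k\|z(x^{\textrm{initial}})\|_r\Rightarrow\eta\le\|z(x^{\textrm{initial}})\|_r/2^k$ together with $R>(1/2)^{1/(r+1)^2}$ to conclude that the right-hand side is $\le R_{k_0+1}\|z(x^{\textrm{initial}})\|_r$, hence Lines~10 and~12 can fire at most $k_0+1$ more times. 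Your final paragraph names these ingredients correctly (modulo attributing the exponent to Proposition~\ref{le:error} rather than to~\eqref{eq:dfnofd0}), but naming them is not proving the bound; without this estimate the argument is incomplete.
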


\begin{proof}[Proof of Proposition \ref{mod-global-algo}]
Notice that Lines $17$, $18$, and $19$ in Algorithm \ref{algo_gfmod} are identical to Lines $10$, $11$, and $12$ in Algorithm \ref{algo_gf}. Therefore, it is enough
to show that, after a finite number of iterations, the condition of Line $17$ in Algorithm \ref{algo_gfmod} holds true. Another preliminary remark is that the distance $\Vert z(x)- z(y)\Vert_r$ give a rough
estimate of the sub-Riemannian distance. Indeed it follows from
Theorem~\ref{theo:unif_contr} that, for every pair of close enough points $(x,y)\in {\cv^c}\times {\cv^c}$, one has
\begin{equation}
\label{eq:dfnofd0}
    \frac{1}{C_0} \Vert z(x)-z(y)\Vert_r^{r+1} \leq d(x,y) \leq C_0
    \Vert z(x)-z(y)\Vert_r^{1/(r+1)},
\end{equation}
where $C_0$ is a positive constant. As a consequence, Eq. (\ref{eq:distcont}) holds true if $\bet \leq
(\eps_{\cv^c}/(2C_0))^{r+1}$.

Let us choose a positive $\bet$ smaller than $(\eps_{\cv^c}/(2C_0))^{r+1}$. We next show that if, at
some step $i_0$, $\eta <\bet$, then the case of Line 10 and the one of Line 12 occur only
in a finite number of iterations. Recall first that, from the proof of Theorem~\ref{th:GlobConv}, one
gets, for every $i > i_0$,
$$
\Vert z(x_i^d)\Vert_r \leq \Vert z(x_{i_0})\Vert_r \quad \hbox{and} \quad d(x_{i},x_i^d)
\leq \eps_{\cv^c}.
$$
In view of Eq. (\ref{eq:dfnofd0}), an obvious adaptation of the latter
proof yields, for every $i>i_0$, $d(x_{i},x_i^d) \leq 2 C_0
\eta^{1/(r+1)}$, and thus
$$\Vert z(x_{i})-z(x_i^d)\Vert_r \leq (2 C_0^2)^{1/(r+1)}
\eta^{1/(r+1)^2}.$$ Finally one gets
\begin{eqnarray}
\label{eq:inegtriang}
\Vert z(x_i)\Vert_r &\leq&  \Vert z(x_{i}^d)\Vert_r + \Vert z(x_i)-z(x_{i}^d)\Vert_r\nonumber\\
& \leq& \Vert z(x_{i_0})\Vert_r+(2 C_0^2)^{1/(r+1)}\eta^{1/(r+1)^2}.
\end{eqnarray}

On the other hand, there exists an integer $k_0$ such that
$\eta \geq \frac{\Vert z(x^{\textrm{initial}})\Vert_r}{2^{k_0}}$. This implies that
$\Vert z(x_{i_0})\Vert_r \leq R_{k_0} \Vert z(x^{\textrm{initial}})\Vert_r$. Up to reducing $\bet$, and so
increasing $k_0$, assume
$$
(2 C_0^2)^{1/(r+1)} (\frac{\Vert z(x^{\textrm{initial}})\Vert_r}{2^{k_0}})^{1/(r+1)^2} \leq
R^{k_0+1} \Vert z(x^{\textrm{initial}})\Vert_r,
$$
since one has chosen $R> (\frac{1}{2})^{1/(r+1)^2}$. Using
Eq. (\ref{eq:inegtriang}), it holds, for every $i \geq i_0$,
$
\Vert z(x_i)\Vert_r \leq R_{k_0} \Vert z(x^{\textrm{initial}})\Vert_r + R^{k_0+1} \Vert z(x^{\textrm{initial}})\Vert_r =  R_{k_0+1}
\Vert z(x^{\textrm{initial}})\Vert_r.
$ Therefore, the case of Line 10 and the one of Line 12 occur in at most $k_0+1$
iterations. Applying again the arguments of the proof of Theorem~\ref{th:GlobConv}, the conclusion follows.
\end{proof}

\begin{rem}

It is worth pointing out that the additional steps involved in Algorithm \ref{algo_gfmod} are designed to prevent the sequences $(x_i)_{i \geq 0}$ and $(x_i^d)_{i \geq
0}$ from accumulating toward the boundary of the compact ${\cv^c}$. There exist other numerical artifacts of probabilistic nature which solve this problem. One also deduces from the proof of Proposition \ref{mod-global-algo} that if the points $x^{\textrm{initial}}$ and $x^{\textrm{final}}$ are \emph{far} enough from the boundary of ${\cv^c}$, the sequences $(x_i)_{i \geq 0}$ and $(x_i^d)_{i \geq}$ will remain in ${\cv^c}$.

\end{rem}



\section{Exact Steering Method for Nilpotent Systems}\label{control-nilpotent}

In this chapter, we devise an exact steering method for general nilpotent systems. Without loss of generality, we assume that the system $X=\{X_1,\dots,X_m\}$ is nilpotent of step $r$, free up to step $r$, and given in the canonical form in coordinates $x$. Recall that, under this assumption, the dynamics is written as follows
\begin{equation}\label{cano-f5}
\begin{array}{llll}
\dot{x}_i&=&u_i,&\textrm{ if }i=1,\dots,m;\\
\dot{x}_{I}&=&\frac{1}{k!}x_{I_L}\dot{x}_{I_R},&\textrm{ if }X_{I}=\textrm{ad}^k_{X_{I_L}}X_{I_R},\ \ I_L, I_R\in\ch^r,
\end{array}
\end{equation} where the components of $x$ are numbered by the elements of $\ch^r$, i.e., for $I\in\ch^r$, the component $x_I$ corresponds to the element $X_I$.
We also assume that we want to steer the System (\ref{cano-f5}) from any point $x\in\R^{\tn_r}$ to the origin $0$ of $\R^{\tn_r}$.  

\begin{rem}\label{rem:nilpotent-general}
Note that these two assumptions are not restrictive since, for general nilpotent systems, in order to steer from $x^{\textrm{initial}}$ to $x^{\textrm{final}}$, it suffices to apply the Desingularization Algorithm at the final point $x^{\textrm{final}}$ (see also Remark \ref{rem:nilpotent}).
\end{rem}

This method can also be applied for the construction of a sub-optimal steering law for the approximate system $\mathcal{A}^X$ defined in Section \ref{bellaiche-construction}. For practical uses, we require that the inputs give rise to regular trajectories (i.e., at least $C^1$), which are not too ``complex" in the sense that, during
the control process, we do not want the system to stop too many times or to make a large number of maneuvers.

Several methods were proposed in the literature for steering nilpotent systems. In \cite{Lafferriere}, the authors make use of piecewise constant controls and obtain smooth controls by imposing some special parameterization (namely by requiring the control system to stop during the control process). In that case, the regularity of the inputs is recovered by using a reparameterization of the time, which cannot prevent in general the occurrence of cusps or corners for the corresponding trajectories. However, regularity of the {\em trajectories} is generally mandatory for robotic applications.  Therefore, the method proposed in \cite{Lafferriere2} is not adapted to such applications. In \cite{Lafferriere}, the proposed inputs are polynomial functions in time, but an algebraic system must be inverted in order to access to these inputs. Moreover, the size and the degree of this algebraic system increase exponentially with respect to the dimension of state space, and there does not exist a
general efficient exact method to solve it. Even the existence of solutions is a non trivial issue. Furthermore, the methods \cite{Lafferriere} and \cite{Lafferriere2} both make use of exponential coordinates which are not explicit and thus require in general numerical integrations of nonlinear differential equations. That prevents the use of these methods in an iterative scheme such as Algorithm \ref{algo_rrt}. Let us also mention the path approximation method by Liu and Sussmann \cite{Liu}, which uses unbounded sequences of sinusoids. Even though this method bears similar theoretical aspects with our method, it is not adapted from a numerical point of view to the motion planning issue since it relies on a limit process of highly oscillating inputs.

\subsection{Steering by sinusoids}
We consider input functions in the form of linear combinations of sinusoids with integer frequencies. In \cite{Murray}, authors used this family of inputs to control the chained-form systems.

We first note that if every component of the input $u=(u_1,\dots,u_m)$ in Eq. (\ref{cano-f5}) is a linear combination of sinusoids with integer frequencies, then the dynamics of every component in Eq. (\ref{cano-f5}) is also a linear combination of sinusoids with integer frequencies, which are themselves linear combinations of frequencies involved in the input $u$. One may therefore expect to move some components during a $2\pi$ time-period without modifying others if the frequencies in $u$ are properly chosen. Due to the triangular form of Eq. (\ref{cano-f5}), it is reasonable to expect to move the components of $x$ one after another according to the order ``$\prec$" induced by the P. Hall basis. In that case, one must ensure that all the components already moved to their preassigned values return to the same values after each $2\pi-$period of control process, while the component under consideration arrives to its preassigned position. However, all the components cannot be moved independently by using sinusoids. For that purpose, we introduce the following notion of {\em equivalence}.

\begin{de}[\emph{Equivalence}]\label{Equivalence-Class}
Two elements $X_I$ and $X_J$ in the P. Hall family are said to be {\em equivalent} if $\Delta_i(X_I)=\Delta_i(X_J)$ for $i=1,\dots, m$, where we use $\Delta_i(X_I)$ to denote the number of times $X_i$ occurs in $X_I$. We write $X_I\sim X_J$ if $X_I$ and $X_J$ are equivalent and {\em equivalence classes} will be denoted by
\begin{equation*}
\mathcal{E}_X(\ell_1,\dots,\ell_m):=\{X_I \ \vert \ \Delta_i(X_I)=l_i, \textrm{ for }i=1,\dots,m\}.
\end{equation*}We say that the components $x_I$ and $x_J$ are {\em equivalent} if the corresponding brackets $X_I$ and $X_J$ are equivalent and {\em equivalent classes for components} are defined as follows,
\begin{equation*}
\mathcal{E}_x(\ell_1,\dots,\ell_m):=\{x_I\ \vert X_I\in\mathcal{E}_X(\ell_1,\dots,\ell_m) \}.
\end{equation*}
\end{de}%
\begin{rem}\label{rem-non-separate}
We will see in the following subsections that the frequencies occurring in the dynamics of $x_I$ only depend on the equivalence class of $x_I$, and not on the structure of the bracket $X_I$. Therefore, the equivalent components (in the sense of Definition \ref{Equivalence-Class}) cannot be moved separately by using sinusoids.
\end{rem}
\begin{de}[\emph{Ordering of equivalence classes}]\label{Order-equivalence-class}
Let $\mathcal{E}_x(\ell_1,\dots,\ell_m)$ and $\mathcal{E}_x(\tilde{\ell}_1,\dots,\tilde{\ell}_m)$ be two equivalence classes. $\mathcal{E}_x(\ell_1,\dots,\ell_m)$ is said to be smaller than $\mathcal{E}_x(\tilde{\ell}_1,\dots,\tilde{\ell}_m)$ if the smallest element (in the sense of ``$\prec$") in $\mathcal{E}_x(\ell_1,\dots,\ell_m)$ is smaller than the one in $\mathcal{E}_x(\tilde{\ell}_1,\dots,\tilde{\ell}_m)$, and we write (by abuse of notation) $\mathcal{E}_x(\ell_1,\dots,\ell_m)\prec\mathcal{E}_x(\tilde{\ell}_1,\dots,\tilde{\ell}_m)$.
\end{de} 

Let $\{\mathcal{E}_x^1,\mathcal{E}_x^2,\dots,\mathcal{E}_x^{\widetilde{N}}\}$ be the partition of the set of the components of $x$ induced by Definition \ref{Equivalence-Class}.  Assume that, for every pair $(i,j)\in\{1,\dots,\widetilde{N}\}^2$ with $i<j$, one has $\mathcal{E}_x^i\prec\mathcal{E}_x^j$. Our control strategy consists in displacing these equivalence classes one after another according to the ordering ``$\prec$" by using sinusoidal inputs. For every $j=1,\dots,\widetilde{N}$, the key point  is to determine how to construct an input $u^j$ defined on $[0,2\pi]$ such that the two following conditions are verified:
\begin{itemize}
\item[(C1)]under the action of $u^j$, every element of $\mathcal{E}_x^j$ reaches its preassigned value at $t=2\pi$;
\item[(C2)]under the action of $u^j$, for every $i<j$, every element of $\mathcal{E}_x^i$ returns at $t=2\pi$ to its value taken at $t=0$.
\end{itemize}

Once one knows how to construct an input $u^j$ verifying (C1) and (C2) for every $j=1,\dots,\widetilde{N}$, it suffices to {\em concatenate} them to control the complete system.

\begin{de}[\emph{Concatenation}]\label{de:conca}
The concatenation of $u^1,\dots, u^{\widetilde{N}}$ is defined on the interval $[0,2\widetilde{N}\pi]$ by
\begin{equation}\label{eq:conca}
u^1\ast\dots\ast u^{\widetilde{N}}(t):=u^{j}(t-2(j-1)\pi),
\end{equation} for $t\in[2(j-1)\pi,2j\pi]$ and $j\in\{1,\dots,\widetilde{N}\}$.
\end{de}

\begin{rem}\label{rem-concatenation}
As we will show later (see Remark \ref{rem:smooth-control}), for every positive integer $k$, it is possible to make $C^k$ concatenations such that the inputs are globally of class $C^k$ and the corresponding trajectories are not only piecewise smooth, but also globally of class $C^{k+1}$.
\end{rem} 

\subsection{Choice of frequencies}\label{control-by-sin}
In this section, we fix an equivalence class $\mathcal{E}_x^j$. We choose frequencies in $u^j$ such that Conditions (C1) and (C2) are verified. For sake of clarity, we first treat the case $m=2$ in Subsections \ref{special_case} and \ref{general_case}., and we show, in Subsection \ref{more-general}, how to adapt the method to greater values of $m$.

\subsubsection{A simple case: $m=2$ and Card $(\mathcal{E}_x^j)=1$}\label{special_case}

Let $x_I$ be the only element of $\mathcal{E}_x^j$, and $X_I$ the corresponding bracket. Let $m_1:=\Delta_1(X_I)$, and $m_2:=\Delta_2(X_I)$.

\begin{prop}\label{main1}
Consider three positive integers $\omega_1$, $\omega_2$, $\omega_3$, and $\varepsilon\in\{0,1\}$ such that
\begin{equation}\label{RM-Equation}
\left\lbrace \begin{array}{l}
        \omega_3=m_1\omega_1+(m_2-1)\omega_2,\\
        \varepsilon=m_1+m_2-1\pmod{2},
\end{array}\right.
\end{equation} and
\begin{equation}\label{NRC-Equation}
\omega_2>(m_1+m_2)m_1.
\end{equation}By choosing properly $\zeta$, the control
\begin{equation}\label{controls}
u_1=\cos \omega_1t,~ u_2=\cos\omega_2t+\zeta\cos(\omega_3t-\varepsilon\frac{\pi}{2}),
\end{equation} steers, during $[0,2\pi]$, the component $x_I$ from any initial value to any preassigned final value without modifying any component $x_J$, with $J\prec I$. Moreover, $x_I(2\pi)-x_I(0)$ gives rise to a non zero linear function of $\zeta$, where $\zeta$ is the coefficient in front of $\cos(\omega_3t-\varepsilon\frac{\pi}{2})$ in Eq. (\ref{controls}).
\end{prop}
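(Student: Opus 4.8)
\noindent The plan is to integrate the cascade dynamics explicitly and reduce the statement to a resonance analysis of iterated integrals over $[0,2\pi]$.

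\emph{Reduction to iterated integrals and a resonance lemma.} Using $\dot x_i=u_i$ for $i=1,\dots,m$ together with the inductive (cascade) formula (\ref{cano-f5}), one unrolls the dynamics of an arbitrary bracket component $x_J$, with $a:=\Delta_1(X_J)$ and $b:=\Delta_2(X_J)$, into a finite linear combination, with universal coefficients depending only on the bracket $X_J$, of iterated integrals
\[
\int_{0\le s_1\le\cdots\le s_{a+b}\le 2\pi}u_{\iota_1}(s_1)\cdots u_{\iota_{a+b}}(s_{a+b})\,ds,
\]
in which exactly $a$ of the indices $\iota_\ell$ equal $1$ and $b$ equal $2$ (the current values $x_\cdot(0)$ enter only through factors multiplying strictly lower-length pieces, and are dealt with by the same argument). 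Substituting (\ref{controls}) and expanding each cosine into two complex exponentials, every $u_1$-slot contributes a frequency $\pm\omega_1$ and every $u_2$-slot a frequency $\pm\omega_2$ or $\pm\omega_3$, the latter carrying an extra factor $\zeta\,e^{\mp i\varepsilon\pi/2}$. Since $\omega_3=m_1\omega_1+(m_2-1)\omega_2\in\mathbb{N}$ by (\ref{RM-Equation}), all frequencies occurring are integers, and an elementary computation with the recursion $t\mapsto\int_0^t e^{i\lambda s}(\,\cdot\,)\,ds$ shows that the simplex integral of a product $e^{i\lambda_1 s_1}\cdots e^{i\lambda_N s_N}$, evaluated at $2\pi$, vanishes unless one of the tail sums $\lambda_\ell+\cdots+\lambda_N$ is zero. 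Thus only the \emph{resonant} monomials — those in which some signed partial sum of the chosen frequencies vanishes — can contribute to $x_J(2\pi)-x_J(0)$.

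\emph{The lower components are not moved.} Since the equivalence class $\mathcal{E}_x^j$ is the singleton $\{x_I\}$, every $J\prec I$ satisfies $(\Delta_1(X_J),\Delta_2(X_J))\ne(m_1,m_2)$ and $a+b=|J|\le|I|=m_1+m_2$. Writing a would-be resonance as $\alpha\omega_1+\beta\omega_2+\gamma\omega_3=(\alpha+\gamma m_1)\omega_1+(\beta+\gamma(m_2-1))\omega_2$ with $|\alpha|\le a$ and $|\beta|+|\gamma|\le b$, one bounds $|\alpha+\gamma m_1|\le(m_1+m_2)m_1$, so by (\ref{NRC-Equation}) the $\omega_2$-coefficient must vanish and then the $\omega_1$-coefficient too; comparing this with $(\Delta_1(X_J),\Delta_2(X_J))\ne(m_1,m_2)$ excludes it. The residual ``balanced'' cases, in which a proper tail sum vanishes although the total does not, are handled either by the same inequality applied to the tail, or more transparently by integration by parts along the cascade (\ref{cano-f5}), which reduces them to the return property of strictly lower components. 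Hence $x_J(2\pi)=x_J(0)$ for every $J\prec I$ and every value of $\zeta$.

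\emph{The component $x_I$ moves affinely in $\zeta$.} Here $a=m_1$ and $b=m_2$, and a monomial using the $\omega_3$-frequency in exactly $p$ of the $m_2$ $u_2$-slots carries the factor $\zeta^p$; the same arithmetic as above, via (\ref{NRC-Equation}), forces $p\le 1$ for a resonant monomial, so $x_I(2\pi)-x_I(0)=c_0+c_1\zeta$ is an affine function of $\zeta$. For $p=1$ the unique resonance is the one in which all $m_1$ copies of $\omega_1$ and the remaining $m_2-1$ copies of $\omega_2$ enter with the sign opposite to the single $\omega_3$-slot, realising $m_1\omega_1+(m_2-1)\omega_2-\omega_3=0$; its contribution to $c_1$ is the real part of $e^{i\varepsilon\pi/2}$ times a nonzero rational multiple of a basic simplex integral of a constant, whose phase is a fixed power of $i$ produced by the $m_1+m_2-1$ nested factors $1/(i\cdot\text{tail sum})$. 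The parity prescription $\varepsilon\equiv m_1+m_2-1\pmod 2$ of (\ref{RM-Equation}) is exactly what makes $e^{i\varepsilon\pi/2}$ cancel that phase, so that the two complex-conjugate resonant terms add to a nonzero real number and $c_1\ne0$. Choosing $\zeta$ so that $c_0+c_1\zeta$ equals the required displacement of $x_I$ then steers $x_I$ to its prescribed value at $t=2\pi$ while, by the previous paragraph, leaving every $x_J$ with $J\prec I$ unchanged.

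\emph{Main obstacle.} The conceptual part of this argument (the passage to iterated integrals and the resonance lemma) is routine; the real difficulty lies in the resonance bookkeeping — proving that (\ref{NRC-Equation}) together with the tuning of $\omega_3$ genuinely excludes every unwanted tail-sum resonance for the lower components, and pinning down the exact power of $i$ coming out of the nested integrations so that the parity choice of $\varepsilon$ does make the resonant coefficient $c_1$ nonzero rather than zero.
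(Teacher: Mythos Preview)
Your reduction to iterated integrals and the resonance/arithmetic argument for the lower components is a legitimate alternative to the paper's cosine-tracking via Lemma~\ref{F}; the two arguments are essentially equivalent once one observes that each nested integration shifts the phase by $\pi/2$ and divides by the running frequency, which is precisely what Lemma~\ref{F} records. The non-resonance bound you extract from (\ref{NRC-Equation}) is also the same as the paper's.

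The genuine gap is your claim that $c_1\neq 0$. You assert that the resonant contribution is ``a nonzero rational multiple of a basic simplex integral of a constant'', but this is exactly the hard point, and your argument does not establish it. For a fixed bracket $X_I$, the coefficient $c_1$ is a \emph{sum} of contributions: the single $\omega_3$ can sit in any of the $m_2$ $u_2$-slots determined by the cascade structure of $X_I$, and each placement produces a different rational function of $(\omega_1,\omega_2,\omega_3)$ (because the ``tail sums'' you divide by depend on the position of the $\omega_3$-slot). Nothing in your sketch rules out cancellation among these terms. The parity choice of $\varepsilon$ only ensures that the overall phase is $0\pmod\pi$ rather than $\pi/2\pmod\pi$ (so one lands on $\pm\cos 0$ rather than $\pm\sin 0$); it says nothing about the \emph{coefficient} in front.

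The paper handles this with a separate device you are missing: it introduces the ratio $\alpha_J=f_J/g_J$ of the $\omega_3$-resonant coefficient to the ``pure'' $(m_1\omega_1+m_2\omega_2)$ coefficient, derives the recursion
\[
\alpha_J=\frac{m_1^{J_1}\omega_1+m_2^{J_1}\omega_2}{m_1^{J_1}\omega_1+(m_2^{J_1}-1)\omega_2-\omega_3}\,\alpha_{J_1}+\alpha_{J_2},\qquad \alpha_1=0,\ \alpha_2=1,
\]
and then evaluates on the (non-physical) line $\omega_3=-\omega_2$, where the recursion collapses to $\alpha_J=\alpha_{J_1}+\alpha_{J_2}$ and hence $\alpha_I$ equals the number of $X_2$'s in $X_I$, a strictly positive integer. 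This shows $f_I$ is not identically zero on the resonance hyperplane, and being rational it can vanish only at finitely many integer points. Your ``Main obstacle'' paragraph correctly flags that pinning down $c_1\neq 0$ is the crux, but a proof has to supply this (or an equivalent) argument; the parity bookkeeping alone is not enough.
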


\noindent The key point is to understand the frequencies occurring in the dynamics $\dot{x}_{I}$.

\begin{lemma}\label{F}
For $J\leq I$, the dynamics  $\dot{x}_{J}$ is a linear combination of cosine functions of the form
\begin{equation}\label{cos_F}
\cos\{(\ell_1\omega_1+\ell_2\omega_2+\ell_3\omega_3)t-(\ell_3\varepsilon+\ell_1+\ell_2+\ell_3-1)\frac{\pi}{2}\},
\end{equation}
where $\ell_1,\ell_2,\ell_3\in\mathbb{Z}$ satisfy
$\vert \ell_1\vert\leq m_1$, $\vert \ell_2\vert+\vert \ell_3\vert\leq m_2$.

In particular, the term $$\cos[(m_1\omega_1+(m_2-1)\omega_2-\omega_3)t-(-\varepsilon+m_1+m_2-1)\frac{\pi}{2}]$$
occurs in $\dot{x}_{I}$ with a zero coefficient depending linearly on $\zeta$.
\end{lemma}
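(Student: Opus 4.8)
The plan is to argue by strong induction on the P.\ Hall order, using the triangular recursion of the canonical form. Writing $X_I=\mathrm{ad}^k_{X_{I_L}}X_{I_R}$, equation (\ref{cano-f4}) (see also (\ref{cano-f3}), (\ref{cano-f5})) gives $\dot x_I=\tfrac1{k!}\,x_{I_L}^{\,k}\,\dot x_{I_R}$, where $x_{I_L}$ is obtained from $\dot x_{I_L}$ by integration and $I_L,I_R\prec I$. So it is enough to prove that the real vector space $\mathcal V$ spanned by the functions $\cos\bigl((\ell_1\omega_1+\ell_2\omega_2+\ell_3\omega_3)t-(\ell_3\varepsilon+\ell_1+\ell_2+\ell_3-1)\tfrac\pi2\bigr)$, $(\ell_1,\ell_2,\ell_3)\in\mathbb Z^3$, is stable under integration and under multiplication, while keeping track of the bounds $|\ell_1|\le\Delta_1(X_J)$ and $|\ell_2|+|\ell_3|\le\Delta_2(X_J)$. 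Precisely, I would prove by induction that for every $J\preceq I$ the function $\dot x_J$ lies in $\mathcal V$ with these bounds, and correspondingly that $x_J$ equals a constant plus a combination of the functions $\cos\bigl((\ell\cdot\omega)t-(\ell_3\varepsilon+\ell_1+\ell_2+\ell_3)\tfrac\pi2\bigr)$ with the same bounds (the extra unit in the phase index coming from one integration), this last statement being valid for $J\prec I$ and, for $J=I$, valid up to one additional term linear in $t$.

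\noindent\textbf{Frequency and phase bookkeeping.} The base case is clear: $\dot x_1=\cos\omega_1 t$ corresponds to $\ell=(1,0,0)$, and $\dot x_2=\cos\omega_2 t+\zeta\cos(\omega_3 t-\varepsilon\tfrac\pi2)$ to $\ell\in\{(0,1,0),(0,0,1)\}$. For the induction I would use two elementary identities: $\int\cos(\nu t-\phi\tfrac\pi2)\,dt=\tfrac1\nu\cos(\nu t-(\phi+1)\tfrac\pi2)+\mathrm{const}$ whenever $\nu\neq0$ (this shifts the phase index by one and leaves the frequency vector unchanged), and $\cos a\cos b=\tfrac12\bigl(\cos(a+b)+\cos(a-b)\bigr)$, which replaces two frequency vectors by their sum and difference and the two phase indices by their sum and difference. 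A short computation then shows that multiplying a term of $x_{I_L}$ (phase index $\ell'_3\varepsilon+\ell'_1+\ell'_2+\ell'_3$) by a term of $\dot x_{I_R}$ (phase index $\ell''_3\varepsilon+\ell''_1+\ell''_2+\ell''_3-1$) yields terms whose phase index is, modulo $2$, exactly the $\dot x$-normalization $(\ell'\pm\ell'')_3\varepsilon+(\ell'\pm\ell'')_1+(\ell'\pm\ell'')_2+(\ell'\pm\ell'')_3-1$; a discrepancy of $\pm2$ merely flips a sign, absorbed into the real coefficient. The degree bounds compose because $\Delta_i(X_I)=k\,\Delta_i(X_{I_L})+\Delta_i(X_{I_R})$ and $\Delta_i(X_J)\le\Delta_i(X_I)=m_i$ for $J\preceq I$. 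Carrying along the power of $\zeta$ — each use of the $\omega_3$-term of $u_2$ carries one factor $\zeta$ — shows moreover that a term with third index $\ell_3$ has coefficient a polynomial in $\zeta$ all of whose monomials have degree $\ge|\ell_3|$ of the same parity; in particular $\ell=(m_1,m_2-1,-1)$ forces exactly one use of the $\omega_3$-term, so its coefficient is a linear function of $\zeta$.

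\noindent\textbf{Resonances.} The only way integration can fail to preserve $\mathcal V$ is a resonance $\ell_1\omega_1+\ell_2\omega_2+\ell_3\omega_3=0$ with $\ell\neq0$, which would create a nonzero constant in some $\dot x_J$ and hence a term linear in $t$ in $x_J$. By (\ref{RM-Equation}) this equation reads $(\ell_1+m_1\ell_3)\omega_1+(\ell_2+(m_2-1)\ell_3)\omega_2=0$. If the $\omega_2$-coefficient is nonzero, (\ref{NRC-Equation}) makes the equality impossible in the range of indices produced by brackets of length $\le|I|$; if it vanishes then so does the $\omega_1$-coefficient, forcing $\ell_1=-m_1\ell_3$, $\ell_2=-(m_2-1)\ell_3$, and the constraints $|\ell_1|\le\Delta_1(X_J)$, $|\ell_2|+|\ell_3|\le\Delta_2(X_J)$ then force $|\ell_3|\le1$, with $|\ell_3|=1$ only if $\Delta_1(X_J)=m_1$ and $\Delta_2(X_J)=m_2$, i.e.\ $X_J\sim X_I$, i.e.\ $J=I$ because $\mathcal E_x^j$ is a singleton. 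Thus for $J\prec I$ the only possibility is $\ell=0$, which is harmless since its $\dot x$-phase index $-1$ is odd, so the corresponding term vanishes; hence no resonance occurs and the induction closes. For $J=I$ the unique resonant vector is $\ell=(m_1,m_2-1,-1)$, for which $\ell_1\omega_1+\ell_2\omega_2+\ell_3\omega_3=m_1\omega_1+(m_2-1)\omega_2-\omega_3=0$ by (\ref{RM-Equation}), and whose phase index $-\varepsilon+m_1+m_2-3$ is even since $\varepsilon\equiv m_1+m_2-1\pmod2$ by (\ref{RM-Equation}); this produces the claimed constant term in $\dot x_I$, still of the form (\ref{cos_F}) because $|\ell_1|=m_1$ and $|\ell_2|+|\ell_3|=m_2$, with coefficient a (nonzero) linear function of $\zeta$.

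\noindent\textbf{Main obstacle.} Two points demand care. First, the resonance analysis: one must make the range estimate behind (\ref{NRC-Equation}) precise for \emph{every} bracket arising in the recursion, i.e.\ for all indices with $|\ell_1|\le m_1+m_2$ and $|\ell_2|+|\ell_3|\le m_1+m_2$. Second, and this is the genuine difficulty, one must check that the coefficient of the resonant term in $\dot x_I$ is \emph{not identically zero} as a function of $\zeta$, i.e.\ that its various contributions through the recursion do not cancel. For this I would unwind $\dot x_I=\tfrac1{k!}x_{I_L}^k\dot x_{I_R}$ all the way down to the inputs $u_1,u_2$ and evaluate $\int_0^{2\pi}\dot x_I\,dt$ as an iterated integral of products of sinusoids; since $X_I$ is a P.\ Hall bracket, the word realizing the resonant "diagonal" contribution is essentially unique, so this integral is a nonzero multiple of $\zeta$ — which is exactly what is needed here and also yields the displacement claim in Proposition \ref{main1}.
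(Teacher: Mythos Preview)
Your inductive scheme is essentially the paper's: induction on the bracket length, use the canonical recursion $\dot x_J=\tfrac1{k!}x_{J_1}^k\dot x_{J_2}$, integrate $\dot x_{J_1}$ (phase index shifts by one), and apply the product formula $\cos a\cos b=\tfrac12(\cos(a+b)+\cos(a-b))$ to recover the form~(\ref{cos_F}) with the additive bounds on $(\ell_1,\ell_2,\ell_3)$. You are in fact more careful than the paper on one point: you notice that the integration step only yields a cosine plus a constant when the frequency is nonzero, so one must rule out intermediate resonances for $J_1\prec I$ in order to close the induction; the paper's proof glosses over this and tacitly relies on the later analysis in Proposition~\ref{main1}. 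Your tracking of the $\zeta$-degree (each appearance of the $\omega_3$-term contributes one factor of $\zeta$, and the extremal triple $(m_1,m_2{-}1,-1)$ saturates the bound, forcing exactly one such factor) is also the right way to get the ``linear in $\zeta$'' conclusion.

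Where you diverge from the paper is in the \emph{non-vanishing} of that linear coefficient, which you fold into the lemma but which the paper treats separately. The paper does not argue combinatorially that ``the diagonal word is essentially unique''; it instead sets up a recursion for the ratio $\alpha_J=f_J/g_J$ (Lemma~\ref{alpha}), namely
\[
\alpha_J=\frac{m_1^{J_1}\omega_1+m_2^{J_1}\omega_2}{\,m_1^{J_1}\omega_1+(m_2^{J_1}-1)\omega_2-\omega_3\,}\,\alpha_{J_1}+\alpha_{J_2},\qquad \alpha_1=0,\ \alpha_2=1,
\]
and then specializes to the (non-integer) value $\omega_3=-\omega_2$, where the recursion collapses to $\alpha_J=\alpha_{J_1}+\alpha_{J_2}>0$. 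This shows $f_I$ is not identically zero on the resonance hyperplane, hence nonzero for generic integer frequencies. Your iterated-integral sketch is plausible but, as you acknowledge, the possibility of cancellations among the many ways the factors $u_1,u_2$ can be distributed through the recursion is precisely the issue; the paper's specialization trick sidesteps this cleanly and is what you should use here.
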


\begin{proof}[Proof of Lemma \ref{F}]
The proof goes by induction on $\vert J\vert$.
\begin{itemize}
        \item $\vert J\vert=1$,  the result is true since $\dot{x}_{I_1}=u_1$ and $\dot{x}_{I_2}=u_2$.
        \item {\it Inductive step:}\\
        Assume that the result holds true for all $\tilde{J}$ such that $\vert\tilde{J}\vert<s$. We show that it remains true for $J$ such that $\vert J\vert=s$.

        By construction, we have $X_{J}=\textrm{ad}^k_{X_{J_1}}X_{J_2}$ with $\vert J_1\vert<s$ and $\vert J_2\vert<s$. Then,
        \begin{equation}\label{Chen-Fliess}
        \dot{x}_{J}=\frac{1}{k!}x_{J_1}^k\dot{x}_{J_2},
        \end{equation}
        $\dot{x}_{J_2}$ is given by the inductive hypothesis and $x_{I_1}$ is obtained by integration of Eq. $(\ref{cos_F})$. By using product formulas for cosine function, the result still holds true for $J$ of length $s$. This ends the proof of Lemma \ref{F}.
\end{itemize}
\end{proof}

\begin{proof}[Proof of Proposition $\ref{main1}$]
First note that integrating between $0$ and $2\pi$ a function of the form $\cos(\gamma t+\bar{\gamma}\frac{\pi}{2})$ with $(\gamma,\bar{\gamma})\in\mathbb{N}^2$ almost always gives $0$ except for $\gamma=0$ and $\bar{\gamma}=0\pmod {2}$  at the same time. Therefore, in order to obtain a non trivial contribution for $x_{I}$, $\dot{x}_{I}$ must contain some cosine functions verifying the following condition
\begin{equation}\label{RC}
\left\lbrace \begin{array}{l}
        \ell_1\omega_1+\ell_2\omega_2+\ell_3\omega_3=0,\\
        \ell_3\varepsilon+\ell_1+m_2+\ell_3-1\equiv 0\pmod{2},
\end{array}\right.
\end{equation}and this condition must not be satisfied by $J\prec I$ in order to avoid a change in the component $x_{J}$.

\noindent Under conditions $(\ref{RM-Equation})$ and $(\ref{NRC-Equation})$, we claim that
\begin{itemize}
\item[\bf (1)]\label{RM} $(m_1,m_2-1,-1,\varepsilon)$ is the only $4$-tuple verifying (\ref{RC}) for $x_{I}$, and $x_{I}(2\pi)-x_{I}(0)$ is a non zero {\it linear} function of $\zeta$;
\item[\bf (2)]\label{NRC}Eq. $(\ref{RC})$ is never satisfied for
$x_{J}$ with $J<I$.
\end{itemize}

\noindent Indeed, consider $(\ell_1,\ell_2,\ell_3)\in\mathbb{Z}^3$ verifying
$\vert \ell_1\vert\leq m_1$, $\vert \ell_2\vert+\vert \ell_3\vert\leq m_2$. One has
\begin{eqnarray}\label{Eq1}
\ell_1\omega_1+\ell_2\omega_2+\ell_3\omega_3&=&\ell_1\omega_1+\ell_2\omega_2+\ell_3((m_2-1)\omega_2+m_1\omega_1)\nonumber\\
&=&(\ell_3(m_2-1)+\ell_2)\omega_2+(\ell_1+\ell_3m_1)\omega_1.
\end{eqnarray}

\noindent Assume that $\omega_2>(m_1+m_2)m_1\omega_1$. Then, except for the $4-$tuple $(m_1,m_2,m_3,\varepsilon)$ verifying Eq. $(\ref{RM-Equation})$, the only possibility to have the right-hand side of Eq. $(\ref{Eq1})$ equal to $0$ is $\ell_1=\ell_2=\ell_3=0$. In that case, $$\ell_1+\ell_2+\ell_3\neq 1\pmod{2}.$$ Then, Eq. $(\ref{RC})$ is not satisfied, and {\bf (2)} is proved.

Due to Eq. $(\ref{controls})$, the power of $\zeta$ is equal to the number of times $\omega_3$ occurs in the resonance condition $(\ref{RM-Equation})$. The latter is clearly equal to $1$. Thus, $x_I(2\pi)-x_I(0)$ gives rise to a linear function of $\zeta$. It remains to show that the coefficient in front of $\zeta$ is not equal to zero. By Lemma \ref{F}, one knows that
\begin{eqnarray}
\dot{x}_I&=&g_I\cos\{(m_1\omega_1+m_2\omega_2)t-(m_1+m_2-1)\frac{\pi}{2}\}\\
&&+f_Ia\cos\{(m_1\omega_1+(m_2-1)\omega_2-\omega_3)t-(m_1+m_2-1-\varepsilon)\frac{\pi}{2}\}+\mathcal{R}\nonumber,
\end{eqnarray}
where we gathered all other terms into $\mathcal{R}$. Note that the numerical coefficients $f_I$ and $g_I$ depend on the frequencies $\omega_1$, $\omega_2$, and $\omega_3$. The goal is to show that $f_I$ is not equal to zero if we want to move the component $x_I$, i.e., when $\omega_3=(m_2-1)\omega_2+m_1\omega_1$. If we consider $f_I$ as a function of $\omega_1$, $\omega_2$, and
$\omega_3$, it suffices to show that this function is not identically equal to zero over the hyperplane of $\mathbb{R}^3$ defined by the resonance condition
$\omega_3=(m_2-1)\omega_2+m_1\omega_1$. We assume that the next lemma holds true, and we will provide an argument immediately after finishing the proof of Proposition \ref{main1}.

\begin{lemma}\label{alpha}
For all $J\leq I$, let $m_1^J:=\Delta_1(X_J)$ and $m_2^J:=\Delta_2(X_J)$.
If $f_J$ is the coefficient in front of the term $\cos\{(m_1^J\omega_1+(m_2^J-1)\omega_2-\omega_3)t-(m_1^J+m_2^J-1-\varepsilon)\frac{\pi}{2}\}$, and $g_J$ the one in front of the term $\cos\{(m_1^J\omega_1+m_2^J\omega_2)t-(m_1^J+m_2^J-1)\frac{\pi}{2}\}$. Then, the quotient $\alpha_J:=f_J/g_J$ verifies the following inductive formula.
\begin{itemize}
\item If $X_J=X_1$, $\alpha_J=0$; If $X_J=X_2$, $\alpha_J=1$;
\item If $X_J=[X_{J_1}, X_{J_2}]$, $\alpha_J$ is defined by
\begin{equation*}
\alpha_J=\frac{m_1^{J_1}\omega_1+m_2^{J_1}\omega_2}{m_1^{J_1}\omega_1+(m_2^{J_1}-1)\omega_2-\omega_3}\alpha_{J_1}+\alpha_{J_2}.
\end{equation*}where $m_i^{J_1}=\Delta_i(X_{J_1})$ for $i=1,\ 2$.
\end{itemize}
\end{lemma}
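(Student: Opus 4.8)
The plan is to prove Lemma~\ref{alpha} by induction on the length $|J|$, carrying along not only $\alpha_J=f_J/g_J$ but the two individual coefficients $g_J$ and $f_J$, and exploiting the recursion coming from the canonical form~(\ref{cano-f3}): writing $X_J=[X_{J_1},X_{J_2}]$ according to its P.\ Hall expansion~(\ref{Hall_expansion}), one has $\dot{x}_J=c\,x_{J_1}\dot{x}_{J_2}$ with $c=1/(\alpha_{J_2}^{J_1}+1)\neq 0$ and $|J_1|,|J_2|<|J|$. The whole point is that the constants $c$, together with the factors $\tfrac12$ produced by the product-to-sum identity $\cos A\cos B=\tfrac12(\cos(A+B)+\cos(A-B))$, enter the formulas for $g_J$ and $f_J$ in exactly the same way, hence cancel in the quotient $\alpha_J$; this is why the recursion for $\alpha_J$ comes out so clean.

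First I record the sharp form of Lemma~\ref{F}: the very same induction that proves it shows that, for every sub-bracket $J$ of $I$, the cosines occurring in $\dot{x}_J$ have indices satisfying $|\ell_1|\leq m_1^J$ and $|\ell_2|+|\ell_3|\leq m_2^J$. Consequently the two indices $(\ell_1,\ell_2,\ell_3)=(m_1^J,m_2^J,0)$ and $(m_1^J,m_2^J-1,-1)$ are extremal, and I call $g_J$ the coefficient of the corresponding cosine for the first index and $\zeta f_J$ the coefficient for the second (the latter is indeed linear in $\zeta$, again by the sharp bound, since a $\zeta^3$-contribution to $\ell_3=-1$ would force $|\ell_2|\le m_2^J-3$). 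The base cases are immediate from $u_1=\cos\omega_1 t$ and $u_2=\cos\omega_2 t+\zeta\cos(\omega_3 t-\varepsilon\frac{\pi}{2})$: one reads off $g_1=g_2=1$, $f_1=0$ (the frequency $\omega_1-\omega_2-\omega_3$ does not appear in $u_1$) and $f_2=1$, so $\alpha_1=0$ and $\alpha_2=1$.

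For the inductive step, set $\mu_{J_1}:=m_1^{J_1}\omega_1+m_2^{J_1}\omega_2$ and $\mu_{J_1}':=m_1^{J_1}\omega_1+(m_2^{J_1}-1)\omega_2-\omega_3$; both are nonzero whenever $J_1$ is a proper sub-bracket of $I$ (for $\mu_{J_1}'$ this uses the non-resonance condition~(\ref{NRC-Equation})). Integrating $\dot{x}_{J_1}$ term by term replaces each $\cos\{\gamma t-\psi\frac{\pi}{2}\}$ by $\frac{1}{\gamma}\cos\{\gamma t-(\psi+1)\frac{\pi}{2}\}$ (plus a constant), so the two extremal terms of $\dot{x}_{J_1}$ produce in $x_{J_1}$ the terms with coefficients $g_{J_1}/\mu_{J_1}$ and $\zeta f_{J_1}/\mu_{J_1}'$ and indices $(m_1^{J_1},m_2^{J_1},0)$ and $(m_1^{J_1},m_2^{J_1}-1,-1)$. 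Multiplying by $\dot{x}_{J_2}$ and using the product-to-sum identity, the sharp bounds force the target index $(m_1^J,m_2^J,0)$ of $\dot{x}_J$ to be reached only from (main term of $x_{J_1})\times$(main term of $\dot{x}_{J_2}$), and the target index $(m_1^J,m_2^J-1,-1)$ only from (main term of $x_{J_1})\times(f$-term of $\dot{x}_{J_2}$) together with $(f$-term of $x_{J_1})\times$(main term of $\dot{x}_{J_2}$); in each case it is the $A+B$ branch of the identity that contributes, and a short check of the accumulated phases shows they match $(m_1^J+m_2^J-1)\frac{\pi}{2}$ and $(m_1^J+m_2^J-1-\varepsilon)\frac{\pi}{2}$ respectively. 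This yields
\[
g_J=\frac{c}{2}\,\frac{g_{J_1}g_{J_2}}{\mu_{J_1}},\qquad
f_J=\frac{c}{2}\Big(\frac{f_{J_1}g_{J_2}}{\mu_{J_1}'}+\frac{g_{J_1}f_{J_2}}{\mu_{J_1}}\Big),
\]
so $g_J\neq 0$ (hence $\alpha_J$ is well defined) and, dividing,
\[
\alpha_J=\frac{\mu_{J_1}}{\mu_{J_1}'}\,\alpha_{J_1}+\alpha_{J_2}
=\frac{m_1^{J_1}\omega_1+m_2^{J_1}\omega_2}{m_1^{J_1}\omega_1+(m_2^{J_1}-1)\omega_2-\omega_3}\,\alpha_{J_1}+\alpha_{J_2},
\]
which is the asserted recursion.

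The main obstacle is precisely the bookkeeping in the previous paragraph, namely proving that no other pair of Fourier terms of $x_{J_1}$ and of $\dot{x}_{J_2}$ — whether combined through the $A+B$ branch or the $A-B$ branch of the product-to-sum identity — can hit the two extremal target indices, and that the constant parts of $x_{J_1}$ contribute nothing there either; this is exactly where the sharp bounds $|\ell_1|\le m_1^J$, $|\ell_2|+|\ell_3|\le m_2^J$ of Lemma~\ref{F} are indispensable, since they make the two target indices maximal in $\ell_1$ and force a unique decomposition. The degenerate situations in which $X_{J_1}$ or $X_{J_2}$ is one of the letters $X_1$, $X_2$ (so that the relevant dynamics carries fewer frequencies, or $f_{J_1}=0$) are covered by the same computation and are consistent with the recursion, the first summand simply vanishing when $\alpha_{J_1}=\alpha_1=0$. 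The only other point demanding care is the tracking of the phases through the integrations and the products, but this is purely mechanical.
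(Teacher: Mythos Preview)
Your proof is correct and follows essentially the same approach as the paper: induction on $|J|$, writing out the two extremal cosine terms of $\dot{x}_{J_1}$ and $\dot{x}_{J_2}$, integrating the first, multiplying, and reading off $g_J$ and $f_J$ to obtain the recursion for $\alpha_J$. Your version is in fact a bit more careful than the paper's---you track the constant $c=1/(\alpha_{J_2}^{J_1}+1)$ from~(\ref{cano-f3}) explicitly (the paper silently drops it), you explain via the sharp bounds of Lemma~\ref{F} why no other Fourier pairs (including the $A-B$ branch and the integration constants) can hit the two extremal target indices, and you note that $g_J\neq 0$ so that $\alpha_J$ is well defined; the paper simply gathers everything else into a remainder $\mathcal{R}$ without comment.
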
 

Let us take $\omega_3=-\omega_2$. It results from Lemma \ref{alpha} that, for every $J\leq I$, one has
\begin{eqnarray*}
\alpha_J&=&\alpha_{J_1}+\alpha_{J_2}, \ \textrm{ if }X_J=[X_{J_1},X_{J_2}].
\end{eqnarray*}Since $\alpha_1=0$ and $\alpha_2=1$, then, over the hyperplane of $\mathbb{R}^3$ defined by $\omega_3=-\omega_2$, the function $\alpha_J$ is a strictly positive number independent of $\omega_1$ and $\omega_2$.

Let us show now that $\alpha_J(\omega_1,\omega_2,\omega_3)$ is not identically equal to zero over the hyperplane of $\mathbb{R}^3$ defined by $\omega_3=m_1\omega_1+(m_2-1)\omega_2$. Let $\hat{\omega}_2:=\displaystyle-{m_1\omega_1}/{m_2}$. One has $m_1\omega_1+(m_2-1)\hat{\omega}_2=-\hat{\omega}_2$. It implies that
\begin{equation*}
\alpha_I(\omega_1,\hat{\omega}_2,m_1\omega_1+(m_2-1)\hat{\omega}_2)=\alpha_I(\omega_1,\hat{\omega}_2,-\hat{\omega}_2).
\end{equation*}

\noindent Since the function $\alpha_I(\omega_1,\omega_2,-\omega_2)$ is never equal to zero, and it coincides with the function $\alpha_I(\omega_1,\omega_2,m_1\omega_1+(m_2-1)\omega_2)$ at the point $(\omega_1,\hat{\omega}_2)$, which is not identically equal to zero. Therefore, $f_I(\omega_1,\omega_2,\omega_3)$ is not identically equal to zero over the hyperplane $\omega_3=(m_2-1)\omega_2+m_1\omega_1$. Moreover, as it is a non trivial rational function, it eventually vanishes at a finite number of integer points. Then, we obtain a non zero linear function of $\zeta$, and {\bf (1)} is now proved. Proposition $\ref{main1}$ results from {\bf (1)} and {\bf (2)}.
\end{proof}

\begin{proof}[Proof of Lemma \ref{alpha}]
The proof goes by induction on $\vert I\vert$. Since $\dot{x}_1=u_1$ and $\dot{x}_2=u_2$, by Eq. (\ref{controls}), one has $\alpha_1=0$ and $\alpha_2=1$.

\noindent Assume that $\vert J\vert\geq 2$. By construction, one has $X_J=[X_{J_1}, X_{J_2}]$ with $\vert J_1\vert\leq \vert J_2\vert<\vert J\vert$. According to the inductive hypothesis, one has
\begin{eqnarray*}
\dot{x}_{J_1}&=&g_{J_1}\cos\{(m_1^{J_1}\omega_1+m_2^{J_1}\omega_2)t-(m_1^{J_1}+m_2^{J_1}-1)\frac{\pi}{2}\}\\
&&+f_{J_1}\cos\{(m_1^{J_1}\omega_1+(m_2^{J_1}-1)\omega_2-\omega_3)t-(m_1^{J_1}+m_2^{J_1}-1-\varepsilon)\frac{\pi}{2}\}+\mathcal{R}_{J_1},\\
\dot{x}_{J_2}&=&g_{J_2}\cos\{(m_1^{J_2}\omega_1+m_2^{J_2}\omega_2)t-(m_1^{J_2}+m_2^{J_2}-1)\frac{\pi}{2}\}\\
&&+f_{J_2}\cos\{(m_1^{J_2}\omega_1+(m_2^{J_2}-1)\omega_2-\omega_3)t-(m_1^{J_2}+m_2^{J_2}-1-\varepsilon)\frac{\pi}{2}\}+\mathcal{R}_{J_2}.
\end{eqnarray*}
This implies that
\begin{eqnarray*}
\dot{x}_J&=&\left(\frac{1}{m_1^{J_1}\omega_1+m_2^{J_1}\omega_2}g_{J_1}\cos\{(m_1^{J_1}\omega_1+m_2^{J_1}\omega_2)t-(m_1^{J_1}+m_2^{J_1})\frac{\pi}{2}\}\right.\\
&&+\frac{1}{m_1^{J_1}+(m_2^{J_1}-1)\omega_2-\omega_3}f_{J_1}a\cos\{(m_1^{J_1}\omega_1+(m_2^{J_1}-1)\omega_2-\omega_3)t\\
&&\left.-(m_1^{J_1}+m_2^{J_2}-\varepsilon)\frac{\pi}{2}\}+\mathcal{R}_{J_1}\right)\\
&&\left(g_{J_2}\cos\{(m_1^{J_2}\omega_1+m_2^{J_2}\omega_2)t-(m_1^{J_2}+m_2^{J_2}-1)\frac{\pi}{2}\}\right.\\
&&+f_{J_2}a\cos\{(m_1^{J_2}\omega_1+(m_2^{J_2}-1)\omega_2-\omega_3)t\\
&&\left.-(m_1^{J_2}+m_2^{J_2}-1-\varepsilon)\frac{\pi}{2}\}+\mathcal{R}_{J_2}\right)\\
&=&\frac{1}{2}\frac{g_{J_1}g_{J_2}}{m_1^{J_1}\omega_1+m_2^{J_1}\omega_2}\cos\{(m_1^J\omega_1+m_2^J\omega_2)t-(m_1^J+m_2^J-1)\frac{\pi}{2}\}\\
&&+\frac{1}{2}\left(\frac{g_{J_1}f_{J_2}}{m_1^{J_1}\omega_1+m_2^{J_1}\omega_2}+\frac{g_{J_2}f_{J_1}}{m_1^{J_1}\omega_1+(m_2^{J_1}-1)\omega_2-\omega_3}\right)\\
&&\cos\{(m_1^J\omega_1+m_2^J\omega_2-\omega_3)t-(m_1^J+m_2^J-1-\varepsilon)\frac{\pi}{2}\}+\mathcal{R}_J\\
&=&g_J\cos\{(m_1^J\omega_1+m_2^J\omega_2)t-(m_1^J+m_2^J-1)\frac{\pi}{2}\}\\
&&+f_J\cos\{(m_1^J\omega_1+m_2^J\omega_2-\omega_3)t-(m_1^J+m_2^J-1-\varepsilon)\frac{\pi}{2}\}+\mathcal{R}_{J}.
\end{eqnarray*}
Therefore, one obtains $\alpha_J=\frac{m_1^{J_1}\omega_1+m_2^{J_1}\omega_2}{m_1^{J_1}\omega_1+(m_2^{J_1}-1)\omega_2-\omega_3}\alpha_{J_1}+\alpha_{J_2}$.
\end{proof}

\subsubsection{A more general case: $m=2$ and Card $(\mathcal{E}_x^j)>1$}\label{general_case}

In general, given a pair $(m_1,m_2)$, the equivalence class $\mathcal{E}_x(m_1,m_2)$ contains more than one element. This situation first occurs for Lie brackets of length $5$. For instance, given the pair $(3,2)$, one has both $X_{I}=[X_2,[X_1,[X_1,[X_1,X_2]]]]$ and $X_{J}=[[X_1,X_2],[X_1,[X_1,X_2]]]$. By Lemma \ref{F}, if one chooses a 4-tuple verifying the resonance condition (\ref{RM-Equation}) for $\dot{x}_{I}$, the same resonance occurs in $\dot{x}_{J}$. Such two components cannot be independently steered by using resonance. The idea is to move simultaneously these components. For instance, one can choose $(u_1,u_2)$ as follows:
\begin{eqnarray*}
u_1(t)&=&\cos \omega_1t,\\
u_2(t)&=&\cos\omega_2t+a_I\cos\omega_3t+\cos\omega_4t+a_J\cos\omega_5t,
\end{eqnarray*}
where $\omega_1=1$, $\omega_2$ is chosen according to Eq. (\ref{NRC-Equation}), $\omega_3=(m_2-1)\omega_2+m_1\omega_1,$ and $\omega_5=(m_2-1)\omega_4+m_1\omega_1,$ with $\omega_4$ large enough to guarantee Condition (C2). After explicit integration of Eq. $(\ref{cano-f5})$, one obtains
\begin{displaymath}
\begin{pmatrix} f_{I}(\omega_1,\omega_2)&f_{I}(\omega_1,\omega_4)\\ f_{J}(\omega_1,\omega_2)&f_{J}(\omega_1,\omega_4)\end{pmatrix}\begin{pmatrix} a_I\\a_J\end{pmatrix}=A\begin{pmatrix} a_I\\a_J\end{pmatrix}=\begin{pmatrix}x_{I}(2\pi)-x_{I}(0)\\x_{J}(2\pi)-x_{J}(0)\end{pmatrix},
\end{displaymath}where $f_{I}$ and $f_{J}$ are two rational functions of frequencies. Thus, the pair $(u_1,u_2)$ controls exactly and simultaneously $x_{I}$ and $x_{J}$, provided that the matrix $A$ is invertible. We generalize this strategy in the following paragraphs. Assume that $\mathcal{E}_x^j(m_1,m_2)=\{x_{I_1},\dots, x_{I_N}\}$. The main result is given next.

\begin{prop}\label{Finitude}
Consider $$\{ \omega_{11}^1,\dots,\omega_{11}^{m_1}\},\dots,\{ \omega_{1N}^1,\dots,\omega_{1N}^{m_1}\},$$$$\{ \omega_{21}^1,\dots,\omega_{21}^{m_2-1},\omega_{21}^{*}\},\dots,\{\omega_{2N}^1,\dots,\omega_{2N}^{m_2-1},\omega_{2N}^{*}\}$$ belonging to $\mathbb{N}^{m_1N}\times\mathbb{N}^{m_2N}$ such that
\begin{equation}\label{RMC}
\left\lbrace\begin{array}{l}
        \forall~ j=1\dots N,~~\displaystyle\omega_{2j}^{*}=\sum_{i=1}^{m_1}\omega_{1j}^i+\sum_{i=1}^{m_2-1}\omega_{2j}^i,\\
        \varepsilon=m_1+m_2-1 \pmod{2},\end{array}\right.
\end{equation} and
\begin{equation}\label{NRMC}
\forall j=1\dots N-1,~\left\lbrace \begin{array}{llll}
        \omega_{11}^1&\in&\mathbb{N}~;\\
        \omega_{1j}^{i+1}&>&m_1\omega_{1j}^{i}~;&i=1\dots m_1,\\
        \omega_{2j}^1&>&m_1\omega_{1j}^{m_1}~;\\
        \omega_{2j}^{i}&>&m_2\omega_{2j}^{i-1}+m_1\omega_{1j}^{m_1}~;& i=2\dots m_2-1,\\
        \omega_{1j+1}^1&>&m_2\omega_{2j}^{m_2-1}+m_1\omega_{1j}^{m_1}~.
\end{array}\right.
\end{equation}
Then, the input $u:=(u_1,u_2)$ defined by
\begin{equation}\label{CF1}
\left\lbrace\begin{array}{lll}
        u_1&:=&\displaystyle\sum_{j=1}^N\sum_{i=1}^{m_1}\cos\omega_{1j}^it,\\
        u_2&:=&\displaystyle\sum_{j=1}^N\sum_{i=1}^{m_2-1}\cos\omega_{2j}^it+a_j\cos(\omega_{2j}^{*}t-\varepsilon\frac{\pi}{2}),
        \end{array}\right.
\end{equation}
steers the components $(x_{I_1},\dots,x_{I_N})$ from an arbitrary initial condition $(x_{I_1}(0),\dots,x_{I_N}(0))$ to an arbitrary final one $(x_{I_1}(2\pi),\dots,x_{I_N}(2\pi))$, without modifying any other component having been previously moved to its final value.
\end{prop}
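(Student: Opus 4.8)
The plan is to generalize the two-step argument used for Proposition \ref{main1} (Conditions (C1) and (C2)) to the case of a full equivalence class of $N$ elements, reducing the simultaneous exact steering to the invertibility of an $N\times N$ matrix $A=(f_{I_k}(\text{block }j))_{k,j}$. First I would establish the analogue of Lemma \ref{F} for the inputs \eqref{CF1}: since both $u_1$ and $u_2$ are finite linear combinations of cosines with the prescribed integer frequencies, an induction on the bracket length $|J|$ (using the triangular dynamics \eqref{cano-f5}, i.e.\ $\dot{x}_J = \frac{1}{k!}x_{J_1}^k\dot{x}_{J_2}$ and the product-to-sum formulas for cosines) shows that for every $J\le I_k$, $\dot{x}_J$ is a linear combination of terms $\cos\{(\text{integer combination of the }\omega\text{'s})\,t - (\dots)\frac{\pi}{2}\}$ whose integer coefficients are bounded by $m_1$ on the $u_1$-frequencies and by $m_2$ on the $u_2$-frequencies of each block. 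The separation inequalities \eqref{NRMC} are exactly what is needed to guarantee that, within such bounded integer combinations, the only way a frequency vanishes (so that the time integral over $[0,2\pi]$ is nonzero) is either the trivial combination (which contributes $0$ because of the phase parity governed by $\varepsilon$) or a resonance involving exactly one starred frequency $\omega_{2j}^{*}$; this gives Condition (C2) — no component $x_J$ with $J$ in a strictly smaller equivalence class, nor any $x_{I_k}$ sitting below the current class in the order $\prec$, gets modified.

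Next I would compute, for each $k=1,\dots,N$, the increment $x_{I_k}(2\pi)-x_{I_k}(0)$. By the frequency analysis just described, only the resonant terms survive integration, and because the resonance condition \eqref{RMC} involves each starred frequency $\omega_{2j}^{*}$ linearly (degree one), the increment is an affine — in fact linear, since $x_{I_k}(0)=0$ is not assumed but the constant term cancels — function of the coefficients $a_1,\dots,a_N$. Writing this out yields
\[
A\begin{pmatrix}a_1\\ \vdots\\ a_N\end{pmatrix}=\begin{pmatrix}x_{I_1}(2\pi)-x_{I_1}(0)\\ \vdots\\ x_{I_N}(2\pi)-x_{I_N}(0)\end{pmatrix},
\qquad
A=\bigl(f_{I_k}(\text{block }j)\bigr)_{1\le k,j\le N},
\]
where $f_{I_k}(\text{block }j)$ is, exactly as in Lemma \ref{alpha}, a rational function of the frequencies in the $j$-th block (and independent of the $a$'s). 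So exact simultaneous steering of $(x_{I_1},\dots,x_{I_N})$ holds as soon as $A$ is invertible.

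The main obstacle, therefore, is showing that one can choose the frequencies satisfying \eqref{RMC}–\eqref{NRMC} so that $\det A\neq 0$; this is the genuine content of the proposition (its title, ``Finitude'', suggests the bad set of frequencies is finite). The plan is to proceed column by column: the entries of column $j$ depend only on the $j$-th frequency block, and $\det A$ is a nontrivial rational function of all the frequency variables provided one can exhibit, for each $j$, a choice making the $j$-th column ``generic'' relative to the others. Concretely I would argue by induction on $N$, expanding $\det A$ along the last column and using the fact (the analogue of the end-of-proof argument for Proposition \ref{main1}, via Lemma \ref{alpha} and the substitution $\omega^{*}\mapsto-\omega_2$) that each diagonal entry $f_{I_k}$ is not identically zero on the resonance hyperplane; combined with the freedom in choosing $\omega_{1N}^1$ (which \eqref{NRMC} allows to be any sufficiently large integer) one can make the last-column contribution dominate, so $\det A$ is a nonzero rational function. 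Since a nonzero rational function vanishes only on a proper algebraic subset, and the frequency constraints \eqref{NRMC} leave infinitely many admissible integer tuples in each block, one can pick integer frequencies with $\det A\neq 0$; inverting $A$ then produces the coefficients $a_1,\dots,a_N$ realizing the prescribed increments, which establishes (C1) and completes the proof. Finally, concatenating the inputs $u^j$ over the ordered equivalence classes $\mathcal{E}_x^1\prec\cdots\prec\mathcal{E}_x^{\widetilde N}$ as in Definition \ref{de:conca} yields the exact steering of the full nilpotent system.
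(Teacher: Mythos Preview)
Your Part I is essentially the paper's Part I: the frequency analysis (Lemma \ref{FG}), the resonance/non-resonance argument (Lemmas \ref{Resonance} and \ref{Non-Resonance}), and the reduction to the linear system with the control matrix $A=(f_{I_k}^X(\text{block }j))_{k,j}$ are all as in the paper. That part is fine.

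The gap is in your invertibility argument. Knowing that each individual $f_{I_k}$ is not identically zero on the resonance hyperplane (the Lemma \ref{alpha} trick) is \emph{not} enough when $N>1$: expanding $\det A$ along the last column gives
\[
\det A=\sum_{k=1}^N (-1)^{k+N} f_{I_k}^X(\text{block }N)\,M_{kN},
\]
where the minors $M_{kN}$ depend only on blocks $1,\dots,N-1$. This is a linear combination of the functions $f_{I_1}^X,\dots,f_{I_N}^X$ evaluated at block $N$, and ``making one term dominate'' has no meaning here --- all $N$ functions depend on the \emph{same} block-$N$ variables, and there is no a priori reason one entry of the last column should be large relative to the others. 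What you actually need is that the family $\{f_{I_1}^X,\dots,f_{I_N}^X\}$ is \emph{linearly independent} on the resonance hyperplane; this is Lemma \ref{Free_Family} in the paper, and it is the real content of the proposition. The paper's proof is not a dominance/perturbation argument at all: it lifts to an auxiliary free Lie algebra on $M=m_1+m_2$ generators, works in the equivalence class $\mathcal{E}_Y(1,\dots,1)$ where each generator appears exactly once, proves linear independence there by an induction on bracket length exploiting the left/right factorization $Y_J=[Y_{L(J)},Y_{R(J)}]$ (Claim \ref{Free_D}), and then projects back via a surjective algebra homomorphism $\Pi:L(Y)\to L(X)$. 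Your sketch does not contain any mechanism that would yield this linear independence, so the plan as written does not close.
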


\noindent This result generalizes Proposition $\ref{main1}$. The proof is decomposed in two parts as follows:
\begin{itemize}
\item[Part I:] we show that, if $(\ref{NRMC})$ holds and the control functions are of the form $(\ref{CF1})$, then $(\ref{RMC})$ is the \textit{only} resonance occurring in $(\dot{x}_{I_1},\dots,\dot{x}_{I_N})$;
\item[Part II:] as the resonance gives rise to a system of linear equations on $(a_1,\dots,a_N)$, we recover the invertibility of this system by choosing  suitable frequencies in the control function $(\ref{CF1})$.
\end{itemize}

\noindent \textbf{Part I \ Frequencies and Resonance} 

Consider inputs of the form $(\ref{CF1})$. Generalizing Lemma \ref{F}, we give a general form of frequencies involved in $\dot{x}_{J}$.

\begin{lemma}\label{FG}
The dynamics $\dot{x}_{J}$ is a linear combination of cosine functions of the form
\begin{equation}\label{FG1}
        (\ell_1\cdot\omega_1+\ell_2\cdot\omega_2+\ell_2^{*}\cdot\omega_2^{*})t-(\ell_1+\ell_2+m_2^{*}-1+\ell_2^{*}\varepsilon)\frac{\pi}{2},
\end{equation}where
\begin{eqnarray*}
        &&\ell_1\cdot\omega_1=\sum_{j=1}^N\sum_{i=1}^{m_1}\ell_{1j}^{i}\omega_{1j}^i,\
        \ell_2\cdot\omega_2=\sum_{j=1}^N\sum_{i=1}^{m_2-1}\ell_{2j}^{i}\omega_{2j}^i,\
        \ell_2^{*}\cdot\omega_2^{*}=\sum_{j=1}^N\ell_{2j}^{*}\omega_{2j}^{*},\label{FG2}\\
        &&\ell_1=\sum_{j=1}^N\sum_{i=1}^{m_1}\ell_{1j}^{i},\
        \ell_2=\sum_{j=1}^N\sum_{i=1}^{m_2-1}\ell_{2j}^{i},\
        \ell_2^{*}=\sum_{j=1}^Nm_{2j}^{*},\label{FG3}
\end{eqnarray*} with $(\ell_{1j}^{i}, \ell_{2j}^{i}, \ell_{2j}^{*})\in\mathbb{Z}^3$.

Let
\begin{equation*}
\vert\ell_1\vert=\sum_{j=1}^N\sum_{i=1}^{m_1}\vert \ell_{1j}^{i}\vert, \ \vert\ell_2\vert=\sum_{j=1}^N\sum_{i=1}^{m_2-1}\vert \ell_{2j}^{i}\vert, \ \textrm{and } \ \vert\ell_2^{*}\vert=\sum_{j=1}^N\vert \ell_{2j}^{*}\vert,
\end{equation*}
then, one has $\vert\ell_1\vert\leq \Delta_1(X_J)$, $\vert\ell_2\vert+\vert\ell_2^{*}\vert\leq \Delta_2(X_J)$.
\end{lemma}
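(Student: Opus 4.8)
The plan is to adapt the proof of Lemma~\ref{F}, replacing the three frequencies $\omega_1,\omega_2,\omega_3$ used there by the larger family occurring in the input~(\ref{CF1}), and to argue by induction on the length $\vert J\vert$ of the bracket $X_J$. For $\vert J\vert=1$ one has $\dot x_J=u_1$ or $\dot x_J=u_2$, so by~(\ref{CF1}) $\dot x_J$ is a sum of terms of the shape $\cos\omega_{1j}^it$, $\cos\omega_{2j}^it$, or $a_j\cos(\omega_{2j}^{*}t-\varepsilon\tfrac{\pi}{2})$; each of these is of the form~(\ref{FG1}) with exactly one coefficient among the $\ell_{1j}^i,\ell_{2j}^i,\ell_{2j}^{*}$ equal to $1$ and all others $0$, so that $\vert\ell_1\vert\le 1=\Delta_1(X_J)$ and $\vert\ell_2\vert+\vert\ell_2^{*}\vert\le 1=\Delta_2(X_J)$ (one of the two bounds being an equality, according as $X_J=X_1$ or $X_J=X_2$), and in each case the integer in the phase is seen by inspection to equal $\ell_1+\ell_2+\ell_2^{*}-1+\ell_2^{*}\varepsilon$.

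For the inductive step, write $X_J=\mathrm{ad}^k_{X_{J_L}}X_{J_R}$ with $\vert J_L\vert,\vert J_R\vert<\vert J\vert$ (using the Hall expansion~(\ref{Hall_expansion}), exactly as in the proof of Lemma~\ref{F}), so that the canonical form~(\ref{cano-f5}) (equivalently~(\ref{cano-f4})) gives $\dot x_J=\frac{1}{k!}\,x_{J_L}^{\,k}\,\dot x_{J_R}$. By the induction hypothesis, both $\dot x_{J_L}$ and $\dot x_{J_R}$ are linear combinations of cosines of the form~(\ref{FG1}). Integrating $\dot x_{J_L}$ term by term replaces a cosine $c\cos(\nu t+\varphi)$ by $(c/\nu)\cos(\nu t+\varphi-\tfrac{\pi}{2})$ plus a constant, the constants being absorbed into the $\ell=0$ term; this leaves the frequency combination and the triple $(\ell_1,\ell_2,\ell_2^{*})$ unchanged and raises the phase integer by $1$, so $x_{J_L}$ is again a linear combination of cosines of the same shape, now with phase integer $\ell_1+\ell_2+\ell_2^{*}+\ell_2^{*}\varepsilon$ (no ``$-1$'', as befits a position rather than a velocity). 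Forming $x_{J_L}^{\,k}$ and then multiplying by $\dot x_{J_R}$ is carried out by iterating $\cos A\cos B=\tfrac12\bigl(\cos(A+B)+\cos(A-B)\bigr)$: along each branch of each product the frequency combinations add, the triples $(\ell_1,\ell_2,\ell_2^{*})$ add with signs, and the phase integers add; since exactly one factor, namely $\dot x_{J_R}$, carries a ``$-1$'', the outcome is a linear combination of cosines of the form~(\ref{FG1}). It remains to bound the coefficients: subadditivity of $\vert\cdot\vert$ under signed sums, together with $\Delta_i(X_J)=k\,\Delta_i(X_{J_L})+\Delta_i(X_{J_R})$ and the inductive bounds for $J_L$ and $J_R$, yields $\vert\ell_1\vert\le k\,\Delta_1(X_{J_L})+\Delta_1(X_{J_R})=\Delta_1(X_J)$ and, likewise, $\vert\ell_2\vert+\vert\ell_2^{*}\vert\le\Delta_2(X_J)$, completing the induction.

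The expansion of products of cosines is routine; the one delicate point, and the place where care is needed, is the phase bookkeeping — keeping track of the extra ``$-1$'' that distinguishes the cosines occurring in a velocity $\dot x_I$ from those occurring in a position $x_I$, and checking that integration moves a term from velocity type to position type while the product of $k$ position-type terms with a single velocity-type term lands back in the velocity type. Once this sign accounting is set up, exactly as in the proof of Lemma~\ref{F}, the bounds $\vert\ell_1\vert\le\Delta_1(X_J)$ and $\vert\ell_2\vert+\vert\ell_2^{*}\vert\le\Delta_2(X_J)$ are immediate from the triangle inequality and the additivity of the $\Delta_i$'s.
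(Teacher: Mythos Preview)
Your proof is correct and follows essentially the same approach as the paper's: induction on $\vert J\vert$, base case read off directly from the input~(\ref{CF1}), inductive step via the recursion $\dot x_J=\tfrac{1}{k!}x_{J_L}^{\,k}\dot x_{J_R}$, term-by-term integration (shifting the phase integer by one), and repeated use of the product formula for cosines together with subadditivity to recover the bounds on $\vert\ell_1\vert$ and $\vert\ell_2\vert+\vert\ell_2^{*}\vert$. The paper's version is slightly terser (it writes out only the case $k=1$ ``for notational ease''), but the argument is the same.
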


\begin{proof}[Proof of Lemma \ref{FG}]
The proof goes by induction on $\vert J\vert$.
\begin{itemize}
        \item $\vert J\vert=1$: the result is true
        since $\dot{x}_{1}=u_1$ and $\dot{x}_{2}=u_2$.
        \item {\it Inductive step:}\\
        Assume that the result holds true for all $x_{\tilde{J}}$ such that $1\leq\vert \tilde{J}\vert<s$. We show that it remains true for $x_J$ with $\vert J\vert=s$. By construction, we have $X_{J}=\textrm{ad}^k_{X_{J_1}}X_{J_2}$, and
        \begin{equation}
        \dot{x}_{J}=\frac{1}{k!}x_{J_1}^k\dot{x}_{J_2},
        \end{equation}
        with $\vert J_1\vert<\vert J\vert$, $\vert J_2\vert<\vert J\vert$, and $k\vert J_1\vert+\vert J_2\vert=\vert J\vert$.

        Then, by the inductive hypothesis, we have
        \begin{eqnarray}
                \dot{x}_{J_1}&=&\textrm{LinCom}\left\{\cos\{(\ell_1\cdot\omega_1+\ell_2\cdot\omega_2+\ell_2^{*}\cdot\omega_2^{*})t-(\ell_1+\ell_2+\ell_2^{*}-1+\ell_2^{*}\varepsilon)\frac{\pi}{2}\}\right\},\label{j1}\\
                \dot{x}_{J_2}&=&\textrm{LinCom}\left\{\cos\{(\tilde{\ell}_1\cdot\omega_1+\tilde{\ell}_2\cdot\omega_2+\tilde{\ell}_2^{*}\cdot\omega_2^{*})t-(\tilde{\ell}_1+\tilde{\ell}_2+\tilde{\ell}_2^{*}-1+\tilde{\ell}_2^{*}\varepsilon)\frac{\pi}{2}\}\right\},\label{j2}
        \end{eqnarray}where $\textrm{LinCom}\{\cdot\}$ stands ``linear combination".

        Eq. $(\ref{j1})$ implies that
        \begin{eqnarray}
                x_{J_1}&=&\textrm{LinCom}\left\{\cos\{(\ell_1\cdot\omega_1+\ell_2\cdot\omega_2+\ell_2^{*}\cdot\omega_2^{*})t-(\ell_1+\ell_2+\ell_2^{*}-1+\ell_2^{*}\varepsilon)\frac{\pi}{2}-\frac{\pi}{2}\}\right\}\nonumber\\
                &=&\textrm{LinCom}\left\{\cos\{(\ell_1\cdot\omega_1+\ell_2\cdot\omega_2+\ell_2^{*}\cdot\omega_2^{*})t-(\ell_1+\ell_2+\ell_2^{*}+\ell_2^{*}\varepsilon)\frac{\pi}{2}\}\right\}.\label{J1}
        \end{eqnarray}
        For notational ease, we will only write down the case $\dot{x}_J=x_{J_1}\dot{x}_{J_2}.$ Using product formulas for cosine function, one has
        \begin{eqnarray}
                \dot{x}_{J}&=&\textrm{LinCom}\left\{\cos\{[(\ell_1\pm\tilde{\ell}_1)\cdot\omega_1+(\ell_2\pm\tilde{\ell}_2)\cdot\omega_2+(\ell_2^{*}\pm\tilde{\ell}_2^{*})\cdot\omega_2^{*}]t\right.\nonumber\\
                &&\left.-[(\ell_1\pm\tilde{\ell}_1)+(\ell_2\pm\tilde{\ell}_2)+(\ell_2^{*}\pm\tilde{\ell}_2^{*})-1+(\ell_2^{*}\pm\tilde{\ell}_2^{*})\varepsilon]\frac{\pi}{2}\}\right\}.
        \end{eqnarray}

        Moreover, according to the inductive hypothesis, one has $$\vert\ell_1\vert\leq \Delta_1(X_{J_1}),\ \vert\ell_2\vert+\vert\ell_2^{*}\vert\leq \Delta_2(X_{J_1}),$$ and $$\vert\tilde{\ell}_1\vert\leq \Delta_1(X_{J_2}),\ \vert\tilde{\ell}_2\vert+\vert\tilde{\ell}_2^{*}\vert\leq \Delta_2(X_{J_2}).$$ Then, one gets
        $$\vert\tilde{\ell}_1\pm\tilde{\ell}_1\vert\leq \Delta_1(X_J), \textrm{ and } \vert\tilde{\ell}_2\pm\tilde{\ell}_2\vert+\vert\ell_2^{*}\pm\tilde{\ell}_2^{*}\vert\leq \Delta_2(X_J).$$
        This concludes the proof of Lemma \ref{FG}.
\end{itemize}
\end{proof}

By Lemma $\ref{FG}$, one gets a non trivial contribution for $x_J$ if the resonance condition
\begin{equation}\label{Resonance-Condition1}
\left\lbrace \begin{array}{l}
        \ell_1\cdot\omega_1+\ell_2\cdot\omega_2+\ell_2^{*}\cdot\omega_2^{*}=0,\\
        \ell_2^{*}\varepsilon+\ell_1+\ell_2+\ell_2^{*}-1\equiv 0\pmod{2},
\end{array}\right.
\end{equation} is verified by the frequencies of some cosine functions involved in $\dot{x}_J$.

\begin{lemma}\label{Resonance}
Under conditions $(\ref{RMC})$ and $(\ref{NRMC})$ in Proposition \ref{Finitude}, one gets a non trivial contribution on $x_{I_j}$ depending linearly on $a_j$ for all $j=1\dots,N$.
\end{lemma}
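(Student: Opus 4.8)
The plan is to prove Lemma~\ref{Resonance} by analysing the resonance condition~(\ref{Resonance-Condition1}) under the hypotheses~(\ref{RMC}) and~(\ref{NRMC}), following the same strategy as in the proof of Proposition~\ref{main1}, but keeping track of the block structure indexed by $j=1,\dots,N$. First I would substitute the relations $\omega_{2j}^{*}=\sum_{i=1}^{m_1}\omega_{1j}^i+\sum_{i=1}^{m_2-1}\omega_{2j}^i$ from~(\ref{RMC}) into the first equation of~(\ref{Resonance-Condition1}), thereby eliminating the ``starred'' frequencies and rewriting $\ell_1\cdot\omega_1+\ell_2\cdot\omega_2+\ell_2^{*}\cdot\omega_2^{*}=0$ as a $\mathbb{Z}$-linear combination of the base frequencies $\{\omega_{1j}^i\}$ and $\{\omega_{2j}^i\}$ alone. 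The coefficients of each $\omega_{1j}^i$ and $\omega_{2j}^i$ in this rewritten sum are explicit integers built from the $\ell$'s, and the constraint from Lemma~\ref{FG}, namely $|\ell_1|\le \Delta_1(X_{I_j})=m_1$ and $|\ell_2|+|\ell_2^{*}|\le \Delta_2(X_{I_j})=m_2$ (recall every $x_{I_j}$ lies in the equivalence class $\mathcal{E}_x(m_1,m_2)$), bounds all these integer coefficients a priori.

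The heart of the argument is then a ``lacunary'' (gap) estimate: condition~(\ref{NRMC}) orders the frequencies
$$
\omega_{11}^1 < \omega_{11}^2 < \cdots < \omega_{11}^{m_1} < \omega_{21}^1 < \cdots < \omega_{21}^{m_2-1} < \omega_{12}^1 < \cdots
$$
with gaps that grow faster than $m_1+m_2$ times the previous frequency, so that no nontrivial integer combination with coefficients bounded as above can vanish unless all coefficients are zero --- \emph{except} for the combinations forced by~(\ref{RMC}) themselves. Concretely, I would process the frequencies from the largest downwards: the largest frequency appearing with a nonzero coefficient must be cancelled, but by the gap hypothesis~(\ref{NRMC}) it dominates the sum of all smaller frequencies weighted by their maximal possible coefficients, so its coefficient must in fact be zero, contradiction --- unless that largest surviving term is exactly one of the $\omega_{2j}^{*}$ that got expanded, in which case the cancellation is the built-in one $\omega_{2j}^{*}=\sum\omega_{1j}^i+\sum\omega_{2j}^i$ coming from a factor $\cos(\omega_{2j}^{*}t-\varepsilon\pi/2)$ in $u_2$. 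This forces $\ell_{2j}^{*}\in\{0,-1\}$ and, when $\ell_{2j}^{*}=-1$, it forces $\ell_{1j}^i=m_1$ for all $i$ and $\ell_{2j}^i$ to take exactly the values making up the one other term $\cos\{(m_1\omega_1+m_2\omega_2)t-\cdots\}$ of $\dot x_{I_j}$, i.e. the $4$-tuple analogue $(m_1,\dots,m_1,m_2-1,\dots,m_2-1,-1)$ for block $j$. The parity condition $\varepsilon=m_1+m_2-1\pmod 2$ then guarantees the second line of~(\ref{Resonance-Condition1}) holds precisely for these block-$j$ resonances and fails for the trivial all-zero combination (whose phase gives $\ell_1+\ell_2+\ell_2^{*}-1\equiv -1\not\equiv 0$), exactly as in step \textbf{(2)} of the proof of Proposition~\ref{main1}.

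Having isolated the surviving resonances, I would then extract, for each $j$, the coefficient of the resonant cosine in $\dot x_{I_j}$ integrated over $[0,2\pi]$: by the power-counting argument of Proposition~\ref{main1} (the power of $a_j$ equals the number of times $\omega_{2j}^{*}$ is used, which is one), $x_{I_j}(2\pi)-x_{I_j}(0)$ is an \emph{affine} function of $(a_1,\dots,a_N)$, and in fact the cross-term $\omega_{2i}^{*}$ with $i\ne j$ cannot contribute to $\dot x_{I_j}$ with the right phase because of the gap hypothesis, so each $x_{I_j}(2\pi)-x_{I_j}(0)$ is linear in $a_j$ plus a constant independent of all $a$'s (the constant coming from the $\cos\omega_{2j}^i$ and $\cos\omega_{1j}^i$ terms). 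The nontriviality of the linear coefficient $f_{I_j}$ is then reduced, exactly as in Lemma~\ref{alpha}, to showing a certain rational function of the frequencies is not identically zero over the relevant resonance hyperplane; one reuses the specialization $\omega_{2j}^{*}\leftrightarrow -\omega_{2j}^{m_2-1}$ (or the analogous choice) to see that the corresponding $\alpha$-quotient is a strictly positive number. I expect the main obstacle to be the bookkeeping in the downward-elimination step: one must argue carefully that the gap conditions~(\ref{NRMC}), which only compare \emph{consecutive} frequencies, genuinely preclude \emph{all} long-range integer cancellations with the bounded coefficients, and that the only exceptions are the designed ones --- this is where a clean induction on the index of the largest frequency with nonzero coefficient, together with the sharp bound $|\ell_{1j}^i|\le m_1$, $|\ell_{2j}^i|+|\ell_{2j}^{*}|\le m_2$ from Lemma~\ref{FG}, is essential.
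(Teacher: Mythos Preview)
Your core argument for the lemma --- substitute~(\ref{RMC}) to eliminate the starred frequencies, use the gap conditions~(\ref{NRMC}) to force each resulting integer coefficient to vanish, then count absolute values against the bounds from Lemma~\ref{FG} to isolate the designed resonances, and rule out the trivial all-zero solution by parity --- is essentially the paper's proof. The paper carries out the gap step more tersely, simply asserting from~(\ref{NRMC}) that in the rewritten sum
\[
\sum_{j,i}(\ell_{1j}^{i}+\ell_{2j}^{*})\,\omega_{1j}^i+\sum_{j,i}(\ell_{2j}^{i}+\ell_{2j}^{*})\,\omega_{2j}^i=0
\]
every bracketed coefficient must vanish; your downward-elimination scheme is a legitimate way to justify that assertion. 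The linearity in the $a_j$ then follows by your power-counting argument, exactly as in the paper.

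Two points in your proposal overreach, however. First, your claim that ``the cross-term $\omega_{2i}^{*}$ with $i\ne j$ cannot contribute to $\dot x_{I_j}$'', so that $x_{I_j}(2\pi)-x_{I_j}(0)$ depends only on $a_j$, is wrong. The index on $x_{I_j}$ labels the elements of the equivalence class $\mathcal{E}_x(m_1,m_2)$, while the index on the frequency blocks labels the $N$ independent copies inserted into the control~(\ref{CF1}); there is no a~priori correspondence between them. Since $u_1,u_2$ carry all blocks simultaneously, every block-$k$ resonance appears in every $\dot x_{I_j}$, producing the full $N\times N$ control matrix $A$ of~(\ref{Systeme_Inverse}) with generically nonzero off-diagonal entries --- indeed, the whole of Part~II (Lemma~\ref{Free_Family}, Corollary~\ref{Det}, Lemma~\ref{Integer}) exists precisely to establish invertibility of this matrix, a step that would be vacuous under your claim. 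Second, the nontriviality of the linear coefficients is not part of Lemma~\ref{Resonance} at all; the paper defers it to Part~II and handles it by a linear-independence argument on the resonance hyperplane (Lemma~\ref{Free_Family}), not by the $\alpha$-recursion of Lemma~\ref{alpha} that you suggest.
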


\begin{proof}[Proof of Lemma \ref{Resonance}]
It is clear that the resonance condition $(\ref{Resonance-Condition1})$ holds for $$\{ \omega_{11}^1,\dots,\omega_{11}^{m_1}\},\dots,\{ \omega_{1N}^1,\dots,\omega_{1N}^{m_1}\},$$$$\{ \omega_{21}^1,\dots,\omega_{21}^{m_2-1},\omega_{21}^{*}\},\dots,\{\omega_{2N}^1,\dots,\omega_{2N}^{m_2-1},\omega_{2N}^{*}\}, $$ and $\varepsilon\in\{0,1\}$ verifying $(\ref{RMC})$. We show that it is the only resonance occurring in $\dot{x}_{I_j}$. Indeed, by Lemma $\ref{FG}$, the integer part of frequencies in $\dot{x}_{I_j}$ is in the following form
\begin{eqnarray}
        &&\ell_1\cdot\omega_1+\ell_2\cdot\omega_2+\ell_2^{*}\cdot\omega_2^{*}\nonumber\\
        &=&\sum_{j=1}^N\sum_{i=1}^{m_1}\ell_{1j}^{i}\omega_{1j}^i+\sum_{j=1}^N\sum_{i=1}^{m_2-1}\ell_{2j}^{i}\omega_{2j}^i+\sum_{j=1}^N\ell_{2j}^{*}\omega_{2j}^{*}\nonumber\\
        &=&\sum_{j=1}^N\sum_{i=1}^{m_1}\ell_{1j}^{i}\omega_{1j}^i+\sum_{j=1}^N\sum_{i=1}^{m_2-1}\ell_{2j}^{i}\omega_{2j}^i+\sum_{j=1}^N\ell_{2j}^{*}\left(\sum_{i=1}^{m_1}\omega_{1j}^i+\sum_{i=1}^{m_2-1}\omega_{2j}^i\right)\nonumber\\
        &=&\sum_{j=1}^N\sum_{i=1}^{m_1}(\ell_{1j}^{i}+\ell_{2j}^{*})\omega_{1j}^i+\sum_{j=1}^N\sum_{i=1}^{m_2-1}(\ell_{2j}^{i}+\ell_{2j}^{*})\omega_{2j}^i.\label{Resonance1}
\end{eqnarray}
By Condition $(\ref{NRMC})$, Eq. $(\ref{Resonance1})$ is equal to zero if and only if
\begin{eqnarray*}
        \ell_{1j}^{i}+\ell_{2j}^{*}&=&0,\ \textrm{ for } i=1,\dots,m_1,\\
        \ell_{2j}^{i}+\ell_{2j}^{*}&=&0,\ \textrm{ for } i=1,\dots,m_2-1.
\end{eqnarray*}
Then, one has
\begin{eqnarray*}
\vert\ell_1\vert&=&\sum_{j=1}^N\sum_{i=1}^{m_1}\vert \ell_{1j}^{i}\vert=\sum_{j=1}^N\sum_{i=1}^{m_1}\vert \ell_{2j}^{*}\vert=m_1\sum_{j=1}^N\vert \ell_{2j}^{*}\vert,\\
\vert\ell_2\vert&=&\sum_{j=1}^N\sum_{i=1}^{m_2-1}\vert \ell_{2j}^{i}\vert=\sum_{j=1}^N\sum_{i=1}^{m_2-1}\vert \ell_{2j}^{*}\vert=(m_2-1)\sum_{j=1}^N\vert \ell_{2j}^{*}\vert.
\end{eqnarray*}
However, by Lemma $\ref{FG}$, one knows that $\vert\ell_1\vert\leq m_1$ and $\vert\ell_2\vert+\vert \ell_2^{*}\vert\leq m_2$. Then, one necessarily has $m_{2j}^*=0$ for all $j=1,\dots,N$. In that case, one obtains $$\ell_2^{*}\varepsilon+\ell_1+\ell_2+\ell_2^{*}-1=-1\neq 0\pmod{2}.$$ In conclusion, the resonance condition $(\ref{Resonance-Condition1})$ does not hold for any $4-$tuple $(\ell_1,\ell_2,\ell_2^*,\varepsilon)$ different from $(m_1,m_2-1,-1,m_1+m_2-1\pmod 2)$.

By Eq. $(\ref{CF1})$, the power of $a_j$ is equal to the number of times $\omega_{2j}^*$ occurs in the resonance condition $(\ref{RM-Equation})$. Since the latter is equal to $1$, we obtain a linear function of $a_j$. This ends the proof of Lemma \ref{Resonance}.
\end{proof}

\begin{lemma}\label{Non-Resonance}
If $x_J\in\mathcal{E}^i_x$ and $i<j$, then $x_{J}(2\pi)-x_J(2\pi)=0$.
\end{lemma}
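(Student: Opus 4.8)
Throughout, recall that the conclusion should read $x_J(2\pi)-x_J(0)=0$: the input $u=(u_1,u_2)$ of the form~\eqref{CF1} built for the class $\mathcal{E}^j_x=\mathcal{E}_x(m_1,m_2)$ leaves every component of a smaller class unchanged. The plan is to set $m_1^J:=\Delta_1(X_J)$, $m_2^J:=\Delta_2(X_J)$, so that $\mathcal{E}^i_x=\mathcal{E}_x(m_1^J,m_2^J)$ with $(m_1^J,m_2^J)\neq(m_1,m_2)$, and to exploit the ordering of the equivalence classes. Since $i<j$ we have $\mathcal{E}^i_x\prec\mathcal{E}^j_x$; as every element of a fixed equivalence class has the same bracket length (namely $m_1^J+m_2^J$, resp.\ $m_1+m_2$) and property~(H1) of the P.~Hall basis orders shorter brackets before longer ones, the relation $\mathcal{E}^i_x\prec\mathcal{E}^j_x$ forces
\[
m_1^J+m_2^J\ \leq\ m_1+m_2 .
\]

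First I would write $x_J(2\pi)-x_J(0)=\int_0^{2\pi}\dot x_J(t)\,dt$ and apply Lemma~\ref{FG}: $\dot x_J$ is a finite linear combination of functions $\cos\{(\ell_1\cdot\omega_1+\ell_2\cdot\omega_2+\ell_2^{*}\cdot\omega_2^{*})t-(\ell_1+\ell_2+\ell_2^{*}-1+\ell_2^{*}\varepsilon)\tfrac{\pi}{2}\}$ with $|\ell_1|\leq m_1^J$ and $|\ell_2|+|\ell_2^{*}|\leq m_2^J$. Integrating over $[0,2\pi]$ kills every such term except those satisfying the resonance condition~\eqref{Resonance-Condition1} (zero frequency together with an even phase), so it suffices to show that~\eqref{Resonance-Condition1} has no solution in this range of multi-indices. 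For this I would reuse the number-theoretic step from the proof of Lemma~\ref{Resonance}: substituting the relations~\eqref{RMC} for the $\omega_{2j}^{*}$, the frequency becomes $\sum_{j=1}^{N}\big(\sum_{i=1}^{m_1}(\ell_{1j}^{i}+\ell_{2j}^{*})\omega_{1j}^{i}+\sum_{i=1}^{m_2-1}(\ell_{2j}^{i}+\ell_{2j}^{*})\omega_{2j}^{i}\big)$, and the growth estimates~\eqref{NRMC} force $\ell_{1j}^{i}=-\ell_{2j}^{*}$ for $i=1,\dots,m_1$ and $\ell_{2j}^{i}=-\ell_{2j}^{*}$ for $i=1,\dots,m_2-1$, for every $j$. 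Hence $|\ell_1|=m_1\,|\ell_2^{*}|$ and $|\ell_2|+|\ell_2^{*}|=m_2\,|\ell_2^{*}|$, where $|\ell_2^{*}|=\sum_{j}|\ell_{2j}^{*}|$.

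Finally I would close by a degree count. If $|\ell_2^{*}|=0$, then all the $\ell$'s vanish and the parity expression in~\eqref{Resonance-Condition1} equals $-1$, which is odd, so no contribution arises (equivalently, the corresponding cosine is the zero function). If $|\ell_2^{*}|\geq 1$, then $m_1\leq|\ell_1|\leq m_1^J$ and $m_2\leq|\ell_2|+|\ell_2^{*}|\leq m_2^J$, whence $m_1+m_2\leq m_1^J+m_2^J$; combined with the reverse inequality above this gives $m_1^J=m_1$ and $m_2^J=m_2$, i.e.\ $\mathcal{E}^i_x=\mathcal{E}^j_x$, contradicting $i\neq j$. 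Thus no term of $\dot x_J$ contributes to the integral, so $x_J(2\pi)-x_J(0)=0$. The main obstacle is the middle step: justifying that~\eqref{NRMC} indeed forces the merged coefficients $\ell_{1j}^{i}+\ell_{2j}^{*}$ and $\ell_{2j}^{i}+\ell_{2j}^{*}$ to vanish, since for $\dot x_J$ the individual coefficients are only controlled through the cruder bound $m_1^J+m_2^J$ rather than through $m_1$, $m_2$ separately. This is settled exactly as in the proof of Lemma~\ref{Resonance} — the ordered list of frequencies, with the separations prescribed by~\eqref{NRMC}, admits no nontrivial vanishing integer combination of total degree at most $m_1+m_2$ — and the inequality $m_1^J+m_2^J\leq m_1+m_2$ coming from the ordering of the equivalence classes is precisely what keeps us within that regime.
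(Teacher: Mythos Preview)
Your proof is correct and follows essentially the same route as the paper's: apply Lemma~\ref{FG} to write $\dot x_J$ as a sum of cosines, substitute~\eqref{RMC} into the frequency, use~\eqref{NRMC} to force all merged coefficients $\ell_{1j}^i+\ell_{2j}^*$ and $\ell_{2j}^i+\ell_{2j}^*$ to vanish, and then use the degree bounds from Lemma~\ref{FG} together with the class ordering to conclude $|\ell_2^*|=0$, whence the parity fails. The only cosmetic difference is that the paper asserts directly ``$\Delta_1(X_J)<m_1$ or $\Delta_2(X_J)<m_2$'' and derives $|\ell_2^*|=0$ from whichever inequality holds, whereas you phrase the same step contrapositively (if $|\ell_2^*|\geq 1$ then $m_1\leq m_1^J$ and $m_2\leq m_2^J$, forcing equality of the classes); these are equivalent, and your version has the virtue of making explicit why the hypothesis $i<j$ yields $m_1^J+m_2^J\leq m_1+m_2$ via~(H1), a point the paper leaves implicit.
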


\begin{proof}[Proof of Lemma \ref{Non-Resonance}]
We first note that Eq. $(\ref{Resonance1})$ still holds true. Recall its expression here.
\begin{eqnarray}
        \ell_1\cdot\omega_1+\ell_2\cdot\omega_2+\ell_2^{*}\cdot\omega_2^{*}
        =\sum_{j=1}^N\sum_{i=1}^{m_1}(\ell_{1j}^{i}+\ell_{2j}^{*})\cdot\omega_{1j}^i+\sum_{j=1}^N\sum_{i=1}^{m_2-1}(\ell_{2j}^{i}+\ell_{2j}^{*})\cdot\omega_{2j}^i\label{Non-Resonance1}
\end{eqnarray}
By condition $(\ref{NRMC})$ in Proposition $\ref{Finitude}$, Eq. $(\ref{Non-Resonance1})$ is equal to zero if and only if $\ell_{1j}^{i}+\ell_{2j}^{*}=0$ for $i=1,\dots,m_1$, $j=1,\dots,N$ and $\ell_{2j}^{i}+\ell_{2j}^{*}=0$ for $i=1,\dots,m_2-1$, $j=1,\dots,N$. In that case, one has
\begin{eqnarray*}
\vert\ell_1\vert=m_1\sum_{j=1}^N\vert \ell_{2j}^{*}\vert,\qquad \vert\ell_2\vert+\vert \ell_2^{*}\vert=m_2\sum_{j=1}^N\vert \ell_{2j}^{*}\vert.
\end{eqnarray*}
One also knows that $\vert \ell_1\vert\leq \Delta_1(X_J),\quad\vert\ell_2\vert+\vert \ell_2^{*}\vert\leq\Delta_2(X_J),$ with $\Delta_1(X_J)<m_1$ or $\Delta_2(X_J)<m_2$. Therefore, one has $\ell_{2j}^*=0$ for all $j=1,\dots,N$. This implies that
$$\ell_2^{*}\varepsilon+\ell_1+\ell_2+\ell_2^{*}-1=-1\neq 0\pmod{2}.$$

In conclusion, the resonance condition $(\ref{Resonance-Condition1})$ does not hold true. This ends the proof of Lemma \ref{Non-Resonance}.
\end{proof}

\noindent\textbf{Part II \ Invertibility} 

As a consequence of Lemma $\ref{Resonance}$, one has
\begin{eqnarray}
&&\begin{pmatrix}
x_{I_1}(2\pi)-x_{I_1}(0)\\
\vdots\\
x_{I_N}(2\pi)-x_{I_N}(0)
\end{pmatrix}=A(\omega_{11}^1,\dots,\omega_{2N}^{m_2-1},\omega_{2N}^{*})
\begin{pmatrix}
a_1\\
\vdots\\
a_N
\end{pmatrix}\label{Systeme_Inverse}\\
&=&\begin{pmatrix}
f_{I_1}^X(\omega_{11}^1,\dots,\omega_{21}^{*}),&\dots&,f_{I_1}^X(\omega_{1N}^1,\dots,\omega_{2N}^{*})\\
\vdots&\ddots&\vdots\\
f_{I_N}^X(\omega_{11}^1,\dots,\omega_{21}^{*}),&\dots&,f_{I_N}^X(\omega_{1N}^1,\dots,\omega_{2N}^{*})
\end{pmatrix}
\begin{pmatrix}
a_1\\
\vdots\\
a_N
\end{pmatrix},\nonumber
\end{eqnarray} where  $f_{I_j}^X:~\mathbb{R}^{m}\rightarrow \mathbb{R}$ are rational functions of frequencies, and every $\omega_{2j}^{*}$ verifies Eq. $(\ref{RMC})$ for $j=1,\dots,N$.

\begin{de}[\emph{Control matrix and control vector}]\label{de:control-matrix}
The matrix $A$ and the vector $(a_1,\dots,a_N)$ occurring in Eq. (\ref{Systeme_Inverse}) are called respectively \emph{control matrix} and \emph{control vector} associated with the equivalence class $\mathcal{E}_x^j$.
\end{de}

We show in the sequel that it is possible to choose integer frequencies $$\{ \omega_{11}^1,\dots,\omega_{11}^{m_1}\},\dots,\{ \omega_{1N}^1,\dots,\omega_{1N}^{m_1}\},$$$$\{ \omega_{21}^1,\dots,\omega_{21}^{m_2-1},\omega_{21}^{*}\},\dots,\{\omega_{2N}^1,\dots,\omega_{2N}^{m_2-1},\omega_{2N}^{*}\},$$ so that the invertibility of the control matrix $A$ is guaranteed, as well as the non-resonance of every component $x_J$ belonging to a class smaller than $\mathcal{E}_x^j$.

For $j=1,\dots, N$, we use $P_j$ to denote the hyperplane in $\mathbb{R}^{M}$ with $M:=m_1+m_2$ defined by Eq. $(\ref{RMC})$, for which we recall the expression next,
$$\omega_{2j}^*=\sum_{i=1}^{m_1}\omega_{1j}^i+\sum_{i=1}^{m_2-1}\omega_{2j}^i.$$
We start by showing that the function $\textrm{det} A(\omega_{11}^1,\dots,\omega_{2N}^{*})$ is not identically equal to zero on $\displaystyle\cap_{j=1}^{N}P_j$. This is a consequence of the following lemma.

\begin{lemma}\label{Free_Family}
The family of functions
$$\{f_{I_1}^X(\omega_1^1,\dots,\omega_2^{m_2-1},\omega_2^{*}),\dots, f_{I_N}^X(\omega_1^1,\dots,\omega_2^{m_2-1},\omega_2^{*})\}$$ is linearly independent on the hyperplane $P$ in $\mathbb{R}^M$ defined by the equation $$\omega_{2}^{*}=\sum_{i=1}^{m_1}\omega_{1}^i+\sum_{i=1}^{m_2-1}\omega_{2}^i.$$
\end{lemma}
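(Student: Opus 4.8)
The plan is to prove linear independence of $\{f_{I_1}^X,\dots,f_{I_N}^X\}$ on the hyperplane $P$ by a dimension/restriction argument borrowed from~\cite{Liu}. First I would note that each $f_{I_j}^X$, being a rational function of the frequencies, is determined by the integration over $[0,2\pi]$ of $\dot x_{I_j}$, where the only surviving resonant term is the one isolated in Lemma~\ref{FG}: namely the quantity $\alpha_{I_j}$ (the analogue of the ratio $f_J/g_J$ from Lemma~\ref{alpha}) computed along the frequency pattern $\{\omega_1^1,\dots,\omega_1^{m_1},\omega_2^1,\dots,\omega_2^{m_2-1},\omega_2^*\}$. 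The inductive formula for $\alpha_J$ from Lemma~\ref{alpha} (and its multi-frequency generalization) shows that, restricted to $P$, each $f_{I_j}^X$ is a rational function in the remaining free variables whose numerator records, combinatorially, the structure of the bracket $X_{I_j}$. The key claim to establish is that distinct P.~Hall brackets in the same equivalence class $\mathcal{E}_X(m_1,m_2)$ give rise to functions that are not merely nonzero but linearly independent.

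The main step is to exhibit, for the $N$-tuple $(f_{I_1}^X,\dots,f_{I_N}^X)$ restricted to $P$, a specialization of the frequencies (a one-parameter or few-parameter curve inside $P$, e.g.\ setting $\omega_2^* = -\omega_2^{m_2-1}$ and letting the other frequencies be generic as in the $\omega_3=-\omega_2$ trick used in the proof of Proposition~\ref{main1}) along which the $f_{I_j}^X$ become polynomial (or Laurent-polynomial) expressions in a distinguished variable, say $\omega_1^1=:s$, with pairwise distinct leading behaviour. Concretely, I would argue that along such a curve $\alpha_{I_j}$ simplifies (as $\alpha_J=\alpha_{J_1}+\alpha_{J_2}$ did in Proposition~\ref{main1}'s proof) to a sum over the leaves of the bracket tree of $I_j$ of explicit rational monomials in the free frequencies; two inequivalent P.~Hall brackets of the same multidegree then have non-proportional such sums because their bracket trees differ, and one can read off this difference as a difference of degrees or exponents in $s$. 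Hence the Wronskian-type determinant (or simply the matrix of coefficients in the $s$-expansion) is nonsingular for generic choices, proving linear independence of the family as rational functions on $P$.

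An alternative, cleaner route I would keep in reserve: induct on $\max_j |I_j|$ and on $N$. If a nontrivial linear relation $\sum_j c_j f_{I_j}^X\equiv 0$ held on $P$, restrict to a subhyperplane isolating the lexicographically smallest bracket $I_1$ (this is possible because of the ordering of equivalence classes and the triangular structure~\eqref{cano-f5}), force $c_1=0$ by a residue/asymptotic argument, then apply the inductive hypothesis to the remaining $N-1$ functions. The triangularity of the canonical form and the fact that $\omega_2^*$ appears to the first power (so $f_{I_j}^X$ is affine in the "new" frequency, whence the coefficient extraction is unambiguous) are what make this reduction work.

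The hard part will be the combinatorial heart of step two: showing that the explicit rational expressions attached to two distinct P.~Hall brackets of identical multidegree $(m_1,m_2)$ are genuinely linearly independent — not just distinct — as functions on $P$. This amounts to a statement that the "evaluation-of-bracket-trees" map is injective up to scaling on each fixed multidegree class, which is essentially the content transferred from~\cite{Liu}; I expect to spend most of the effort either reproducing their argument in the present notation or finding the right specialization of frequencies that linearizes the problem. Everything else — the form of the frequencies in $\dot x_J$ (Lemma~\ref{FG}), the uniqueness of the resonance (Lemma~\ref{Resonance}), and the reduction of $f_{I_j}^X$ to the ratio $\alpha_{I_j}$ — is already in place or is a routine extension of the $\mathrm{Card}(\mathcal{E}_x^j)=1$ computations.
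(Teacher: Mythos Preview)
Your proposal misses the key structural idea of the paper's proof and, as written, the main specialization approach would actually fail. The paper does \emph{not} work directly with the functions $f_{I_j}^X$ on $P$. Instead it lifts to a free Lie algebra on $M=m_1+m_2$ generators $Y=\{Y_1,\dots,Y_M\}$ and considers the equivalence class $\mathcal{E}_Y(1,\dots,1)$ in which every generator appears exactly once. In that lifted setting (Claim~\ref{Free_D}) the functions $\tilde f_{J_j}^Y$ satisfy a clean multiplicative recursion~\eqref{DD}, and linear independence is proved by an induction on bracket length using a ``maximal left factor'' argument: one multiplies a putative relation by $\sum_{\tilde\nu_k\in\Xi_L}\tilde\nu_k$, restricts to the sub-hyperplane where this vanishes, and applies the inductive hypothesis to strictly shorter brackets. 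The transfer back to $X$ is then a one-line surjectivity argument via the homomorphism $\Pi$ of~\eqref{Proj1}--\eqref{Proj2}.

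Your main plan---specialize frequencies as in the $\omega_3=-\omega_2$ trick and read off linear independence from ``pairwise distinct leading behaviour''---breaks down precisely here. Under that specialization the recursion collapses to $\alpha_J=\alpha_{J_1}+\alpha_{J_2}$, which gives $\alpha_J=m_2$ for \emph{every} bracket in $\mathcal{E}_X(m_1,m_2)$; the specialization that proved nonvanishing for a single bracket is useless for distinguishing several brackets of the same multidegree. Your alternative induction on $N$ has the same obstruction: since all $I_j$ share the multidegree $(m_1,m_2)$, there is no natural sub-hyperplane in the $(\omega_1^i,\omega_2^i)$-variables that isolates the lexicographically smallest one. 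The paper's lift is exactly what creates the extra variables needed to separate brackets by their tree structure; without it, the ``combinatorial heart'' you correctly flag as the hard part has no evident mechanism.
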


\begin{proof}[Proof of Lemma \ref{Free_Family}]
The first part of the argument consists in considering a family of $M$ indeterminates $Y=\{Y_1,\dots,Y_M\}$ and the associated control system
\begin{equation}\label{CSM}
        \dot{y}=\sum_{i=1}^M v_iY_i.
\end{equation}Let $H_Y$ be a \textit{P. Hall family} over $Y$. Consider the elements $\{Y_{J_1},\dots,Y_{J_{\tilde{N}}}\}$ in $H_Y$ of length $M$ such that $\Delta_i(Y_{J_j})=1$, for $i=1\dots M$, and $j=1,\dots,\tilde{N}$, and the corresponding components $\{y_{J_1},\dots,y_{J_{\tilde{N}}}\}$ in exponential coordinates.

If we apply one control of the form $\{v_i=\cos\nu_it\}_{i=1\dots M}$, with $\nu_m=\sum_{i=1}^{m-1}\nu_i$, to System $(\ref{CSM})$, then, by explicit integration, there exists, for each component $y_{J_j}$, a fractional function $f_{J_j}^Y:\mathbb{R}^m\rightarrow\mathbb{R}$ such that
\begin{equation}\label{D}
y_{J_j}(2\pi)-y_{J_j}(0)=f_{J_j}^Y(\nu_1,\dots,\nu_M),\ \ \textrm{ for } \nu_M=\sum_{i=1}^{M-1}\nu_i.
\end{equation}

\begin{claim}\label{Free_D}
The family of functions $\{f_{J_1}^Y,\dots,f_{I_{\tilde{N}}}^Y\}$ is linearly independent on the hyperplane in $\mathbb{R}^M$ defined by $ \nu_M=\sum_{i=1}^{M-1}\nu_i$.
\end{claim}

\begin{proof}[Proof of Claim \ref{Free_D}]

We first define $\tilde{f}_{J_j}^Y$ by
\begin{equation}\label{dd}
        \tilde{f}_{J_j}^Y(\nu_1,\dots,\nu_M)=f_{J_j}^Y(\nu_1,\dots,-\nu_M).
\end{equation}
Then, it is easy to see that $\tilde{f}_{J_j}^Y$ verifies the following inductive formula:
\begin{enumerate}
\item for $J=i=1\dots M$, $\tilde{f}_{J}^Y(\nu_i)=\frac{1}{\nu_i}$;
\item for $\vert J\vert>1$, $Y_{J}=[Y_{J_1},Y_{J_2}]$, there exists an injective function $$\sigma_{J}: \{1,\dots,m^{J}\}\rightarrow\{1,\dots, M\}$$ such that
\begin{eqnarray}\label{DD}
\tilde{f}_{J}^Y(\nu_{\sigma_{J}(1)},\dots,\nu_{\sigma_{J}(m^{J})})
=\frac{\tilde{f}_{J_1}^Y(\nu_{\sigma_{J}(1)},\dots,\nu_{\sigma_{J}(m^{{J_1}})})}{\sum_{i=1}^{m^{J_1}} \nu_{\sigma_{J}(i)}}\tilde{f}_{{J_2}}^Y(\nu_{\sigma_{J}(m^{{J_1}}+1)},\dots, \nu_{\sigma_{J}(m^{J})}),
\end{eqnarray}where $m^{J}:=\Delta(Y_{J})$, $m^{{J_1}}:=\Delta(Y_{{J_1}})$, and $m^{{J_2}}:=\Delta(Y_{{J_2}})$.
\end{enumerate}

\noindent We note that the family of rational functions $\tilde{f}^Y_J$ is well defined for all the Lie brackets $Y_J$ such that $\Delta_i(Y_J)\leq 1$, $i=1,\dots,M$. The algebraic construction could be extended to all the Lie brackets, but it is not necessary for our purpose. We also note that Claim \ref{Free_D} is equivalent to the fact that the family of rational functions $$\{\tilde{f}_{J_j}^Y(\nu_1,\dots,\nu_M)\}_{j=1,\dots,\tilde{N}}$$ is linearly independent over the hyperplane $\sum_{i=1}^M\nu_i=0$.

Recall that every element $Y_{J_j}$ in the family $\{Y_{J_1},\dots,Y_{J_{\tilde{N}}}\}$ writes uniquely as
\begin{equation}\label{PH}
        Y_{J_j}=[Y_{J_{j_1}},Y_{J_{j_2}}].
\end{equation}
\begin{de}[\emph{Left and right factors}]
For $J\in\{J_1,\dots, J_{\tilde{N}}\}$, the {\em left factor} $L(J)$ and the {\em right factor} $R(J)$ of $J$ are defined in such a way that $Y_J=[Y_{L(J)},Y_{R(J)}]$.
\end{de}
\noindent Let $L^{*}$ be defined by
\begin{equation}\label{LF}
        L^*:=\max_{j=1,\dots,\tilde{N}}\{L(J_j)\}.
\end{equation} The integer $L^*$ is well defined since a P. Hall family is totally ordered. Thus, there exists $J^*\in\{J_1,\dots,J_{\tilde{N}}\}$ such that $L^*=L({J^*})$. Then, define $R^*:=R({J^*})$ and set $m^{*}=\vert L^*\vert$. Let
$$\Lambda=\Lambda_L\cup \Lambda_R\hbox{ and }\bar{\Lambda}=\{1,\dots, \tilde{N}\}\setminus \Lambda,$$
with $\Lambda_L$ and $\Lambda_R$ defined by
\begin{eqnarray}
\Lambda_L&:=&\{j\in\{1,\dots,\tilde{N}\},\ \textrm{ such that } Y_{L(J_j)}\sim Y_L\},\\
\Lambda_R&:=&\{j\in\{1,\dots,\tilde{N}\},\ \textrm{ such that } Y_{L(J_j)}\sim Y_R\}.
\end{eqnarray}
Then, for all $j\in\Lambda$, there exists an injection function $$\sigma_j:\{1,\dots, M\}\rightarrow\{1,\dots, M\}$$ such that one has
\begin{eqnarray}
\tilde{f}_{J_j}^Y(\nu_1,\dots,\nu_M)
&=&\frac{\tilde{f}_{L(J_{j})}^Y(\nu_{\sigma_j(1)},\dots,\nu_{\sigma_j(m^*)})}{\sum_{i=1}^{m^{*}} \nu_{\sigma_j(i)}}\tilde{f}_{R(J_{j})}^Y(\nu_{\sigma_j(m^{*}+1)},\dots, \nu_{\sigma_j(M)}),\ \textrm{ if } j\in \Lambda_L ,\label{DD1}\\
\tilde{f}_{J_j}^Y(\nu_1,\dots,\nu_M)&=&\frac{\tilde{f}_{L(J_{j})}^Y(\nu_{\sigma_j(m^*+1)},\dots,\nu_{\sigma_j(M)})}{\sum_{i=m^*+1}^{M} \nu_{\sigma_j(i)}}\tilde{f}_{R(J_{j})}^Y(\nu_{\sigma_j(1)},\dots, \nu_{\sigma(m^*)}),\ \textrm{ if } j\in \Lambda_R. \label{DD2}
\end{eqnarray}

Note that, for all $j_1$ and $j_2$ in $\Lambda_L$, one has
$\{\nu_{\sigma_{j_1}(1)},\dots, \nu_{\sigma_{j_1}(m^*)}\}=\{\nu_{\sigma_{j_2}(1)},\dots, \nu_{\sigma_{j_2}(m^*)}\}.$ Denote by $\Xi_L$ the set of variables involved in $\tilde{f}^Y_{L(J_j)}$ with $j\in\Lambda_L$. A similar property holds for $\Lambda_R$. For all $j_1$ and $j_2$ in $\Lambda_R$, one has
$\{\nu_{\sigma_{j_1}(m^*+1)},\dots, \nu_{\sigma_{j_1}(M)}\}=\{\nu_{\sigma_{j_2}(m^*+1)},\dots, \nu_{\sigma_{j_2}(M)}\}.$ Denote by $\Xi_R$ the set of all variables occurring in $\tilde{f}^Y_{L(J_j)}$ with $j\in\Lambda_R$. Then one has $\Xi_L\cup\Xi_R=\{\nu_1,\dots,\nu_M\}.$ By abuse of notation, we re-write Eqs. $(\ref{DD1})$ and $(\ref{DD2})$ in the following form:
\begin{eqnarray}
\tilde{f}_{J_j}^Y(\nu_1,\dots,\nu_M)&=&\frac{\tilde{f}_{L(J_{j})}^Y(\Xi_L)}{\sum_{\tilde{\nu}_k\in\Xi_L}\tilde{\nu}_{k}}\tilde{f}_{R(J_{j})}^Y(\Xi_R),\ \textrm{ if } j\in \Lambda_L \label{DD1_bis};\\
\tilde{f}_{J_j}^Y(\nu_1,\dots,\nu_M)&=&\frac{\tilde{f}_{L(J_{j})}^Y(\Xi_R)}{\sum_{\tilde{\nu}_k\in\Xi_R} \tilde{\nu}_k}\tilde{f}_{R(J_{j})}^Y(\Xi_L),\ \textrm{ if } j\in \Lambda_R. \label{DD2_bis}
\end{eqnarray}Moreover, by the resonance condition $\sum_{i=1}^M\nu_i=0$, Eq. $(\ref{DD2_bis})$ becomes
\begin{equation}
\tilde{f}_{J_j}^Y(\nu_1,\dots,\nu_M)=\frac{\tilde{f}_{L(J_{j})}^Y(\Xi_R)}{-\sum_{\tilde{\nu}_k\in\Xi_L} \tilde{\nu}_k}\tilde{f}_{R(J_{j})}^Y(\Xi_L),\ \textrm{ if } j\in \Lambda_R. \label{DD2_bbis}
\end{equation}

We now prove that the family of rational functions $\displaystyle\{\tilde{f}_{J_j}^Y(\nu_1,\dots,\nu_M)\}_{j=1,\dots,\tilde{N}}$ is linearly independent over the hyperplane $\sum_{i=1}^M\nu_i=0$. The proof goes by induction over the length of the Lie brackets under consideration. For the brackets of length $1$, the result is obviously true. Assume that the result holds for all brackets of length smaller than $M-1$, $M\geq 2$.

Assume that there exist $\ell_j\in\mathbb{R}^{\tilde{N}}$ such that
\begin{equation}\label{LR}
        \sum_{j=1}^{\tilde{N}} \ell_j\tilde{f}_{J_j}^Y(\nu_1,\dots,\nu_M)=0,\ \textrm{ with } \sum_{i=1}^M\nu_i=0.
\end{equation}
One has
\begin{eqnarray}
        &&\sum_{j=1}^{\tilde{N}} \ell_j\tilde{f}_{J_j}^Y(\nu_1,\dots,\nu_M)
        =\sum_{j\in \Lambda} \ell_j\tilde{f}_{J_j}^Y(\nu_1,\dots,\nu_M)+\sum_{j\in\bar{\Lambda}} \ell_j\tilde{f}_{J_j}^Y(\nu_1,\dots,\nu_M)\label{LR1}\\
        &=&\sum_{j\in\Lambda_L}\ell_j\frac{\tilde{f}_{L(J_{j})}^Y(\Xi_L)}{\sum_{\tilde{\nu}_k\in\Xi_L}\tilde{\nu}_{k}}\tilde{f}_{R(J_{j})}^Y(\Xi_R)-\sum_{j\in\Lambda_R}\ell_j\frac{\tilde{f}_{L(J_{j})}^Y(\Xi_R)}{\sum_{\tilde{\nu}_k\in\Xi_L} \tilde{\nu}_k}\tilde{f}_{R(J_{j})}^Y(\Xi_L)+\sum_{j\in\bar{\Lambda}} \ell_j\tilde{f}_{J_j}^Y(\nu_1,\dots,\nu_M)=0.\nonumber
\end{eqnarray}
Multiplying Eq. $(\ref{LR1})$ by the factor $\displaystyle\sum_{\tilde{\nu}_k\in\Xi_L}\tilde{\nu}_k$, one gets
\begin{eqnarray}
\sum_{j\in\Lambda_L}\ell_j\tilde{f}_{L(J_{j})}^Y(\Xi_L)\tilde{f}_{R(J_{j})}^Y(\Xi_R)-\sum_{j\in\Lambda_R}\ell_j{\tilde{f}_{L(J_{j})}^Y(\Xi_R)}\tilde{f}_{R(J_{j})}^Y(\Xi_L)
+(\sum_{\tilde{\nu}_k\in\Xi_L}\tilde{\nu}_{k})\sum_{j\in\bar{\Lambda}} \ell_j\tilde{f}_{J_j}^Y(\nu_1,\dots,\nu_M)=0.\label{LR2}
\end{eqnarray}

\noindent Since $L^*$ is the maximal element among the left factors of Lie brackets of length $M$, the fraction $\tilde{f}_{J_j}^Y$ does not contain the factor $\displaystyle\sum_{\tilde{\nu}_k\in\Xi_L}\tilde{\nu}_k$ for all $j\in\bar{\Lambda}$. Therefore, on the hyperplane of $\mathbb{R}^{m^*}$ defined by $\displaystyle\sum_{\tilde{\nu}_k\in\Xi_L}\tilde{\nu}_{k}=0$, one has
\begin{equation}
\sum_{j\in\Lambda_L}\ell_j\tilde{f}_{L(J_{j})}^Y(\Xi_L)\tilde{f}_{R(J_{j})}^Y(\Xi_R)-\sum_{j\in\Lambda_R}\ell_j{\tilde{f}_{L(J_{j})}^Y(\Xi_R)}\tilde{f}_{R(J_{j})}^Y(\Xi_L)=0.\label{LR3}
\end{equation}
Fixing variables belonging to $\Xi_R$, Eq. $(\ref{LR3})$ is a linear combination of elements of the family $$\{\tilde{f}_{L(J_{j})}^Y(\Xi_L)\}_{j\in\Lambda_L}\cup\{\tilde{f}_{R(J_{j})}^Y(\Xi_L)\}_{j\in\Lambda_R}$$ associated with elements of length $m^*$ in the P. Hall family. By the inductive hypothesis, this family is linearly independent over the hyperplane of $\mathbb{R}^{m^*}$ defined by $\displaystyle\sum_{\tilde{\nu}_k\in\Xi_L}\tilde{\nu}_{k}=0$. We therefore obtain that
\begin{eqnarray}
\ell_j\tilde{f}_{R(J_{j})}^Y(\Xi_R)=0,&&\textrm{ for all }j\in\Lambda_L,\label{LR4}\\
\ell_j\tilde{f}_{L(J_{j})}^Y(\Xi_R)=0,&&\textrm{ for all }j\in\Lambda_R.\label{LR5}
\end{eqnarray}

Since Eqs. $(\ref{LR4})$ and $(\ref{LR5})$ hold true over the whole hyperplane of $\mathbb{R}^{M-m^*}$ defined by $\displaystyle\sum_{\tilde{\nu}_k\in\Xi_R}\tilde{\nu}_k=0$, one has $\ell_j=0$ for every $j\in\Lambda$. Therefore, Eq. $(\ref{LR1})$ becomes
\begin{equation}
\sum_{j\in\bar{\Lambda}} \ell_j\tilde{f}_{J_j}^Y(\nu_1,\dots,\nu_M)=0.
\end{equation}

Consider now the maximum left factor for $j\in\bar{\Lambda}$ and iterate the same reasoning used for Eq. $(\ref{LR})$. We deduce that $\ell_j=0$ for every $j\in\bar{\Lambda}$. Therefore, the family $\{\tilde{f}_{J_j}^Y(\nu_1,\dots,\nu_M)\}_{j=1,\dots,\tilde{N}}$ is linearly independent over the hyperplane $\sum_{i=1}^M\nu_i=0$ and this concludes the proof of Claim $\ref{Free_D}$.
\end{proof}

We are now in a position to proceed with the argument of Lemma \ref{Free_Family}. Let $X_{I}$ be an element of $\mathcal{E}_X(m_1,m_2)$, $M:=m_1+m_2$ and $N:=$Card $\mathcal{E}_X(m_1,m_2)$. Consider also another family of $M$ indeterminates $Y=\{Y_1,\dots,Y_M\}$ and let $H_Y$ be the \textit{P. Hall family} over $Y$. Finally, consider all the elements of the class $\mathcal{E}_Y(1,\dots,1)=\{Y_{J_1},\dots,Y_{J_{\tilde{N}}}\}$ in $H_Y$.

Let $\Pi$ be the algebra homomorphism from $L(Y)$ to $L(X)$ defined by
\begin{eqnarray}
        \Pi(Y_i)=X_1,\ &\textrm{ for }& i=1,\dots,m_1,\label{Proj1}\\
        \Pi(Y_i)=X_2,\ &\textrm{ for }& i=m_1+1,\dots,M.\label{Proj2}
\end{eqnarray}
Note that the map $\Pi$ is surjective from $\mathcal{E}_Y$ onto $\mathcal{E}_X$. Consider the following vector fields
\begin{equation*}
V_Y=\{v_1Y_1+\dots+v_MY_M\},
\end{equation*} where
\begin{equation}\label{Y-control}
v_i=\cos\omega_it,\textrm{ for }i=1\dots M-1, \textrm{ and } v_M=\cos(\omega_Mt+\varepsilon\frac{\pi}{2}),
\end{equation} with $\displaystyle\omega_M=\sum_{i=1}^{M-1}\omega_i$, and $\omega_i$ verifying the non-resonance conditions.

\noindent Then, the non-autonomous flow of $V_Y$ between $0$ and $2\pi$ is given by
\begin{eqnarray}
\overrightarrow{\exp}(V_Y)(0,2\pi)=e^{f_{J_1}^YY_{J_1}}\circ\cdots\circ e^{f_{J_{\tilde{N}}}^YY_{J_{\tilde{N}}}}\circ \prod_{J>J_1}e^{f_{J}^YY_J}.
\end{eqnarray}

\noindent Let us now apply $\Pi$ to $V_Y$, we get
\begin{equation}\label{VX}
\Pi(V_Y):=V^X=\{v_1\Pi(Y_1)+\dots+v_m\Pi(Y_m)\}=\{u_1X_1+u_2X_2\},
\end{equation} where
\begin{eqnarray}
u_1&=&\sum_{i=1}^{m_1}v_i=\sum_{i=1}^{m_1}\cos\omega_it,\\
u_2&=&\sum_{i=m_1+1}^mv_i=\sum_{i=m_1+1}^{m-1}\cos\omega_it+\cos(\omega_mt+\varepsilon\frac{\pi}{2}).
\end{eqnarray}
Then, the non-autonomous flow of $V_X$ between $0$ and $2\pi$ is given by
\begin{eqnarray}
\overrightarrow{\exp}(V_X)(0, 2\pi)=e^{f_{J_1}^Y\Pi(Y_{J_1})}\circ\cdots\circ e^{f_{J_{\tilde{N}}}^Y\Pi(Y_{J_{\tilde{N}}})}\circ \prod_{J>J_1}e^{f_{J}^Y\Pi(Y_J)}
=e^{\sum_{j=1}^{\tilde{N}}f_{J_j}^Y\Pi(Y_{J_j})}\circ\prod_{J>J_1}e^{\bar{f}_{J}^Y\Pi(Y_J)}.\label{Surj1}
\end{eqnarray}
We also know that
\begin{eqnarray}
\overrightarrow{\exp}(V_X)(0,2\pi)=e^{f_{I_1}^XX_{I_1}}\circ\cdots\circ e^{f_{I_N}^XX_{I_N}}\circ \prod_{I>I_1}e^{f_{I}^XX_I}
=e^{\sum_{j=1}^Nf_{I_j}^XX_{I_j}}\circ \prod_{I>I_1}e^{\bar{f}_{I}^XX_I}.\label{Surj2}
\end{eqnarray}

\noindent Recall that  $\Pi$ is surjective from $\mathcal{E}_Y(1,\dots,1)$ onto $\mathcal{E}_X(m_1,m_2)$. Therefore, by identifying Eqs. $(\ref{Surj1})$ and $(\ref{Surj2})$, we obtain that, for all $j=1,\dots,N$, $f_{I_j}^X$ is a linear combination of $f_{J_i}^Y$ with $i=1,\dots,\tilde{N}$, i.e.,
\begin{equation}
        f_{I_j}^X=\sum_{i=1}^{\tilde{N}}\alpha_i^jf_{J_i}^Y.
\end{equation}

\noindent Since the family $(f_{J_i}^Y)_{i=1,\dots,\tilde{N}}$ is linearly independent and the matrix ${A}:=(\alpha_i^j)_{i=1,\dots,N; j=1,\dots,\tilde{N}}$ is surjective, we conclude that the family $(f_{I_j}^X)_{j=1,\dots, N}$ is also linearly independent. This ends the proof of Lemma \ref{Free_Family}.\end{proof}

A consequence of Lemma $\ref{Free_Family}$ is the following.
\begin{coro}\label{Det}
With the above notations, the function {\em det}$A$ is not identically equal to zero on $\cap_{j=1}^{N}P_j$.
\end{coro}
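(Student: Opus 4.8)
The plan is to deduce Corollary~\ref{Det} from Lemma~\ref{Free_Family} by means of the classical equivalence between linear independence of a family of functions and the non-degeneracy, at suitably chosen sample points, of the associated ``sampling'' determinant. Recall that $A=A(\omega_{11}^1,\dots,\omega_{2N}^{*})$ is the $N\times N$ matrix whose $k$-th column is the vector $\big(f_{I_1}^X(q_k),\dots,f_{I_N}^X(q_k)\big)$ with
$q_k:=(\omega_{1k}^1,\dots,\omega_{1k}^{m_1},\omega_{2k}^1,\dots,\omega_{2k}^{m_2-1},\omega_{2k}^{*})\in\R^M$, $M=m_1+m_2$, and each $q_k$ constrained to lie on $P_k$ by $(\ref{RMC})$. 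Since every $P_k$ is a copy of the hyperplane $P$ appearing in Lemma~\ref{Free_Family}, the function $\det A$, viewed on $\cap_{j=1}^{N}P_j\cong P\times\cdots\times P$ ($N$ factors), is exactly $(q_1,\dots,q_N)\mapsto\det\big(f_{I_j}^X(q_k)\big)_{1\le j,k\le N}$. As the $f_{I_j}^X$ are rational in the frequencies, this is a rational function defined on a dense open subset of $\cap_j P_j$ (the common domain of the $f_{I_j}^X$), and ``not identically zero'' is to be understood on that subset.

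First I would record the following general fact. If $g_1,\dots,g_N$ are real-valued functions on a set $Q$ that are linearly independent over $\R$, then there exist $p_1,\dots,p_N\in Q$ with $\det(g_i(p_j))_{i,j=1}^{N}\neq 0$. This is proved by induction on $N$: for $N=1$, linear independence forces $g_1\not\equiv 0$; for the inductive step, linear independence of $g_1,\dots,g_N$ gives that of $g_1,\dots,g_{N-1}$, hence points $p_1,\dots,p_{N-1}$ with $D:=\det(g_i(p_j))_{i,j=1}^{N-1}\neq 0$, and then the function
\[
p\longmapsto \det
\begin{pmatrix}
g_1(p_1) & \cdots & g_1(p_{N-1}) & g_1(p)\\
\vdots & & \vdots & \vdots\\
g_N(p_1) & \cdots & g_N(p_{N-1}) & g_N(p)
\end{pmatrix}
\]
is, by cofactor expansion along the last column, a linear combination of $g_1,\dots,g_N$ in which the coefficient of $g_N$ equals $\pm D\neq 0$; by linear independence it is not identically zero on $Q$, hence nonzero at some $p_N\in Q$, and this $p_N$ completes a family with $\det(g_i(p_j))_{i,j=1}^{N}\neq 0$.

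Then I would apply this with $Q$ the dense open subset of $P$ on which all the $f_{I_j}^X$ are defined and $g_i:=f_{I_i}^X|_Q$: Lemma~\ref{Free_Family} guarantees linear independence, so there exist $q_1,\dots,q_N\in Q\subset P$ with $\det\big(f_{I_j}^X(q_k)\big)_{j,k}\neq 0$. Choosing the $k$-th block of frequencies equal to $q_k$ exhibits a point of $\cap_{j=1}^{N}P_j$ at which $\det A$ is nonzero (up to the transposition/relabelling of rows and columns, which does not affect the vanishing of the determinant), which is precisely the assertion of Corollary~\ref{Det}.

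The argument is essentially routine linear algebra once Lemma~\ref{Free_Family} is available; the only point deserving a little care --- and the closest thing to an obstacle --- is the bookkeeping that identifies $\det A$ on $\cap_j P_j$ with the sampling determinant of the family $(f_{I_j}^X)_{j=1,\dots,N}$, together with the remark that, since these are rational functions, the statement ``$\det A\not\equiv 0$'' must be read on the dense open set where the $f_{I_j}^X$ are all defined, so that the sample points produced by the induction genuinely belong to the relevant domain.
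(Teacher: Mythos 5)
Your proposal is correct and follows the same route as the paper: deduce the non-vanishing of $\det A$ on $\cap_j P_j$ from the linear independence of $(f_{I_j}^X)_{j=1,\dots,N}$ established in Lemma~\ref{Free_Family}. In fact your write-up is more careful than the paper's, which passes rather abruptly from the linear independence of the row functions to the conclusion $\det A\not\equiv 0$; the inductive ``sampling determinant'' lemma you spell out (cofactor expansion in the last column giving a nontrivial linear combination of the $f_{I_j}^X$, hence a point where the new bordered determinant is nonzero) is exactly the bridge needed to make that step rigorous, together with your observation that everything should be read on the dense open subset of $P$ where the rational functions are defined.
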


\begin{proof}[Proof of Corollary \ref{Det}.]

For $j=1,\dots,N$, we define the vector $L_j$ by
$$L_j=\left(f_{I_j}^X(\omega_{11}^1,\dots,\omega_{21}^{m_2-1},\omega_{21}^{*}),\cdots,f_{I_j}^X(\omega_{1N}^1,\dots,\omega_{2N}^{m_2-1},\omega_{2N}^{*})\right)^T.$$
Assume that $\displaystyle\sum_{j=1}^N\ell_jL_j=0$ with $l_j\in\mathbb{R}$. Then, for all $i=1,\dots, N$, we have
\begin{equation}
\sum_{j=1}^N\ell_jf_{I_j}^X(\omega_{1i}^1,\dots,\omega_{1i}^{m_1},\omega_{2i}^1,\dots,\omega_{2i}^{m_2-1},\omega_{2i}^{*})=0.
\end{equation}
By Lemma \ref{Free_Family}, we have $l_j=0$ for $j=1,\dots,N$. Then, the family $(L_j)_{j=1,\dots,N}$ is linearly independent. We conclude that $\textrm{det}~A$ is not equal to zero. This ends the proof of Corollary \ref{Det}.
\end{proof}

We still need another technical lemma which guarantees that there exist integer frequencies such that Eq. $(\ref{NRMC})$ is satisfied and the matrix $A$ in Eq. $(\ref{Systeme_Inverse})$ is invertible.

\begin{lemma}\label{Integer}
There exists integer frequencies such that $(\ref{NRMC})$ is satisfied and $\textrm{det}~A$ is not equal to zero.
\end{lemma}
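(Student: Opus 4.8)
The plan is to combine Corollary~\ref{Det} with a density/genericity argument over the integer lattice. By Corollary~\ref{Det}, the polynomial (or rational) function $\det A(\omega_{11}^1,\dots,\omega_{2N}^*)$, once we eliminate the variables $\omega_{2j}^*$ by the linear substitution
$$
\omega_{2j}^*=\sum_{i=1}^{m_1}\omega_{1j}^i+\sum_{i=1}^{m_2-1}\omega_{2j}^i,
$$
becomes a rational function $G$ of the remaining free frequency variables $\omega_{1j}^i$ ($i=1,\dots,m_1$, $j=1,\dots,N$) and $\omega_{2j}^i$ ($i=1,\dots,m_2-1$, $j=1,\dots,N$) which is not identically zero on $\mathbb{R}^{(m_1+m_2-1)N}$. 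Clearing denominators, we may assume $G$ is a nonzero polynomial in these variables.

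The key step is then the elementary fact that a nonzero polynomial in finitely many variables with real coefficients cannot vanish on a set of the form $\prod_k (N_k, +\infty)\cap\mathbb{Z}^{(m_1+m_2-1)N}$ for any choice of thresholds $N_k\in\mathbb{N}$: indeed, restricting successively to lines parallel to the coordinate axes, a nonzero one-variable polynomial has only finitely many integer roots, and an induction on the number of variables shows that the common zero set cannot contain a shifted lattice orthant. Now the non-resonance conditions \eqref{NRMC} are precisely a finite system of strict lower-bound inequalities arranged in a triangular (cascading) fashion: $\omega_{11}^1$ is free, then $\omega_{1j}^{i+1}>m_1\omega_{1j}^i$, etc., ending with $\omega_{1,j+1}^1>m_2\omega_{2j}^{m_2-1}+m_1\omega_{1j}^{m_1}$. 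So I would first pick $\omega_{11}^1$ to be any sufficiently large integer, then choose $\omega_{11}^2$ larger than $m_1\omega_{11}^1$, and so on, at each stage having infinitely many admissible integer choices for the next frequency. This produces an infinite ``cascade'' of admissible integer tuples; more precisely, the set of $(\omega_{1j}^i,\omega_{2j}^i)$ satisfying \eqref{NRMC} contains, for each admissible partial choice, a full infinite arithmetic-progression worth of choices for the next coordinate, hence contains a shifted lattice orthant (after reindexing the variables in the order dictated by the triangular structure). Intersecting with the non-vanishing set of $G$, which by the polynomial fact omits only a ``thin'' set, we conclude that there exists an integer tuple satisfying \eqref{NRMC} at which $\det A\neq 0$; setting the $\omega_{2j}^*$ by the linear relation \eqref{RMC} then gives the desired integer frequencies.

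Concretely I would carry out the steps in this order: (1) introduce the free variables and rewrite $\det A$ as a nonzero polynomial $G$ in them using Corollary~\ref{Det}; (2) reorder the frequency variables as $\xi_1,\dots,\xi_p$ following the triangular dependency in \eqref{NRMC}, so that the constraint on $\xi_{k+1}$ is of the form $\xi_{k+1}>c_k(\xi_1,\dots,\xi_k)$ with $c_k$ a fixed integer affine function; (3) prove by induction on $k$ that one can choose $\xi_1,\dots,\xi_p$ integers satisfying all constraints and with $G(\xi_1,\dots,\xi_p)\neq 0$, using at each step that, with $\xi_1,\dots,\xi_k$ fixed, the univariate polynomial $\xi_{k+1}\mapsto G(\xi_1,\dots,\xi_k,\xi_{k+1},\dots)$ restricted generically is either identically zero on a strict algebraic subset or vanishes at finitely many points — the standard argument being to fix the later variables generically so that the relevant leading coefficient survives; (4) set $\omega_{2j}^*$ via \eqref{RMC} and $\varepsilon\equiv m_1+m_2-1\pmod 2$, and check that \eqref{NRMC} and $\det A\neq 0$ both hold, which is immediate.

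The main obstacle is step (3): one must be careful that the genericity argument interacts correctly with the triangular constraints, i.e. that after fixing $\xi_1,\dots,\xi_k$ the polynomial $G$ does not become identically zero as a function of the remaining variables. The clean way around this is to run the induction \emph{backwards} — first observe that since $G\not\equiv 0$ there is a monomial order making some variable, say the last one in the ordering, have a nonzero leading coefficient polynomial $\tilde G$ in the earlier variables; by the outer induction hypothesis one can choose the earlier variables (respecting their triangular constraints) to make $\tilde G\neq 0$, and then only finitely many values of the last variable are forbidden, while infinitely many satisfy its lower-bound constraint. This reduces the problem in each step to the same type of statement with one fewer variable, so the induction closes. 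The only genuinely delicate point is bookkeeping: ensuring the variable one ``peels off'' is the last in the triangular order so that its constraint $\xi_p>c_{p-1}(\xi_1,\dots,\xi_{p-1})$ is the one being satisfied last, which is exactly how \eqref{NRMC} is structured. With this care, the proof is routine.
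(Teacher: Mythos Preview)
Your proposal is correct and follows essentially the same approach as the paper: both reduce $\det A$ (after the substitution~\eqref{RMC}) to a nonzero polynomial in the free frequencies via Corollary~\ref{Det}, then argue by induction on the number of variables, peeling off the last variable in the triangular order dictated by~\eqref{NRMC} and using that a nonzero univariate polynomial has only finitely many integer roots while~\eqref{NRMC} allows infinitely many. The paper phrases this as a proof by contradiction (assuming $P$ vanishes on all admissible integers and deriving that its leading coefficient in the last variable does too), whereas you phrase it constructively; your version is slightly more explicit about the delicate point that fixing earlier variables must not kill the polynomial, but the underlying argument is the same.
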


\begin{proof}[Proof of Lemma \ref{Integer}]
For $j=1,\dots,N$, we set
\begin{equation}\label{New_d}
        f_j(\omega_1,\dots,\omega_{m-1})=f_{I_j}^X(\omega_1,\dots,\omega_{m-1},\sum_{i=1}^{m-1}\omega_i),
\end{equation}
then, we have
\begin{eqnarray}
\textrm{det}A&=&\begin{vmatrix}
f_1(\omega_{11}^1,\dots,\omega_{21}^{m_2-1}),&\dots&,f_1(\omega_{1N}^1,\dots,\omega_{2N}^{m_2-1})\\
\vdots&\ddots&\vdots\\
f_N(\omega_{11}^1,\dots,\omega_{21}^{m_2-1}),&\dots&,f_N(\omega_{1N}^1,\dots,\omega_{2N}^{m_2-1})
\end{vmatrix}=\frac{P(\omega_{11}^1,\dots,\omega_{2N}^{m_2-1})}{Q(\omega_{11}^1,\dots,\omega_{2N}^{m_2-1})},
\end{eqnarray}where $P$ and $Q$ are two polynomials of $(m-1)N$ variables.

We first note that $Q$ never vanishes over integer frequencies. Assume, by contradiction, that $P$ is always equal to zero for integer frequencies verifying Eq. $(\ref{NRMC})$.
Consider $P$ as a polynomial in one variable $\omega_{2N}^{m_2-1}$, i.e.,
\begin{eqnarray}
P(\omega_{11}^1,\dots,\omega_{2N}^{m_2-1})=\sum_{j=0}^{M}P_j(\omega_{11}^1,\dots,\omega_{2N}^{m_2-2})(\omega_{2N}^{m_2-1})^j.\label{P}
\end{eqnarray}
Given integer frequencies $(\omega_{11}^1,\dots,\omega_{2N}^{m_2-2})$, if Eq. $(\ref{P})$ is not identically equal to zero, then this polynomial in the variable $\omega_{2N}^{m_2-1}$ most has a finite number of roots. However,  for a given choice of $(m-1)N-1$ first frequencies, there exist an infinite number of $\omega_{2N}^{m_2-1}$ verifying $(\ref{NRMC})$. Then, $P_j=0$ over all integer frequencies, and $P_M$ is not identically equal to zero. We note that all $P_j$ are polynomials of $(m-1)N-1$ variables. Proceeding by induction on the number of variables, it is easy to see that, at the end, we obtain a polynomial in the variable $\omega_{11}^1$ which is equal to zero over all integer $\omega_{11}^1$, and which is not identically equal to zero according to Corollary $\ref{Det}$. That contradiction ends the proof of Lemma $\ref{Integer}$.
\end{proof}

\subsubsection{General case: $m>2$}\label{more-general}

Notice that the proof of Theorem \ref{Finitude} does not really depend on the number of vector fields involved in the control system (\ref{CS}). Indeed, for $m>2$, if the control functions are linear combination of sinusoids with integer frequencies, then the state variables in the canonical form are also linear combinations of sinusoids so that the frequencies are $\mathbb{Z}-$linear combinations of the frequencies occurring in the control functions.  The proof is the same as that of Lemma \ref{FG}, up to extra notation. Since Lemma \ref{Free_D} depends only on the length of the Lie brackets, but not on the number of vector fields, the proof of Lemma \ref{Free_Family} does not depend on $m$, either. In order to prove a similar result for $m>2$, we just need to re-project Eqs. (\ref{Proj1}) and (\ref{Proj2}) to $m$ vector fields instead of $2$.

\subsection{Exact and sub-optimal steering law}\label{sec:sub-opt}

In this section, we explain how we can devise, from Proposition \ref{Finitude}, an \emph{exact} and \emph{sub-optimal} steering law (cf. Definition \ref{de:sub-opt}) $\ex_{m,r}$ for the approximate system, which is already in the canonical form and how $\ex_{m,r}$ can be incorporated into the \emph{global approximate steering algorithm} (cf. Section \ref{GASA}). Note that $\ex_{m,r}$ only depends on the number of controlled vector fields $m$ and on the maximum degree of nonholonomy $r$.

Recall that the components of $x\in\R^{\tn^r}$ are partitioned into equivalence classes $\{\mathcal{E}_x^1,\mathcal{E}_x^2,\dots,\mathcal{E}_x^{\widetilde{N}}\}$ according to Definition \ref{Equivalence-Class} in such a way that $\mathcal{E}_x^i\prec\mathcal{E}_x^j$ for $(i,j)\in\{1,\dots,\widetilde{N}\}^2$ and $i<j$. For every equivalence class $\mathcal{E}_x^j$, Proposition \ref{Finitude} and Subsection \ref{more-general} guarantee that we can choose frequencies such that the corresponding control matrix $A_j$ (cf. Definition \ref{de:control-matrix}) is invertible and the corresponding control function $u^j$ obtained by Eq. (\ref{CF1}) steers all the elements of $\mathcal{E}_x^j$ from an arbitrary initial value to the origin $0$ (see Remark \ref{rem:goal0}) without modifying any elements belonging to smaller classes. 

Let $x^{\textrm{initial}}\in\R^{\tn^r}$. Let $B_j:=A_j^{-1}$ and $N_j:=\textrm{Card}(\mathcal{E}_x^j)$, $j=1,\dots,\widetilde{N}$. For $x\in\R^{\tn^r}$, we will use $[x]_{i,\dots,k}$ with $1\leq i<k\leq\tn_r$ to denote the vector $(x_{i},\dots, x_k)$, and $\Vert x\Vert$ to denote the \emph{pseudo-norm} of $x$ defined by the \emph{free} weights (cf. Definition \ref{de:pseudonorm} and Definition \ref{de:free-weight}). We also define an intermediate function $\textrm{Position}(u)$ as follows: if System (\ref{cano-f5}) starts from $x=0$ and evolves under the action of $u$, then $\textrm{Position}(u)$ returns its position at $t=2\pi$. 

\begin{algorithm}
\caption{Exact Steering Law: $\ex_{m,r}(x^{\textrm{initial}})$}
\label{algo_ex}
\begin{algorithmic}[1]
\REQUIRE $B_1,\dots,B_{\widetilde{N}}$, and $N_1,\dots,N_{\widetilde{N}}$; 
\STATE $\lambda:=\Vert x^{\textrm{initial}}\Vert_0$; 
\STATE $x^{\textrm{new}}:=\delta_{0,\frac{1}{\lambda}}(x^{\textrm{initial}})$; 
\STATE $\hat{u}_{\textrm{norm}}:=0$; 
\STATE $i:=0$; 
\FOR{$j=1,\dots,\widetilde{N}$}
  \STATE $x:=[x^{\textrm{new}}]_{i+1,\dots,i+N_j}$; 
  \STATE $a^j:=B_j~x$; 
  \STATE construct $u^j$ from $a^j$ by Eq. (\ref{CF1}); 
  \STATE $x^{\textrm{new}}:=x^{\textrm{new}}+\textrm{Position}(u^j)$; 
  \STATE $\hat{u}_{\textrm{norm}}:=\hat{u}_{\textrm{norm}}\ast u^j$ (cf. Definition \ref{de:conca}); 
  \STATE $i=i+N_j$; 
\ENDFOR 
\RETURN $\hat{u}:=\lambda\hat{u}_{\textrm{norm}}$.
\end{algorithmic}
\end{algorithm}

\begin{prop}\label{prop:sub-optimal}
For every $x^{\textrm{\emph{initial}}}\in \R^{\tn^r}$, the input given by \\ $\ex_{m,r}(x^{\textrm{\emph{initial}}})$ steers System (\ref{cano-f5}) from $x^{\textrm{\emph{initial}}}$ to $0$ exactly. Moreover, there exists a constant $C>0$ such that
\begin{equation}\label{eq:sub-optimal}
\ell(\ex_{m,r}(x^{\textrm{\emph{initial}}}))\leq Cd(x^{\textrm{\emph{initial}}},0),\qquad \forall~x^{\textrm{\emph{initial}}}\in \R^{\tn^r},
\end{equation}where we use $d$ to denote the sub-Riemannian distance defined by the family $X$.
\end{prop}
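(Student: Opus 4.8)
The plan is to prove the two assertions separately: first that $\ex_{m,r}(x^{\textrm{initial}})$ steers System~(\ref{cano-f5}) from $x^{\textrm{initial}}$ to $0$ exactly, and then the sub-optimality estimate~(\ref{eq:sub-optimal}).

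For the exactness claim, I would unwind Algorithm~\ref{algo_ex} step by step. The algorithm first rescales $x^{\textrm{initial}}$ by the dilation $\delta_{0,1/\lambda}$ (with $\lambda = \|x^{\textrm{initial}}\|_0$) to obtain $x^{\textrm{new}}$ of pseudo-norm $1$, then builds the controls $u^j$ on the rescaled problem, and finally restores the scale by setting $\hat u := \lambda \hat u_{\textrm{norm}}$. The key point is the homogeneity of the canonical form~(\ref{cano-f5}) under the dilations $\delta_{0,t}$: applying the input $t\cdot u(t\,\cdot)$ to $x_0$ produces the trajectory $\delta_{0,t}$ of the trajectory produced by $u$ starting from $\delta_{0,1/t}(x_0)$. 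Hence it suffices to show that the concatenation $\hat u_{\textrm{norm}} = u^1 * \cdots * u^{\widetilde N}$ steers the rescaled problem from $x^{\textrm{new}}$ to $0$. Here I would argue by induction on $j$: Proposition~\ref{Finitude} (together with Subsection~\ref{more-general} for $m>2$), via the invertibility of the control matrix $A_j$ and the choice $a^j = B_j x$ with $B_j = A_j^{-1}$, guarantees that during the time interval $[2(j-1)\pi, 2j\pi]$ the control $u^j$ moves every component in $\mathcal{E}_x^j$ by exactly the prescribed amount (so that, accounting for the already-accumulated shift recorded in $x^{\textrm{new}}$, the components of $\mathcal{E}_x^j$ reach $0$) while leaving unchanged at $t=2j\pi$ every component belonging to a class $\mathcal{E}_x^i$ with $i<j$ (Lemma~\ref{Non-Resonance}). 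Since the classes are ordered so that $\mathcal{E}_x^i \prec \mathcal{E}_x^j$ for $i<j$, and since the dynamics of $x_I$ depends only on components $x_J$ with $J \prec I$ (the triangular structure of~(\ref{cano-f5})), after the $\widetilde N$-th block all components are at $0$. The bookkeeping with $\textrm{Position}(u^j)$ exactly encodes the cumulative effect of earlier blocks on later components, so the linear system $a^j = B_j x$ with $x = [x^{\textrm{new}}]_{i+1,\dots,i+N_j}$ is the correct one to solve.

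For the sub-optimality estimate, the strategy is to bound $\ell(\ex_{m,r}(x^{\textrm{initial}}))$ in terms of $\lambda = \|x^{\textrm{initial}}\|_0$ and then relate $\lambda$ to $d(x^{\textrm{initial}},0)$ via the Ball-Box Theorem. By construction $\hat u = \lambda \hat u_{\textrm{norm}}$, and the length scales accordingly: $\ell(\hat u) = \lambda\,\ell(\hat u_{\textrm{norm}})$ after the time-reparameterization implicit in the dilation (one checks that under $u \mapsto t\,u(t\,\cdot)$ the length $\ell$ is multiplied by $t$). Now $\hat u_{\textrm{norm}}$ is a concatenation of $\widetilde N$ blocks, each of which is a fixed finite linear combination of sinusoids with fixed integer frequencies whose coefficients $a^j$ depend linearly and boundedly on the (bounded, since $\|x^{\textrm{new}}\|_0 = 1$) vector $[x^{\textrm{new}}]_{i+1,\dots,i+N_j}$; hence $\ell(\hat u_{\textrm{norm}})$ is bounded by a constant $C'$ depending only on $m$ and $r$ (through the frequencies and the norms of the matrices $B_j$). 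Therefore $\ell(\ex_{m,r}(x^{\textrm{initial}})) \le C' \lambda = C'\|x^{\textrm{initial}}\|_0$. Finally, since $X$ is free up to step $r$ (so all weights equal the free weights and the pseudo-norm $\|\cdot\|_0$ does not depend on the base point), the Ball-Box Theorem~\ref{le:bbox}, or rather its uniform version in Corollary~\ref{theo:unif_contr}, gives $\|x^{\textrm{initial}}\|_0 \le C_d\, d(x^{\textrm{initial}},0)$ for $x^{\textrm{initial}}$ in a bounded region; for large $x^{\textrm{initial}}$ one invokes the homogeneity of both $d$ and $\|\cdot\|_0$ under dilations to extend the bound to all of $\R^{\tn_r}$. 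Combining, $\ell(\ex_{m,r}(x^{\textrm{initial}})) \le C\, d(x^{\textrm{initial}},0)$ with $C := C' C_d$.

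The main obstacle I anticipate is making the scaling argument fully rigorous: one must verify carefully that the canonical form~(\ref{cano-f5}) is genuinely homogeneous with respect to the weighted dilations (so that the $\lambda$-rescaling in Lines 1--3 and Line 13 of Algorithm~\ref{algo_ex} really does commute with the dynamics), and that the pseudo-norm $\|\cdot\|_0$ used in the algorithm coincides (up to the fixed constants of the Ball-Box Theorem) with the pseudo-norm $\|\cdot\|_r$ built from the free weights, so that the distance estimate applies. A secondary point requiring care is the uniformity of the bound on $\ell(\hat u_{\textrm{norm}})$: one needs that the frequency choices and the matrices $B_j$ produced by Lemma~\ref{Integer} can be fixed once and for all (depending only on $m,r$), and that the solution $a^j$ of the linear system stays bounded as $[x^{\textrm{new}}]$ ranges over the unit pseudo-sphere — which follows since $\textrm{Position}(u^j)$ is itself a fixed continuous (indeed polynomial in the $a$'s) function, so the whole recursion in Lines 5--12 is a fixed continuous map of the bounded input $x^{\textrm{new}}$.
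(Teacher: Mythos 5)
Your proposal follows the same two-part strategy as the paper's proof: exactness of the steering via Proposition~\ref{Finitude} (and Subsection~\ref{more-general}) applied block by block in Algorithm~\ref{algo_ex}, and sub-optimality via the homogeneity of the canonical form under the weighted dilations together with a compactness argument on the unit pseudo-sphere $S(0,1)=\{y:\Vert y\Vert_0=1\}$ and the comparison of $\Vert\cdot\Vert_0$ with $d(0,\cdot)$ by joint homogeneity. This is essentially identical to what the paper does.

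One small but genuine slip in your formulation of the homogeneity deserves correction. The scaling used in Algorithm~\ref{algo_ex} (Line~13, $\hat u:=\lambda\hat u_{\mathrm{norm}}$) is a pure \emph{amplitude} rescaling on a fixed time interval: if $z(\cdot)$ solves $\dot z=\sum_iu_iD_i(z)$ with $z(0)=\delta_{0,1/\lambda}(x_0)$, then $y(t):=\delta_{0,\lambda}(z(t))$ solves $\dot y=\sum_i(\lambda u_i)D_i(y)$ with $y(0)=x_0$ on the \emph{same} interval, and $\ell(\lambda u)=\lambda\,\ell(u)$. You instead invoke the map $u\mapsto t\,u(t\,\cdot)$ and assert that it multiplies $\ell$ by $t$; in fact that map is a time reparameterization and \emph{preserves} $\ell$ (since $\int_0^{T/t}|t\,u(t\tau)|\,d\tau=\int_0^T|u(s)|\,ds$). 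Your displayed conclusion $\ell(\hat u)=\lambda\,\ell(\hat u_{\mathrm{norm}})$ is correct, but the justification offered for it would give the wrong answer; the argument should rest on the amplitude-only rescaling as in the paper (and as in the statement of the homogeneity in the paper's proof). With that correction, the rest of your reasoning --- continuity of $x\mapsto\ex_{m,r}(x)$ on the compact set $S(0,1)$ to bound $\sup_{y\in S(0,1)}\ell(\hat u(y))$, and the bound $\widetilde C\lambda\le d(0,x)$ from homogeneity of degree one of both $d(0,\cdot)$ and $\Vert\cdot\Vert_0$ --- matches the paper.
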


\begin{proof}[Proof of Proposition \ref{prop:sub-optimal}]

The fact that the procedure described by the Lines $5-12$ in {Algorithm \ref{algo_ex}} produces an input $\hat{u}_{\textrm{norm}}$ steering System (\ref{cano-f5}) from $\delta_{0,\frac{1}{\lambda}}(x^{\textrm{initial}})$ to $0$ is a consequence of Proposition \ref{Finitude} and Subsection \ref{more-general}. We also note that, due to the homogeneity of System (\ref{cano-f5}), if an input ${u}$ steers (\ref{cano-f5}) from $x$ to $0$, then, for every $\lambda>0$, the input $\lambda{u}$ steers (\ref{cano-f5}) from $\delta_{0,\lambda}(x)$ to $0$.  Therefore, the input computed by $\ex_{m,r}(x^{\textrm{{initial}}})$ steers System (\ref{cano-f5}) from ${x^{\textrm{{initial}}}}$ to $0$.
Let us now show (\ref{eq:sub-optimal}). In the sequel, the application $\ex_{m,r}: x\to \ex_{m,r}(x)$ will be simply denoted by $\hat{u}: x\to \hat{u}(x)$. Let $S(0,1):=\{y, \Vert y\Vert_0=1\}$ and $x\in\R^{\tn^r}$. Then, there exists $x_{\textrm{norm}}\in S(0,1)$ such that $x=\delta_{0,\lambda}(x_{\textrm{norm}})$ with $\lambda:=\Vert x\Vert_0$. We have:
\begin{eqnarray*}
\ell(\hat{u}(x))=\ell(\lambda\hat{u}(x_{\textrm{norm}}))=\lambda\ell(\hat{u}(x_{\textrm{norm}}))
\leq\lambda\sup_{y\in S(0,1)}\ell(\hat{u}(y)).
\end{eqnarray*}

We also know that, since the sub-Riemannian distance $d(0,\cdot)$ from $0$ and the pseudo-norm $\Vert\cdot\Vert_0$ at $0$ are both homogeneous of degree $1$ with respect to the dilation $\delta_{0,t}(\cdot)$, there exists a constant $\widetilde{C}>0$ such that $\widetilde{C}\lambda\leq d(0,x)$. Since the application $y\to \hat{u}(y)$ is continuous from $S(0,1)$ to $\R^m$ and $S(0,1)$ is compact, then, $\sup_{y\in S(0,1)}\ell(\hat{u}(y))$ is bounded, thus the inequality (\ref{eq:sub-optimal}) holds true.
\end{proof}

The following theorem is a consequence of Proposition \ref{prop:sub-optimal} and Remark \ref{rem:form-approx}.

\begin{theo}\label{th:sub-optimal}
The function $\ex_{m,r}(\cdot)$ constructed by \emph{Algorithm \ref{algo_ex}} provides the approximate system $\mathcal{A}^X$ defined in Section \ref{bellaiche-construction} with a sub-optimal steering law.
\end{theo}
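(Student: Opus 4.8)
The plan is to show that the pair consisting of the approximation $\mathcal{A}^X$ constructed in Section \ref{bellaiche-construction} together with the steering law induced by $\ex_{m,r}$ satisfies Definition \ref{de:sub-opt}. The key observation, recorded in Remarks \ref{rem:form-approx} and \ref{rem:goal0}, is that for every goal point $\aa \in \Omega$ the approximate system $\mathcal{A}^X(\aa)$, when expressed in the privileged coordinates $z = \Phi^X(\aa,\cdot)$, is \emph{exactly} the canonical form \eqref{cano-desingularized}, that is, the nilpotent system \eqref{cano-f5} studied in Section \ref{control-nilpotent}. Moreover the steering control is always required to drive this canonical system from the point $z(x)$ (the image of the current point $x$) to $0 = z(\aa) = \Phi^X(\aa,\aa)$. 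Hence the steering task for $\mathcal{A}^X(\aa)$ is, verbatim, an instance of the task ``steer \eqref{cano-f5} from $x^{\textrm{initial}} \in \R^{\tn_r}$ to $0$'' solved by Algorithm \ref{algo_ex}, and we may simply \emph{define} the steering law of $\mathcal{A}^X$ by $\hat u := \ex_{m,r}(z(x))$. Proposition \ref{prop:sub-optimal} then guarantees that this $\hat u$ steers $\mathcal{A}^X(\aa)$ exactly from $x$ to $\aa$ in time $2\widetilde N \pi$, so the law is well defined and exact.

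Next I would verify the sub-optimality estimate. By Proposition \ref{prop:sub-optimal} there is a constant $C>0$, depending only on $m$ and $r$ (and not on $\aa$, again by Remark \ref{rem:form-approx}, since the canonical form is the same for all goal points), such that
$$
\ell(\hat u) \;=\; \ell\bigl(\ex_{m,r}(z(x))\bigr) \;\leq\; C\, d_{\mathrm{can}}(z(x),0),
$$
where $d_{\mathrm{can}}$ is the sub-Riemannian distance attached to the canonical vector fields $\{\widehat\xi_1,\dots,\widehat\xi_m\}$ on $\R^{\tn_r}$. But $d_{\mathrm{can}}(z(x),0)$ is precisely $\hd_{\aa}(x,\aa)$, the sub-Riemannian distance associated to $\mathcal{A}(\aa)$ appearing in Definition \ref{de:sub-opt}, because $z = \Phi^X(\aa,\cdot)$ is a diffeomorphism carrying $\mathcal{A}^X(\aa)$ to its canonical form and carrying $\aa$ to $0$. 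Therefore $\ell(\hat u) \leq C\,\hd_{\aa}(x,\aa) = C\,\hd_{\aa}(\hat x(0,x,\hat u),\hat x(T,x,\hat u))$, with $C_\ell := C$ and $\eps_\ell(\cdot)$ taken to be any continuous positive function (e.g.\ the approximation radius $\rho(\cdot)$), which is exactly the sub-optimality condition.

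The main obstacle I anticipate is a bookkeeping one rather than a conceptual one: one must check that the identification of the LAS steering problem for $\mathcal{A}^X(\aa)$ with the canonical-form problem of Section \ref{control-nilpotent} is legitimate, i.e.\ that the partition into equivalence classes $\{\mathcal{E}_x^1,\dots,\mathcal{E}_x^{\widetilde N}\}$, the free weights, and the admissible frequencies used by $\ex_{m,r}$ depend only on $(m,r)$ and not on $\aa$. This follows from the fact that, $X$ being free up to step $r$ (the standing assumption of Section \ref{regular-case}, justified by the desingularization of Section \ref{Desing}), the P.\ Hall family $H^r$, the free weights $\widetilde w_j$, and consequently the canonical form \eqref{cano-desingularized} are fixed combinatorial data; the $\aa$-dependence is entirely hidden inside the change of coordinates $\Phi^X(\aa,\cdot)$ (Remark \ref{rem:form-approx}), which does not enter the construction of $\ex_{m,r}$ at all. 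A secondary point to record is that the constant $C$ of Proposition \ref{prop:sub-optimal} is uniform in the above sense, and that the trajectories produced are $C^1$, as noted in Remark \ref{rem-concatenation}; neither requires new work, only a citation of the already-established statements. With these remarks in place, the theorem is an immediate corollary of Proposition \ref{prop:sub-optimal} and Remark \ref{rem:form-approx}.
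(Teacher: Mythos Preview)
Your proposal is correct and follows essentially the same route as the paper: invoke Remark~\ref{rem:form-approx} to observe that $\mathcal{A}^X(\aa)$ in the privileged coordinates $z=\Phi^X(\aa,\cdot)$ is always the \emph{same} canonical system (independent of $\aa$), hence defines the same sub-Riemannian distance, and then apply the uniform estimate of Proposition~\ref{prop:sub-optimal}. Your write-up is more explicit than the paper's (you spell out the identification $d_{\mathrm{can}}(z(x),0)=\hd_\aa(x,\aa)$ via the coordinate diffeomorphism and the fact that the combinatorial data of $\ex_{m,r}$ depend only on $(m,r)$), but the argument is the same.
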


\begin{proof}[Proof of Theorem \ref{th:sub-optimal}]

It suffices to note that, for every $a\in\Omega$, $\mathcal{A}^X(a)$ has the same form (cf. Remark \ref{rem:form-approx}), thus defines the same sub-Riemannian distance ${d}$. Therefore, the inequality (\ref{eq:sub-optimal}) holds uniformly with respect to the approximate point $a$, and this terminates the proof of Theorem \ref{th:sub-optimal}.
\end{proof}

\begin{rem}
Frequencies choices and the construction of the corresponding control matrix $A_j$, as well as its inverse $B_j$, translate to off-line computations.
We note that Proposition $\ref{Finitude}$ only gives sufficient conditions to prevent resonance (by choosing widely spaced frequencies, cf. Eq. $(\ref{NRMC})$) and guarantee the invertibility of the corresponding matrix (by using a sufficiently large number of independent frequencies). These conditions tend to produce high frequencies while it is desirable to find smaller ones for practical use.  We can prove that two independent frequencies suffice to steer one component (cf. Section \ref{special_case}), and we \emph{conjecture} that $2N$ independent frequencies suffice to control one equivalence class of cardinal $N$ by producing an invertible matrix. One can implement a searching algorithm for finding the optimal frequencies for each equivalence class such that they prevent all resonances in smaller classes and produce an invertible matrix for the class under consideration. Proposition \ref{Finitude} guarantees the finiteness of such an algorithm. Moreover, one can construct, once for all, a table containing the choice of frequencies and the corresponding matrices for each equivalence class in the free canonical system.
\end{rem}

\begin{rem}
Recall that the key point in our control strategy consists in choosing suitable frequencies such that, during each $2\pi-$period, the corresponding input function displaces components of one equivalence class to their preassigned positions while all the components of smaller classes (according to the ordering in Definition \ref{Order-equivalence-class}) return at the end of this control period to the values taken at the beginning of the period. In order to achieve the previous task, special resonance conditions must be verified by the appropriate components, and these conditions must not hold for all the other smaller components (according to the ordering in Definition \ref{Order-equivalence-class}). Note that two categories of frequencies have been picked in Proposition \ref{Finitude}: the basic frequencies $\{\omega_{ij}^k\}$, and the resonance frequencies $\{\omega_{ij}^*\}$. Since frequencies occurring in the dynamics of the state variables are just $\mathbb{Z}-$linear
combinations of $\{\omega_{ij}^k\}\cup\{\omega_{ij}^*\}$, and the resonance frequencies $\{\omega_{ij}^*\}$ are chosen to be special $\mathbb{Z}-$linear combinations of basic  frequencies (resonance condition), the frequencies in the dynamics of the state variables are special $\mathbb{Z}-$linear combinations of $\{\omega_{ij}^k\}$.
\end{rem}

\begin{rem}
Once the frequencies and matrices are obtained, the on-line computation $\ex_{m,r}$ is only a series of matrix multiplications which can be performed on-line without any numerical difficulty.
\end{rem}

\begin{rem}\label{rem:CNS}
The Desingularization Algorithm presented in Section \ref{Sec-lifting} (see also Remarks \ref{rem:nilpotent} and \ref{rem:nilpotent-general}) together with Algorithm \ref{algo_ex} provides general nilpotent control systems with an exact steering method, which is also sub-optimal.
\end{rem}

\begin{rem}\label{rem:smooth-control}
We note that the inputs constructed in this section are piecewise $C^\infty$ during each time interval $[2i\pi, 2(i+1)\pi]$, for $i=1,\dots,\widetilde{N}-1$, but they are not globally continuous during the entire control period $[0,~2\widetilde{N}\pi]$, due to discontinuity at $t=2\pi,4\pi,\dots, 2(\widetilde{N}-1)\pi$. However, it is not difficult to devise (globally) continuous
inputs using interpolation techniques. We illustrate the idea with a simple example. Assume that we use $u^i=(u^i_1, u^i_2)$ and $u^j=(u_1^j, u_2^j)$ defined by
\begin{eqnarray*}
u_1^i(t)&=&\cos\omega_{1i}t,\\
u_2^i(t)&=&\cos\omega_{2i}t+a^i\cos(\omega_{2I}^*t+\varepsilon^i\frac{\pi}{2}), \qquad t\in [2(i-1)\pi,2i\pi],\\
u_1^{j}(t)&=&\cos\omega_{1j}t, \\
u_2^{j}(t)&=&\cos\omega_{2j}t+a^{j}\cos(\omega_{2j}^*t+\varepsilon^{j}\frac{\pi}{2}), \qquad t\in[2(j-1)\pi, 2j\pi],
\end{eqnarray*} to steer two consecutive classes $\mathcal{E}^i_x$ and $\mathcal{E}^j_x$ (i.e. $j=i+1$) which are both of cardinal equal to $1$.

If we require their concatenation $u^i\ast u^j$ to be continuous, i.e.
\begin{eqnarray}
u_1^{i}(2\pi)&=&u_1^{j}(2\pi),\label{continuity1}\\
u_2^{i}(2\pi)&=&u_2^{j}(2\pi),\label{continuity2}
\end{eqnarray}we can proceed as follows. 

For Eq. $(\ref{continuity1})$, it suffices to modify slightly $u_1^{j}$. We take \begin{equation}\label{continuity3}\widetilde{u}_1^{j}(t)=u_1^{i}(2\pi)\cos \omega_{1J}t.\end{equation}

For Eq. $(\ref{continuity2})$, we distinguish two cases:
\begin{itemize}
	\item if $\varepsilon^j=1$, we can take
	\begin{eqnarray}
	 \widetilde{u}_2^{j}(t)&=&u_2^i(2\pi)\cos\omega_{2j}t+a^j\cos(\omega_{2j}^*t-\frac{\pi}{2})\nonumber\\
	&=&u_2^i(2\pi)\cos\omega_{2j}t+a^j\sin\omega_{2j}^*t;
	\end{eqnarray}
	\item if $\varepsilon^j=0$, we add a frequency $\omega_c$ to $u_{2}^j$, large enough to avoid any additional resonances,
	\begin{equation}
	 \widetilde{u}_2^{j}(t)=\cos\omega_{2j}t+a^j\cos\omega_{2j}^*t+(u_2^i(2\pi)-a^j-1)\cos\omega_ct.
	\end{equation}
\end{itemize}

\noindent Let $\widetilde{u}^j:=(\widetilde{u}_1^j,~\widetilde{u}_2^j)$. Then, by construction, the new input $u^i\ast\widetilde{u}^j$ is continuous over the time interval $[2i\pi,2j\pi]$.

It is clear that this idea of interpolation by adding suitable frequencies can be used to construct continuous inputs over the entire control period $[0,2\widetilde{N}\pi]$.
In fact, by using more refined interpolations, one can get inputs of class $C^k$ for arbitrary finite integer $k$.

\end{rem}

\begin{rem}\label{rem:smooth-traj-nilpotent}
Using the idea presented in Remark \ref{rem:smooth-control} together with Remark \ref{rem:CNS}, it is easy to conclude that, for general nilpotent systems, the resulting trajectories are globally $C^1$ curves and the regularity does not depend on the time-parameterization of the trajectories.
\end{rem}


\appendix

For the sake of completeness, we give in this short appendix the proof of Theorem \ref{th1} together with some comments on Algorithm \ref{algo}.

\section{Proof of Theorem \ref{th1}}

We first note that Steps $1$ through $5$ in Algorithm \ref{algo} are straightforward.

Theorem \ref{theo-desingularization} guarantees that the Desingularization Algorithm (Section \ref{Sec-lifting}) provides us with a new family of vectors fields $\xi=\{\xi_1,\dots, \xi_m\},$ which is regular and free up to step $r$ with $r$ denoting the maximum value of the degree of nonholonomy of the original system $X=\{X_1,\dots, X_m\},$ on the corresponding compact set $\cv^c_{\cj_i}$. Then, we construct the approximate system $\mathcal{A}^{\xi}$ using the procedure presented in Section \ref{bellaiche-construction} and provide it with the sub-optimal steering law $\ex_{m,r}$ defined in Algorithm \ref{algo_ex}. The sub-optimality of $\ex_{m,r}$ is guaranteed by Theorem \ref{th:sub-optimal}. Therefore, by Theorem \ref{regular-uniform}, the LAS method $\ap$ associated with $\mathcal{A}^{\xi}$ and its steering law is uniformly contractive on the compact set $\cv^c_{\cj_i}\times \bar{B}_R(0)$. Then, by Theorem \ref{th:GlobConv},\\ $\gf~(\tx_0,\tx_1,e,\cv^c,\ap)$ provided by Algorithm \ref{algo_gf} terminates in a finite number of steps and stops at a point $\tx$ such that $d(\tx, \tx_1)<e$. Since there is a finite number of compacts to be explored, we conclude that Algorithm \ref{algo} terminates in a finite number of steps and steers the system $(\Sigma)$ from $x^{\textrm{initial}}$ to some point $x$ such that $d(x,x^{\textrm{final}})<e$. This ends the proof of Theorem \ref{th1}.

\section{About the control set}\label{rem:control-set}
Let $U\subset\R^m$ be any neighborhood of the origin. Then every trajectory of (\ref{CS}) corresponding to the inputs produced by Algorithm \ref{algo} can be time-reparameterized so that the resulting trajectory of (\ref{CS}) is associated with an input taking values in $U$.   

\section{Getting trajectories of class $C^1$ for the original control system}\label{rem:smooth-control2}
We can slightly modify Algorithm \ref{algo} to get trajectories of class $C^1$ for  the original control system $(\Sigma)$. This is equivalent to ask for continuous inputs produced by the algorithm. According to Remark \ref{rem:smooth-control}, inputs can be made continuous within each iteration step in Algorithm \ref{algo_gf} since they are computed based the nilpotent approximate system. By using the same interpolation technique as presented in Remark \ref{rem:smooth-control}, we can still produce inputs which remain continuous from one step to another in Algorithm \ref{algo_gf}. Therefore, trajectories of class  $C^1$ for the control system $(\Sigma)$ are obtained.

\bibliographystyle{amsalpha}

\end{document}